\DeclareMathOperator{\Coh}{\textbf{Coh}}
\DeclareMathOperator{\NS}{NS}
\DeclareMathOperator{\Stab}{Stab}
\DeclareMathOperator{\Mov}{Mov}
\DeclareMathOperator{\Bir}{Bir}
\title{More birational involutions}
\author{Pietro Beri, Laurent Manivel}
\date{}
\address{Institut Elie Cartan de Lorraine, Université de Lorraine et CNRS, 54000 Nancy, France}
\email{pietro.beri@univ-lorraine.fr}
\address{Institut de Math\'ematiques de Toulouse ; UMR 5219, Universit\'e de Toulouse \& CNRS, F-31062 Toulouse Cedex 9, France}
\email{manivel@math.cnrs.fr}
\begin{document}

\theoremstyle{plain}
\newtheorem{theorem}{Theorem}[section]
\newtheorem*{conjecture*}{Conjecture}
\newtheorem{prop}[theorem]{Proposition}
\newtheorem{lemma}[theorem]{Lemma}
\newtheorem{coro}[theorem]{Corollary}
\theoremstyle{definition}
\newtheorem{definition}[theorem]{Definition}
\newtheorem{remark}[theorem]{Remark}
\newtheorem*{ack}{Acknowledgements}

\def\AA{{\mathbb{A}}}
\def\FF{{\mathbb{F}}}
\def\CC{{\mathbb{C}}}
\def\RR{{\mathbb{R}}}
\def\OO{{\mathbb{O}}}
\def\HH{{\mathbb{H}}}
\def\PP{{\mathbb{P}}}
\def\QQ{{\mathbb{Q}}}
\def\ZZ{{\mathbb{Z}}}
\def\SS{{\mathbb{S}}}
\def\cA{{\mathcal{A}}}
\def\cB{{\mathcal{B}}}
\def\cC{{\mathcal{C}}}
\def\cD{{\mathcal{D}}}
\def\cE{{\mathcal{E}}}
\def\cF{{\mathcal{F}}}
\def\cG{{\mathcal{G}}}
\def\cH{{\mathcal{H}}}
\def\cI{{\mathcal{I}}}
\def\cJ{{\mathcal{J}}}
\def\cK{{\mathcal{K}}}
\def\cL{{\mathcal{L}}}
\def\cM{{\mathcal{M}}}
\def\cO{{\mathcal{O}}}
\def\cP{{\mathcal{P}}}
\def\cQ{{\mathcal{Q}}}
\def\cR{{\mathcal{R}}}
\def\cS{{\mathcal{S}}}
\def\cT{{\mathcal{T}}}
\def\cU{{\mathcal{U}}}
\def\cV{{\mathcal{V}}}
\def\cZ{{\mathcal{Z}}}
\def\ra{{\rightarrow}}
\def\lra{{\longrightarrow}}
\def\ft{{\mathfrak t}}\def\fs{{\mathfrak s}}
\def\fg{{\mathfrak g}}\def\fp{{\mathfrak p}}\def\fb{{\mathfrak b}}
\def\fso{\mathfrak{so}}
\def\fsp{\mathfrak{sp}}
\def\fsl{\mathfrak{sl}}\def\fgl{\mathfrak{gl}}

\let\Iff\iff
\def\iff{\Leftrightarrow}

\begin{abstract} For $S$ a very general polarized $K3$ surface of degree $8n-6$, we describe in geometrical terms a birational involution of the Hilbert scheme $S^{[n]}$ of $n$ points on the surface, whose existence was established in \cite{beri} from lattice theoretical considerations. In \cite{invol1} we  studied this involution for $n=3$ with the help of the exceptional Lie group $G_2$, since the Mukai model of $S$ is embedded in its projectivized Lie algebra. Here we use different, more general arguments to show that some important features of the birational involution persist for $n\ge 4$. In particular we describe the indeterminacy locus of the involution in terms of a Mori contraction, and deduce that it is birational to a $\PP^2$-fibration on a moduli space of sheaves $\Sigma$ on $S$, that also admits a degree two nef and big line bundle and an induced birational involution.
\end{abstract}

\maketitle

\section{Introduction}

In 1983, Beauville described explicitly a birational involution of the punctual Hilbert scheme $S^{[n]}$, for $S$ a K3 surface whose Néron-Severi group $\NS(S)=\mathbb{Z}\cH$ is generated by the class of an ample divisor $\cH$ of self-intersection $\cH^2=2n$ \cite[section 6]{beauville-remarks}.
Later on, under similar hypothesis,
O'Grady described an involution on $S^{[2]}$ when $\cH^2=10$ \cite[section 4.3]{ogrady}, and recently 
the authors of the present paper described an involution on $S^{[3]}$ when $\cH^2=18$ \cite{invol1}. 
Some other cases have been studied, but we focus on the latter ones because they
belong to a sequence. 
Indeed the following result is  proved in \cite{beri}: for any $n\geq 2$ and $k\ge 1$,  and any 
K3 surface $S$ whose Néron-Severi group is generated by a class of self-intersection $2t$,
with $t=(n-1)k^2+1$, the group of birational automorphisms of $S^{[n]}$
is generated by a non-trivial birational involution.

After Beauville's work from  1983, only a limited number of geometric descriptions for automorphisms of Hilbert schemes of points on very general algebraic K3 surfaces have been
found (see \cite{debarre} for a survey). Here we describe a new infinite family of such involutions.
Beauville's involution, as it has been called, appears when  $k=1$ and so $t=n$; here we take care of the cases where  $k=2$ and $t=4n-3$. Lastly
 these constructions were extended in \cite{faenzi-menet} to moduli spaces of sheaves on K3 surfaces admitting special spherical objects. 

The starting point of our study is the existence of a rank two Mukai bundle $\cU_2^\vee$ on $S$. It embeds the surface into the Grassmannian $G(2,V_{2n+1})$ of planes in a complex vector space of dimension $2n+1$, and identifies with the restriction of the dual tautological bundle. In particular, the zero locus of any section of $\cU_2^\vee$ is of the form $G(2,V_{2n})\cap S$ for some hyperplane $V_{2n}\subset V_{2n+1}$. Via the embedding of $S$ into the Grassmannian, any finite, length $n$ subscheme $Z$ of $S$ generates a linear space in $V_{2n+1}$. A simple observation allows to describe the birational involution $\varphi$ of $S^{[n]}$ in very concrete terms.

\begin{prop}\label{intro-involution}
For a generic $Z\in S^{[n]}$, there exists a unique (up to nonzero scalar) section $s$ of $\cU_2^\vee$ 
vanishing along $Z$. The zero-locus $Z(s)$ is finite of length $2n$, and 
$$Z(s)=Z\cup \varphi(Z).$$
Equivalently, a generic $Z\in S^{[n]}$ generates a hyperplane $V_{2n}\subset V_{2n+1}$, and $\varphi(Z)$ 
is the residual scheme of $Z$ in $G(2,V_{2n})\cap S$.
\end{prop}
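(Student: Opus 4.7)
The plan is to combine a Riemann--Roch computation of $c_2(\cU_2^\vee)$ with Mukai's identification of $H^0(\cU_2^\vee)$ with $V_{2n+1}^\vee$, then deduce both descriptions by a dimension count. First I would compute Chern classes: since $\cU_2^\vee$ is the restriction of the dual tautological rank-$2$ bundle on $G(2,V_{2n+1})$, one has $c_1(\cU_2^\vee) = \cH$ and hence $c_1^2 = 8n-6$. Taking for granted from Mukai's construction that $h^0(\cU_2^\vee) = 2n+1$ with vanishing higher cohomology, Riemann--Roch on the K3 surface $S$ gives
$$2n+1 \;=\; \chi(\cU_2^\vee) \;=\; \tfrac{1}{2}c_1^2 - c_2 + 4 \;=\; 4n+1 - c_2,$$
whence $c_2(\cU_2^\vee) = 2n$.

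Next I would address existence and uniqueness of $s$. Vanishing of a section at a reduced point imposes $2$ linear conditions on sections of the rank-$2$ bundle; since $\cU_2^\vee$ is globally generated (its sections define the Mukai embedding of $S$ into $G(2,V_{2n+1})$), these conditions are independent on a dense open locus of $S^{[n]}$. For $Z$ in this locus the evaluation map $H^0(\cU_2^\vee) \to H^0(\cU_2^\vee|_Z)$ has maximal rank $2n$, so its kernel is one-dimensional and yields the unique (up to scalar) section $s$ vanishing along $Z$. Its zero scheme $Z(s)$ has length $c_2(\cU_2^\vee) = 2n$ and contains $Z$, so the residual $\varphi(Z) := Z(s) \setminus Z$ lies in $S^{[n]}$; the involutive nature of the construction is immediate from the symmetric role of $Z$ and $\varphi(Z)$ in $Z(s)$.

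For the equivalent hyperplane reformulation, I would invoke Mukai's identification $H^0(\cU_2^\vee) \cong V_{2n+1}^\vee$: a section $s$ corresponds to a linear form $\ell$ whose kernel is a hyperplane $V_{2n} \subset V_{2n+1}$, and the zero locus of the associated section on $G(2,V_{2n+1})$ is exactly $G(2,V_{2n})$. So $s$ vanishes at $z \in S$ iff the $2$-plane $\Pi_z$ parametrized by $z$ lies in $V_{2n}$, hence $s$ vanishes on $Z$ iff the linear span $\sum_{z \in Z} \Pi_z$ is contained in $V_{2n}$. For a generic $Z$ this span has dimension exactly $2n$ (since $S$ is non-degenerate in $G(2,V_{2n+1})$), uniquely determining $V_{2n}$ and giving $Z(s) = S \cap G(2,V_{2n})$, which matches the scheme constructed from the section-theoretic picture.

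The hard part will be the genericity claims used in the second paragraph: that for a generic $Z$ the evaluation map is surjective, and that $Z(s)$ is finite of length exactly $2n$ with $Z$ and $\varphi(Z)$ disjoint and without positive-dimensional excess components. I expect this to come from a Bertini-type argument on the incidence variety of pairs $(Z,s)$ with $s$ vanishing on $Z$, the essential input being global generation of $\cU_2^\vee$ together with the exhibition of at least one explicit $Z$ where all the required transversality conditions can be verified directly.
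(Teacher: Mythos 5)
Your overall architecture matches the paper's (compute $c_2(\cU_2^\vee)=2n$, show the evaluation map $H^0(\cU_2^\vee)\to H^0(\cU_2^\vee|_Z)$ is surjective for generic $Z$ so the kernel is a line, and translate sections into hyperplanes $V_{2n}\subset V_{2n+1}$), and the Riemann--Roch computation is correct. But the two claims you defer to ``the hard part'' --- that $Z(s)$ is zero-dimensional and that a generic $Z$ imposes independent conditions --- are the actual mathematical content, and the strategy you propose for them does not close. Global generation of $\cU_2^\vee$ gives independence of the two conditions imposed by \emph{one} point; it does not give independence for $n$ points, and the very-ampleness results available here only cover subschemes of length roughly $2n/3$. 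A Bertini/dimension count on the incidence variety $\{(Z,[s]): s|_Z=0\}$ also fails on its own: a priori there could be a component lying over the locus of sections vanishing along a curve $C$ (whose fibers $C^{[n]}$ are $n$-dimensional), and such a component could dominate $S^{[n]}$, destroying uniqueness of $s$. Finally, ``exhibiting one explicit $Z$'' is not viable: $S$ is a very general K3 of degree $8n-6$ with no explicit projective model for large $n$, so there is nothing to compute on.

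The missing ingredient is the \emph{stability} of the Mukai bundle, which you never invoke and which is what the paper uses at every step. Stability (with $\NS(S)=\ZZ\cH$) shows a nonzero section cannot vanish along a curve, since it would embed $\cO_S(C)$ into $\cU_2^\vee$ with destabilizing slope; hence \emph{every} $Z(s)$ is finite of length $2n$. The Koszul complex of $s$ twisted by $\cU_2^\vee$, together with $H^1(\cU_2)=0$ and $H^0(\mathit{End}(\cU_2))=\CC$ (stability again), gives $h^0(\cI_{Z(s)}\otimes\cU_2^\vee)=1$, i.e.\ $s$ is the unique section vanishing on its whole zero locus. Independence of conditions for $n$ general points then follows by an inductive argument of the same flavour: if every point of $S$ imposed a dependent condition on $H^0(\cI_{\{p_1,\dots,p_k\}}\otimes\cU_2^\vee)$ with $h^0\ge 3$, any two of its sections would be pointwise proportional, hence proportional by stability, a contradiction. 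Without these stability inputs your proposal has a genuine gap at exactly the point you flag.
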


This construction is clearly similar to Beauville's one, for which the image of a generic length $n$ subscheme $Z$ of $S\subset \PP^{n+1}$ is the residual scheme in the intersection of $S$ with the hyperplane spanned by $Z$ in $\PP^{n+1}$.
\medskip

Using Bayer and Macri's approach to wall-crossing \cite{bayermacri}, we prove the following result, for a very general polarized K3 surface $S$ of degree $8n-6$ (see Theorem \ref{thm I=J} and Corollary \ref{structure I}):

\begin{theorem}\label{intro-indeterminacy} The indeterminacy locus $I$ of $\varphi$ coincides with the locus $J$ of schemes $Z\in S^{[n]}$ that do not generate 
a hyperplane in $V_{2n+1}$. Moreover, $I=J$ is birational to a 
$\PP^2$-fibration over a moduli space $\Sigma$ of sheaves on $S$. 
\end{theorem}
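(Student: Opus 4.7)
The equality $I=J$ splits into two inclusions. The containment $J\subseteq I$ is essentially built into Proposition \ref{intro-involution}: if $Z$ does not span a hyperplane, then $h^0(\cI_Z\otimes\cU_2^\vee)\geq 2$, so there is a pencil (or larger family) of sections of $\cU_2^\vee$ vanishing on $Z$, and distinct sections give distinct residual schemes, destroying the single-valuedness of $\varphi$. For the reverse inclusion $I\subseteq J$, I would show that the assignment $Z\mapsto \varphi(Z)$ extends to a morphism on the open set $S^{[n]}\setminus J$; this can be done by working in a relative setting over $S^{[n]}\setminus J$, where by semicontinuity the sheaf $\pi_*(\cI_\cZ\otimes p^*\cU_2^\vee)$ on $S^{[n]}\setminus J$ is a line bundle, yielding a well-defined universal section whose residual zero-locus defines a morphism to $S^{[n]}$ by the universal property of the Hilbert scheme.

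The second assertion is proved through Bayer--Macr\`\i\ wall-crossing. First, compute the Mukai vector $v=(1,0,1-n)$ of $\cI_Z$ and identify the Mukai vector $m$ of the spherical bundle $\cU_2$ (this uses that $\cU_2^\vee$ has $h^0=2n+1$, $c_2=2n$, and rigidity). The involution $\varphi$ arises as a wall-crossing in $\Stab(S)$; one identifies the relevant wall $\cW$ by verifying that the shifted Mukai bundle $\cU_2^\vee[k]$ (for the appropriate $k$) becomes strictly semistable with respect to $v$ along $\cW$, and that the induced birational map between the adjacent moduli spaces realizes $\varphi$ under Bayer--Macr\`\i's identification of $S^{[n]}$ with a Bridgeland moduli space $M_\sigma(v)$.

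With the wall identified, the indeterminacy locus of $\varphi$ coincides with the locus of strictly $\sigma_0$-semistable objects on $\cW$. Concretely, a generic member of this locus fits into a short exact sequence
\[
0\ra F\lra \cU_2^\vee\otimes V_3\lra \cI_Z\ra 0,
\]
where $V_3$ is three-dimensional and $F$ is a stable sheaf with Mukai vector $w=3m-v$. The Bridgeland moduli space of such $F$ is the variety $\Sigma$. The fiber over a fixed $F\in\Sigma$ is the projectivization $\PP(\mathrm{Hom}(F,\cU_2^\vee))=\PP^2$: each surjection modulo scalars yields the ideal of a distinct length-$n$ subscheme $Z$ in $J$. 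Finally, to identify the fibers with the projective spaces $\PP H^0(\cI_Z\otimes\cU_2^\vee)^\vee$, one uses the exact sequence above.

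\textbf{Main obstacle.} The delicate point is to identify $\Sigma$ precisely and to verify both the dimension count $h^0(\cI_Z\otimes \cU_2^\vee)=3$ for generic $Z\in J$ and the stability of the complementary sheaf $F$ with Mukai vector $w$. This requires a careful wall-and-chamber analysis: showing that no other wall interferes with $\cW$ in the relevant chamber, computing the dimension of $\Sigma$ and verifying it is a hyperk\"ahler manifold of the expected dimension, and excluding pathological components of $J$ on which the $\PP^2$-fibration structure would degenerate. The passage from the abstract Bridgeland picture to the geometric description in terms of sections of $\cU_2^\vee$ relies on showing that the universal destabilizing subobject on the wall admits an explicit description in terms of the Mukai bundle.
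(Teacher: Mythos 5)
Your wall-crossing half contains a genuine error: the destabilizing sequence $0\to F\to \cU_2^\vee\otimes V_3\to\cI_Z\to 0$ cannot exist. The bundle $\cU_2^\vee$ is $\mu_{\cH}$-stable of positive slope, while $\cI_Z$ is torsion-free of slope $0$, and a $\mu$-semistable sheaf admits no torsion-free quotient of strictly smaller slope. The numerology confirms this: your $w'=3\,v(\cU_2^\vee)-v=(5,3\cH,7n-4)$ has $(w')^2=2n-14$, so your $\Sigma$ would have dimension $2n-12$ and the $\PP^2$-fibration over it dimension $2n-10$, whereas $I$ has codimension two in $S^{[n]}$ (Proposition \ref{surJ}), i.e.\ dimension $2n-2$. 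Relatedly, $h^0(\cI_Z\otimes\cU_2^\vee)=2$, not $3$, for generic $Z\in J$: for $Z$ spanning a codimension-two subspace the evaluation $V_{2n+1}^\vee\to H^0(\cU_2^\vee|_Z)$ has corank one. The correct sequence goes the other way: a generic $Z\in I$ sits in $0\to\cU_2\to\cE\to\cI_Z\to 0$ with $\cE$ an $\cH$-stable sheaf of Mukai vector $w=v+v(\cU_2)=(3,-\cH,n)$, $w^2=2n-6$; then $\Sigma=M_{\cH}(w)$ has dimension $2n-4$, and the fiber of $I\to\Sigma$ over generic $\cE$ is $\PP(\mathrm{Hom}(\cU_2,\cE))\simeq\PP^2$ because $\hom(\cU_2,\cE)=\chi(\cU_2,\cE)=3$ there. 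The "$3$" you were after is $\hom(\cU_2,\cE)$, not $h^0(\cI_Z\otimes\cU_2^\vee)$; and the fiber cannot be "identified with $\PP H^0(\cI_Z\otimes\cU_2^\vee)^\vee$" in any case, since $Z$ varies along it. The substantive work you correctly flag as the "main obstacle" (no interfering walls, stability of the complementary sheaf, translation of subobjects in the tilted heart into genuine sheaf inclusions $\cU_2\hookrightarrow\cE$) is exactly what Lemmas \ref{no-positive}, \ref{no-spherical}, \ref{de JOfiltr a JOmaps} and \ref{de JOfiltr a JOmaps second part} accomplish, but starting from the wrong lattice vector none of it can be carried out.

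For the equality $I=J$ your route is genuinely different from the paper's and would need some supplements. The paper realizes $\varphi$ as the flop through the contraction $c$ associated to the wall $\cH_n-2\delta$ (this uses the standing hypothesis $C_n=1$), so that $I$ is the locus where the Jordan--H\"older filtration of $\cI_Z$ is nontrivial, i.e.\ $\mathrm{ext}^1(\cI_Z,\cU_2)>0$; combined with $\chi(\cI_Z\otimes\cU_2^\vee)=1$ this is exactly the Brill--Noether condition $h^0(\cI_Z\otimes\cU_2^\vee)\ge 2$ defining $J$ (Lemmas \ref{J-BrillNoether} and \ref{aumoins3 debut}). Your more elementary approach — extending the residual construction to a morphism on $S^{[n]}\setminus J$ via the relative zero locus and liaison, and exhibiting two distinct limit values of $\varphi$ at points of $J$ — avoids the derived-category machinery, but you must (i) justify that the residual of $\cZ$ inside the l.c.i.\ family $Z(s)$ is flat of length $n$ (Gorenstein liaison), (ii) check that two non-proportional sections vanishing on $Z$ give distinct residuals both arising as limits of $\varphi(Z_t)$ (the first point follows from Proposition \ref{prop: deg small grassmannian}), and (iii) know that the residual construction computes $\varphi$ where defined, which is Proposition \ref{intro-involution}.
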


This moduli space of sheaves $\Sigma$ also admits a nef and big line bundle $\cL$ whose Beauville-Bogomolov degree is equal to $2$, and inherits a birational involution $\varphi_\Sigma$ that belongs to the class of involutions uncovered in \cite{faenzi-menet} (Proposition \ref{descr varphiSigma}). 

We prove several refinements of the previous statement; in particular there exists a natural stratification of $I=J$ by the dimension of the span in $V_{2n+1}$, and this stratification if well-behaved. In particular, each stratum is birational to a fibration over an auxiliary moduli space, and the dimension of the fiber in which a subscheme lies only depends on the dimension of its linear  span in $V_{2n+1}$
(Theorem \ref{descr I}). The construction of this stratification follows from a good understanding of the possible (derived) Jordan-H\"older filtrations of the ideal sheaves of finite subschemes in $S^{[n]}$, considered as a moduli space of semistable objects in the derived category of $S$.

We also describe $\varphi$ in terms of the "other" boundary of the nef cone of $S^{[n]}$, generated by $\cH_n-2\delta$ (the obvious  
boundary is generated by $\cH_n$, the line bundle induced by the polarization $\cH$ of $S$, and the corresponding contraction is the Hilbert-Chow morphism; as usual $2\delta$ denotes the class of the divisor of non-reduced schemes). 
Up to constant, the class of  $\cH_n-2\delta$ is the only one that is fixed by the action of $\varphi$ in cohomology. The associated linear system plays an important r\^ole in the geometry of the involution.

\begin{theorem}\label{intro-basepointfree}
The divisor $\cH_n-2\delta$ is base-point free when it is  nef and big. The morphism it defines is generically finite of degree two onto its image, and $\varphi$ is the corresponding covering
involution. 
\end{theorem}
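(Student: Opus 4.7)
The plan is to identify the morphism defined by $|\cH_n-2\delta|$ with the natural map $\Psi\colon Z\mapsto \langle Z\rangle$ taking values in the projective space of hyperplanes of $V_{2n+1}$, and to combine this explicit description with Bayer-Macri wall-crossing theory (as in Theorem \ref{intro-indeterminacy}) to establish base-point freeness and the degree claim.

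The key identification is $\cH_n-2\delta=\det((\cU_2^\vee)^{[n]})$, which follows from the standard formula $c_1(\cF^{[n]})=(c_1\cF)_n-(\operatorname{rk}\cF)\delta$ applied to the rank $2$ bundle $\cU_2^\vee$. The evaluation morphism
\[
\mathrm{ev}\colon H^0(S,\cU_2^\vee)\otimes \cO_{S^{[n]}}\longrightarrow (\cU_2^\vee)^{[n]},
\]
between vector bundles of ranks $2n+1$ and $2n$, combined with $H^0(S,\cU_2^\vee)\cong V_{2n+1}^\vee$ coming from the embedding $S\subset G(2,V_{2n+1})$, yields upon taking $\wedge^{2n}$ a natural inclusion $V_{2n+1}\hookrightarrow H^0(S^{[n]},\cH_n-2\delta)$. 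The associated sublinear system realises the rational map $\Psi\colon S^{[n]}\dashrightarrow \PP(V_{2n+1}^\vee)$, $Z\mapsto \langle Z\rangle$, and Proposition \ref{intro-involution} immediately gives $\Psi\circ\varphi=\Psi$, so $\Psi$ is generically at most $2$-to-$1$ with $\varphi$ as covering involution.

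For base-point freeness, one first observes $q(\cH_n-2\delta)=q(\cH_n)+4q(\delta)=(8n-6)-8(n-1)=2>0$, so $\cH_n-2\delta$ is big whenever nef. Viewing $S^{[n]}$ as a moduli space of Bridgeland-stable objects on $S$ and invoking Bayer-Macri, the class lies on the boundary of the nef cone and is semi-ample; the wall-crossing analysis from Theorem \ref{intro-indeterminacy} identifies the generic $S$-equivalence classes along the associated wall as pairs $\{I_Z,I_{\varphi(Z)}\}$, so the resulting contraction is generically $2$-to-$1$ with $\varphi$ as covering involution. Hirzebruch-Riemann-Roch on the $K3^{[n]}$-type manifold, together with Kawamata-Viehweg vanishing, yields $h^0(S^{[n]},\cH_n-2\delta)=\binom{n+2}{2}$, so the morphism lands in $\PP^{\binom{n+2}{2}-1}$ and $\Psi$ is its composition with a linear projection onto $\PP(V_{2n+1}^\vee)=\PP^{2n}$.

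The main obstacle is upgrading Bayer-Macri's semi-ampleness to base-point freeness of $\cH_n-2\delta$ itself: the sublinear system $V_{2n+1}$ has base locus exactly the indeterminacy locus $I=J$, so the $\binom{n}{2}$-dimensional complement of $V_{2n+1}$ in $H^0(S^{[n]},\cH_n-2\delta)$ must contribute enough sections to resolve those base points. This requires a careful wall-crossing analysis in the spirit of Theorem \ref{descr I}, together with the verification that the Bayer-Macri contraction has generic fiber of size exactly $2$, so that the degree of the morphism is precisely $2$ and not larger.
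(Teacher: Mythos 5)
There are two genuine gaps, one in the degree computation and one in base-point freeness.

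\textbf{The degree.} Your two routes to ``generically $2$-to-$1$'' both fail. First, the Pfaffian map $\Psi=\phi_{V_{2n+1}}$ sending $Z$ to the hyperplane $\langle Z\rangle$ (equivalently to the unique section $s$ with $Z\subset Z(s)$) has generic fiber the set of \emph{all} length-$n$ subschemes of the $2n$ points of $Z(s)$, so it has degree $\binom{2n}{n}$; the relation $\Psi\circ\varphi=\Psi$ only shows the fibers are unions of $\varphi$-orbits, i.e.\ it gives a \emph{lower} bound of $2$ on the degree, not an upper bound. Second, the wall-crossing contraction $c:S^{[n]}\to N$ is a flopping contraction: a generic $\cI_Z$ is $\sigma_0$-stable, its S-equivalence class is a singleton, and $\cI_Z$, $\cI_{\varphi(Z)}$ are non-isomorphic stable objects mapping to \emph{distinct} points of $N=M_{\sigma_0}(v)$. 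The identification of $Z$ with $\varphi(Z)$ happens only in the finite part $\nu$ of the Stein factorization $\phi_{\cH_n-2\delta}=\nu\circ c$, which wall-crossing does not see. What one actually gets cheaply is that the degree $d$ of $\phi_{\cH_n-2\delta}$ is $\ge 2$ and divides $\binom{2n}{n}$ (since $\phi_{\cH_n-2\delta}$ refines $\Psi$ and factors through $\varphi$, the latter by deforming to a model where the class is ample, \cite[Corollary 4.7]{debarre}). Pinning down $d=2$ is the real work: the paper first proves that the monodromy group of $Z(s)$ is the full symmetric group $S_{2n}$ (Proposition \ref{monodromy}, via irreducibility of incidence varieties and $k$-very ampleness of $\cU_2^\vee$), then a combinatorial argument shows that any intermediate degree would force $d=\binom{2n}{n}$, and finally a projective-geometric argument using the distribution of disjoint $\PP^{2n-2}$'s $\langle Z(s)\rangle$ covering $L$ and the ramification divisor of $\eta:G(2,H^0(\cU_2^\vee))\to\PP H^0(\cH)$ excludes $d=\binom{2n}{n}$ (Proposition \ref{card fiber}). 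None of this is present in your sketch.

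\textbf{Base-point freeness.} You correctly isolate the obstacle --- upgrading semi-ampleness of $\cH_n-2\delta$ (base-point freeness of $k(\cH_n-2\delta)$ for $k\gg0$) to base-point freeness of the class itself --- but you do not close it, and the route you gesture at (showing the complement of $V_{2n+1}$ in $H^0(S^{[n]},\cH_n-2\delta)$ resolves the base locus $I=J$ of the Pfaffian subsystem) is not carried out. The paper's actual argument is entirely different and is a specialization argument: base-point freeness is an open condition in families of hyperK\"ahler manifolds; for $T$ a degree-$2$ K3 surface (double cover of $\PP^2$) the induced class $D_n$ on $T^{[n]}$ is base-point free by an elementary symmetric-product argument (Lemma \ref{bpf sur Tn}); a lattice computation with Eichler transvections places the period of $(T^{[n]},D_n)$ in the divisor $\cD_\kappa$ in which the periods of the pairs $(S^{[n]},\cH_n-2\delta)$ are dense (Lemma \ref{period of Sn}); and the Torelli theorem together with the hypothesis $C_n=1$ (needed to promote ``birational'' to ``isomorphic'' via Lemma \ref{unique model}) transfers base-point freeness to $S^{[n]}$. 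Without some such argument your proof of the first assertion of the theorem is incomplete.
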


This is Theorem \ref{bpf for nC=1} and Corollary \ref{phi is covering inv}. The condition that 
 $\cH_n-2\delta$  is  nef and big is expected to hold in full generality; we have checked it by computer for $n\le 200$ (Proposition \ref{nC=1}).

 \medskip
These results were proved in \cite{invol1} for $n=3$ by making heavy use of the geometry of $G_2$. Here we provide different, more general arguments; although we obtain less precise results, they are less specific and more relevant from the prespective of the general theory of sheaves on $K3$ surfaces and stability conditions in derived categories, whose full force we don't refrain from using. 

The moduli space $\Sigma$ over which the indeterminacy locus fibers birationally, admits a natural map 
$$\Sigma\lra \overline{\Sigma}\subset \PP^{\frac{(n-2)(n+1)}{2}},$$ 
whose image $\overline{\Sigma}$ we call the Pl\"ucker variety.
For $n=4$, $\Sigma$ is a double EPW sextic and we recover a familiar set-up. The cases where $n\ge 5$ would certainly deserve further investigations, as they 
might lead to  new projective models of hyperK\"ahler manifolds, that we plan to investigate. 

\medskip
The structure of the paper is the following. In the next section we discuss the geometry of zero-loci of global sections of the Mukai bundle on $S$, including the geometric description of the birational involution  $\varphi$ which is the content of Proposition \ref{intro-involution}. Then we start in section 3  our study of the linear system $\cH_n-2\delta$, proving that the associated morphism is generically finite of degree two, the corresponding covering involution being $\varphi$. Section 4 discusses the walls and chambers decomposition of the nef and movable cones of $S^{[n]}$. The proof of the first part of Theorem \ref{intro-basepointfree} is the object of section 5. The much longer and more technical section 6 focuses on the wall-crossing interpretation of $\varphi$ and the corresponding flopping contraction, which is amenable to a quite precise description in terms of moduli spaces of objects in the derived category of $S$; we deduce Theorem \ref{intro-indeterminacy} and its refinements. In particular the base of the flopping contraction is birational to the moduli space $\Sigma$, endowed with a nef and big class $\cL$ of Beauville-Bogomolov degree $2$. It follows from works of Oguiso that for certain values of $n$, $\Sigma$ has an  infinite 
group of birational transformations; we show that the special one induced by $\varphi$ is 
precisely one of the birational involutions recently described in \cite{faenzi-menet}. We conclude the paper by showing that the morphism defined by $\cL$ maps $\Sigma$ to the Pl\"ucker variety 
$\overline{\Sigma}$, whose geometry would certainly deserve to be investigated further. A brief Appendix contains the computer program that was used to prove Proposition \ref{nC=1}.
 
\medskip\noindent {\it Acknowledgements.} We warmly thank D. Faenzi, G. Kapustka, E. Macri,  G. Mongardi, F. Giovenzana, K. O'Grady for their comments and hints. 

P. Beri is  supported by the ANR project “Positivity on K-trivial varieties", grant ANR-23-CE40-0026, and 
L. Manivel by the ANR project "FanoHK", grant
ANR-20-CE40-0023.

\medskip 

\section{The involution}

\subsection{Existence of the birational involution}

In \cite{beri} were completely characterized the pairs $(n,t)$ for which $S^{[n]}$ admits 
a non-trivial birational automorphism, $S$ being a K3 surface whose N\'eron-Severi group is generated by a class $\cH$ of self-intersection $2t$. This characterization is stated in terms of the Pell equation 
$$X^2-t(n-1)Y^2=1.$$
For $t=4n-3$, the minimal positive solution of this equation is $(z,w)=(8n-7,4)$. Applying 
 \cite[Theorem 1.1]{beri}, we deduce: 

 \begin{prop}\label{action}
$S^{[n]}$ admits a unique non-trivial birational automorphism $\varphi$. This is 
a non-symplectic involution,  acting 
by $\varphi_*= 
-R_{\cH_n-2\delta}$ on $H^2(S^{[n]},\ZZ)$.
 \end{prop}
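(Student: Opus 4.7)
The plan is to apply \cite[Theorem 1.1]{beri}, which classifies the birational automorphism group of $S^{[n]}$ in the N\'eron--Severi rank one case through the Pell equation $X^2 - t(n-1) Y^2 = 1$. The work splits into verifying the claimed minimal positive solution and translating the theorem's output into the explicit reflection formula.

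For the Pell verification, a direct expansion gives
$$(8n-7)^2 - 16(4n-3)(n-1) = (64n^2 - 112n + 49) - (64n^2 - 112n + 48) = 1,$$
so $(z, w) = (8n-7, 4)$ is a positive solution. Minimality amounts to showing that $1 + Y^2(4n-3)(n-1)$ is not a perfect square for $Y \in \{1, 2, 3\}$: in each case it lies strictly between two consecutive integer squares as soon as $n \ge 2$. For example, when $Y = 1$ one has $(2n-2)^2 < 4n^2 - 7n + 4 < (2n-1)^2$ for $n \ge 2$, and the cases $Y = 2, 3$ are entirely analogous.

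With the minimality of $(8n-7, 4)$ in place, \cite[Theorem 1.1]{beri} directly produces a unique non-trivial birational automorphism $\varphi$ of $S^{[n]}$, a non-symplectic involution, together with its action on $H^2(S^{[n]}, \ZZ)$. To match this action with $-R_{\cH_n - 2\delta}$, I would first compute the Beauville--Bogomolov square $(\cH_n - 2\delta)^2 = 2(4n-3) - 8(n-1) = 2$, so that $-R_{\cH_n - 2\delta}$ fixes $\cH_n - 2\delta$ and acts as $-\id$ on its orthogonal complement. Being non-symplectic, $\varphi_*$ is $-\id$ on the transcendental lattice; it thus suffices to identify the primitive square-$2$ class in $\NS(S^{[n]}) = \ZZ\cH_n \oplus \ZZ\delta$ fixed by $\varphi_*$. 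A class $a\cH_n + b\delta$ has BB square $2\bigl(a^2(4n-3) - b^2(n-1)\bigr)$; setting this equal to $2$ gives the equation $a^2(4n-3) - b^2(n-1) = 1$, whose smallest primitive solution is $(a, b) = (1, \pm 2)$. The sign is then pinned down by the orientation of $\cH_n - 2\delta$ relative to the movable cone, compatibly with the description of $\cH_n - 2\delta$ as a nef boundary ray used later in the paper.

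The main obstacle is the final identification step: extracting from \cite[Theorem 1.1]{beri} the specific square-$2$ class carried by the reflection, and reconciling sign conventions so that it matches $\cH_n - 2\delta$ rather than $\cH_n + 2\delta$. All other pieces of the statement --- existence, uniqueness, involutive and non-symplectic character --- follow directly from the cited theorem once the Pell minimality has been verified.
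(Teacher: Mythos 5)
Your overall route is the same as the paper's: verify that $(8n-7,4)$ is the minimal positive solution of $X^2-t(n-1)Y^2=1$ for $t=4n-3$ and then invoke \cite[Theorem 1.1]{beri} for existence, uniqueness, and the non-symplectic involutive character. Your Pell computation and the minimality check for $Y\in\{1,2,3\}$ are correct (and more detailed than the paper, which simply states the minimal solution).

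The gap is in your identification of the reflection class, which you yourself flag as ``the main obstacle'' but then underestimate as a sign issue. Two steps are unjustified. First, you assume without argument that the $\varphi_*$-invariant part of $\NS(S^{[n]})$ is generated by a primitive class of Beauville--Bogomolov square $2$; a priori $\varphi_*|_{\NS}$ could be $\pm\id$, or the invariant generator could have a different square (ruling these out needs the integrality/divisibility constraints on $-R_D$ in the $K3^{[n]}$ lattice, or the effectivity of $\delta$, neither of which you invoke). Second, and more seriously, the equation $a^2(4n-3)-b^2(n-1)=1$ is itself a Pell-type equation with \emph{infinitely} many primitive solutions, so there are infinitely many primitive square-$2$ classes in $\NS(S^{[n]})$; nothing in your argument explains why $\varphi_*$ fixes the one coming from the \emph{smallest} solution $(1,\pm2)$ rather than a larger one. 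The paper sidesteps both issues by citing \cite[Proposition 2.1]{beri}, which states directly that $\varphi_*=-R_D$ on the N\'eron--Severi group for $D=b\cH_n-a\delta$ with $(a,b)$ the minimal solution of $(n-1)X^2-tY^2=-1$ (here $(2,1)$, giving $D=\cH_n-2\delta$), and handles the transcendental lattice by the proof of \cite[Theorem 1.1 (i)]{beri}. Replacing your ad hoc identification by that citation closes the gap; as written, the identification step does not go through.
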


 Here as usual we denoted by $\cH_n$ the divisor on $S^{[n]}$ induced by $\cH$, and by $2\delta$ the class of the divisor $E$ parametrizing non-reduced schemes.  
 Moreover, for $D$ non-isotropic, $R_D$ denotes the reflection with 
 respect to $D$, which sends  $D$ to $-D$ and fixes the orthogonal hyperplane. 
 By \cite[Proposition 2.1]{beri}, $\varphi_*= -R_D$ for $D=b\cH_n-a\delta$, $(a,b)$ being the minimal solution of the Pell equation $(n-1)X^2-tY^2=-1$, which in our case is $(2,1)$.
 Actually this  is stated  only for the action on the Neron-Severi group, but the action on the transcendental lattice is by minus the identity,  see \cite[Proof of Theorem 1.1 (i)]{beri}.

\subsection{The Mukai bundle}
The constructions of \cite{ogrady} and \cite{invol1}, corresponding respectively to $n=2$ and $3$, rely on a very explicit
projective model of the K3 surface, namely a \textit{Mukai model}. But such a description 
is not available  for $t\geq 19$, and no explicit geometric model is in fact 
expected for $t>61$ \cite{ghs}. Instead we use the following result \cite[Theorem 3]{mukai}. 

\begin{theorem}
Let $S$ be a K3 surface whose N\'eron-Severi group is generated by a class $\cH$ of self-intersection $2t$.
For any two integers $r,s\ge 2$ such that $rs=t+1$, there exists a unique stable vector bundle 
$\cE$ of rank $r$ on $S$ with first Chern class $\cH$ and such that $\chi(\cE)=r+s$. Moreover,
\begin{enumerate}
    \item $\cE$ is globally generated and has no higher cohomology,
    \item the natural map $\lambda:\bigwedge^r H^0(S,\cE)\to H^0(S,\bigwedge^r\cE)$ is surjective.
\end{enumerate}
\end{theorem}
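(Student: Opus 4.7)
The plan is to reinterpret the statement as a question about moduli of stable sheaves on $S$, and then extract the cohomological conclusions from sphericality and stability. First I compute the Mukai vector: Riemann--Roch on the K3 surface gives $\chi(\cE)=2r+\cH^2/2-c_2(\cE)=2r+t-c_2(\cE)$, so the normalisation $\chi(\cE)=r+s$ combined with $rs=t+1$ forces $v(\cE)=(r,\cH,s)$, and a direct computation yields $\langle v,v\rangle=\cH^2-2rs=2t-2(t+1)=-2$. Hence $v$ is a primitive spherical class (primitive because $\cH$ is). For a polarised K3 surface with $\NS(S)=\ZZ\cH$, a primitive spherical Mukai vector is represented by a unique stable sheaf up to isomorphism: existence follows from the non-emptiness of moduli of stable sheaves whenever $\langle v,v\rangle\geq -2$, and uniqueness from the fact that the moduli space is a reduced point of expected dimension $\langle v,v\rangle+2=0$. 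The sheaf $\cE$ is locally free, since otherwise the inclusion $\cE\hookrightarrow\cE^{**}$ would be proper and produce a non-zero element of $\Ext^1(\cE,\cE)$, contradicting sphericality.

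Next I would address the vanishings in (1). Serre duality on the K3 gives $H^2(S,\cE)=\operatorname{Hom}(\cE,\cO_S)^\vee$, which vanishes because a non-zero map $\cE\to\cO_S$ would destabilise $\cE$ (whose slope is positive). The vanishing of $H^1(S,\cE)$ is more delicate: my approach would combine Riemann--Roch (which gives $h^0(\cE)-h^1(\cE)=r+s$) with the upper bound $h^0(\cE)\leq r+s$ obtained as follows. Choose a generic $r$-dimensional subspace $W\subset H^0(S,\cE)$ and consider the evaluation $W\otimes\cO_S\to\cE$; $\mu$-stability forces it to be injective as a map of sheaves and constrains its cokernel through the determinantal divisor in class $\cH$, yielding the desired bound. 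Global generation then follows by the standard stability argument: if the evaluation failed to be surjective at some point $x$, the image of $H^0(S,\cE)\otimes\cO_S\to\cE$ would be a proper subsheaf with the same first Chern class as $\cE$, violating $\mu$-stability.

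The surjectivity of $\lambda$ in (2) is the main obstacle. Once evaluation is known to be surjective, we obtain a short exact sequence $0\to K\to H^0(S,\cE)\otimes\cO_S\to\cE\to 0$, whose kernel $K$ has rank $s$, determinant $\cO_S(-\cH)$ and Mukai vector $(s,-\cH,r)$, itself primitive spherical. Taking the $r$-th exterior power of this sequence produces a Koszul-type filtration on $\bigwedge^r(H^0(S,\cE)\otimes\cO_S)$ whose successive graded pieces are $\bigwedge^i K\otimes\bigwedge^{r-i}\cE$, with top quotient $\bigwedge^r\cE=\cO_S(\cH)$. Passing to global sections, the surjectivity of $\lambda$ reduces to a chain of vanishings $H^1(S,\bigwedge^i K\otimes\bigwedge^{r-i}\cE)=0$ for $i\geq 1$. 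Establishing these vanishings is the technical heart of the proof; the strategy would be to rewrite them as $\Ext^1$-groups between twists of the two spherical bundles $\cE$ and $K$, and then exploit the rigidity $\Ext^1(\cE,\cE)=0$ together with the stability of $K$ to conclude.
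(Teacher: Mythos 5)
Your opening moves are fine: the computation $v(\cE)=(r,\cH,s)$, $v^2=-2$, the existence/uniqueness of the stable spherical sheaf, its local freeness, and the vanishing of $H^2(S,\cE)$ by stability are all correct, and they are exactly the facts the paper itself only cites (Mukai, Kuleshov). The genuine gaps are in the two places where the real work lies. For the vanishing of $H^1(S,\cE)$, your proposed bound $h^0(\cE)\le r+s$ is circular: for a generic $r$-dimensional $W\subset H^0(S,\cE)$ with $W\otimes\cO_S\to\cE$ injective, the cokernel $Q$ is a sheaf on a curve $D\in|\cH|$ of genus $rs$, and the long exact sequence gives exactly $h^0(Q)=h^0(\cE)-r$; so bounding $h^0(Q)$ by $s=\chi(Q)$ is literally the statement you are trying to prove, and no Clifford-type estimate on $D$ delivers it. The global generation argument is also incorrect: neither $\mu$- nor Gieseker stability places any restriction on a full-rank subsheaf with zero-dimensional quotient (e.g.\ $\ker(\cE\to\cO_x)$ is always such a subsheaf), and moreover the image of evaluation has the same $c_1$ as $\cE$ only if one already knows the cokernel has finite support. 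These two statements are precisely the ``weak Brill--Noether'' results for which the paper cites Coskun--Nuer--Yoshioka; they need substantially more input than stability plus Riemann--Roch.

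For assertion (2), your reduction of the surjectivity of $\lambda$ to the vanishing of $H^1(S,\bigwedge^iK\otimes\bigwedge^{r-i}\cE)$ for $i\ge1$ via the exterior-power filtration is sound, but you explicitly leave those vanishings unproved, and they do not follow from $\Ext^1(\cE,\cE)=0$ together with stability of $K$: the graded pieces are not spherical and their cohomology is not controlled by the rigidity of the two bundles. This is the technical heart and it is missing. Note that the paper does not prove the theorem either; it quotes it, and for (2) it establishes a stronger statement only in the rank-two case it needs (Proposition \ref{grass}), by a completely different and much shorter route: the map $G(2,H^0(S,\cE))\to\PP H^0(S,\cH)$ induced by $\lambda$ on decomposable tensors is a morphism between varieties of the same dimension $t+1$, is dominant by a result of Huybrechts on spherical bundles, hence surjective and finite --- which yields surjectivity of $\lambda$ with no further cohomology vanishing. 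If you want a self-contained proof along your lines, the honest comparison is that you would be reproving the cited Brill--Noether results from scratch.
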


We call $\cE$ the Mukai bundle of rank $r$. Its Mukai vector is $v=(r,\cH,s)$, so that $v^2=-2$ 
and $\cE$ is therefore an {\it exceptional} vector bundle,
also called a {\it rigid} bundle if it is simple (see \cite[Proposition 3.2]{mukai-tata}),
or also a {\it spherical} vector bundle \cite{huybrechts-chowspherical}.
The existence of $\cE$ was also proved by Kuleshov \cite{kuleshov}, following ideas of Mukai 
who treated the 
case where $v^2\ge 0$ in \cite{mukai-tata}. (Actually Kuleshov only proved the existence of a semistable 
vector bundle with a given exceptional Mukai vector. See \cite{bkm2} for an update.) 
Uniqueness had previously been observed by Mukai \cite[Corollary 3.5]{mukai-tata}. That $\cE$ is generated by global sections also follows from 
\cite[Proposition 8.10]{coskun-neuer-yoshioka}, that it has no higher cohomology  from 
\cite[Theorem 8.3]{coskun-neuer-yoshioka}. Assertion (2) is not considered in  \cite{kuleshov}, and stated without proof in \cite{mukai}. We will prove a stronger statement 
in Proposition \ref{grass} (for rank two, but the proof extends to arbitrary rank).

\medskip
As for line bundles, we denote by $\phi_{\cE}$ the morphism defined by the sections of $\cE$. Since $\cH=\det(\cE)$, we get a commutative diagram
\[\begin{tikzcd}
S  \ar[d,"\phi_{\cH}",hook]\ar[r,"\phi_{\cE}", hook] & G(r,H^0(S,\cE)^\vee) \ar[d,"p",hook] \\
 \PP(H^0(S,\cH)^\vee) \ar[r,"\lambda^\vee", hook] & \PP(\wedge^r H^0(S,\cE)^\vee)
\end{tikzcd}\]
where $p$ is the Pl\"ucker embedding. 

\medskip 
In our case where $t=4n-3$, we let $(r,s)=(2,2n-1)$. Then $H^0(S,\cE)$ has dimension $2n+1$ and we will denote its dual by $V_{2n+1}$. We get 
\[ S\subset G(2,V_{2n+1})\cap L\subset \PP(\wedge^2 V_{2n+1}) \]
with $L=\PP(H^0(S,\cH)^\vee)\cong \PP^{t+1}$. Note that $L$ is highly non-generic, since its 
codimension is quadratic in $n$, while it meets non-trivially the Grassmannian $G(2,V_{2n+1})$
whose dimension is linear in $n$. 

\begin{definition}
    We denote the Mukai bundle $\cE$ by $\cU_2^\vee$; it is the restriction to $S$ of the dual tautological bundle on the Grassmannian $G(2,V_{2n+1}).$
\end{definition}

Under the embedding $S\subset G(2,V_{2n+1})$, intersections of $S$ with Schubert cycles
of type  $G(2,V_{2n})$ are zero-loci of  sections of $\cU_2^\vee$ (note that  
$V_{2n}$ is a hyperplane inside
$V_{2n+1}=H^0(S,\cU_2^\vee)^\vee$, so identifies with a line in $H^0(S,\cU_2^\vee)$). The following statement is close to Lemma 3.1 and Proposition 3.2 in \cite{voisin-K3}. 

\begin{prop}\label{prop: deg small grassmannian}
Let $s$ be any non-zero section of $\cU_2^\vee$. Then its zero-locus $Z(s)$ is a finite scheme
of length $2n$. Moreover, $s$ is the only section of $\cU_2^\vee$ (up to scalar) vanishing along $Z(s)$.
\end{prop}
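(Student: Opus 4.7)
The plan is to prove the two assertions separately: the length of $Z(s)$ first, and then the uniqueness of $s$ given $Z(s)$.

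For the length, a direct Hirzebruch--Riemann--Roch computation on the K3 surface $S$ gives $c_2(\cU_2^\vee)=2n$: from $c_1(\cU_2^\vee)=\cH$ with $\cH^2=8n-6$ and $\chi(\cU_2^\vee)=2n+1$, one reads
$$2n+1 \;=\; 2r + \tfrac12 c_1^2 - c_2 \;=\; 4 + (4n-3) - c_2.$$
To conclude that $Z(s)$ has length $2n$, it suffices to know that it is zero-dimensional, so that $s$ is a regular section. If $Z(s)$ contained an effective divisor $C$, then since $\NS(S)=\ZZ\cH$ one would have $C\in|k\cH|$ with $k\ge 1$, and dividing $s$ by a defining equation of $C$ would yield an injection $\cO_S(k\cH)\hookrightarrow \cU_2^\vee$ of slope $2kt>t=\mu(\cU_2^\vee)$, contradicting the slope-stability of $\cU_2^\vee$ granted by Mukai's theorem.

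For the uniqueness, write $Z=Z(s)$. Regularity of $s$ gives the Koszul resolution
$$0\to \cH^{-1}\to \cU_2\to \cI_Z\to 0.$$
Tensoring with $\cU_2^\vee$ and using the rank-two identity $\cU_2^\vee\otimes \cH^{-1}\cong\cU_2$ (i.e.\ $\cE\otimes\det(\cE)^{-1}\cong\cE^\vee$ for rank-two $\cE$), this becomes
$$0\to \cU_2 \to \cU_2\otimes\cU_2^\vee \to \cU_2^\vee\otimes\cI_Z \to 0.$$
The middle term is $\mathrm{End}(\cU_2)$ and the right-hand term has $H^0$ equal to the space of sections of $\cU_2^\vee$ vanishing on $Z$, so the goal is to show this $H^0$ is one-dimensional. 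The three ingredients are: $H^0(\cU_2)=0$, by the slope-stability argument applied to $\cU_2$ (which has slope $-t<0=\mu(\cO_S)$); $H^0(\mathrm{End}(\cU_2))=\CC$, since $\cU_2^\vee$ is simple; and $H^1(\cU_2)\cong H^1(\cU_2^\vee)^\vee=0$, by Serre duality on the K3 together with the vanishing of the higher cohomology of $\cU_2^\vee$ from Mukai's theorem. The long exact sequence then pins down $H^0(\cU_2^\vee\otimes\cI_Z)=\CC$, as required.

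The main obstacle is really bookkeeping: identifying $\cU_2^\vee\otimes\cH^{-1}$ with $\cU_2$ in order to recast the tensored Koszul sequence in terms of the endomorphism bundle, where simplicity and rigidity of $\cU_2^\vee$ do the work. The slope-stability of $\cU_2^\vee$ and the cyclicity of $\NS(S)$ are the decisive geometric inputs; everything else is a formal cohomology chase, very much in the spirit of \cite[Lemma 3.1 and Proposition 3.2]{voisin-K3}.
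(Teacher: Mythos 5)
Your proof is correct and follows essentially the same route as the paper: stability of $\cU_2^\vee$ (together with $\NS(S)=\ZZ\cH$) rules out divisorial components of $Z(s)$, the length is the Riemann--Roch computation of $c_2(\cU_2^\vee)=2n$, and uniqueness comes from the Koszul complex twisted by $\cU_2^\vee$, identified with $0\to\cU_2\to \mathrm{End}(\cU_2)\to \cI_{Z(s)}\otimes\cU_2^\vee\to 0$, plus the vanishings $H^0(\cU_2)=H^1(\cU_2)=0$ and simplicity. The only cosmetic difference is that the paper extracts $c_2$ from $ch_2$ and the reducedness of a general zero-locus, whereas you read it off directly from $\chi(\cU_2^\vee)=2n+1$; the content is identical.
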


\begin{proof}
Since $\cU_2^\vee$ is generated by global sections, the zero-locus of a general section is a set of simple points, and the number of these points,  since 
 $ch_2(\cU_2^\vee)=\frac{1}{2}c_1(\cU_2^\vee)^2-c_2(\cU_2^\vee)$,  is 
 $$\int_S c_2(\cU_2^\vee)=\frac{1}{2}\int_S c_1(\cU_2^\vee)^2 -\Big(\chi(\cU_2^\vee)-2\chi(\cO_S)\Big)=
\frac{1}{2}(8n-6) - (2n-3) = 2n.$$
More generally, this will be the length of $Z(s)$ as soon as it is a finite scheme,
which is always the case since if $s$ happened to vanish along a curve $C$, it would embed $\cO_S(C)$ as a subsheaf of $\cU_2^\vee$, and this would contradict the stability of $\cU_2^\vee$. 

Now since $Z(s)$ is finite,  
the associated Koszul complex is exact. Twisting it by $\cU_2^\vee$,
we get the exact sequence 
$$0\lra \cU_2\lra End(\cU_2)\lra I_{Z(s)}\otimes \cU_2^\vee\lra 0.$$
We know that $H^1(S,\cU_2)=H^1(S,\cU_2^\vee)^\vee=0$, and that $H^0(S,End(\cU_2))=\CC$ since $\cU_2^\vee$ is stable. So 
$H^0(S,I_{Z(s)}\otimes \cU_2^\vee)=\CC$, which is  our last claim. 
\end{proof}

\begin{remark}
It was proved in \cite{huybrechts-chowspherical} and \cite{voisin-K3} that the second Chern class of $\cU_2^\vee$ has to be a multiple of the canonical zero-cycle of the surface $S$.
\end{remark}

\begin{remark}
A consequence of Proposition \ref{prop: deg small grassmannian} 
is that  sending a section to its 
zero-locus defines an injective map
$$\gamma : \PP H^0(S,\cU_2^\vee)=\PP^{2n}\lra S^{[2n]},$$
whose image is of course a Lagrangian cycle. Using the Koszul complex it is easy to 
show that the differential of $\gamma$ at $[s]$ is given by the evaluation map 
$$H^0(S,\cU_2^\vee)/\CC s \lra \cU^\vee_{2|Z(s)}.$$
The fact that $H^0(S,I_{Z(s)}\otimes \cU_2^\vee)=\CC$ implies that this differential is everywhere injective, 
so that $\gamma$ is an embedding. 

This reflects the positivity of $\cU_2^\vee$ in a certain way. 
According to \cite[Proposition 2]{mop}, $\cU_2^\vee$ is $(k-1)$-very ample for $2n+1\ge 3k$, which means that for any finite subscheme $Z$ of $S$ of length $k$, the evaluation map 
$$H^0(S,\cU_2^\vee)\lra H^0(Z,\cU^\vee_{2|Z})$$
is surjective. 
\end{remark}

The next observation will be useful later on.
For any nonzero section $s$ of $\cU_2^\vee$, the finite set $Z(s)$ has a linear 
span $\langle Z(s)\rangle$ of dimension $2n-2$. Indeed the Koszul complex, twisted by $\cH$, yields an exact sequence 
$$0\lra \CC s\lra H^0(S,\cU_2^\vee)\lra H^0(S,I_{Z(s)}\otimes \cH)\lra 0.$$
More precisely, this exact sequence yields the following statement. 

\begin{prop}\label{covers} For any nonzero section $s$ of $\cU_2^\vee$, the orthogonal of $\langle Z(s)\rangle$ in $H^0(S,\cH)$ is the image of $s\wedge H^0(S,\cU_2^\vee)$ by the wedge product map. 

For any $z\in L=\PP H^0(S,\cH)^\vee$, there exists a non zero section $s$ of $\cU_2^\vee$ such that $z\in \langle Z(s)\rangle$. The space of such sections is the kernel of the skew-symmetric 
form $\psi_z$ on $H^0(S,\cU_2^\vee)$ defined by $z$. 
\end{prop}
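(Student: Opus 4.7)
The plan is to extract everything from the Koszul resolution twisted by $\cH$, namely
$$0\lra \cO_S\xrightarrow{\,\cdot s\,} \cU_2^\vee\xrightarrow{\,s\wedge\,} \cH\otimes I_{Z(s)}\lra 0,$$
where the second arrow is fibrewise wedge product with $s$ (here I use that $\det\cU_2^\vee=\cH$). Taking global sections and using $H^1(S,\cO_S)=0$ one recovers the sequence already displayed just before the statement, and I identify the sections of $\cH\otimes I_{Z(s)}$ with the orthogonal of $\langle Z(s)\rangle$ in $H^0(S,\cH)$: by definition of the linear span, a section of $\cH$ vanishes on $\langle Z(s)\rangle$ iff it vanishes on $Z(s)$.

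For the first assertion, I only need to check that the sheaf-level map $\cU_2^\vee\to\cH$ sending $u\mapsto s\wedge u$ induces, on global sections, the composition
$$H^0(S,\cU_2^\vee)\xrightarrow{\;s\wedge\;}\textstyle\bigwedge^2 H^0(S,\cU_2^\vee)\xrightarrow{\;\lambda\;} H^0(S,\cH).$$
This is a tautology: $\lambda$ is defined on decomposable elements by $t_1\wedge t_2\mapsto (x\mapsto t_1(x)\wedge t_2(x))$, so $\lambda(s\wedge t)$ is exactly the global section $x\mapsto s(x)\wedge t(x)$. Combined with the previous step, this gives Part 1: the image of $s\wedge H^0(S,\cU_2^\vee)$ under $\lambda$ coincides with $H^0(S,\cH\otimes I_{Z(s)})$, i.e.\ the orthogonal of $\langle Z(s)\rangle$.

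For Part 2, I translate the condition $z\in\langle Z(s)\rangle$ into the statement that $z$, viewed as a linear form on $H^0(S,\cH)$, annihilates the orthogonal $H^0(S,\cH\otimes I_{Z(s)})$. By Part 1 this is equivalent to $\langle z,\lambda(s\wedge t)\rangle=0$ for every $t\in H^0(S,\cU_2^\vee)$, that is $\psi_z(s,t)=0$ for all $t$, since $\psi_z=\lambda^\vee(z)$ is by definition the skew form obtained by dualising $\lambda$ against $z$. Hence $s\in\ker\psi_z$, giving the second claim. Existence of a nonzero such $s$ is then automatic: $\dim H^0(S,\cU_2^\vee)=2n+1$ is odd, so every skew form on it is degenerate and $\ker\psi_z\neq 0$.

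The only step requiring any care is the identification of the Koszul boundary with $\lambda(s\wedge-)$; everything else is linear algebra plus the already-established exact sequence, so I do not anticipate a serious obstacle.
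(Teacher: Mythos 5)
Your proof is correct and follows essentially the same route as the paper: the first assertion is extracted from the Koszul complex twisted by $\cH$, and the second from the degeneracy of the skew form $\psi_z$ on the odd-dimensional space $H^0(S,\cU_2^\vee)$ together with the equivalence $s\in\ker\psi_z\iff z\in\langle Z(s)\rangle$. The only difference is that you spell out the identification of the Koszul boundary with $\lambda(s\wedge-)$, which the paper leaves implicit.
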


\proof The first assertion follows from the Koszul complex. 
The second assertion is a direct consequence of the fact that 
$H^0(S,\cU_2^\vee)$ has odd dimension. Indeed, the skew-symmetric form $\psi_z$ 
on $H^0(S,\cU_2^\vee)$ defined by $z$ must have a non-trivial kernel; moreover,
 $s$ being in the kernel means that $z$ is orthogonal 
to $s\wedge H^0(S,\cU_2^\vee)$, hence that $z\in \langle Z(s)\rangle$
according to the first claim. \qed 

\begin{coro}\label{unique Z(s)}
A generic point $z\in L\simeq \PP^{4n-2}$ lies on a unique linear subspace 
$\langle Z(s)\rangle\simeq\PP^{2n-2}$,
$[s]\in \PP H^0(S,\cU_2^\vee)\simeq \PP^{2n}$. 
\end{coro}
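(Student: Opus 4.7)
The plan is to study the incidence variety
$$X = \{(z, [s]) \in L \times \PP H^0(S, \cU_2^\vee) : z \in \langle Z(s)\rangle\}$$
via its two projections, and deduce the corollary from a dimension count together with Proposition \ref{covers}.

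First I would control $\dim X$ through the projection $X \to \PP H^0(S,\cU_2^\vee) \simeq \PP^{2n}$: its fiber over $[s]$ is the linear span $\langle Z(s)\rangle$, which has dimension exactly $2n-2$ by the Koszul sequence
$$0\lra \CC s\lra H^0(S,\cU_2^\vee)\lra H^0(S,I_{Z(s)}\otimes \cH)\lra 0$$
recalled just before Proposition \ref{covers}. Thus the projection to $\PP^{2n}$ is equidimensional, $X$ is irreducible (being a $\PP^{2n-2}$-bundle over $\PP^{2n}$, where the fibers form a subbundle of the trivial bundle $H^0(\cH)^\vee \otimes \cO_{\PP V}$ of locally constant rank), and $\dim X = 2n + (2n-2) = 4n - 2 = \dim L$.

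Next I would apply the second part of Proposition \ref{covers} to show that the projection $X \to L$ is surjective: for every $z \in L$, the skew-symmetric form $\psi_z$ on the odd-dimensional space $H^0(S,\cU_2^\vee)$ is degenerate, so any $[s] \in \PP(\ker \psi_z)$ produces a point of $X$ above $z$. Since source and target are irreducible of the same dimension, this surjection is generically finite, and the generic fiber has dimension $0$.

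Finally, the set-theoretic fiber of $X \to L$ over $z$ is precisely $\PP(\ker \psi_z)$ by Proposition \ref{covers}, and this is a linear subspace of $\PP H^0(S,\cU_2^\vee)$. A zero-dimensional linear subspace of a projective space is a single reduced point, so for generic $z \in L$ there is a unique $[s]$ with $z \in \langle Z(s)\rangle$, as claimed. The argument is essentially forced by the odd-dimensionality of $H^0(S,\cU_2^\vee)$ combined with the Koszul-based equality $\dim \langle Z(s)\rangle = 2n - 2$, so no significant obstacle is expected beyond verifying these two numerical inputs.
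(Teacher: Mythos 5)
Your proof is correct and fills in exactly the argument the paper leaves implicit: the corollary is stated without proof as a direct consequence of Proposition \ref{covers}, and the intended justification is precisely your dimension count $2n+(2n-2)=4n-2$ on the incidence variety together with the observation that the fiber over $z$ is the linear space $\PP(\ker\psi_z)$, so a zero-dimensional fiber is a single point. Both numerical inputs you isolate (the Koszul computation of $\dim\langle Z(s)\rangle$ and the odd-dimensionality forcing $\ker\psi_z\neq 0$) are established in the surrounding text, so there is no gap.
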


This yields a remarkable distribution of disjoint linear spaces covering an open subset of $\PP^{4n-2}$.
The geometry of this distribution will play a crucial r\^ole in the proof of Proposition \ref{card fiber}.

\medskip Now let us consider pencils of sections. If $s_1, s_2$ are two sections of $\cU_2^\vee$, let us denote by $\pi(s_1\wedge s_2)$ the corresponding section of $\cH$. First observe that if 
$\pi(s_1\wedge s_2)=0$, then $s_1$ and $s_2$ must be proportional. Indeed their values must be proportional at any point of $S$, hence $s_1/s_2$ defines a rational function on $S$. But if this function has p\^oles along a curve $C$, then 
$s_2$ vanishes along this curve and defines a nonzero section of 
$\cU_2^\vee(-C)$, which contradicts the stability of $\cU_2^\vee$. We thus get a regular morphism from $G(2,H^0(S,\cU_2^\vee))$ to $ \PP H^0(S,\cH)$.

\begin{prop}\label{grass}
The morphism  $\eta: G(2,H^0(S,\cU_2^\vee))\lra \PP H^0(S,\cH)$ is surjective and finite.
\end{prop}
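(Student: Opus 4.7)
The plan is to identify the pullback $\eta^* \cO_{\PP H^0(S, \cH)}(1)$ with the very ample Pl\"ucker polarization $\cO_G(1)$ on $G := G(2, H^0(S, \cU_2^\vee))$. Once this is done, both finiteness and surjectivity follow by standard formal arguments, with essentially no hard step.

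By construction, $\eta$ factors as the Pl\"ucker embedding $G \hookrightarrow \PP\bigwedge^2 H^0(S, \cU_2^\vee)$ followed by the linear projection induced by the surjective map $\lambda$. The base locus of this projection is $\PP\ker\lambda$, and it misses $G$: a decomposable element $s_1\wedge s_2$ of $\ker\lambda$ would satisfy $\pi(s_1\wedge s_2) = 0$, forcing $s_1$ and $s_2$ to be proportional by the argument recalled just above the statement, and so contradicting that $[s_1\wedge s_2]$ lies on the Grassmannian. This is precisely why $\eta$ is regular, as already noted; and since it arises from a linear projection, we obtain $\eta^*\cO_{\PP H^0(S,\cH)}(1) = \cO_G(1)$.

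With this identification in hand, finiteness is automatic: if $\eta$ contracted a curve $C \subset G$, then $\cO_G(1)|_C$ would be trivial and hence of degree zero on $C$, contradicting the ampleness of the Pl\"ucker polarization. So every fiber of $\eta$ is zero-dimensional, and being proper (as $G$ is projective), $\eta$ is finite. For surjectivity I would simply compare dimensions: $\dim G = 2(2n-1) = 4n-2$, while Riemann-Roch on the $K3$ surface $S$ gives $h^0(S, \cH) = \chi(\cH) = \tfrac{1}{2}\cH^2 + 2 = 4n-1$, so $\dim \PP H^0(S, \cH) = 4n-2$ as well. Since $\eta$ is finite, its image is a closed irreducible subvariety of the irreducible target of the same dimension, and hence coincides with it. No step here is really an obstacle; the crucial inputs are the surjectivity of $\lambda$ coming from Mukai's theorem recalled above and the proportionality observation given just before the statement, both already at our disposal.
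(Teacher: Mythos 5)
Your proof is correct, but it is organized differently from the paper's, and the difference is worth noting. The paper first invokes an external result (Corollary 4.3 of Huybrechts' paper on spherical objects) to get that $\eta$ is dominant, hence surjective by properness, and only then deduces finiteness from the fact that the Grassmannian has cyclic Picard group (a nef and big class on a variety of Picard number one is ample, so no curve can be contracted). You reverse the order: you identify $\eta^*\cO(1)$ with the ample Pl\"ucker polarization directly, which yields finiteness with no input beyond the regularity of the projection on $G$, and then obtain surjectivity for free from the equality of dimensions of source and target. Your route is more self-contained, since it removes the citation to Huybrechts entirely and does not even use that $\mathrm{Pic}(G)$ is cyclic. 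One caveat: you list the surjectivity of $\lambda$ (assertion (2) of Mukai's theorem) among your ``crucial inputs'', but the paper explicitly notes that this assertion is stated without proof in Mukai and that Proposition \ref{grass} is intended to supply a proof of a stronger statement, so leaning on it would be circular in the paper's logical architecture. Fortunately your argument does not actually need it: a linear map $\lambda$ induces a rational projection $\PP(\wedge^2 H^0(S,\cU_2^\vee))\dashrightarrow \PP(\mathrm{im}\,\lambda)$ whether or not $\lambda$ is surjective, the identification $\eta^*\cO(1)=\cO_G(1)$ and the finiteness go through verbatim, and your final dimension count then shows that the image, being closed of dimension $4n-2$ inside $\PP H^0(S,\cH)=\PP^{4n-2}$, is everything --- which simultaneously re-proves that $\lambda$ is surjective. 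You should simply rephrase so that the surjectivity of $\lambda$ appears as an output of the argument rather than an input.
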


\proof Observe that the source and target have the same dimension $4n-2=t+1$. 
 The morphism is dominant by \cite[Corollary 4.3]{huybrechts-chowspherical},
 hence surjective, and therefore finite since the Grassmannian has cyclic Picard group. \qed

\medskip Since this projection $\eta$ is linear with respect to the Pl\"ucker embedding, its degree is just the degree of the Grassmannian $G(2,2n+1)$, which is equal to the Catalan number $\frac{1}{2n}\binom{4n-2}{2n-1}$. 
Denote by $\cR\subset G(2,H^0(S,\cU_2^\vee))$ the ramification divisor. 

\begin{lemma}\label{ramification}
    The pencil $P=\langle s_1, s_2\rangle$ defines a point of $\cR$ if and only if $\langle Z(s_1)\rangle$ meets $\langle Z(s_2)\rangle$. 
\end{lemma}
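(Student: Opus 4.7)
The plan is to identify the ramification of $\eta$ via the geometry of linear projections, then translate the resulting condition through Proposition \ref{covers}. Set $V := H^0(S, \cU_2^\vee)$. Since $\lambda : \wedge^2 V \to H^0(S,\cH)$ is surjective, the morphism $\eta$ is the restriction to $G(2,V) \subset \PP(\wedge^2 V)$ of the linear projection $\PP\lambda : \PP(\wedge^2 V) \dashrightarrow \PP H^0(S, \cH)$ centered at $\PP(\ker \lambda)$; by finiteness of $\eta$, this center is disjoint from $G(2,V)$. A standard criterion for ramification of a linear projection restricted to a smooth projective variety then says that $[P] \in \cR$ if and only if the embedded projective tangent space of $G(2,V)$ at $[P]$, which is classically $\PP(P \wedge V)$, meets the center; equivalently
$$[P]\in\cR \iff (P\wedge V)\cap\ker\lambda \neq 0.$$

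To convert this into a statement about the zero-loci, I would consider the surjection $\mu : V^{\oplus 2} \to P\wedge V$ sending $(u_1, u_2)$ to $s_1 \wedge u_1 + s_2 \wedge u_2$, whose kernel is easily computed to be $3$-dimensional. Composing with $\lambda$ produces a map $V^{\oplus 2} \to H^0(S,\cH)$ whose image is $\pi(s_1\wedge V) + \pi(s_2\wedge V)$. By Proposition \ref{covers}, $\pi(s_i\wedge V)$ is the annihilator in $H^0(S,\cH)$ of $\langle Z(s_i)\rangle$; letting $A_i \subset H^0(S,\cH)^\vee$ be the $(2n-1)$-dimensional subspace with $\PP A_i = \langle Z(s_i)\rangle$, this means $\pi(s_i\wedge V) = A_i^\perp$, whence
$$\dim\bigl(\pi(s_1\wedge V)\cap\pi(s_2\wedge V)\bigr)=\dim(A_1+A_2)^\perp=1+\dim(A_1\cap A_2).$$
Chasing dimensions through $\mu$ then yields $\dim\bigl((P\wedge V)\cap\ker\lambda\bigr)=\dim(A_1\cap A_2)$, so $[P]\in\cR$ is equivalent to $A_1\cap A_2\neq 0$, i.e.\ to $\langle Z(s_1)\rangle\cap\langle Z(s_2)\rangle\neq\emptyset$.

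The main delicate point is tracking the "trivial" element $s_1\wedge s_2$ consistently throughout: it lies in $P\wedge V$ but not in $\ker\lambda$ (since $\lambda(s_1\wedge s_2)\neq 0$), while its image $\pi(s_1\wedge s_2)$ always belongs to both $\pi(s_1\wedge V)$ and $\pi(s_2\wedge V)$. This asymmetry accounts for the $+1$ in the dimension formula above and is exactly what makes the equivalence "the two annihilators intersect in dimension $\geq 2$ $\iff$ $(P\wedge V)\cap \ker\lambda \neq 0$" work out cleanly. Once this bookkeeping is in place, everything reduces to linear algebra built on Proposition \ref{covers}.
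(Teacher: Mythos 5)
Your proof is correct and is essentially the paper's argument in a different dress: the paper computes the differential of $\eta$ at $P$ directly and observes that it fails to be injective exactly when $\pi(s_1\wedge H^0(\cU_2^\vee))$ and $\pi(s_2\wedge H^0(\cU_2^\vee))$ intersect in more than the line $\pi(P)$, then takes orthogonals via Proposition \ref{covers} — the same dimension count you perform after rephrasing the condition through the embedded tangent space $\PP(P\wedge V)$ and the projection center $\PP(\ker\lambda)$. Your bookkeeping of the class of $s_1\wedge s_2$ and the resulting $+1$ is exactly the paper's ``bigger intersection than $\pi(P)$'', so the two proofs coincide in substance.
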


\proof The differential of $\eta$ at $P$ is induced by the 
morphism that sends $(t_1,t_2)\in H^0(S,\cU_2^\vee)$ to $\pi(s_1\wedge t_1)-\pi(s_2\wedge t_2)$. We deduce that this differential fails to be injective exactly when $\pi(s_1\wedge H^0(\cU_2^\vee))$ and 
$\pi(s_2\wedge H^0(\cU_2^\vee))$ have a bigger intersection than $\pi(P)$. Taking orthogonals, this exactly means that 
$\langle Z(s_1)\rangle $ and $\langle Z(s_2)\rangle $ meet non trivially. \qed

\begin{remark}\label{D}
By the same argument,  $P=\langle s_1, s_2\rangle\in \cR$ is a simple ramification point exactly when $Z(s_1)$ and $Z(s_2)$ meet at a single point. Let $\cD\subset L$ be the locus of points $z$ where the skew-symmetric form $\psi_z$, see Proposition \ref{covers}, has corank at least three; so $\cD$ is a Pfaffian locus, defined by Pfaffian equations of degree $n$. Let $\cI$ be the set of pairs $(z, P=\langle s_1, s_2\rangle)$ such that $z\in Z(s_1)\cap Z(s_2)$. In the diagram 
$$\xymatrix{\cI\ar[r]\ar[d] & \cR\subset G(2,H^0(S,\cU_2^\vee)) \\
\cD &}$$
the projection $\cI\ra\cR$ is thus birational over the component $\cI_0$ of $\cI$ dominating 
$\cR$, whose dimension is therefore $4n-3$. On the other hand, $\cD$ has everywhere dimension at least $4n-5$, with fibers $\PP^2$ over points $z$ where $\psi_z$
has corank exactly three, and of bigger dimension when the corank is bigger. 
So necessarily the projection $\cI_0\ra\cD$ is generically a $\PP^2$-fibration over a component $\cD_0$ of $\cD$, on the generic point of which the corank of $\psi$ is 
exactly three.  
\end{remark}

\subsection{A monodromy result}
The following result will be useful later on.  We denote by $\Gamma$ the monodromy group 
of the finite subsets of $S$ given as zero loci of general sections of $\cU_2^\vee$. If we fix such a section $s_0$ and let $Z_0=Z(s_0)$, $\Gamma$ is identified with a  
subgroup of the permutation group $S(Z_0)\simeq S_{2n}$, the latter identification being 
well-defined up to conjugation. 

\begin{prop}\label{monodromy}
$\Gamma$ is the full permutation group.
\end{prop}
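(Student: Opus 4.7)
The plan is to verify that $\Gamma\subset S_{2n}$ is $2$-transitive and contains a transposition, which by a classical theorem of Jordan suffices to conclude that $\Gamma=S_{2n}$.

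For $2$-transitivity, I would project the incidence
$$W:=\{(p_1,p_2,[s])\in(S\times S\setminus\Delta_S)\times\PP H^0(S,\cU_2^\vee):s(p_1)=s(p_2)=0\}$$
onto $S\times S$ minus its diagonal $\Delta_S$. By \cite[Proposition 2]{mop} recalled above, $\cU_2^\vee$ is $1$-very ample (since $2n+1\ge 6$), so for any two distinct points $p_1,p_2$ the joint evaluation map $H^0(S,\cU_2^\vee)\to(\cU_2^\vee)_{p_1}\oplus(\cU_2^\vee)_{p_2}$ is surjective, with projectivized kernel $\PP^{2n-4}$. These fibers being irreducible of constant dimension, $W$ is irreducible, hence the generic fiber of its projection to $\PP H^0(S,\cU_2^\vee)$ is a single $\Gamma$-orbit of cardinality $2n(2n-1)$, and $\Gamma$ is $2$-transitive.

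For a transposition, I would study the ramification of the generically \'etale $2n$-sheeted projection $\cZ\to\PP H^0(S,\cU_2^\vee)$, where $\cZ=\{([s],p):s(p)=0\}$. Let
$$N=\{([s],p)\in\cZ:p\text{ is a non-reduced component of }Z(s)\}.$$
In local coordinates, after trivializing $\cU_2^\vee$ near $p$ and writing $s=(f,g)$, the defining conditions are $f(p)=g(p)=0$ and $(df\wedge dg)(p)=0$: three independent conditions. Thus $N$ is of codimension $3$ in $\PP H^0(S,\cU_2^\vee)\times S$, i.e.\ of dimension $2n-1$, equal to that of the discriminant $\mathrm{Disc}\subset\PP H^0(S,\cU_2^\vee)$. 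The projection $N\to\mathrm{Disc}$ is generically injective: sections with two non-reduced components, or whose non-reduced component has length $\geq 3$, form strictly smaller strata. Hence for generic $[s]\in\mathrm{Disc}$, the scheme $Z(s)$ consists of one length-two double point plus $2n-2$ reduced simple points. Locally analytically $\cZ\to\PP H^0(S,\cU_2^\vee)$ near such $[s]$ is then the product of a simply-ramified double cover at the double point with a trivial \'etale cover of degree $2n-2$, and the monodromy around the corresponding local branch of $\mathrm{Disc}$ is the transposition of the two colliding sheets.

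The two properties combined yield $\Gamma=S_{2n}$. The main obstacle lies in the local ramification analysis: one must carry out the codimension count ruling out sections with several non-reduced components or with a component of length $\geq 3$ at a single point, and extract from the resulting simple local model the expected transposition in the monodromy action.
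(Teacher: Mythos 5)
Your proposal is correct and follows essentially the same route as the paper: Harris's classical method, with $1$-very ampleness of $\cU_2^\vee$ giving double transitivity (the paper phrases this as transitivity plus transitivity of a point stabilizer, via irreducibility of the incidence varieties $R$ and $R'_p$, which is equivalent to your single incidence variety over $S\times S\setminus\Delta_S$) and an analysis of the generic point of the discriminant producing a simple transposition. The codimension count you flag as the remaining obstacle is handled in the paper by invoking $2$-very ampleness of $\cU_2^\vee$ for $n\ge 4$, which forces the strata of sections with worse degenerations to have higher codimension, exactly as you anticipate.
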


\proof We follow the classical approach of \cite{harris-galois}. Let $R\subset \PP H^0(S,\cU_2^\vee)\times S$ denote the set of pairs $([s],p)$ such that $s(p)=0$. Since $\cU_2^\vee$ is generated by global sections, each fiber of the projection of $R$ to $S$ is a codimension two linear subspace of $\PP H^0(S,\cU_2^\vee)$. This implies that $R$ is irreducible. Using the same argument as 
in \cite[p.698]{harris-galois}, we conclude  that $\Gamma$ acts transitively on $Z_0$. 

According to \cite[Proposition 2]{mop}, 
$\cU_2^\vee$ is $k$-very ample when $3k\le 2n-2$, hence $1$-very ample as soon as $n\ge 3$. Fix a point $p$ in $Z_0$ and denote by $R_p$ the 
corresponding fiber of the projection of $R$ to $S$, which is just the linear system of 
sections of $\cU_2^\vee$ that vanish at $p$. Let $R'_p\subset R_p\times (S-\{p\})$ denote the set of  pairs $([s],q)$ such that $q\ne p$ and $s(q)=s(p)=0$. Since $\cU_2^\vee$ is $1$-very ample, all the 
fibers of the projection of $R_p'$ to $S-\{p\}$ are codimension four linear spaces. We deduce that $R_p'$ is irreducible, and therefore, by the same argument as before, 
the stabilizer of $p$ in $\Gamma$ has to act transitively on $Z_0-\{p\}$. We conclude 
that the action of $\Gamma$ on $Z_0$ is doubly transitive. 

Recall that the discriminant subvariety of  $\PP H^0(S,\cU_2^\vee)$ is the locus $\Delta_{\cU_2^\vee}$ parametrizing sections whose zero-locus is not reduced. Since $\cU_2^\vee$ is $2$-very ample for $n\ge 4$, the same dimension count as above ensures that  $\Delta_{\cU_2^\vee}$ is an irreducible hypersurface, 
whose generic point is given by a section of $\cU_2^\vee$ whose zero-locus is made of $2n-2$ simple points and a non-reduced scheme of length two. Using \cite[II.3, Lemma p.698]{harris-galois}, we deduce 
that $\Gamma$ contains at least one simple transposition. But then, being doubly-transitive 
it contains all the simple transpositions, hence it must be the full symmetric group. \qed

\subsection{Geometric description of the involution}
 Let us finally describe the birational involution $\varphi$ geometrically. 
Consider $n$ points $p_1,\ldots,p_n$ on $S$, in general position. Let $P_1,\ldots , P_n\subset V_{2n+1}$ denote the corresponding planes, and let $V_{2n}$ denote their span. Since $p_1,\ldots,p_n$ are in general position, $V_{2n}$ is a general hyperplane in $V_{2n+1}$. In
other words, the points $p_1,\ldots,p_n$ impose independent conditions on sections of $\cU_2^\vee$, and 
there is a unique section $s$ (up to scalar) such that $s(p_1)=\cdots = s(p_n)=0.$

By Proposition \ref{prop: deg small grassmannian}, the zero-locus $Z(s)=G(2,V_{2n})\cap S$ consists of $2n$ simple 
points, $n$ of which being $p_1,\ldots,p_n$. Denoting the remaining points by $q_1,\ldots,q_n$, we can then define explicitely the birational involution $\varphi$ of  $S^{[n]}$ by 
\[ \varphi:  p_1+\cdots+p_n\mapsto q_1+\cdots+q_n. \]
Proposition \ref{intro-involution} clearly holds true.

\begin{remark}\label{Beauville-like}
     If instead one takes $t=n$ and $(r,s)=(1,t+1)$, the same construction as above leads to Beauville's construction.
\end{remark}

\medskip

\section{The linear system}

Recall that the second cohomology group  
\begin{equation}\label{H2dec}
H^2(S^{[n]},\ZZ)\simeq H^2(S,\ZZ)\oplus \ZZ\delta, 
\end{equation}
where $2\delta$ is the class of the divisor $E$ of non-reduced schemes;
the embedding of $H^2(S,\ZZ)$ is given by
$\xi\mapsto \xi_n:=HC^* (\xi^{(n)})$, where $HC:S^{[n]}\to S^{(n)}$ is the 
Hilbert-Chow morphism and $\xi^{(n)}$ is induced by $\xi$ on $S^{(n)}$. 

\smallskip
\begin{lemma}
The class $\cH_n-2\delta$ has Beauville-Bogomolov degree 2.
\end{lemma}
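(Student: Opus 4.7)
The plan is to compute the Beauville-Bogomolov degree of $\cH_n-2\delta$ directly using the standard description of the form on $H^2(S^{[n]},\ZZ)$. Under the decomposition \eqref{H2dec}, the Beauville-Bogomolov form $q$ restricts to the intersection form of $S$ on the summand $H^2(S,\ZZ)$, makes that summand orthogonal to $\ZZ\delta$, and satisfies $q(\delta)=-2(n-1)$. This is classical, due to Beauville, and I would simply cite it.

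Given that setup, the verification is a one-line computation. Since $\cH_n$ and $\delta$ are orthogonal with respect to $q$,
\[
q(\cH_n-2\delta)=q(\cH_n)+4\,q(\delta)=\cH^2-8(n-1).
\]
By hypothesis $S$ has degree $2t=8n-6$, so $\cH^2=8n-6$, and hence
\[
q(\cH_n-2\delta)=(8n-6)-8(n-1)=2.
\]

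There is really no obstacle here; the only thing to be careful about is the sign and normalization conventions (in particular that $q(\delta)=-2(n-1)$ and not $-(n-1)$, and that $q(\cH_n)=\cH^2$ rather than $\tfrac12\cH^2$), which match the conventions already being used in the paper via \cite{beri}. The same computation also explains why $\cH_n-2\delta$ is the relevant boundary of the nef cone for the involution $\varphi$: the class $-R_{\cH_n-2\delta}$ appearing in Proposition \ref{action} requires $\cH_n-2\delta$ to be non-isotropic, which is precisely what the value $2$ guarantees.
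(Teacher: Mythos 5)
Your computation is correct and is exactly what the paper's one-line citation to \cite[3.2.1]{debarre} encapsulates: the standard fact that under the decomposition \eqref{H2dec} the Beauville--Bogomolov form restricts to the intersection form on $H^2(S,\ZZ)$, is orthogonal to $\delta$, and gives $q(\delta)=-2(n-1)$, whence $q(\cH_n-2\delta)=(8n-6)-8(n-1)=2$. This is the same approach, merely written out explicitly rather than deferred to the reference.
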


\proof This follows immediately from \cite[3.2.1]{debarre}. \qed 

\medskip
Proposition \ref{action} implies that the divisor  $\cH_n-2\delta$ is fixed by the 
induced action of $\varphi$ on the N\'eron-Severi group of $S^{[n]}$. This motivates the study of its linear system, which is the object of this section.

\subsection{Polynomials vanishing on the $(n-2)$-th secant}
 From the definition of the line bundles $\cH_n$ and $2\delta$, we get  a clear geometric interpretation by reasoning as in \cite[Proposition 39]{invol1}, namely
\[|\cH_n-2\delta|\simeq \PP(H^0(S^{(n)},\cH^{\otimes n}\otimes\cI_{\Delta})\simeq I_n(Sec^{n-2}(S)),\]
the system of degree $n$ hypersurfaces in $L$ that contain the $(n-2)$-th secant variety of $S$ (we denoted by $\Delta$ the big diagonal in $S^{(n)}$). In other words, this is the space of hypersurfaces that are maximally singular on $S$, in the sense that their polynomial equations have all their derivatives of order $n-2$ vanishing on $S$.

\begin{prop}
The linear system $|\cH_n-2\delta|$ has dimension $\frac{n(n+3)}{2}$.
\end{prop}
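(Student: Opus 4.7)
The plan is to combine Huybrechts' Riemann-Roch formula for line bundles on $K3^{[n]}$-type manifolds with a vanishing argument for higher cohomology. The Riemann-Roch formula reads $\chi(X,L)=\binom{q(L)/2+n+1}{n}$; since the previous lemma gives $q(\cH_n-2\delta)=2$, this immediately yields
$$\chi(S^{[n]},\cH_n-2\delta)=\binom{n+2}{n}=\binom{n+2}{2}=\frac{(n+1)(n+2)}{2},$$
which is exactly $\tfrac{n(n+3)}{2}+1$, the desired value of $h^0$.

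Next I would argue that $H^i(S^{[n]},\cH_n-2\delta)=0$ for $i\ge 1$, so that $h^0$ coincides with the Euler characteristic. The canonical bundle of $S^{[n]}$ is trivial, and $\cH_n-2\delta$ has positive Beauville-Bogomolov square (hence is big); Kawamata-Viehweg vanishing then gives the desired vanishing as soon as the class is nef. Nefness is the content of Theorem \ref{bpf for nC=1} below, and can be invoked here as a forward reference. Alternatively one can appeal to Huybrechts' vanishing results for classes of positive BB-square in the birational K\"ahler cone of a hyperk\"ahler manifold of $K3^{[n]}$-type.

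The main obstacle is therefore this vanishing of higher cohomology, which rests on the non-trivial fact that $\cH_n-2\delta$ is nef, to be proved only in later sections; everything else reduces to a direct application of Riemann-Roch and the BB-square computation of the previous lemma. A more self-contained alternative would be to identify $H^0(S^{[n]},\cH_n-2\delta)$ with $H^0(S^{(n)},\cH^{\otimes n}\otimes\cI_\Delta)$ and count, via polarization, the degree $n$ polynomials on $L\cong\PP^{4n-2}$ that contain $Sec^{n-2}(S)$; this route is combinatorially heavier than the Riemann-Roch computation above and, given the clean numerics of the formula, I would favor the first approach.
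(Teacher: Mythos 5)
Your Riemann--Roch computation is correct and matches the paper's ($\chi=\binom{n+2}{2}=\frac{n(n+3)}{2}+1$, so the projective dimension is as claimed), but your main route to the vanishing of higher cohomology has a genuine gap. You invoke Kawamata--Viehweg on $S^{[n]}$ itself, which requires $\cH_n-2\delta$ to be nef \emph{on $S^{[n]}$}. By Proposition \ref{prop: ample cone} this nefness is \emph{equivalent} to $C_n=1$, which is only verified by computer for $n\le 200$ and is merely expected in general; Theorem \ref{bpf for nC=1} is stated under that hypothesis. So your argument proves the proposition only conditionally, whereas the statement (and the paper's proof) is unconditional. The forward reference is therefore not just an ordering issue but a real restriction of validity.

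The paper's proof sidesteps this: since $\cH_n-2\delta$ lies on a wall of the movable cone, it lies in the closure of some chamber, hence there is a hyperK\"ahler birational model $f:M\dashrightarrow S^{[n]}$ on which $f^*(\cH_n-2\delta)$ is nef, and big because its Beauville--Bogomolov square is $2>0$. Vanishing and Riemann--Roch are applied on $M$, and then $h^0(M,f^*(\cH_n-2\delta))\simeq h^0(S^{[n]},\cH_n-2\delta)$ because the indeterminacy locus of $f$ has codimension at least $2$. Your ``alternatively'' sentence (Huybrechts-type vanishing for classes in the birational K\"ahler cone) is essentially this correct unconditional route, but to complete it you must make explicit both the existence of the nef-and-big model and the transfer of global sections across the birational map; with those two steps added, your argument becomes the paper's.
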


\begin{proof}
Since $\cH_n-2\delta$ belongs to the movable cone, 
there exists a smooth birational model $f:M\dashrightarrow S^{[n]}$ 
for which $f^*(\cH_n-2\delta)$ is nef and big. Then it has no higher cohomology, and we can use the Hirzebruch-Riemann-Roch \cite[(5)]{debarre} to compute
$$h^0(M,f^*(\cH_n-2\delta))=\chi(M,f^*(\cH_n-2\delta))=\frac{(n+1)(n+2)}{2}.$$ 
Finally, since the indeterminacy locus of $f$ has codimension 
at least $2$,  $H^0(M,f^*(\cH_n-2\delta))\simeq H^0(S^{[n]},\cH_n-2\delta)$.
\end{proof}

The morphism $\phi_{\cH_n-2\delta}: S^{[n]}\dashrightarrow |\cH_n-2\delta|^\vee$ sends a generic (hence reduced, and not in the base locus of the linear system) scheme $Z=p_1+\cdots +p_n$ 
to the hyperplane in $I_n(Sec^{n-2}(S))$ parametrizing hypersurfaces
that contain the linear span $\langle p_1,\ldots,p_n \rangle $. 
That this is a codimension one linear condition follows from the fact that the $(n-2)$-th secant variety of $n$ points in general position is just the simplex generated by these points. This 
simplex is the union of $n$ hyperplanes in $\langle p_1,\ldots,p_n \rangle $, hence a 
hypersurface of degree $n$. So a degree $n$ hypersurface $(P=0)$ containing the simplex will 
contain its linear span, if and only if it contains any other given point in 
$\langle p_1,\ldots,p_n \rangle $. 
If we denote in the same way $P$ and its polarization, this linear condition can simply be stated as $$P(p_1,\ldots ,p_n)=0.$$

\begin{remark}
For an alternative description, observe that 
$$H^0(S^{(n)},\cH^{\otimes n}\otimes\cI_{\Delta})=Ker (S^nH^0(S,\cH)\lra H^0(S,\cH^{\otimes 2})\otimes S^{n-2}H^0(S,\cH)).$$
From this point of view, $p_1+\cdots +p_n$ defines a hyperplane in $S^nH^0(S,\cH)$, hence 
(in general) in this kernel, simply by evaluating sections of $\cH$ at 
the $n$ points $p_1,\ldots , p_n$.
\end{remark}

\subsection{The Pfaffian subsystem and its base locus}\label{sec: pfaff}\label{sec pfaffians}
There is a distinguished linear subsystem in $|\cH_n-2\delta|,$  
given by
$$\begin{array}{rcl}
 p_{2n+1}:   V_{2n+1} & \to & I_n(Sec^{n-2}(S))\\
    v        & \mapsto & P_v=v\wedge \_ \wedge \cdots \wedge \_.
\end{array}$$
Here we have fixed a volume form, that is, a generator of $\wedge^{2n+1}V_{2n+1}$, 
allowing to identify $P_v(\omega)=v\wedge \omega\wedge\cdots\wedge\omega$ ($n$ times)
with a scalar. 

\smallskip
Inside $\PP(\wedge^2V_{2n+1})$, the same formula defines a linear system of degree $n$ 
polynomials $P_v$ whose base locus is the variety of skew-symmetric tensors which are not of 
maximal rank $2n$. In particular, since $S$ parametrizes rank two tensors, $P_v$ certainly 
vanishes on $Sec^{n-2}(S)$. 

\begin{definition}\label{def pfaff}
    The image of $p_{2n+1}$ will be also denoted $V_{2n+1}$. We call the associated linear subsystem the \emph{Pfaffian subsystem}.
\end{definition}

Observe that a reduced scheme $Z=p_1+\cdots +p_n$ belongs to the base locus  of this
subsystem exactly when $p_1,\ldots ,p_n$ do not impose independent conditions 
on $H^0(S,\cU_2^\vee)$. We will denote this base locus by $J$. More generally, a finite scheme 
$Z$ defines a point of $J$ when the evaluation map $H^0(S,\cU_2^\vee)\lra H^0(Z,\cU^\vee_{2|Z})$ is not 
surjective. 

We always have $h^2(\cI_Z\otimes \cU_2^\vee)=0$ since $Z$ is zero-dimensional and $\cU_2^\vee$ has no higher cohomology. Moreover an easy computation yields $\chi(\cI_Z\otimes \cU_2^\vee)=1$. Hence the following Brill-Noether type description of $J$. 

\begin{lemma}\label{J-BrillNoether}
    A finite scheme $Z\in S^{[n]}$ belongs to $J$ if and only if 
   $$h^1(\cI_Z\otimes \cU_2^\vee)=h^0(\cI_Z\otimes \cU_2^\vee)-1>0.$$
\end{lemma}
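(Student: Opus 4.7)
The plan is to read the lemma off the long exact cohomology sequence associated with the standard ideal sheaf sequence
$$0\lra \cI_Z\otimes \cU_2^\vee\lra \cU_2^\vee\lra \cU_2^\vee|_Z\lra 0.$$
First I would record the relevant dimensions: $h^0(S,\cU_2^\vee)=\dim V_{2n+1}=2n+1$, the higher cohomology of $\cU_2^\vee$ vanishes (Mukai's theorem), and because $\cU_2^\vee$ has rank $2$ and $Z$ is a length-$n$ zero-dimensional subscheme, $h^0(Z,\cU_2^\vee|_Z)=2n$.

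Next I would write out the long exact sequence
$$0\lra H^0(\cI_Z\otimes \cU_2^\vee)\lra H^0(\cU_2^\vee)\lra H^0(Z,\cU_2^\vee|_Z)\lra H^1(\cI_Z\otimes \cU_2^\vee)\lra 0,$$
using the vanishing of $H^1(\cU_2^\vee)$ to terminate on the right. Taking alternating sums of the dimensions (and using $h^2(\cI_Z\otimes \cU_2^\vee)=0$, as already noted in the excerpt) yields the announced Euler characteristic computation
$$\chi(\cI_Z\otimes \cU_2^\vee)=h^0(\cI_Z\otimes \cU_2^\vee)-h^1(\cI_Z\otimes \cU_2^\vee)=(2n+1)-2n=1.$$
Hence $h^0(\cI_Z\otimes \cU_2^\vee)=h^1(\cI_Z\otimes \cU_2^\vee)+1$ unconditionally.

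Finally I would translate the defining condition for $J$, namely the non-surjectivity of the evaluation map $H^0(S,\cU_2^\vee)\to H^0(Z,\cU_2^\vee|_Z)$, directly into the vanishing-in-degree-one language via the same exact sequence: non-surjectivity of the middle arrow is equivalent to $h^1(\cI_Z\otimes \cU_2^\vee)>0$. Combined with the equality $h^0=h^1+1$ just established, this gives exactly $h^1(\cI_Z\otimes \cU_2^\vee)=h^0(\cI_Z\otimes \cU_2^\vee)-1>0$, proving the lemma. There is no real obstacle here: the whole argument is bookkeeping with the ideal sheaf sequence, and the only substantive inputs are facts already recorded before the statement, namely $H^i(S,\cU_2^\vee)=0$ for $i>0$ and the rank-times-length formula for $h^0(Z,\cU_2^\vee|_Z)$.
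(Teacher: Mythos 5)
Your argument is correct and is essentially the paper's own: the authors record $h^2(\cI_Z\otimes\cU_2^\vee)=0$ and $\chi(\cI_Z\otimes\cU_2^\vee)=1$ in the two sentences preceding the lemma and then read off the statement from the non-surjectivity of the evaluation map, which is exactly your bookkeeping with the ideal sheaf sequence. No gaps.
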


\begin{remark}
 If    $h^0(\cI_Z\otimes \cU_2^\vee)>1$, there is a pencil $\langle s_1, s_2\rangle$ of sections of $\cU_2^\vee$ vanishing at $Z$. Then $s_1\wedge s_2$ is a section of $\cH=\det (\cU_2^\vee)$, hence defines a hyperplane section which is singular along $Z$.  
\end{remark}

Recall that $ H^0(Z,\cU^\vee_{2|Z}) $ is the fiber at $Z$ of the tautological vector bundle 
of rank $2n$ on $S^{[n]}$ usually denoted $(\cU_2^\vee)^{[n]}$. We can then see $J$ globally as
the degeneracy locus of the vector bundle morphism 
\begin{equation}\label{eval}
H^0(S,\cU_2^\vee)\otimes \mathcal{O}_{S^{[n]}}\lra (\cU_2^\vee)^{[n]}.
\end{equation}

\begin{prop}\label{surJ}
    $J$ is a non-empty subscheme of $S^{[n]}$, of codimension at most two 
    at any of its points.
\end{prop}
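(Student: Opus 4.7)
My plan is to realize $J$ as a degeneracy locus and invoke the classical codimension bound. By Lemma~\ref{J-BrillNoether}, a subscheme $Z\in S^{[n]}$ lies in $J$ precisely when the fibre at $Z$ of the evaluation morphism~\eqref{eval} is not surjective. Thus $J=D_{2n-1}(\phi)$ is the locus where the rank of $\phi$ drops by at least one, for a morphism between locally free sheaves of ranks $e=2n+1$ and $f=2n$.

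The codimension estimate will then follow from the standard bound on the height of determinantal ideals (see for instance \S 14.4 of Fulton's \emph{Intersection Theory}, or Bruns--Vetter's \emph{Determinantal Rings}): any non-empty component of $D_{k}(\phi)$ has codimension at most $(e-k)(f-k)$. In our setting this equals $2\cdot 1=2$ at every point of $J$, giving the stated bound with its natural scheme structure.

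For the non-emptiness claim I would appeal to the Thom--Porteous formula, which computes the cycle class of $J$ as $[J]=c_1\bigl((\cU_2^\vee)^{[n]}\bigr)^2-c_2\bigl((\cU_2^\vee)^{[n]}\bigr)=(\cH_n-2\delta)^2-c_2\bigl((\cU_2^\vee)^{[n]}\bigr)$ in $H^4(S^{[n]},\QQ)$, using that $c_1((\cU_2^\vee)^{[n]})=\cH_n-2\delta$ (the very class whose linear system is the object of this section). Were $J$ empty, $\phi$ would be a bundle surjection with line-bundle kernel, and Whitney multiplicativity would force $c_2((\cU_2^\vee)^{[n]})=(\cH_n-2\delta)^2$. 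The hardest part will be to rule this out; concretely, I would compute $c_2$ of the tautological bundle via the standard formulas for Chern classes of tautological sheaves on Hilbert schemes (as in Ellingsrud--Lehn) and compare with $(\cH_n-2\delta)^2$ by pairing with $(\cH_n-2\delta)^{2n-2}$ and using Fujiki's formula, exploiting $q(\cH_n-2\delta)=2$. Alternatively, I would construct an explicit $Z\in J$ by choosing a codimension-two subspace $V_{2n-1}\subset V_{2n+1}$ such that $G(2,V_{2n-1})\cap S$ has length at least $n$, and taking $Z$ to be a length-$n$ subscheme of this intersection; the existence of such $V_{2n-1}$ is a Schubert-calculus question whose analysis would be delicate because the naive expected dimension of $G(2,V_{2n-1})\cap S$ is negative, so the specific geometry of the embedding $S\subset G(2,V_{2n+1})$ must be used.
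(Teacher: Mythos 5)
Your codimension bound is exactly the paper's argument: $J$ is the degeneracy locus $D_{2n-1}$ of a map from a trivial bundle of rank $2n+1$ to a bundle of rank $2n$, so every component has codimension at most $(e-k)(f-k)=2$. That half is fine.

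The gap is in the non-emptiness, which is the substantive claim. You correctly reduce it to the statement that, were $J$ empty, the kernel of the evaluation map would be a line bundle $\cK$ with $c(\cK)=s\bigl((\cU_2^\vee)^{[n]}\bigr)$, forcing $c_2=c_1^2$, i.e.\ $s_2\bigl((\cU_2^\vee)^{[n]}\bigr)=0$. But you then stop: you yourself flag that ruling this out is "the hardest part," and neither of your two proposed routes is carried through. Moreover the first route as stated is shaky: $\int c_2\bigl((\cU_2^\vee)^{[n]}\bigr)\cdot(\cH_n-2\delta)^{2n-2}$ is not controlled by Fujiki's formula, since $c_2$ of a tautological bundle is not a class in $\operatorname{Sym}^2 H^2(S^{[n]})$ nor a characteristic class of $S^{[n]}$, so the generalized Fujiki relation does not apply; one would have to do a genuine Ellingsrud--G\"ottsche--Lehn computation. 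The second route (producing an explicit $Z$ with small span) is, as you note, delicate precisely because the expected dimension is negative. The paper's proof closes this gap with one extra observation that your setup already contains: if $\cK$ is a line bundle then \emph{all} Segre classes $s_k\bigl((\cU_2^\vee)^{[n]}\bigr)$ vanish for $k>1$, not just $s_2$; in particular the top one does, whereas by Marian--Oprea--Pandharipande one has $\int_{S^{[n]}} s_{2n}\bigl((\cU_2^\vee)^{[n]}\bigr)=(-3)^n\binom{2n-2}{n-2}\neq 0$. Replacing your degree-$4$ obstruction by the top Segre class, whose integral is a known closed formula, turns your sketch into a complete proof.
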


\proof The second claim follows from the usual properties of degeneracy loci. 
For the first claim, we observe that if $J$ was empty, the evaluation morphism 
(\ref{eval}) would be surjective, and its kernel would be a line bundle. As a 
consequence the Segre classes  $s_k((\cU_2^\vee)^{[n]})$ would vanish for any $k>1$. 
But according to \cite[Theorem 3 (i)]{mop},
$$\int_{S^{[n]}} s_{2n}((\cU_2^\vee)^{[n]})=(-3)^n\binom{2n-2}{n-2}\ne 0,$$
a contradiction. \qed

\begin{lemma}
$p_{2n+1}$ is injective.
\end{lemma}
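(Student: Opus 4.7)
The plan is to prove the contrapositive: if $v\in V_{2n+1}$ satisfies $P_v|_L\equiv 0$, then $v=0$. The argument combines a linear-algebra interpretation of the Pfaffian polynomial $P_v$ with the covering property of the family of linear spans $\langle Z(s)\rangle$ established earlier.

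First I would fix a volume form on $\wedge^{2n+1}V_{2n+1}$, so that for $z\in L$, viewed via $\lambda^\vee$ as a skew-symmetric form $\psi_z\in\wedge^2V_{2n+1}$ on $V_{2n+1}^\vee=H^0(S,\cU_2^\vee)$, the value $P_v(z)=v\wedge\psi_z^{\wedge n}$ makes sense as a scalar. The key linear algebra is then: if $\psi\in\wedge^2V_{2n+1}$ has maximal rank $2n$, then $\psi^{\wedge n}\ne 0$ and, choosing a symplectic basis, one checks directly that $v\wedge\psi^{\wedge n}=0$ if and only if $v$ lies in the image of $\psi:V_{2n+1}^\vee\to V_{2n+1}$, equivalently if and only if $v$ annihilates $\ker(\psi)\subset V_{2n+1}^\vee$ under the canonical pairing.

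Next I would note that for a generic $z\in L$, the form $\psi_z$ has rank exactly $2n$: by Corollary \ref{unique Z(s)} such a $z$ lies in a unique span $\langle Z(s)\rangle$, so by Proposition \ref{covers} the kernel of $\psi_z$ is a single line, spanned by a section $s(z)$ characterized by $z\in \langle Z(s(z))\rangle$. Under the assumption $P_v|_L\equiv 0$, the previous step then forces the linear functional $v\in V_{2n+1}=H^0(S,\cU_2^\vee)^\vee$ to vanish on $s(z)$ for all generic $z$.

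Finally, I would check that the assignment $z\mapsto [s(z)]$ has Zariski-dense image in $\PP H^0(S,\cU_2^\vee)\simeq\PP^{2n}$ by a straightforward dimension count on the incidence variety
$$I=\bigl\{(z,[s])\in L\times\PP H^0(S,\cU_2^\vee):z\in\langle Z(s)\rangle\bigr\}.$$
The projection to $\PP H^0(S,\cU_2^\vee)$ has fibers the $\PP^{2n-2}$'s  $\langle Z(s)\rangle$ (coming from the Koszul sequence recalled before Proposition \ref{covers}), so $I$ is irreducible of dimension $4n-2=\dim L$; by Corollary \ref{unique Z(s)} the projection $I\to L$ is birational, and then for dimension reasons the projection $I\to\PP H^0(S,\cU_2^\vee)$ is surjective. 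Thus $v$ must annihilate a Zariski-dense subset of $\PP H^0(S,\cU_2^\vee)$, hence $v=0$.

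I do not anticipate a serious obstacle: the only slightly delicate point is the identification of $v\wedge\psi_z^{\wedge n}=0$ with $v\in\mathrm{image}(\psi_z)$ when $\psi_z$ has full rank, but this is standard once one picks a symplectic basis. Everything else is an application of Proposition \ref{covers} and Corollary \ref{unique Z(s)}.
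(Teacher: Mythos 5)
Your proof is correct, but it takes a genuinely different route from the paper's. The paper polarizes $P_v$ and, using that $S$ is nondegenerate in $L$, reduces to $P_v(p_1,\ldots,p_n)=v\wedge\alpha_1\wedge\beta_1\wedge\cdots\wedge\alpha_n\wedge\beta_n=0$ for points $p_i\in S$; taking the $p_i$ among the points of $Z(s)$ for $s$ generic, the $2n$ evaluation covectors $\alpha_i,\beta_i$ are independent (they impose independent conditions by Proposition \ref{prop: deg small grassmannian}) and all annihilate $s$, so $v$ lies in their span and hence $v(s)=0$; genericity of $s$ gives $v=0$. You reach the same punchline --- $v$ kills a dense family of sections --- but without polarizing: you work at a generic $z\in L$ not necessarily on the secant variety, use the standard symplectic-basis identity $v\wedge\psi^{\wedge n}=0\iff v\perp\ker\psi$ for $\psi$ of maximal rank $2n$, identify $\ker\psi_z=\CC s(z)$ via Proposition \ref{covers} and Corollary \ref{unique Z(s)}, and then need the extra step that $z\mapsto[s(z)]$ is dominant, which your incidence-variety dimension count ($\dim I=2n+(2n-2)=4n-2$, $I\to L$ birational, $I\to\PP H^0(S,\cU_2^\vee)$ surjective with irreducible source) correctly supplies. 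The paper's argument is shorter and avoids the dominance step by evaluating directly at points of $S$; yours has the merit of making transparent the role of the distribution of $\PP^{2n-2}$'s from Corollary \ref{unique Z(s)} and of the Pfaffian kernel, which is the same geometry that reappears in the proof of Proposition \ref{card fiber}. Both are complete.
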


\proof
Suppose $P_v$ is identically zero on $L$. Since $S\subset L$ is nondegenerate, 
this is equivalent to 
the condition that the polarization $P_v(p_1,\ldots ,p_n)=0$ for any $p_1,\ldots ,p_n\in S$. 
But each $p_i$ is identified with the codimension two subspace of sections of $\cU_2^\vee$ vanishing at 
$p_i$. If we chose a basis $a_i,b_i$ of the fiber of $\cU_2^\vee$ at $p_i$, and consider the dual
basis  $a_i^\vee,b_i^\vee$, this means that $p_i$ is identified with  $[\alpha_i\wedge \beta_i]$ where 
 $\alpha_i=a_i^\vee\circ ev_{p_i}$ and $\beta_i=b_i^\vee\circ ev_{p_i}$.

Now, $P_v(p_1,\ldots ,p_n)=v\wedge \alpha_1\wedge \beta_1\wedge\cdots \wedge \alpha_n\wedge \beta_n$. So suppose $s$ is a generic section, and that $p_1,\ldots , p_n$ are points in $Z(s)$. By the previous Proposition they impose independent conditions on sections of $\cU_2^\vee$  (recall that $H^0(S,\cU_2^\vee)=V_{2n+1}^\vee$), 
which means that $\alpha_1, \beta_1, \ldots, \alpha_n,\beta_n$ are independent. Thus 
$P_v(p_1,\ldots ,p_n)=0$ means that $v$ belongs to their linear span. In particular $v$ 
has to vanish on $s$, since $\alpha_1, \beta_1, \ldots, \alpha_n,\beta_n$ do by construction. 
But $s$ is generic, so necessarily $v=0$. \qed

\begin{remark} Alternatively, one can observe that 
$$\det((\cU_2^\vee)^{[n]})=\cH_n-2\delta$$
(see \cite[Lemma 1.5]{wandel}), and that $p_{2n+1}$ is induced by the  morphism
$$\wedge^{2n}H^0(S,\cU_2^\vee)\simeq \wedge^{2n}H^0(S^{[n]},(\cU_2^\vee)^{[n]})\lra  \hspace*{5cm}$$
$$  \hspace*{5cm} \lra H^0(S^{[n]}, \wedge^{2n}(\cU_2^\vee)^{[n]})\simeq H^0(S^{[n]},\cH_n-2\delta).$$
\end{remark}

\subsection{The generic degree}
Our  linear systems define a commuting diagram
\begin{equation}\label{subsystem map}
\xymatrix{ &  S^{[n]}\ar@{..>}[dr]^{\phi_{V_{2n+1}}}\ar@{..>}[dl]_{\phi_{\cH_n-2\delta}} & \\  \PP(I_{n}(Sec^{n-2}(S))^\vee)\ar@{..>}[rr]_{p_{V_{2n+1}^\vee}} & &  \PP(V_{2n+1}^\vee) }
\end{equation}

\smallskip\noindent 
where $p_{V_{2n+1}^\vee}$ is a linear projection. We will prove later on that 
$\phi_{\cH_n-2\delta}$ is  regular under the hypothesis that $\cH_n-2\delta$ is nef and big, which we expect to hold true in full generality (see Theorem \ref{bpf for nC=1} and Proposition \ref{nC=1}). 

\begin{lemma}\label{factors via quotient, def arg}
$\phi_{\cH_n-2\delta}$ factors through the quotient by $\varphi$.
\end{lemma}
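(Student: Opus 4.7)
The plan is to show that for a generic $Z\in S^{[n]}$, the images $\phi_{\cH_n-2\delta}(Z)$ and $\phi_{\cH_n-2\delta}(\varphi(Z))$ coincide in $\PP(I_n(Sec^{n-2}(S))^\vee)$. By Proposition \ref{action} the class $\cH_n-2\delta$ is $\varphi$-invariant, and since $\varphi$ has indeterminacy in codimension $\ge 2$, there is a well-defined linear involution $\varphi^*$ on $H^0(S^{[n]},\cH_n-2\delta)$. The morphism $\phi_{\cH_n-2\delta}$ factors through $\varphi$ if and only if $\varphi^*$ acts as a scalar on that space—necessarily $\pm\mathrm{Id}$.

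First I would verify the triviality of this action on the Pfaffian subsystem $V_{2n+1}$. The identification $|\cH_n-2\delta|\simeq I_n(Sec^{n-2}(S))$ sends a generic $Z=\{p_1,\ldots,p_n\}$ to the linear form $ev_Z\colon P\mapsto P(p_1,\ldots,p_n)$. Restricted to the Pfaffian subsystem, $ev_Z$ takes $v\in V_{2n+1}$ to the Pfaffian $v\wedge\alpha_1\wedge\beta_1\wedge\cdots\wedge\alpha_n\wedge\beta_n$ computed in Section \ref{sec pfaffians}, which vanishes iff $v$ lies in the hyperplane $V_{2n}\subset V_{2n+1}$ spanned by the $2$-planes of $Z$. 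Since $\varphi(Z)\subset Z(s)=G(2,V_{2n})\cap S$ spans the same hyperplane $V_{2n}$, the linear forms $ev_Z|_{V_{2n+1}}$ and $ev_{\varphi(Z)}|_{V_{2n+1}}$ share the same kernel, hence are proportional. This establishes both that $\phi_{V_{2n+1}}(Z)=\phi_{V_{2n+1}}(\varphi(Z))$ and that $\varphi^*$ acts as $+\mathrm{Id}$ on $V_{2n+1}$, so that the scalar, if any, must be $+1$.

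The main step is to extend this triviality to all of $H^0(S^{[n]},\cH_n-2\delta)$. For this I would use the Koszul resolution of the ideal of $Z(s)$,
\[
0\to\cO_S\to\cU_2^\vee\to\cH\to\cH|_{Z(s)}\to 0,
\]
whose associated long exact sequence produces a canonical one-dimensional cokernel $\CC$, that is, a linear form $\lambda=(\lambda_1,\ldots,\lambda_{2n})$ on $H^0(\cH|_{Z(s)})=\bigoplus_{i}\cH_{r_i}$ with the property $\sum_i\lambda_i\,ev_{r_i}(\sigma)=0$ for every $\sigma\in H^0(\cH)$, where $Z(s)=\{r_1,\ldots,r_{2n}\}$. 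Raising this relation to its $n$-th symmetric power and restricting to $I_n(Sec^{n-2}(S))$—on which every evaluation at an $n$-tuple with a repeated argument vanishes, since $\tilde P(r,r,\cdots)=0$ for $P\in I_n(Sec^{n-2}(S))$ by the polarization characterization of $Sec^{n-2}(S)$—produces the single symmetric relation
\[
\sum_{|I|=n}\Bigl(\prod_{i\in I}\lambda_i\Bigr)\,ev_I\big|_{I_n(Sec^{n-2}(S))}=0
\]
among the $\binom{2n}{n}$ evaluation functionals $ev_I$, for $I$ an $n$-subset of $Z(s)$. Combined with the fact, established above, that $p_{V_{2n+1}^\vee}\circ\phi_{\cH_n-2\delta}$ identifies $Z$ and $\varphi(Z)$, and with the $S_{2n}$-monodromy action on $Z(s)$ (Proposition \ref{monodromy}), a direct rank-count yields the pairwise proportionality $ev_I\propto ev_{I^c}$ for every $n$-subset $I\subset Z(s)$, in particular for $I=Z$ and $I^c=\varphi(Z)$.

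The principal obstacle is precisely the last step: extracting the pairwise proportionality relations from the single Koszul-derived identity. If this combinatorial extraction proves too delicate, an alternative route is to replace it with an argument directly on the space of sections—for instance, by identifying a generating family of $\varphi$-invariant subsystems of $H^0(S^{[n]},\cH_n-2\delta)$ containing the Pfaffian subsystem, or by invoking the classification of involutions of $S^{[n]}$ with prescribed action on $H^2$ to identify $\varphi^*$ intrinsically as a scalar.
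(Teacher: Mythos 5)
Your overall strategy (show that $\varphi^*$ acts as a scalar on $H^0(S^{[n]},\cH_n-2\delta)$, equivalently that $\mathrm{ev}_Z$ and $\mathrm{ev}_{\varphi(Z)}$ are proportional for generic $Z$) is legitimate, and several ingredients are correct: the identification of $\mathrm{ev}_Z|_{V_{2n+1}}$ with a functional whose kernel is the hyperplane $V_{2n}$ spanned by $Z$ does show that $\phi_{V_{2n+1}}$ factors through $\varphi$, and the Koszul relation $\sum_i\lambda_i r_i=0$ together with the vanishing of polarizations of elements of $I_n(Sec^{n-2}(S))$ at repeated arguments does yield the identity $\sum_{|I|=n}\bigl(\prod_{i\in I}\lambda_i\bigr)\mathrm{ev}_I=0$. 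But the proof fails exactly where you flag it: one linear relation among the $\binom{2n}{n}$ functionals $\mathrm{ev}_I$ in the space $I_n(Sec^{n-2}(S))^\vee$ of dimension $\frac{(n+1)(n+2)}{2}$ cannot yield the pairwise proportionalities $\mathrm{ev}_I\propto\mathrm{ev}_{I^c}$; the module of relations among these functionals has dimension at least $\binom{2n}{n}-\frac{(n+1)(n+2)}{2}$, and exhibiting one canonical element of it constrains nothing. Monodromy adds no further information, since the relation you wrote is already $S_{2n}$-equivariant by construction ($\lambda_I=\prod_{i\in I}\lambda_i$). Note moreover that what you are trying to prove by this route is essentially Proposition \ref{card fiber} (that the generic fiber of $\phi_{\cH_n-2\delta}$ is exactly $\{Z,\varphi(Z)\}$), which the paper only obtains by a substantially harder geometric argument involving the ramification of $\eta$, the Pfaffian locus $\cD$ and chains of linear spaces $\langle Z(s)\rangle$ --- and which takes the present lemma as an input.

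Two further remarks. First, ``same kernel'' only gives proportionality with an undetermined constant; it shows neither that $\varphi^*$ preserves the subspace $V_{2n+1}\subset H^0(S^{[n]},\cH_n-2\delta)$ nor that it acts there as $+\mathrm{Id}$ (this is harmless for the lemma, since only projective triviality is needed, but the sign claim is unsupported). Second, the paper's proof is completely different and one line long: since $\cH_n-2\delta$ is $\varphi$-invariant and becomes ample on a general small deformation of $(S^{[n]},\cH_n-2\delta)$, where $\varphi$ deforms to a biregular involution, \cite[Corollary 4.7]{debarre} gives directly that $\phi_{\cH_n-2\delta}$ factors through $\varphi$. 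Your second fallback option points in this direction but is not developed.
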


\begin{proof}
This follows from \cite[Corollary 4.7]{debarre}, since $\cH_n-2\delta$ becomes ample on a general small deformation of $(S^{[n]},\cH_n-2\delta)$.
\end{proof}

The rational map $\phi_{V_{2n+1}}$
to $\PP(V_{2n+1}^\vee)=\PP H^0(S,\cU_2^\vee)$ sends a scheme $Z\notin J$ to the unique (up to scalar)
section $s$ of $\cU_2^\vee$ that vanishes on $Z$.  Since in general $Z(s)$ consists in $2n$ 
simple points, $\phi_{V_{2n+1}}$ is generically $\binom{2n}{n}$-to-$1$.

\begin{coro}
$\phi_{\cH_n-2\delta}$ is generically finite of degree $d\ge 2$ dividing  $\binom{2n}{n}$.
\end{coro}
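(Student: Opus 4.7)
The plan is to exploit the commuting diagram (\ref{subsystem map}) to transfer information from the already-understood map $\phi_{V_{2n+1}}$ to $\phi_{\cH_n-2\delta}$. The key input is that $\phi_{V_{2n+1}}$ is generically $\binom{2n}{n}$-to-$1$ onto a $2n$-dimensional image, as established just before the corollary: a generic $Z\in S^{[n]}$ determines a unique section $s\in H^0(S,\cU_2^\vee)$ vanishing on $Z$ (Proposition \ref{prop: deg small grassmannian}), and the fiber of $\phi_{V_{2n+1}}$ over $[s]$ consists of the $\binom{2n}{n}$ ways of picking $n$ among the $2n$ simple points of $Z(s)$.

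Next I would upgrade this to generic finiteness of $\phi_{\cH_n-2\delta}$. Let $X$ denote the closure of the image of $\phi_{\cH_n-2\delta}$ and $Y$ the closure of the image of $\phi_{V_{2n+1}}$. The commutativity of (\ref{subsystem map}) gives $p_{V_{2n+1}^\vee}(X)=Y$. Since $p_{V_{2n+1}^\vee}$ is a linear projection (hence has linear fibers), we get $\dim X\ge \dim Y=2n=\dim S^{[n]}$, which forces $\phi_{\cH_n-2\delta}$ to be generically finite onto $X$. Now let $d=\deg\phi_{\cH_n-2\delta}$ and let $e$ be the degree of the dominant restriction $p_{V_{2n+1}^\vee}|_X\colon X\dashrightarrow Y$. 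Multiplicativity of degrees in the composition $\phi_{V_{2n+1}}=p_{V_{2n+1}^\vee}\circ\phi_{\cH_n-2\delta}$ yields $d\cdot e=\binom{2n}{n}$, so in particular $d$ divides $\binom{2n}{n}$.

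For the lower bound $d\ge 2$, I would invoke Lemma \ref{factors via quotient, def arg}: $\phi_{\cH_n-2\delta}$ factors through the quotient $S^{[n]}/\langle\varphi\rangle$. Since $\varphi$ is a non-trivial birational involution by Proposition \ref{action}, this quotient map is itself generically $2$-to-$1$, hence $d\ge 2$.

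There is no serious obstacle in this argument. The only point that deserves a moment's care is the application of multiplicativity of degrees to a composition of rational maps; this is handled by restricting everything to a common open subset of $S^{[n]}$ on which both $\phi_{\cH_n-2\delta}$ and the restriction of $p_{V_{2n+1}^\vee}$ to $X$ are morphisms with fibers of the expected cardinality, which is possible because $\phi_{\cH_n-2\delta}$ is generically finite and $p_{V_{2n+1}^\vee}|_X$ is a dominant rational map between projective varieties of the same dimension.
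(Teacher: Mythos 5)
Your argument is correct and is essentially the one the paper intends: the divisibility comes from the factorization $\phi_{V_{2n+1}}=p_{V_{2n+1}^\vee}\circ\phi_{\cH_n-2\delta}$ through the linear projection together with the fact that $\phi_{V_{2n+1}}$ is generically $\binom{2n}{n}$-to-$1$, and the bound $d\ge 2$ comes from Lemma \ref{factors via quotient, def arg} and the non-triviality of $\varphi$. Your explicit dimension count establishing generic finiteness and your care with multiplicativity of degrees for rational maps only make explicit what the paper leaves implicit.
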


\begin{remark}
For $n=3$, we gave in \cite{invol1} a geometric proof of the fact that 
$\phi_{\cH_n-2\delta}$ factorizes through $\varphi$, based on the fact that the 
$2n$ points of $Z(s)$ are not in general linear position. Indeed, the twisted Koszul 
complex 
$$ 0\lra \cO_S\lra \cU_2^\vee\lra I_{Z(s)}\otimes \cH\lra 0$$
implies that $h^0(S, I_{Z(s)}\otimes \cH)=2n$, so that the linear span $\langle Z(s)\rangle\simeq \PP^{2n-2}$ has one dimension less than expected.
\end{remark}

In \cite{invol1} we also proved for $n=3$ that the degree of the generically finite morphism 
$\phi_{\cH_n-2\delta}$ is $d=2$. In fact this is always the case.

\begin{prop} \label{card fiber}
    $d=2$. 
\end{prop}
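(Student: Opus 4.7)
The plan is to combine Proposition~\ref{monodromy} with a classification of $S_{2n}$-invariant equivalence relations on $\binom{[2n]}{n}$, and then to rule out the remaining bad case by a Picard-theoretic dimension count.

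Fixing a generic $[s]\in \PP V_{2n+1}^\vee$, the fiber of $\phi_{V_{2n+1}}$ above $[s]$ is precisely the set $\binom{Z(s)}{n}$ of $n$-element subsets of $Z(s)$. Since $\phi_{V_{2n+1}} = p_{V_{2n+1}^\vee}\circ\phi_{\cH_n-2\delta}$, the restriction of $\phi_{\cH_n-2\delta}$ induces a partition of this finite set into classes of size $d$, and Lemma~\ref{factors via quotient, def arg} together with Proposition~\ref{intro-involution} ensures that each class contains the $\varphi$-pair $\{Z,\, Z(s)\setminus Z\}$. By Proposition~\ref{monodromy}, the monodromy on $Z(s)$ is the full $S_{2n}$, so the induced action on $n$-subsets makes this partition $S_{2n}$-invariant. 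Any such invariant equivalence takes the form $Z\sim Z'$ iff $|Z\cap Z'|\in T$ for some $T\subseteq\{0,1,\ldots,n\}$ with $\{0,n\}\subseteq T$. A direct check shows that, when $|A\cap B|=k$ and $|B\cap C|=l$, the intersection $|A\cap C|$ achieves every integer in $[|k+l-n|,\,n-|k-l|]$; transitivity therefore forces $T$ to be closed under this interval operation. Taking $(k,l)=(k,k)$ puts $[|2k-n|,n]$ into $T$, and using the symmetry $k\leftrightarrow n-k$ (obtained by taking $l=0$ in the interval rule) one verifies that any $k\in T\cap(0,n)$ propagates all of $\{0,1,\ldots,n\}$ into $T$. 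Hence only $T=\{0,n\}$ (yielding $d=2$) and $T=\{0,1,\ldots,n\}$ (yielding $d=\binom{2n}{n}$) survive.

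To rule out $d=\binom{2n}{n}$, observe that in that case $p_{V_{2n+1}^\vee}$ restricts to a birational map from the image of $\phi_{\cH_n-2\delta}$ onto $\PP V_{2n+1}^\vee\simeq \PP^{2n}$, so its rational inverse $\alpha$ satisfies $\phi_{\cH_n-2\delta}=\alpha\circ\phi_{V_{2n+1}}$. Pulling back $\cO(1)$ and using that $\phi_{V_{2n+1}}^*\cO(1)=\cH_n-2\delta$ is non-trivial, so that $\phi_{V_{2n+1}}^*$ is injective on Picard groups, one deduces $\alpha^*\cO(1)=\cO(1)$. Thus $\alpha$ is given by linear forms on $\PP^{2n}$, and its image---which coincides with the image of $\phi_{\cH_n-2\delta}$---lies in a linear subspace of $\PP(|\cH_n-2\delta|^\vee)\simeq \PP^{n(n+3)/2}$ of projective dimension at most $2n$. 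But the image of $\phi_{\cH_n-2\delta}$ must be non-degenerate in $\PP(|\cH_n-2\delta|^\vee)$ since the linear system is complete, and $n(n+3)/2>2n$ for $n\geq 2$: contradiction. Hence $d=2$.

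The main obstacle I anticipate is the combinatorial closure step, which amounts to verifying that a single intermediate $k\in T\cap(0,n)$ propagates to all of $\{0,1,\ldots,n\}$ via the interval rule---elementary, but requiring a careful iteration combined with the $k\leftrightarrow n-k$ symmetry.
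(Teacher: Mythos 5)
Your reduction to the dichotomy $d\in\{2,\binom{2n}{n}\}$ is correct and is essentially the paper's own first step: the paper also combines Proposition~\ref{monodromy} with a combinatorial argument on $n$-subsets of $Z(s)$ to reach exactly this alternative (your packaging via $S_{2n}$-invariant equivalence relations and the interval rule on the Johnson scheme $J(2n,n)$ is a tidier way to organize the same iteration, and the propagation of a single $k\in T\cap(0,n)$ to all of $\{0,\dots,n\}$ does check out). The divergence, and the problem, is in how you exclude $d=\binom{2n}{n}$.

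Your Picard-theoretic step has a genuine gap. From $\phi_{\cH_n-2\delta}=\alpha\circ\phi_{V_{2n+1}}$ you infer $\alpha^*\cO(1)=\cO(1)$ by writing $\cH_n-2\delta=\phi_{V_{2n+1}}^*(\alpha^*\cO(1))$ and invoking injectivity of $\phi_{V_{2n+1}}^*$. But the identity $(\alpha\circ\phi_{V_{2n+1}})^*\cO(1)=\phi_{V_{2n+1}}^*(\alpha^*\cO(1))$ is not automatic for rational maps: if $\alpha$ is given by degree-$e$ forms without common factor, the pulled-back system lies in $|e(\cH_n-2\delta)|$ and the map it defines is $\phi_{\cH_n-2\delta}$ only after removing its divisorial fixed component $F'$, which is supported on divisors that $\phi_{V_{2n+1}}$ sends into the indeterminacy locus of $\alpha$. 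The correct conclusion is therefore only $F'=(e-1)(\cH_n-2\delta)$, and since $\cH_n-2\delta$ is effective this is a perfectly consistent effective class for every $e\ge 1$; no contradiction follows for $e\ge 2$ unless you additionally prove $F'=0$, i.e.\ that the generically finite map $\phi_{V_{2n+1}}$ contracts no divisor into $\mathrm{Bs}(\alpha)$. Nothing available at this point of the paper rules that out (nefness and base-point-freeness of $\cH_n-2\delta$ come only later, under the hypothesis $C_n=1$, and even then the issue concerns the incomplete Pfaffian subsystem). This missing step is exactly where the paper's proof invests its real work: it shows instead that if $d=\binom{2n}{n}$ then vanishing on $\langle Z(s)\rangle$ is a codimension-one condition on $I_n(Sec^{n-2}(S))$, and derives a contradiction by propagating the constant value of $P/P_0$ along the web of $(2n-2)$-planes $\langle Z(s)\rangle$ through the Pfaffian locus $\cD$ via the ramification divisor $\cR$ (Corollary~\ref{unique Z(s)}, Lemma~\ref{ramification}, Remark~\ref{D}). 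You would need either to supply that geometric argument or to close the fixed-component loophole by an independent positivity argument.
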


\proof Recall the result of Proposition \ref{monodromy}, which is that the monodromy group of the zero-locus of the generic section of $\cU_2^\vee$ is the full symmetric group. Denote the $2n$ points of this zero-locus by  $p_1,\ldots , p_n, q_1, \ldots , q_n$ as above. We know that $p_1+\cdots +p_n$ and $q_1+\cdots +q_n$ are in the same fiber
$F$ of $\phi_{\cH_n-2\delta}$, and that any other point of $F$ must be of the form $p_I+q_J$
for some $I,J\subset \{1,\ldots ,n\}$ of respective sizes $i$ and $j=n-i$. If such a 
point does exist with $i,j>0$, then by  monodromy  all the points $p_{I'}
+q_{J'}$ with $I'$ and $J'$ of the same sizes $i$ and $j$ must also belong to $F$. 

In other words, there exist $i,j>0$, with $i+j=n$, such that each time we consider a 
generic point $p_1+\cdots +p_n$ of $S^{[n]}$, all the other points $p_I+q_J$
with $I,J$ of respective sizes $i$ and $j$, belong to the same fiber. So let us apply 
this to $p'_1+\cdots +p'_n=p_I+q_J$ and $q'_1+\cdots +q'_n=p_{I^c}+q_{J^c}$. We get 
in the same fiber all the points $p'_{I'}+q'_{J'}$, where $I'$ is obtained by choosing
$i$ points among the $p_I$'s and $q_J$'s, and $J'$ is obtained by choosing $j$ points among the $p_{I^c}$'s and $q_{J^c}$'s. Suppose the respective numbers of these points are $i-a, a, j-b, b$ respectively; note that the size of $I^c$ is $j$, so the only constraints are that 
$0\le a,b\le i,j$. Then we get in the fiber a point of the form $p_A+q_B$ with $B$
of size $a+b$, and we also get $p_{A^c}+q_{B^c}$ where $A^c$ has size $a+b$. 

We conclude that 
each time we have a 
point $p_I+q_J$ in the fiber $F$, with $i,j>0$, we also get all the $p_K+q_L$ 
with $K$ of size at most 
$2\min (i,j)$. Iterating this process if necessary, we will exhaust all the possible values 
of $i$ and $j$, which means that in fact, all the   $p_K+q_L$'s are contained in the 
fiber $F$. Let us exclude this last possibility.

\begin{lemma} $d\ne \binom{2n}{n}$. \end{lemma}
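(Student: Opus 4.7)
The plan is to rule out $d=\binom{2n}{n}$ by a dimension comparison between the complete linear system $|\cH_n-2\delta|$ and its Pfaffian subsystem $V_{2n+1}$: under this hypothesis the former would factor through the latter by a linear map, and the image would have to be a proper linear subspace of $\PP^{n(n+3)/2}$, violating non-degeneracy.

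Suppose for contradiction that $d=\binom{2n}{n}$. By the preceding discussion the generic fiber of $\phi_{\cH_n-2\delta}$ through $Z\subset Z(s)$ is then the full set of $n$-subsets of $Z(s)$. But this coincides with the generic fiber of the Pfaffian map $\phi_{V_{2n+1}}:S^{[n]}\dashrightarrow \PP V_{2n+1}^\vee=\PP^{2n}$, since every $n$-subset of $Z(s)$ is sent to $[s]$. Having the same generic fibers, we would obtain a birational factorisation $\phi_{\cH_n-2\delta}=h\circ \phi_{V_{2n+1}}$ for some rational map $h:\PP^{2n}\dashrightarrow \PP^{n(n+3)/2}$.

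Next I would identify $h$ as a linear map. Both $\phi_{\cH_n-2\delta}^*\cO(1)$ and $\phi_{V_{2n+1}}^*\cO(1)$ equal $\cH_n-2\delta$, and since $\phi_{V_{2n+1}}$ is dominant and $\mathrm{Pic}(\PP^{2n})=\ZZ$, the chain
\[
\phi_{V_{2n+1}}^*\bigl(h^*\cO(1)\bigr)=\phi_{\cH_n-2\delta}^*\cO(1)=\cH_n-2\delta=\phi_{V_{2n+1}}^*\cO_{\PP^{2n}}(1)
\]
forces $h^*\cO(1)=\cO_{\PP^{2n}}(1)$. Therefore $h$ is defined by linear forms, and its image $h(\PP^{2n})\subset \PP^{n(n+3)/2}$ is a linear subspace of dimension at most $2n$.

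Finally, because $\phi_{\cH_n-2\delta}$ is given by the \emph{complete} linear system of $\cH_n-2\delta$, its image is non-degenerate in $\PP^{n(n+3)/2}$. Combined with the factorisation, this forces $2n\ge \tfrac{n(n+3)}{2}$, i.e.\ $n\le 1$, contradicting $n\ge 2$. The main obstacle I expect is making the factorisation $\phi_{\cH_n-2\delta}=h\circ\phi_{V_{2n+1}}$ and the Picard-group identification precise on the right open subsets; once those are in place the remaining steps are purely numerical.
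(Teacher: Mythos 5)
Your reduction is correctly set up: under the hypothesis $d=\binom{2n}{n}$ the generic fibres of $\phi_{\cH_n-2\delta}$ and of the Pfaffian map $\phi_{V_{2n+1}}$ do coincide, the factorisation $\phi_{\cH_n-2\delta}=h\circ\phi_{V_{2n+1}}$ exists, and the closing numerical step is fine. The gap is the linearity of $h$. The identity $(h\circ\phi_{V_{2n+1}})^*\cO(1)=\phi_{V_{2n+1}}^*(h^*\cO(1))$ is not a formality for rational maps: it fails precisely when $\phi_{V_{2n+1}}$ contracts a divisor into the indeterminacy locus of $h$, in which case the pulled-back linear system acquires a fixed component $F$ and the correct relation is $\cH_n-2\delta=e(\cH_n-2\delta)-F$ with $e=\deg h$. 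This is a real phenomenon, not a bookkeeping issue. On $\PP^2$ take the complete system $|\cO_{\PP^2}(2)|$ (the Veronese, injective) and the subsystem $\langle x^2,y^2,xy,xz\rangle$, whose base locus is the single point $[0:0:1]$ and which is also generically injective; the two maps have the same generic fibres, yet the connecting map $h:\PP^3\dashrightarrow\PP^5$, $[a:b:c:d]\mapsto[a^2:c^2:d^2:ac:ad:cd]$, is given by quadrics, not linear forms. The discrepancy is the line $\{x=0\}$, which the subsystem contracts to the indeterminacy point $[0:1:0:0]$ of $h$ and which appears with multiplicity two as the fixed part of the pulled-back system. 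So to conclude $e=1$ you must exclude $\phi_{V_{2n+1}}$-contracted divisors summing to a member of $|(e-1)(\cH_n-2\delta)|$; that is genuine geometric information about $\phi_{V_{2n+1}}$, not available in Section 3. (A second, smaller issue: $\phi_{V_{2n+1}}^*\cO(1)=\cH_n-2\delta$ already requires the base locus $J$ to contain no divisor, whereas at this point only the bound ``codimension at most two'' of Proposition \ref{surJ} is known; the codimension is pinned down only in Section 6.)

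The paper's proof goes a different way and supplies exactly the propagation input your argument is missing. It works on the ambient space $L=\PP H^0(S,\cH)^\vee$: the hypothesis implies that, for every $P$ in $I_n(Sec^{n-2}(S))$ and a fixed $P_0$, the ratio $P/P_0$ is constant on each of the disjoint linear spaces $\langle Z(s)\rangle\simeq\PP^{2n-2}$ covering a dense open subset of $L$ (Corollary \ref{unique Z(s)}). It then connects any two such spaces through a third one meeting both, using the ramification divisor $\cR$ of $\eta$ (Lemma \ref{ramification}) and the Pfaffian locus $\cD$ of Remark \ref{D}, to conclude that $P/P_0$ is globally constant, contradicting $\dim I_n(Sec^{n-2}(S))>1$. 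Some connectivity argument of this kind seems unavoidable; the purely cohomological bookkeeping in your proposal cannot by itself distinguish the actual situation from ones where $h$ is genuinely nonlinear.
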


We proceed by contradiction. This would imply that there is a hyperplane in $I_n(Sec^{n-2}(S))$ consisting of $n$-ics vanishing 
on all the linear spaces spanned by any $n$ points among $p_1,\ldots , p_n, q_1,\ldots ,q_n$. Recall moreover that the linear span $\langle Z(s)\rangle$ of these points has codimension $2n$ in $L$, 
hence dimension $2n-2$. Since the points are in general position, such an $n$-ic has to vanish,
and we conclude that vanishing on $\langle Z(s)\rangle$ is a codimension one condition on $I_n(Sec^{n-2}(S))$.

Let us fix a general nonzero section $s_0$ and a general polynomial $P_0$ in $I_n(Sec^{n-2}(S))$, that does not vanish on $\langle Z(s_0)\rangle$. For any $s$ in a neighbourhood of $s_0$, there
is then a linear form $\alpha_s$ on  such that the polynomials that vanish on $\langle Z(s)\rangle$ are those
of the form $P-\alpha_s(P)P_0$. This implies that 
$P(z)/P_0(z)$ is constant on $\langle Z(s)\rangle$.
Let us prove that this constant does in fact not depend on $s$, 
which will imply that $P$ is a multiple of $P_0$ and yield the 
contradiction we are aiming at. 

Consider two generic points $z,z'\in L$; by Corollary \ref{unique Z(s)}, they belong respectively to $\langle Z(s)\rangle$ and $\langle Z(s')\rangle$ for two generic sections of $\cU_2^\vee$, uniquely defined up to scalar. Since belonging to the ramification divisor $\cR$ is a codimension one condition, we can find a section $t$ such that the lines $\langle s,t\rangle$
and $\langle s',t\rangle$ belong to $\cR$. Then by Lemma \ref{ramification},$ \langle Z(s)\rangle$ and $\langle Z(t)\rangle$ meet at some point $x\in\cD$,
and similarly  $\langle Z(s')\rangle$ and $\langle Z(t)\rangle$ meet at some point $x'\in\cD$ (recall Remark \ref{D}). Moreover, since $z$ is generic in $L$, $x$ is generic in $\cD$. 

By definition, $\cD$ is cut-out by the degree $n$ Pfaffian equations, so we can find $P_0\in I_n(Sec^{n-2}(S))$ that does not vanish identically on $\cD$. Since $x$, and symmetrically $x'$, are generic in $\cD$, $P_0(x)$ and $P_0(x')$ do not vanish. 
But then for any $P\in I_n(Sec^{n-2}(S))$, since $P/P_0$ is constant on each linear space $\langle Z(u)\rangle$, it must take the same value at $z$ and at $x$, then at $x$ and at $x'$, then at $x'$ and at $z'$. Since it takes the same value at the generic points $z$ and $z'$, $P/P_0$ must therefore be constant, which is impossible.  
\qed

\begin{coro}\label{phi is covering inv}
$\varphi$ is the covering involution associated to  $\phi_{\mathcal{H}_n-2\delta}$.
\end{coro}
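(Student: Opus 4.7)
The plan is to assemble three facts already established above. First, by Lemma \ref{factors via quotient, def arg}, the rational map $\phi_{\cH_n-2\delta}$ factors through the quotient by $\varphi$, i.e.\ there is a commutative diagram
$$\xymatrix{S^{[n]}\ar@{..>}[dr]_{\pi}\ar@{..>}[rr]^{\phi_{\cH_n-2\delta}} & & \PP(I_n(Sec^{n-2}(S))^\vee) \\ & S^{[n]}/\langle\varphi\rangle \ar@{..>}[ur]_{\bar\phi} & }$$
where $\pi$ is the quotient map. Second, by Proposition \ref{action}, $\varphi$ is a \emph{non-trivial} birational involution, so $\pi$ is generically $2$-to-$1$. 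Third, by Proposition \ref{card fiber} we just proved that $\phi_{\cH_n-2\delta}$ is generically finite of degree exactly $2$.

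Taking generic degrees in the factorization above yields $2=\deg(\bar\phi)\cdot \deg(\pi)=2\deg(\bar\phi)$, hence $\bar\phi$ is birational onto its image. In particular, for a generic $Z\in S^{[n]}$ the fiber $\phi_{\cH_n-2\delta}^{-1}(\phi_{\cH_n-2\delta}(Z))$ consists of the single $\varphi$-orbit $\{Z,\varphi(Z)\}$. This is exactly the assertion that $\varphi$ is the covering involution attached to the generically $2$-to-$1$ morphism $\phi_{\cH_n-2\delta}$.

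There is no real obstacle left at this point: the corollary is simply a bookkeeping consequence of the earlier results. The substantive work was already done in Proposition \ref{card fiber}, where the main difficulty was excluding via the monodromy result of Proposition \ref{monodromy} and the Pfaffian geometry of Remark \ref{D} the possibility that $d=\binom{2n}{n}$, and in Lemma \ref{factors via quotient, def arg}, which rests on the deformation-theoretic criterion of \cite[Corollary 4.7]{debarre} and uses that $\cH_n-2\delta$ becomes ample on a generic small deformation.
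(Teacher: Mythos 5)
Your proof is correct and is exactly the argument the paper intends: the corollary is stated without proof precisely because it follows immediately from Lemma \ref{factors via quotient, def arg} together with Proposition \ref{card fiber} by the degree count you spell out.
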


\medskip

\section{Chambers in the movable cone}

\subsection{Walls and chambers}\label{sec: walls and chambers}
Let $\Mov(M)\subset \NS(M)_{\mathbb{R}}$ be the closure of the movable cone of a projective hyperK\"ahler manifold $M$. According to 
\cite[section 5.2]{markmansurvey}, 
\[ \Mov(M)=\overline{\bigcup_{f:M\dashrightarrow M'} f^*\mathcal{A}(M')}, \]
where $M'$ is any hyperK\"ahler birational model of $M$ and $\mathcal{A}(M')$ is its ample cone. We call \textit{chambers} of $\Mov(M)$ the open convex cones of the form $f^*\mathcal{A}(M')$ for some $M'$, and \textit{walls} the non-empty sets of the form
$\overline{f^*\mathcal{A}(M')}\cap \overline{g^*\mathcal{A}(M'')} \setminus \lbrace 0 \rbrace$.
The wall-and-chamber decomposition plays an important role in the study of the automorphisms of $M$, since any birational automorphism of the manifold fixes the movable cone and sends walls to walls, thus chambers to chambers.
In other words, by \cite[Theorem 1.3]{markmansurvey}, chambers in $\Mov(M)$ parametrize pairs $(M',f)$ up to biregular automorphisms.

Since $S^{[n]}$ is projective, 
any birational automorphism of $S^{[n]}$ acting trivially on $\NS(S^{[n]})$ fixes a K\"ahler class, hence lies in $\mathrm{Aut}(S^{[n]})$.
By the argument of \cite[Lemma 2.4]{bcns}, when $S$ has Picard rank one any automorphism acting trivially on $\NS(S^{[n]})$ must be trivial. Thus the movable cone is not fixed pointwise by $\varphi$, which has therefore to 
exchange the two rays generating the movable cone. One is generated by $\cH_n$, since the latter is nef and big and orthogonal (with respect to the Beauville-Bogomolov form) to the exceptional divisor of 
the Hilbert-Chow morphism.

Since according to Proposition \ref{action}, $\varphi$ acts as $-R_{\cH_n-2\delta}$,  the other ray is 
\[
\varphi^*(\cH_n)=(2t-1)\cH_n-4t\delta,
\]
where as usual $t=4n-3$.
By \cite[Lemma 3.6]{beri}, in our setting $t$ is $n$-irregular \cite[Definition 3.1]{beri}, meaning that $\varphi$ is not biregular on any hyperK\"ahler birational model of $S^{[n]}$. Equivalently, the number of chambers is even and the number of walls is odd, the middle one being 
 generated by $\cH_n-2\delta$. Up to scalar, this is the only class fixed by $\varphi^*$ in $\NS(S^{[n]})$.

\begin{definition}
    We denote by $2C_n-1$ 
    the number of walls in the interior of the movable cone, hence by $2C_n$ the number of chambers.
\end{definition}

We call \textit{first chamber} the ample cone of $S^{[n]}$, \textit{second chamber} the chamber whose closure intersects the closure of the first one along a 1-dimensional space and so on. 
The action of $\varphi$ exchanges the $k$-th chamber with the $(2C_n+1-k)$-th. 

\begin{lemma}\label{unique model}
There are exactly $C_n$ distinct hyperK\"ahler birational models of $S^{[n]}$.
If $(M,f)$ is the pair corresponding to the $k$-th chamber, then the pair corresponding to the $(2C_n+1-k)$-th chamber is $(M,f\circ \varphi)$.
\end{lemma}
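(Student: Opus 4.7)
The plan is to combine three ingredients: the bijection between chambers of $\Mov(S^{[n]})$ and pairs $(M',f)$ up to biregular isomorphism recalled at the start of Section \ref{sec: walls and chambers}; the fact from Proposition \ref{action} that the birational automorphism group of $S^{[n]}$ equals $\{\id,\varphi\}$; and the $n$-irregularity of $t$, which guarantees that $\varphi$ never becomes biregular on any hyperK\"ahler birational model.

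First I will pair chambers via the reflection. Since $\varphi_{*}=-R_{\cH_n-2\delta}$ has for fixed line exactly the ray through $\cH_n-2\delta$, which lies on the middle wall, and since $\varphi^{*}$ preserves $\Mov(S^{[n]})$ and permutes walls, its restriction to the N\'eron-Severi plane is the orthogonal reflection through the middle wall; with the enumeration introduced above it therefore sends $\mathcal{C}_k$ to $\mathcal{C}_{2C_n+1-k}$. If $(M_k,f_k)$ represents $\mathcal{C}_k$, then $f_k\circ\varphi:S^{[n]}\dashrightarrow M_k$ satisfies
\[
(f_k\circ\varphi)^{*}\mathcal{A}(M_k)=\varphi^{*}(f_k^{*}\mathcal{A}(M_k))=\varphi^{*}\mathcal{C}_k=\mathcal{C}_{2C_n+1-k},
\]
so the pair $(M_k,f_k\circ\varphi)$ represents the $(2C_n+1-k)$-th chamber, which is the second assertion of the lemma. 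Moreover $(M_k,f_k)$ and $(M_k,f_k\circ\varphi)$ are inequivalent in the sense of Markman, for otherwise $\varphi$ would descend to a biregular automorphism of $M_k$, contradicting $n$-irregularity; hence these really are two different chambers sharing a single underlying model.

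To conclude, I have to verify that the $C_n$ models $M_1,\dots,M_{C_n}$ are pairwise non-isomorphic. Assume $\alpha:M_k\xrightarrow{\sim}M_{k'}$ is biregular with $k,k'\in\{1,\dots,C_n\}$. Then $\psi:=f_{k'}^{-1}\circ\alpha\circ f_k$ is a birational automorphism of $S^{[n]}$ sending $\mathcal{C}_k$ to $\mathcal{C}_{k'}$, and Proposition \ref{action} forces $\psi$ to be either $\id$ (whence $k=k'$) or $\varphi$ (whence $k'=2C_n+1-k$, impossible for $k,k'\leq C_n$). The one genuinely delicate input in the whole argument is the identification of $\varphi^{*}$ with the reflection swapping $\mathcal{C}_k$ and $\mathcal{C}_{2C_n+1-k}$, which relies on the fact that the unique $\varphi^{*}$-fixed ray is the middle wall; once that is granted, the rest follows formally from Markman's bijection and the description of $\Bir(S^{[n]})$.
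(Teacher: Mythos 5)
Your proposal is correct and follows essentially the same route as the paper: you use the Markman bijection between chambers and pairs $(M',f)$, the fact that $\varphi^*$ exchanges the $k$-th and $(2C_n+1-k)$-th chambers (so $(M_k,f_k\circ\varphi)$ represents the mirror chamber, giving at most $C_n$ models), and the equality $\Bir(S^{[n]})=\{\id,\varphi\}$ to rule out an isomorphism between models of two chambers that are neither equal nor mirror to each other. The only cosmetic difference is that you spell out explicitly, via $n$-irregularity, why $(M_k,f_k)$ and $(M_k,f_k\circ\varphi)$ are inequivalent pairs, a point the paper leaves implicit.
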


\begin{proof}
For any birational, non-biregular map $f:S^{[n]}\dashrightarrow M$, with $M$ hyperK\"ahler, the pullback of the ample cone of $M$ is the interior of one chamber of $\Mov(S^{[n]})$. 
If necessary, by the action of $\varphi$  we can always bring $f^*\mathcal{A}(M)$ among the first $C_n$ chambers. 
By \cite[Theorem 1.3]{markmansurvey}, this proves that there are at most $C_n$ birational models and that the models of the $k$-th and $(2C_n+1-k)$-th chamber differ by $\varphi$.

On the other hand,
for a pair $(M,f)$  corresponding to the $k$-th chamber, and  $(M',f')$ corresponding to the $k'$-th,
with $k'\ne k, 2C_n+1-k$, any biregular map 
$\alpha:M\lra M'$ would induce a birational involution $(f')^{-1}\circ \alpha\circ f$ of $S^{[n]}$,
sending the $k$-th chamber to the $k'$-th. So this would neither be $\varphi$ nor the identity, a contradiction since  $\Bir(S^{[n]})=\lbrace 
id_{S^{[n]}},\varphi \rbrace$.
\end{proof}

\begin{prop}\label{prop: ample cone}
The class $\cH_n-2\delta$ is never ample, and is nef and big if and only if $C_n=1$. In this case, the nef cone is 
$$\mathrm{Nef}(S^{[n]})=\langle \mathcal{H}_n, \mathcal{H}_n-2\delta\rangle $$
and the movable cone has two chambers, exchanged by the action of $\varphi$:
\[ \mathrm{Mov}(S^{[n]})=\mathrm{Nef}(S^{[n]})\cup \varphi^* (\mathrm{Nef}(S^{[n]})). \]
\end{prop}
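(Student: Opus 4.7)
The plan is to exploit the fact that since $S$ has Picard rank one, $\NS(S^{[n]})$ has rank two, so $\mathrm{Mov}(S^{[n]})$ is a two-dimensional cone whose chambers and walls are rays, linearly ordered from $\cH_n$ to $\varphi^*\cH_n$. The involution $\varphi^*$ swaps the two extreme rays and, being of the form $-R_{\cH_n-2\delta}$, fixes precisely the ray spanned by $\cH_n-2\delta$; this forces $\cH_n-2\delta$ to sit strictly in the interior of $\mathrm{Mov}(S^{[n]})$ as the ``middle wall'' of the $\varphi^*$-symmetric arrangement, with equally many walls on either side. The proposition then amounts to locating this middle wall with respect to the nef cone.

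For non-ampleness I would argue by contradiction: if $\cH_n-2\delta$ were ample, then since it is $\varphi^*$-fixed the birational involution $\varphi$ would preserve an ample class, and hence preserve the ample cone. By the standard principle that a birational automorphism of a hyperK\"ahler manifold preserving the ample cone is biregular (see \cite{markmansurvey}), $\varphi$ would be a regular automorphism of $S^{[n]}$. This contradicts the $n$-irregularity of $t=4n-3$ recalled in Section~\ref{sec: walls and chambers}, which is precisely the statement that $\varphi$ fails to be biregular on any hyperK\"ahler birational model. This is the main conceptual step; everything else is bookkeeping on the planar fan.

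For the equivalence, $\cH_n-2\delta$ is nef exactly when it lies in the closure of the first chamber, and in a two-dimensional cone with totally ordered walls this is equivalent to no wall strictly separating $\cH_n$ from $\cH_n-2\delta$. By the symmetry of the wall arrangement about the middle wall, this in turn forces the total number of interior walls to be $1$, i.e.\ $2C_n-1=1$, equivalently $C_n=1$. Bigness then comes for free: $\cH_n-2\delta$ has Beauville-Bogomolov square $2>0$ (the opening lemma of Section~3), so lies in the positive cone, and any nef class with positive BB-square on a hyperK\"ahler manifold is big via the Fujiki relation. Finally, when $C_n=1$ the movable cone has exactly two chambers: the first is the nef cone $\langle \cH_n,\cH_n-2\delta\rangle$, and by Lemma~\ref{unique model} (applied with $k=1$, so $2C_n+1-k=2$) the second is its image under $\varphi^*$, yielding the claimed decomposition $\mathrm{Mov}(S^{[n]})=\mathrm{Nef}(S^{[n]})\cup \varphi^*(\mathrm{Nef}(S^{[n]}))$.
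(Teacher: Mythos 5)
Your proposal is correct and follows essentially the same route as the paper, which disposes of the statement in two lines by noting that $n$-irregularity makes the ray of $\cH_n-2\delta$ the middle wall of the $\varphi^*$-symmetric chamber decomposition, so it bounds the ample cone exactly when $C_n=1$. Your version merely spells out the details the paper leaves implicit (the "ample fixed class implies biregular" reformulation of non-ampleness, and bigness via the positive Beauville--Bogomolov square), so no further comment is needed.
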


\begin{proof}
The wall generated by $\cH_n-2\delta$ is the middle wall by definition of $n$-irregularity, so it lies in the closure of the ample cone if and only if $C_n=1$. The second part of the claim is clear.
\end{proof}

By Bayer-Macrì's study of the Mori cone of $S^{[n]}$ (as a cone in $H^2(S^{[n]},\RR)$, see \cite[Section 12]{bayermacri}), there is an extremal 
contraction associated to the wall of $\mathrm{Nef}(S^{[n]})$ lying in the interior of the movable cone.
It is a flopping contraction, in which the contracted locus has codimension at least two. 
We call it $c:S^{[n]}\lra N$, with $N$ a normal irreducible projective variety.

\medskip

Suppose now that $C_n=1$.
By Lemma \ref{unique model}, in this case the second chamber $\varphi^*(\mathcal{A}(S^{[n]}))$ is associated to the pair $(S^{[n]},\varphi)$:
the diagram associated to the flop is then
\begin{equation}\label{phi as flop}
\xymatrix{ S^{[n]}\ar@{..>}[rr]^{\varphi}\ar[dr]_{c} & & S^{[n]}\ar[dl]^{d}  \\
   & N &.}
\end{equation}

This provides an alternative way to see $\varphi$, as a flop.
This will be an important step in the description of the indeterminacy locus of the involution, in Section \ref{sec: indeterminacy}.
Note also that, by Proposition \ref{prop: ample cone}, for $C_n=1$ the contraction $c$ is induced by the linear system $|k(\cH_n-2\delta)|$ for $k\gg 0$, see 
the proof of \cite[Theorem 8.1.3]{matsuki}.

\medskip

The last observations indicate that $\varphi$ will be easier to understand when $C_n=1$.
In fact this condition is fullfilled for the first occurrences of $n$, as the following result shows. We expect it to hold in full generality. 

\begin{prop}\label{nC=1}
$C_n=1$ for $n\leq 200$.
\end{prop}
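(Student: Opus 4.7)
The statement is verified by a finite numerical computation, implemented by the short program reproduced in the Appendix. I outline here what the program checks.

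By Bayer and Macrì \cite{bayermacri}, identifying $S^{[n]}$ with the moduli space $M_H(v)$ for the Mukai vector $v=(1,0,1-n)$ (so that $v^2=2n-2$), the walls of $\Mov(S^{[n]})$ correspond bijectively to primitive rank-two sublattices of the algebraic Mukai lattice $\widetilde{H}^{1,1}_{\mathrm{alg}}(S,\ZZ)$ containing $v$, subject to the numerical conditions that produce a genuine (flopping, divisorial, or Hilbert--Chow type) wall. Since $\NS(S)=\ZZ\cH$ with $\cH^2=8n-6$, each such sublattice is generated by $v$ together with a primitive Mukai vector $w=(r,k\cH,s)$, and the Mukai pairings compute to
\[ (v,w) = (n-1)r - s, \qquad w^2 = (8n-6)k^2 - 2rs. \]

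First I would make the Bayer--Macrì wall criterion fully explicit in these coordinates: a triple $(r,k,s)$ produces a wall exactly when $w^2 \geq -2$, the discriminant $(v,w)^2 - v^2 w^2$ is positive (the sublattice is hyperbolic), and an explicit inequality on $(v,w)$ guarantees that $w$ corresponds to an actual destabilizing subobject rather than a fake wall. Under Mukai's isomorphism $\theta \colon v^\perp/\langle v\rangle \otimes \RR \xrightarrow{\sim} H^2(S^{[n]},\RR)$, the ray of the wall is the image of $w - \tfrac{(v,w)}{v^2}\,v$, and expressing this in the basis $(\cH_n,\delta)$ yields an explicit rational slope depending on $(r,k,s,n)$.

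Second, the assertion $C_n=1$ is equivalent to: no triple $(r,k,s)$ satisfying the above criterion produces a ray strictly between $\RR_{>0}\cH_n$ and $\RR_{>0}(\cH_n - 2\delta)$ other than the middle ray itself. The inequalities $w^2 \geq -2$ and $0 < (v,w) < v^2 = 2(n-1)$, together with the positivity of the discriminant, force $|k|$ to be $O(\sqrt{n})$ and then $|r|,|s|$ to be $O(n)$, so for each $n$ only finitely many triples need to be inspected. The program loops over this bounded set for every $2 \le n \le 200$, tests each candidate against the wall criterion and the slope condition, and confirms that no intermediate wall exists.

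The main obstacle is not the size of the computation, which is trivial, but the careful bookkeeping needed to translate Bayer--Macrì's wall classification into a numerical predicate on $(r,k,s)$: one must distinguish genuine walls from fake ones and not miss any of the possible types. Once this predicate is in place, the verification is routine, and the bound $n \leq 200$ reflects only the range over which the loop has been run rather than any intrinsic obstruction.
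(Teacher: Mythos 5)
Your plan is correct and is essentially the paper's proof: a finite computer search, for each $n\le 200$, over the numerical candidates for walls of $\Mov(S^{[n]})$ supplied by Bayer--Macr\`i's classification, checking that none lands strictly between $\cH_n$ and $\cH_n-2\delta$. The only difference is bookkeeping: the paper does not re-derive the wall predicate in the coordinates $(r,k,s)$ of the Mukai vector $w$, but instead invokes Cattaneo's ready-made reformulation as a Pell equation $X^2-4t(n-1)Y^2=\alpha^2-4\rho(n-1)$ with the explicit finite list of admissible $(\rho,\alpha)$, which is exactly the ``careful translation'' you flag as the main obstacle.
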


\begin{proof}
This is a computer calculation, see Appendix \ref{appendix}. 
\end{proof}

\medskip

\section{The base locus}
\label{base locus section}

Let $(M,f)$ be the pair associated to a chamber $\mathcal{C}$ in the movable cone of $S^{[n]}$. Since base-point-freeness implies nefness, by Proposition \ref{prop: ample cone} the hypothesis that  $(f^{-1})^*(\cH_n-2\delta)$ has empty base locus 
implies that $\mathcal{C}$ or $\varphi^*\mathcal{C}$ is the $C_n$-th chamber. In particular,
if $C_n>1$, then by Proposition \ref{prop: ample cone} the linear system $|\cH_n-2\delta|$ has non-empty base locus. 
This is why we impose the condition $C_n=1$ in the Theorem below.

\medskip
This section is devoted to the proof of the following

\begin{theorem}\label{bpf for a model}\label{bpf for nC=1}
If $C_n=1$, then $|\cH_n-2\delta|$ is base-point-free.
\end{theorem}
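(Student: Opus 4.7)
Under the assumption $C_n = 1$, Proposition \ref{prop: ample cone} says that $\cH_n - 2\delta$ is nef and big, spanning the wall of the nef cone lying in the interior of the movable cone. The associated extremal contraction $c : S^{[n]} \to N$ from diagram \eqref{phi as flop} is a birational flopping morphism whose exceptional locus has codimension at least two. Since $K_{S^{[n]}} = 0$, the Kawamata--Shokurov base-point-free theorem provides that $|m(\cH_n - 2\delta)|$ is base-point-free for $m \gg 0$; the content here is that $m = 1$ already suffices.

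The reduction to $N$ proceeds as follows. By the Bayer--Macrì description of the flopping contraction, $\cH_n - 2\delta$ descends to a primitive ample Cartier line bundle $A$ on $N$, so $\cH_n - 2\delta = c^{*}A$; indeed $\cH_n - 2\delta$ is nef and trivial on every curve contracted by $c$. Since $c$ is birational with $N$ normal, $c_{*}\cO_{S^{[n]}} = \cO_N$ (with higher direct images vanishing by Kawamata--Viehweg), whence $H^0(S^{[n]}, \cH_n - 2\delta) \simeq H^0(N, A)$ and set-theoretically $\mathrm{Bs}\,|\cH_n - 2\delta| = c^{-1}(\mathrm{Bs}\,|A|)$. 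Since $c$ is surjective, it is enough to show that $|A|$ is base-point-free on $N$.

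For this last step, one exploits the interplay between the Pfaffian subsystem, the additional sections of $|\cH_n - 2\delta|$, and the $\varphi$-symmetry. The Pfaffian subsystem $V_{2n+1}$ of Section \ref{sec pfaffians} descends to a subsystem on $N$ whose base locus is $c(J)$, where $J$ is the Brill--Noether locus of Lemma \ref{J-BrillNoether}. The full linear system $|\cH_n - 2\delta|$ contains extra sections coming from the kernel description $H^0(S^{[n]}, \cH_n - 2\delta) \simeq \ker\bigl(S^n H^0(S, \cH) \to H^0(S, \cH^{\otimes 2}) \otimes S^{n-2} H^0(S, \cH)\bigr)$ of the Remark following Proposition 3.3. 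The main step is to show that for every $Z \in J$, some section of $|\cH_n - 2\delta|$ does not vanish at $Z$; the natural tools are the $\varphi$-invariance of $|\cH_n - 2\delta|$ and of $J$, Kawamata--Viehweg vanishing for $\cI_Z \otimes (\cH_n - 2\delta)$ yielding $H^1(S^{[n]}, \cI_Z(\cH_n - 2\delta)) = 0$ in the generic case, and the codimension bound of Proposition \ref{surJ}.

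The main obstacle is this pointwise non-vanishing verification on the whole of $J$. The locus $J$ carries a Brill--Noether stratification by $h^0(\cI_Z \otimes \cU_2^\vee)$, and on deeper strata many sections may vanish simultaneously, so one cannot a priori conclude from a general-position argument. Overcoming this calls either for a stratification-by-stratification argument using the Brill--Noether description together with the $\varphi$-action on $J$, or for a direct construction of non-vanishing sections from the non-Pfaffian component of $H^0(S^{[n]}, \cH_n - 2\delta)$ tailored to the specific collinearity defects of the subschemes in $J$; in either route the hypothesis $C_n = 1$ enters precisely to keep the contracted ray an isolated flopping wall, so that the exceptional locus of $c$ is well-controlled by the dimension bound of Proposition \ref{surJ}.
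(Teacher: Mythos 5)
Your proposal has a genuine gap: the entire content of the theorem is concentrated in the step you label ``the main obstacle,'' and you do not carry it out. Reducing to base-point-freeness of the descended ample bundle $A$ on $N$ is a legitimate (and essentially cost-free) reformulation, since ample line bundles on a singular normal variety need not be globally generated; but after that reduction you only list candidate tools (the Pfaffian subsystem, $\varphi$-invariance, a stratification of $J$) and state that overcoming the obstacle ``calls for'' one of two unexecuted strategies. Nothing in the proposal actually produces, for a given $Z$ in the putative base locus, a section of $|\cH_n-2\delta|$ not vanishing at $Z$. Moreover, the one concrete vanishing you invoke is not available: Kawamata--Viehweg gives $H^{i}(X,K_X+L)=0$ for $L$ nef and big, but says nothing about $H^1(S^{[n]},\cI_Z\otimes(\cH_n-2\delta))$ for $\cI_Z$ the ideal sheaf of a point of $S^{[n]}$, which is what separating that point would require. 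So the argument as written does not close.

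For comparison, the paper's proof avoids any pointwise analysis on $S^{[n]}$ altogether. It uses three inputs: (i) base-point-freeness is an open condition in families of polarized hyperK\"ahler manifolds; (ii) for $T$ a double cover of $\PP^2$ branched in a sextic, $D_n$ is base-point-free on $T^{[n]}$ (Lemma \ref{bpf sur Tn}); and (iii) a lattice computation via Eichler's criterion (Lemma \ref{period of Sn}) showing that the period point of $(T^{[n]},D_n)$ lies in the divisor $\mathcal{D}_\kappa$ in which the periods of the pairs $(S^{[n]},\cH_n-2\delta)$ are dense. One then deforms $(T^{[n]},D_n)$ in a maximal family with base-point-free polarization, finds a member birational to $S^{[n]}$ by the Torelli theorem, and uses $C_n=1$ (via Lemma \ref{unique model}) to upgrade birational to biregular, identifying the base-point-free class with $\cH_n-2\delta$ by uniqueness of the nef and big degree-two class. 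The hypothesis $C_n=1$ enters there --- to guarantee the birational model is $S^{[n]}$ itself --- not, as you suggest, to control the exceptional locus of $c$. If you want to salvage your direct approach, the missing non-vanishing statement on the strata of $J$ is precisely what would have to be proved, and the paper gives no indication that this can be done by elementary means.
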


\begin{remark}
Without any hypothesis on the number of chambers, denoting by $(M,g)$ the model corresponding to the $C_n$-th chamber, the proof of Theorem \ref{bpf for a model} can easily be adapted to prove that $|(g^{-1})^*(\cH_n-2\delta)|$  is base-point-free on $M$.
\end{remark}

As an immediate consequence, by \cite[2.1.28]{lazarsfeld} we deduce:

\begin{coro}\label{c stein}
If $C_n=1$, the contraction $c$ is the first factor of the Stein factorization of $\phi_{\cH_n-2\delta}$.
\end{coro}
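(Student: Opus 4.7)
The plan is to combine Theorem \ref{bpf for nC=1} with the semi-ample fibration theorem recalled in \cite[2.1.28]{lazarsfeld}. Under the hypothesis $C_n=1$, Proposition \ref{prop: ample cone} ensures that $\cH_n-2\delta$ is nef and big, while Theorem \ref{bpf for nC=1} adds that the complete linear system $|\cH_n-2\delta|$ is base-point-free. In particular, the rational map $\phi_{\cH_n-2\delta}:S^{[n]}\dashrightarrow \PP(H^0(S^{[n]},\cH_n-2\delta)^\vee)$ is an everywhere-defined morphism of normal projective varieties, and so its Stein factorization is literally the Stein factorization of $\phi_{\cH_n-2\delta}$ (no passage to a birational model is needed).

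Next, I would unpack the cited result of Lazarsfeld: for any semi-ample line bundle $L$ on a normal projective variety $X$, there is a unique morphism $\pi:X\lra Y$ with connected fibers onto a normal projective variety (the \emph{semi-ample fibration} of $L$), characterized by the property that it coincides with the first factor of the Stein factorization of $\phi_{|mL|}$ for every $m$ for which $|mL|$ is base-point-free. Specializing to $L=\cH_n-2\delta$ and $m=1$, I obtain that the first factor of the Stein factorization of $\phi_{\cH_n-2\delta}$ is precisely the semi-ample fibration of $\cH_n-2\delta$.

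It then remains to identify this semi-ample fibration with the flopping contraction $c$. As recalled just before Proposition \ref{nC=1}, when $C_n=1$ the contraction $c:S^{[n]}\lra N$ is the morphism defined by the linear system $|k(\cH_n-2\delta)|$ for $k\gg 0$, via \cite[Theorem 8.1.3]{matsuki}; this is exactly the definition of the semi-ample fibration of $\cH_n-2\delta$. Hence $c$ equals the first factor of the Stein factorization of $\phi_{\cH_n-2\delta}$, which is the statement of the corollary. The argument is essentially formal: the only substantive point to keep in mind is that base-point-freeness at $k=1$ is what allows us to read off the Stein factorization of $\phi_{\cH_n-2\delta}$ itself (rather than only of $\phi_{|k(\cH_n-2\delta)|}$ for $k\gg 0$), so the true content of the corollary is the input provided by Theorem \ref{bpf for nC=1}.
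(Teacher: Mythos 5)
Your argument is correct and is exactly the paper's: the authors state the corollary as an immediate consequence of Theorem \ref{bpf for nC=1} together with \cite[2.1.28]{lazarsfeld}, having already noted after Proposition \ref{prop: ample cone} that $c$ is the morphism defined by $|k(\cH_n-2\delta)|$ for $k\gg 0$. You have simply spelled out the same deduction in more detail.
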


The first ingredient of the proof of Theorem \ref{bpf for a model} is the fact that base-point-freeness is an open condition in a family of hyperK\"ahler manifolds, see for example \cite[Proposition 2.6]{varesco}.
The second ingredient is the following well known fact.

\begin{lemma}\label{bpf sur Tn}
For any surface $T$ and $n\geq 2$, if $D$ is a base-point-free divisor on $T$, then $D_n$ is base-point-free on $T^{[n]}$.
\end{lemma}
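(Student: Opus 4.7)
The plan is to reduce the statement to the corresponding one on the symmetric product $T^{(n)}$. Recall that by definition $D_n=HC^*(D^{(n)})$, where $HC\colon T^{[n]}\to T^{(n)}$ is the Hilbert-Chow morphism and $D^{(n)}$ is the line bundle on $T^{(n)}$ induced by $D$ via the quotient $\pi\colon T^n\to T^{(n)}$, characterized by $\pi^*D^{(n)}=D\boxtimes\cdots\boxtimes D$. Since pullback along a morphism preserves non-vanishing of sections, it suffices to prove that $D^{(n)}$ is base-point-free on $T^{(n)}$; base-point-freeness of $D_n$ on $T^{[n]}$ then follows automatically.

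The key observation is that any $s\in H^0(T,D)$ produces an $S_n$-invariant section $s\boxtimes\cdots\boxtimes s$ of $\pi^*D^{(n)}$, which descends to a section $\sigma_s\in H^0(T^{(n)},D^{(n)})$. By construction, for a point $p_1+\cdots+p_n\in T^{(n)}$ (allowing repetitions), $\sigma_s$ vanishes at $p_1+\cdots+p_n$ if and only if $s(p_i)=0$ for some $i$.

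Now fix a point $p_1+\cdots+p_n\in T^{(n)}$, with underlying support $\{q_1,\dots,q_k\}\subseteq T$. For each $j=1,\dots,k$ the hypothesis that $D$ is base-point-free says that the linear subspace $W_j:=\{s\in H^0(T,D)\mid s(q_j)=0\}$ is a proper subspace of $H^0(T,D)$. Since we work over the infinite field $\CC$, the union $W_1\cup\cdots\cup W_k$ is a proper subset, hence there exists $s\in H^0(T,D)$ with $s(q_j)\neq 0$ for every $j$. The associated section $\sigma_s$ does not vanish at $p_1+\cdots+p_n$, which proves that $D^{(n)}$ is base-point-free, and hence so is $D_n=HC^*(D^{(n)})$.

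There is no real obstacle here: the argument is entirely formal and uses only the definition of $D_n$ together with the standard ``finite union of proper subspaces'' trick. The only mildly subtle point is remembering that the descent $s\boxtimes\cdots\boxtimes s\mapsto \sigma_s$ is compatible with the definition $D^{(n)}=(\pi_* (D\boxtimes\cdots\boxtimes D))^{S_n}$, which makes the vanishing criterion for $\sigma_s$ at $p_1+\cdots+p_n$ transparent.
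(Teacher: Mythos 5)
Your proof is correct and follows essentially the same route as the paper: both arguments reduce to showing that $D^{(n)}$ is base-point-free on the symmetric product $T^{(n)}$ and then pull back along the Hilbert--Chow morphism. The only difference is mechanical: the paper establishes base-point-freeness of $D^{(n)}$ by descending the surjective evaluation map $\mathrm{Sym}^n H^0(T,D)\otimes\cO_{T^{(n)}}\to \cO_{T^{(n)}}(D^{(n)})$ from the $S_n$-equivariant one on $T^n$, whereas you check it pointwise using the diagonal sections $\sigma_s$ coming from $s^{\otimes n}$ together with the finite-union-of-proper-subspaces trick; these are two equivalent ways of packaging the same observation.
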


\begin{proof}
Let $\mathcal{S}_n$ be the permutation group of $n$ elements, so that $T^{(n)}=T^n/\mathcal{S}_n$, and let  $p_i$ the $i$-th projection of $T^n$ to $T$.
Since $D$ is base-point-free, the evaluation map $H^0(T,D)\otimes \mathcal{O}_T\lra \mathcal{O}_T(D)$ is surjective, hence there is a $\mathcal{S}_n$-equivariant surjection
$H^0(T^{n},D)^{\otimes n}\otimes \mathcal{O}_T\lra \mathcal{O}_{T^n}(p_1^*D+\cdots+p_n^*D)$. This yields a surjective map  $$H^0(T^{(n)},D^{(n)})\otimes \mathcal{O}_{T^{(n)}}\simeq Sym^n H^0(T^{n},D)\otimes \mathcal{O}_{T^{(n)}} \lra \mathcal{O}_{T^{(n)}}(D^{(n)}).$$
This proves the statement, since $D_n$ is the pullback of $D^{(n)}$ via the Hilbert-Chow morphism.
\end{proof}

Finally, we will use the Torelli Theorem for hyperK\"ahler manifolds. For that, we need to introduce the global period domain for polarized manifolds and some lattice-theoretic tools.

For any lattice $L$, we denote by $\widetilde{O}^+(L)$ the group of orientation-preserving isometries acting trivially on  the discriminant group of $L$.
For $v\in L$, we denote by $\gamma_L(v)$ its divisibility in $L$ and by $v_*$ the class of $\frac{v}{\gamma_L(v)}$ in the discriminant group of $L$.

We fix the lattice $\Xi=U^{\oplus 3}\oplus E_8(-1)^{\oplus 2}\oplus \ZZ \ell$, with $\ell^2=-2(n-1)$.
For $X$ a manifold
of $K3^{[n]}$-type, a marking is an isometry $\psi:H^2(X,\ZZ)\lra \Xi$. We fix once and for all a connected component $\mathcal{M}$ of the moduli space of marked pairs and any manifold of this kind considered in the sequel will be
taken from this component.
We will use the fact that composing a marking with a monodromy operator does not change the connected component in the moduli space of marked pairs, so by \cite[Lemma 9.2]{markmansurvey} for any given $(X,\psi)\in \mathcal{M}$ and $\alpha\in \widetilde{O}^+(\Xi)$, we have $(X,\alpha\circ \psi)\in \mathcal{M}$ again.

\medskip

Let $\lbrace u,v \rbrace$ be a standard basis for a fixed copy of the hyperbolic plane in $\Xi$ and let $M=U^{\oplus 2}\oplus E_8(-1)^{\oplus 2}\oplus \ZZ \ell\oplus \ZZ (u-v)$ be the orthogonal complement of $u+v$ in $\Xi$. The space 
\[
\left\{ [x]\in \PP(M_\mathbb{C}) \,|\, x^2=0, (x,\bar{x})>0 \right\}
\]
has two connected component. We choose one of them and we call it $\Omega$.

From now on we suppose $n\geq 3$, which is harmless since base-point-freeness for $n=2$ is known from works of O'Grady, see \cite{ogrady}.
The global period domain $\mathcal{P}$ for manifolds of $K3^{[n]}$-type with a 2-polarization (a polarization of Beauville-Bogomolov degree 2) and divisibility 1 is the quotient of $\Omega$ by an arithmetic subgroup $\Gamma$ of $O(M)$ in which $\widetilde{O}^+(M)$ has index two \cite[Lemma 3.7, Proposition 3.8]{bbbf}.
This group has the following property: for any marked pair $(X,\psi)\in \mathcal{M}$, provided with a nef and big class $D\in \NS(X)$ such that the marking $\psi$ send $D$ to $u+v$, the pullback of $\Gamma$ via $\psi$ is the group of monodromy operators on $H^2(X,\ZZ)$ fixing $D$.

Since $\cH_n-2\delta$ is nef and big, but not ample, the period points of our pairs $(S^{[n]},\cH_n-2\delta)$ cover a dense subset of an irreducible divisor in the period domain $\mathcal{P}$, and this divisor lies in the complement of the image of the period map of the moduli space of manifolds of $K3^{[n]}$-type with a $2$-polarization of divisibility $1$. This is \cite[Remark 3.7]{beri}, but we describe more precisely the divisor in the next lemma.

\begin{lemma}\label{period of Sn}
    Let $\lbrace u_1,v_1 \rbrace$ be a standard basis of a copy of $U$ in $M$; consider the vector $\kappa=2(n-1)(u-v)+4(n-1)v_1-\ell$. The period points of pairs $(S^{[n]},\cH_n-2\delta)$ are dense in the irreducible divisor
    \[
    \mathcal{D}_\kappa= (\kappa^\perp \cap \Omega)/\Gamma.
    \]
\end{lemma}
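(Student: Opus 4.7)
The plan is to identify, inside $\NS(S^{[n]}) \cap D^\perp$ with $D = \cH_n - 2\delta$, a residual algebraic class whose image under a marking sending $D$ to $u+v$ coincides with $\kappa$ up to the action of $\widetilde{O}^+(\Xi)$, and then to derive density from a dimension count combined with the Torelli theorem for $K3$ surfaces.

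For $S$ very general one has $\NS(S^{[n]}) = \ZZ\cH_n \oplus \ZZ\delta$ with $\cH_n^2 = 8n-6$, $\delta^2 = -2(n-1)$ and $\cH_n \cdot \delta = 0$. Since $\gcd(2(n-1), 4n-3) = 1$, a direct computation shows that the orthogonal of $D$ in this rank-two lattice is generated by the primitive class
\[
\alpha = 2(n-1)\cH_n - (4n-3)\delta, \qquad \alpha^2 = -2(n-1)(4n-3),
\]
of divisibility $2(n-1)$ in $H^2(S^{[n]},\ZZ)$ (using unimodularity of $H^2(S,\ZZ)$ and $\gamma(\delta) = 2(n-1)$). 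A direct calculation from the stated expression yields $\kappa^2 = -2(n-1)(4n-3)$, primitivity of $\kappa$ (forced by the $-\ell$ term), and divisibility $2(n-1)$ of $\kappa$ in $M$. Thus $\alpha$ and $\kappa$ are candidates to match through a marking.

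Starting from any marking $\psi_0 \in \mathcal M$, Eichler's criterion in $\Xi$ (which contains $U^{\oplus 3}$) provides $g \in \widetilde{O}^+(\Xi)$ with $g(\psi_0(D)) = u+v$, since $\psi_0(D)$ and $u+v$ are both primitive of square $2$ and trivial class in $\Xi^*/\Xi$; by the hypothesis on $\mathcal M$, the marking $g \circ \psi_0$ is still in $\mathcal M$. The resulting $\psi(\alpha) \in M$ shares its invariants (square, divisibility, discriminant-group class) with $\kappa$, and a second application of Eichler's criterion, this time in the stabilizer of $u+v$ acting on $M$ (still containing $U^{\oplus 2}$), yields an isometry after which $\psi(\alpha) = \kappa$. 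Since $\alpha$ is algebraic, the period line $\psi_\CC(H^{2,0}(S^{[n]}))$ is orthogonal to both $u+v$ and $\kappa$, so it defines a point of $\kappa^\perp \cap \Omega$ projecting into $\mathcal D_\kappa$.

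For density, the assignment $(S,\cH) \mapsto [(S^{[n]}, \cH_n - 2\delta)]$ defines a morphism from the $19$-dimensional irreducible moduli space of polarized $K3$ surfaces of degree $8n-6$ to $\mathcal D_\kappa$, injective because the transcendental lattice of $S^{[n]}$ is that of $S$, so Torelli for $K3$ surfaces reconstructs $S$ from the period of $S^{[n]}$. The target is $19$-dimensional and irreducible ($\kappa^\perp \cap \Omega$ is a connected smooth hypersurface in $\Omega$), hence the image is a constructible subset of full dimension, therefore dense. The main technical step is the lattice-theoretic matching above: while the square and divisibility of $\alpha$ and $\kappa$ agree by direct computation, tracking the discriminant-group classes of $\psi(\alpha)$ through the finite-index extension $M \oplus \ZZ(u+v) \subset \Xi$ is the delicate point, and ensures that the double application of Eichler's criterion is valid.
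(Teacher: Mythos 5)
Your overall strategy is the same as the paper's: produce a marking sending $\cH_n-2\delta$ to $u+v$, show that the generator of its orthogonal complement in $\NS(S^{[n]})$ lands in the $\Gamma$-orbit of $\kappa$, and get density from Torelli. The density step and the first application of Eichler's criterion are fine (for density, what you actually obtain is generic finiteness, since a K3 surface can have non-isomorphic Fourier--Mukai partners with isometric transcendental Hodge structures; literal injectivity is the content of \cite[Theorem 5.2]{beri}, which the paper invokes instead, but generic finiteness suffices for your dimension count). The gap is in the second Eichler step, which is exactly where the content of the lemma sits and which you explicitly defer. To apply Eichler's criterion \emph{inside $M$} you need the square, primitivity, divisibility and discriminant-group class of the image of $2(n-1)\cH_n-(4n-3)\delta$ \emph{computed in $M$}, not in $\Xi$. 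The square and primitivity transfer automatically, but the divisibility and the class in the discriminant group of $M$ (which is $\ZZ/2\oplus\ZZ/2(n-1)$, strictly larger than that of $\Xi$) do not: $M\oplus\ZZ(u+v)$ has index two in $\Xi$, and two primitive vectors of $M$ with the same square and the same $\Xi$-invariants need not lie in the same $\widetilde{O}^+(M)$-orbit. Your computation that $2(n-1)\cH_n-(4n-3)\delta$ has divisibility $2(n-1)$ is a computation in $H^2(S^{[n]},\ZZ)\cong\Xi$; it does not by itself give divisibility $2(n-1)$ in $M$ for its image, nor the equality of its class with $\kappa_*=-\ell_*$ in the discriminant group of $M$. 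Asserting that the invariants ``share'' and labelling the verification ``the delicate point'' leaves the proof incomplete precisely at the point where the specific vector $\kappa$ is identified --- and identifying that specific $\kappa$ is what the lemma asserts and what the proof of Theorem \ref{bpf for a model} later consumes.

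The statement you need is true, and the paper establishes it by brute force rather than by a second orbit argument: it constructs one explicit isometry $t(u_1,-v)\circ t(v_1,w)\circ t(u_1,-v)^{-1}\in\widetilde{O}^+(\Xi)$ (a conjugate of an Eichler transvection) taking $u+tv-2\ell$ to $u+v$, and then computes directly that this isometry sends $2(n-1)\cH_n-t\delta$ to $\kappa$; the well-definedness up to $\Gamma$ then takes care of any other admissible marking. If you want to keep your two-step Eichler argument, you must supply the missing computation of the divisibility and discriminant class in $M$ of the image of $2(n-1)\cH_n-(4n-3)\delta$ --- for instance by exploiting that $\ZZ(u+v)+\ZZ\,\psi(2(n-1)\cH_n-(4n-3)\delta)$ is the saturated image of $\NS(S^{[n]})$ --- or simply follow the paper and track the generator through an explicit isometry.
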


\begin{proof}
    By \cite[Theorem 5.2]{beri}, for $(S,\cH)$, $(S',\cH')$ very general, $S^{[n]}$ and $S'^{[n]}$ are birationally equivalent if and only if $S\cong S'$. As a consequence of the Torelli Theorem for hyperK\"ahler manifolds, this implies that the closure of the locus of period points of pairs $(S^{[n]},\cH_n-2\delta)$ is an irreducible divisor in $\mathcal{P}$.
    
    The orthogonal complement of $\cH_n-2\delta$ in $\NS(S^{[n]})$ is generated by $2(n-1)\cH_n-t\delta$.
    Since $\NS(X)=H^2(X,\ZZ)\cap H^{2,0}(X)^\perp$ for any hyperK\"ahler $X$, proving the statement is equivalent to providing a marking $\psi:H^2(S^{[n]},\ZZ)\lra \Xi$ on $S^{[n]}$ such that $(X,\psi)\in \mathcal{M}$, sending $\cH_n-2\delta$ to $u+v$ and $2(n-1)\cH_n-t\delta$ to $\kappa$.
    A marking $H^2(S,\ZZ)\lra U^{\oplus 3}\oplus E_8(-1)^{\oplus 2}$ on $S$ induces a marking $\psi'$ on $S^{[n]}$ via the natural embedding of lattices $H^2(S,\ZZ)\lra H^2(S^{[n]},\ZZ)$, by sending $\delta$ to $\ell$. We can choose the marking on $S$ so that $(S^{[n]},\psi')\in \mathcal{M}$ and, since $H^2(S,\ZZ)$ is unimodular, $\psi'(\cH_n)=u+tv$ by Eichler's Criterion \cite[Proposition 2.15]{song}; then $\psi'(\cH_n-2\delta)=u+tv-2\ell$. 
    
    Both $\cH_n-2\delta$ and $u+v$ have degree 2 and divisibility 1, so the classes $(\psi'(\cH_n-2\delta))_*$ and $(u+v)_*$ are both zero in the discriminant group of $\Xi$.
    Again by Eichler's Criterion, there exists an isometry $\alpha\in \widetilde{O}^+(\Xi)$ sending $\psi'(\cH_n-2\delta)=u+tv-2\delta$ to $u+v$. So we let $\psi=\alpha\circ \psi'$ and get  that $(S^{[n]},\psi)\in \mathcal{M}$. 
    We can describe explicitly $\alpha$ in terms of Eichler transvections (for their definitions and properties, we refer to \cite[Section 3.1]{GHSEichler}). For any $x,y\in \Xi$ with $(x,x)=0$, there is an Eichler transvection 
    \[
    t(x,y):z\in \Xi\mapsto z-(y,z)x+(x,z)y-\frac{1}{2}(y,y)(x,z)x.
    \]
     We denote by $w$ the difference between $u+tv-2\ell$ and $u+v$.
    By \cite[proof of Proposition 3.3]{GHSEichler}, the composition of three Eichler transvections $t(u_1,-v)\circ t(v_1,w)\circ t(u_1,v)$ is an isometry sending $u+tv-2\ell$ to $u+v$.
    By \cite[(5)]{GHSEichler}, 
    \[
    t(u_1,-v)\circ t(v_1,w)\circ t(u_1,v)=
    t(u_1,-v)\circ t(v_1,w)\circ t(u_1,-v)^{-1},
    \]
    thus the composition lies in $\widetilde{O}^+(\Xi)$ since $t(v_1,w)$ does, see \cite[(10)]{GHSEichler}.     
   Hence $\alpha=t(u_1,-v)\circ t(v_1,w)\circ t(u_1,-v)^{-1}$.
    Now a direct computation shows that $\alpha(2(n-1)\cH_n-t\delta)=\kappa$, as required.
\end{proof}

\medskip\noindent {\it Proof of 
Theorem \ref{bpf for a model}}. 
Let $T$ be a very general double cover of $\mathbb{P}^2$ branched along a nodal sextic; its Néron-Severi lattice is $\ZZ D\oplus \ZZ\Gamma = \langle 2\rangle\oplus \langle -2\rangle$ for $D$ the class of the pullback of $\cO_{\PP^2}(1)$.
We can choose a marking $\psi$ on the Hilbert scheme $T^{[n]}$ such that $(T^{[n]},\psi)$ lies in $\mathcal{M}$, and moreover $\psi(D_n)=u+v$ and $\psi(\delta)=\ell$. In addition, since $\Gamma$ has divisibility one and the Néron-Severi lattice is primitive, we can always choose $\psi$ such that $\psi(\Gamma_n)=u_1-v_1$.
Observe that $\kappa'=2(n-1)(u_1-v_1)$ lies in $\psi(\NS(T^{[n]}))\cap (u+v)^\perp$. 
 By Eichler's Criterion, there exists $\alpha$ in $\widetilde{O}^+(M)$ -- hence in $\Gamma$ -- such that $\alpha(\kappa)=\kappa'$, because
$\kappa'_*=-\ell_*=\kappa_*$
in the discriminant group of $M$ and $(\kappa,\kappa)=(\kappa',\kappa')$.
Hence the period point of $(T^{[n]},D_n)$ in $\cP$ lies in $\mathcal{D}_\kappa$.

Since $D$ is base-point-free, the same holds for $D_n$ by Lemma \ref{bpf sur Tn}. 
Let $(\mathcal{X},\mathcal{L})\lra B$ be a family of pairs whose central fiber is $(T^{[n]},D_n)$, with $\mathcal{X}_b$ smooth hyperK\"ahler and $\mathcal{L}_b$ base-point-free for any $b\in B$. By \cite[Proposition 2.6]{varesco}, we can take $B$ connected of maximal dimension.
By Lemma \ref{period of Sn}, 
for a very general $b$ over a codimension one space in $B$ there will be a $(S^{[n]},\cH_n-2\delta)$ with the same period point as $(\mathcal{X}_b,\mathcal{L}_b)$; by the Torelli Theorem for hyperK\"ahler manifolds, this implies that $S^{[n]}$ and $\mathcal{X}_b$ are birational. Since we supposed that $C_n=1$, this implies that $\mathcal{X}_b$ and $S^{[n]}$ are actually isomorphic by Lemma \ref{unique model}. 

Via the isomorphism, $\mathcal{L}_b$ correspond to a nef and big class of Beauville-Bogomolov degree $2$ on $S^{[n]}$, and the only such class is $\cH_n-2\delta$. Indeed, observe that any degree $2$, nef and big class lies in the interior of the movable cone, since $\cH_n$ has stricly bigger degree and the same holds for an integral generator of the second wall of the movable cone, since the action of $\varphi$ exchanges the two walls. Proceeding as in \cite[proof of Theorem 1.1]{beri}, one then checks that any degree $2$, nef and big class in the interior of the movable cone of $S^{[n]}$ induces a birational involution, since minus the reflection in such a class is a Hodge monodromy
of $H^2(S^{[n]},\ZZ)$; then we are done by unicity of $\varphi$.
\qed

\section{The indeterminacy and the contracted locus}\label{sec: indeterminacy}

\subsection{The small contraction}\label{sec: small c}

We consider in greater detail the contraction $c:S^{[n]}\lra N$ associated to the flop $f: S^{[n]}\dashrightarrow M\simeq S^{[n]}$ as in (\ref{phi as flop}). 
For this it will be convenient to consider $S^{[n]}$ as the moduli space of objects in $D^b(S)$ with Mukai vector $(1,0,-(n-1))$, stable with respect to a generic stability condition $\sigma$; indeed, one can choose $\sigma$ such that those objects are exactly the ideal sheaves $\cI_Z$ for $Z\in S^{[n]}$, see for example \cite[Theorem 4.4]{bayer}.
To simplify the statements, from now on we will always work under the hypothesis that $C_n=1$.

\medskip

\begin{definition} For any contraction, we call \textit{base
of the contraction} the image of the exceptional locus; otherwise said, the locus of points with positive dimensional fibers.
\end{definition}

\medskip
As an interesting consequence of Bayer and Macrì's approach to birational transformations of moduli spaces of objects on K3 surfaces, it is possible to construct, for a general point on the base of the contraction $c$, two curves in $S^{[n]}$ which are contracted to that point, one by $c$ and the other by $d$ (recall \eqref{phi as flop}). This means that the bases of the two contractions $c$, $d$ in \eqref{phi as flop} coincide. This was already observed in \cite[Lemma 29]{invol1} for the case $n=3$, and holds in any dimension.

\smallskip
On the full cohomology $H^*(S,\ZZ)$, we consider  the usual Mukai pairing $((r,\mathcal{L},s), (r',\mathcal{L}',s'))=\mathcal{L}\cdot \mathcal{L}'-rs'-r's$.
We write $$H^*_{alg}(S,\ZZ)=H^0(S,\ZZ)\oplus \NS(S)\oplus H^4(S,\ZZ)$$ and  consider a primitive vector 
$v\in H^*_{alg}(S,\ZZ)$  with $v^2=(v,v)>0$.
A moduli space $M$ of objects on $S$ with Mukai vector $v$ comes with a lattice isometry 
\begin{equation}\label{mukai morphism}
\theta: v^\perp \lra  H^2(M,\ZZ).
\end{equation}
It restricts to an isometry between the orthogonal complement to $v$ in $H^*_{alg}(S,\ZZ)$ and $\NS(M)$, called the Mukai morphism; see \cite[Theorem 3.6]{bayermacri} for the statement in full generality. 

\smallskip 
From now on, we consider $v=(1,0,-(n-1))$.
A flop of $S^{[n]}$, considered as above as a  moduli space of objects on $S$, is induced by a wall in the interior of the movable cone, a so-called flopping wall. Any wall is, via the Mukai morphism, the orthogonal complement to some hyperbolic sublattice of $H^*_{alg}(S,\ZZ)$.
This lattice encodes a lot of information about the flop; we call it the lattice associated to the flop.

In \cite{cattaneo}, Cattaneo translated \cite[Theorem 12.1]{bayermacri} in an explicit numerical form for the special case of Hilbert schemes of points: the lattice $\Lambda$ associated to a flop (or a flopping wall) is 
obtained from 
 a positive, integral solution  $(X,Y)$  of a Pell's equation
\begin{equation}\label{eq:pell murs}
    X^2-4t(n-1)Y^2=\alpha^2-4\rho (n-1),
\end{equation}
with  $X\equiv \pm \alpha \pmod{2(n-1)}$, where either
\begin{equation}\label{eq alpha et rho}
    \begin{cases}
        \rho=-1 \text{ and } \alpha \in \lbrace 1,\ldots,n-1 \rbrace, \text{ or }\\
        \rho=0, \text{ and } \alpha \in \lbrace 3,\ldots,n-1 \rbrace, \text{ or }\\
        \rho\in \lbrace 1,\ldots \lfloor \frac{n-1}{4}\rfloor \rbrace, \text{ and } \alpha \in \lbrace 4\rho+1,\ldots,n-1 \rbrace .\\
    \end{cases}
\end{equation}
The associated lattice $\Lambda$ is then the saturation in $H^*_{alg}(S,\ZZ)$ of the lattice generated by $v$ and $a$, where
\[
a=\pm\left( \frac{X\pm \alpha}{2(n-1)},-Y\cH,\frac{X\mp \alpha}{2}  \right) \text{ for } X\equiv \mp \alpha\pmod{2(n-1)}.
\]

In our case, recall from Section \ref{sec: walls and chambers} that the value $t=4n-3$ is $n$-irregular with middle wall generated by $\cH_n-2\delta$, so by \cite[Lemma 3.6]{beri} it is associated to $(\rho,\alpha)=(-1,1)$ and $(X,Y)=(t,1)$. We conclude that the lattice $\Lambda$ associated to the flop is as follows.

\begin{definition}   
We let $\Lambda$ be the rank two, primitive sublattice 
$$\Lambda =\langle v,a\rangle \subset H^*_{alg}(S,\ZZ)$$
generated by $v=(1,0,-(n-1))$ and $a=-(2,-\cH,2n-1)$, which is nothing else than the Mukai vector of $\cU_2[1]$. 
The  associated Gram matrix is
$$ M=\begin{bmatrix}
2n-2 & 1 \\
1 & -2 
\end{bmatrix}.$$
We also let $w=v-a=(3,-\cH,n)$, so that  $w^2=2n-6$ and $(a,w)=3$.
\end{definition}

Instead of the generic stability condition $\sigma$, we also consider $\sigma_0$ a stability condition on $S^{[n]}$ which is generic 
on a wall corresponding to $\Lambda$ in the space of stability conditions. The condition that 
$Z_{\alpha,\beta}(v)$ and $Z_{\alpha,\beta}(a)$ be collinear yields that
\[
\alpha^2+\beta^2+\beta+\frac{n-1}{t}=0.
\]
Hence potential walls corresponding to $\Lambda$ correspond, in the half plane $(\beta,\alpha)\in \RR\times \RR_{> 0}$, to points in the semicircle of center $(-\frac{1}{2},0)$ and radius $\frac{1}{2\sqrt{t}}$. Objects whose Mukai vector lie in $\Lambda$ have the same phase with respect to $\sigma_0$ \cite[Definition 5.2]{bayermacri}.

\medskip
Similarly to the $n=3$ case, we can always choose $\sigma_0$ such that $a$ is effective. A priori this is asking less than \cite[Lemma 28]{invol1}, where we prove that  $\sigma_0$ can be chosen such that \textit{any} spherical class in $\Lambda$ is effective; but as it will turn out in the next subsection, effectivity is only important for $a$.

For simplicity, we suppose that $\sigma$ and $\sigma_0$ have the same associated heart $\Coh^\beta$.
Since the set of walls in $\rm{Stab}^\dagger(S)$ is locally finite, we can and will always consider $\sigma$ generic also with respect to $w$. Actually, we will consider $\sigma$ generic with respect any $v^{(i)}=v-(i+1)a$, $i\ge -1$,  of degree at least $-2$.
In the next subsection we will consider the moduli spaces of objects whose Mukai vector is one of those, since all these spaces naturally appear in the geometry of the contraction $c$.

\subsection{Jordan-H\"older filtrations}
\label{sec: base}
If $v^{(i)}=v-(i+1)a$, we get
$$v^{(i)}=(2i+3,-(i+1)\cH, (2i+1)n-i),$$
$$(v^{(i)})^2+2=2n-2(i+1)(i+2).$$
Thus by \cite[Theorem 3.6 (a)]{bayermacri}, the moduli space $\Sigma^{(i)}:=M_\sigma(v^{(i)})$ of $\sigma$-stable objects with Mukai vector $v^{(i)}$ is non-empty if and only if  $i\in \lbrace -1,\ldots, r\rbrace$, where  $r=max_{i\geq 0} \lbrace n-(i+1)(i+2)\geq 0 \rbrace$.

Each $v^{(i)}$ induces a decomposition in wall and chambers of $\Stab^\dagger(S)$.
Modifying the stability condition along a path from $\sigma$ to $\sigma_0$ that does not cross any wall, expect (possibly) on $\sigma_0$, produces a birational morphism $c^{(i)} :\Sigma^{(i)}\lra \Sigma^{(i)}_0$ with $\Sigma^{(i)}_0\textcolor{blue}:=M_{\sigma_0}(v^{(i)})$ normal \cite[Theorem 1.4]{bayermacri1}.  This morphism $c^{(i)}$ is defined by a suitably high multiple of a generator of $\theta((v^{(i)})^\perp\cap \Lambda)^\perp$ in $\NS(\Sigma^{(i)})$, where $\theta$ is the Mukai morphism \eqref{mukai morphism}, see \cite[Theorem 1.4]{bayermacri1} and \cite[proof of Theorem 12.1]{bayermacri}. 

We say that two objects parametrized by $\Sigma^{(i)}$ are \textit{S-equivalent for $\sigma_0$} if they have the same Jordan-H\"older factors with respect to $\sigma_0$ or equivalently, by \cite[Theorem 1.1]{bayermacri1}, if they have the same image in $\Sigma^{(i)}_0$. 
For $i=-1$ we recover $c:S^{[n]}\lra N$ and $v^{(0)}=w$. 
Studying all the $\Sigma^{(i)}$ together will provide a better understanding of the contraction $c$.

\begin{definition} The varieties $\Sigma^{(0)}$ and $\Sigma^{(0)}_0$ will also be denoted by $\Sigma$ and $\Sigma_0$ respectively, and the morphism $c^{(0)}$ by $\alpha:\Sigma\lra \Sigma_0$. The base of the contraction $c$  will be denoted  $B=c(I)$.
\end{definition}

We will need some technical lemmas. Recall from  \cite[Definition 5.4]{bayermacri} that a class $w\in \Lambda$ is positive (with respect to $v^{(i)}$) if $w^2\ge 0$ and $(w,v^{(i)})>0$.

\begin{lemma}\label{no-positive}
    $v^{(i)}$ is not the sum of two positive classes in $\Lambda$.
\end{lemma}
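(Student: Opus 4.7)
I would prove this by concrete coordinate calculation in the rank-two lattice $\Lambda=\langle v,a\rangle$, using the Gram matrix displayed in the definition. Writing $w_j=x_jv+y_ja$ for a hypothetical decomposition $v^{(i)}=w_1+w_2$ into classes positive with respect to $v^{(i)}$, the equation $v^{(i)}=v-(i+1)a$ forces
\[
x_1+x_2=1, \qquad y_1+y_2=-(i+1),
\]
and one has the explicit formulas $w_j^2=2[(n-1)x_j^2+x_jy_j-y_j^2]$ and $(w_j,v^{(i)})=(2n-3-i)x_j+(2i+3)y_j$. The argument then splits on the sign of $(v^{(i)})^2=2n-2(i+1)(i+2)-2$, which in the admissible range $-1\le i\le r$ takes values in $\{-2,0,2,4,\ldots\}$ since $\Lambda$ is even.

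The easy case is $(v^{(i)})^2\le 0$: each $(w_j,v^{(i)})$ is a strictly positive integer, hence $\ge 1$, so
\[
(v^{(i)})^2=(w_1,v^{(i)})+(w_2,v^{(i)})\ge 2,
\]
a contradiction. This takes care of the two boundary values $n=(i+1)(i+2)$ and $n=(i+1)(i+2)+1$.

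The main case is $(v^{(i)})^2\ge 2$, where $v^{(i)}$ lies strictly inside the positive cone of $\Lambda_\RR$. Since $v^2=2n-2>0$ and $(v,v^{(i)})=2n-3-i>0$, the classes $v$ and $v^{(i)}$ lie in the same component $C$ of the positive cone; any class positive with respect to $v^{(i)}$ then lies in the closure $\overline{C}$, as in signature $(1,1)$ the sign of $(w,v^{(i)})$ separates the two components of the positive cone and their closures. Reverse Cauchy--Schwarz applied to $w_j$ and $v$, both in $\overline{C}$, yields $(w_j,v)\ge\sqrt{w_j^2\cdot v^2}\ge 0$, i.e.\ $(2n-2)x_j+y_j\ge 0$. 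Combining this with $w_j^2\ge 0$, which viewed as a quadratic in $y_j$ forces $y_j\le|x_j|(\sqrt{4n-3}-1)/2$ when $x_j<0$, would give $4n-3\le\sqrt{4n-3}$, impossible for $n\ge 2$. The degenerate case $x_j=0$ is ruled out since then $w_j^2=-2y_j^2\le 0$. Hence $x_j\ge 1$ for both $j$, contradicting $x_1+x_2=1$.

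The main obstacle is precisely the second case: showing that every class positive with respect to $v^{(i)}$ has a strictly positive coefficient on $v$. The first case is essentially tautological; the second combines the Lorentzian signature of $\Lambda$ (via reverse Cauchy--Schwarz) with the precise values of the Gram matrix and the integrality of $x_j$ to promote $x_j\ge 0$ into $x_j\ge 1$. For $i=-1$ the whole lemma reduces to the clean statement that $v$ itself is not a sum of two positive classes, a prototype for the general argument.
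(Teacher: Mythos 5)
Your proof is correct: the coordinate formulas $w_j^2=2\bigl[(n-1)x_j^2+x_jy_j-y_j^2\bigr]$, $(w_j,v^{(i)})=(2n-3-i)x_j+(2i+3)y_j$ and $(v^{(i)})^2=2n-2-2(i+1)(i+2)$ all check out against the Gram matrix, and your two cases cover the whole admissible range of $i$. The route, however, is genuinely different from the paper's. The paper stays entirely in the $(x,y)$-plane: it shows that the four inequalities $w_1^2,w_2^2\ge 0$ and $(w_1,v^{(i)}),(w_2,v^{(i)})>0$ cut out a closed parallelogram whose extreme points in the $x$-direction are $(0,0)$ and $(1,-i-1)$ (this is where the hypothesis $n\ge(i+1)(i+2)$ enters, to orient the bounding lines), and that these two vertices are its only integral points; both correspond to one of the $w_j$ vanishing. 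You instead split on the sign of $(v^{(i)})^2$: when it is $\le 0$ you only use that the two pairings $(w_j,v^{(i)})$ are positive integers summing to $(v^{(i)})^2$ --- a clean observation that does not appear in the paper --- and when it is positive you use the Lorentzian structure of $\Lambda$ (reverse Cauchy--Schwarz against $v$) together with integrality to force $x_1,x_2\ge 1$, contradicting $x_1+x_2=1$. Your version is arguably more transparent, since the role of the hypothesis on $i$ is isolated in the sign of $(v^{(i)})^2$ and you avoid the planar bookkeeping of identifying the extreme vertices; the paper's version, in exchange, describes the entire feasible region explicitly. Two small points you should make explicit: when $x_j=0$, the correct phrasing is that $w_j^2=-2y_j^2\ge 0$ forces $y_j=0$, hence $w_j=0$, which is excluded by $(w_j,v^{(i)})>0$; and the inequality $(v,v^{(i)})=2n-3-i>0$ used to place $v$ and $v^{(i)}$ in the same component deserves a word --- it follows from $(v^{(i)})^2>0$, which gives $(i+1)^2<n$ and hence $i<\sqrt{n}-1<2n-3$.
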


\proof 
Write $w_1=xv+ya$ and $w_2=(1-x)v-(i+1+y)a$. Imposing $w^2_1\geq 0$ and $w^2_2\geq 0$ gives
\[
4(n-1)x^2+4xy-4y^2=\left( (\sqrt{t}+1)x-2y \right)\left( (\sqrt{t}-1)x+2y \right)\geq 0,
\]
\[
\left( (\sqrt{t}+1)(1-x)+2(y+i+1) \right)\left( (\sqrt{t}-1)(1-x)-2(y+i+1) \right)\geq 0.
\]
This defines two lines $d_1,d_2$ passing through $(0,0)$ and two lines $\ell_1,\ell_2$ passing through $(1,-i-1)$, with $d_1$ parallel to $\ell_1$ and of positive slope, $d_2$ parallel to $\ell_2$ and of negative slope. These four lines are the sides of a parallelogram $\cP$ with vertices $(0,0)$ and $(1,-i-1)$; because of the condition $n\ge (i+1)(i+2)$, the latter is the easternmost 
point of $\cP$.  The conditions $w^2_1, w^2_2\geq 0$ are verified on the (closed)
parallelogram $\cP$ and two unbounded regions on its east and its west, which are cones with 
vertices $(0,0) $ and $(1,-i-1)$ respectively. 

Now we ask further that $(v^{(i)},w_1)=(2n-i-3)x+(2i+3)y>0$ and  $(v^{(i)},w_2)>(2n-i-3)(1-x)-(2i+3)(y+i+1)>0$. The first condition is verified above a line through $(0,0)$ with bigger slope than $d_2$, again because of the condition $n\ge (i+1)(i+2)$. The second condition  is verified below a parallel line through $(1,-i-1)$. As a result, the two unbounded regions are ruled out and we only remain with the parallelogram $\cP$. Since its westernmost point is $(0,0)$ and its easternmost point is $(1,-i-1)$, these are clearly its only two integral points.   But they correspond to  $w_1$ or $w_2$ being $0$, which is not allowed. \qed 

\begin{lemma}\label{no-spherical}
    Suppose  $s\in\Lambda$ is spherical and  such that  $0<(s,v^{(i)})\leq \frac{(v^{(i)})^2}{2}$. 
    Then either $s=a$ or $s=v^{(i+1)}$. This last case happens if and only if
    $n=m(m+1)$ for some integer $m>0$ and $i=m-2$.
\end{lemma}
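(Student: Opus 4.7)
The plan is to reduce to a Pell equation and then case-split on the $v$-coefficient of $s$. Writing $s=xv+ya$ in the basis $\{v,a\}$ of $\Lambda$, the condition $s^2=-2$ becomes $(n-1)x^2+xy-y^2=-1$; setting $u=2y-x$ and $t=4n-3$ this reads $u^2-tx^2=4$. Using the identity $t=4K+(2i+3)^2$, where $K:=(v^{(i)})^2/2=n-1-(i+1)(i+2)$, a direct computation gives
\[
(s,v^{(i)})\;=\;(2n-i-3)x+(2i+3)y\;=\;\tfrac{1}{2}\bigl(tx+(2i+3)u\bigr).
\]
The hypothesis $0<(s,v^{(i)})\leq K$ forces $K\geq 1$, which I assume throughout. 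For $x=0$ the Pell equation gives $u=\pm 2$, hence $y=\pm 1$; positivity of $(s,v^{(i)})=\pm(2i+3)$ singles out $s=a$. For $x\leq -1$ I will reduce to $x\geq 1$ via $s\mapsto -s$: for $x\geq 1,K\geq 1$ a short direct computation will show that both Pell-values of $(s,v^{(i)})$ are strictly positive, so any $s$ with $x\leq -1$ has $(s,v^{(i)})<0$ and fails the hypothesis.

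The substantive case is $x\geq 1$. Writing $c:=2i+3$ and $M:=\sqrt{tx^2+4}>0$, the two Pell signs $u=\pm M$ give candidates $(s,v^{(i)})_\pm=\tfrac{1}{2}(tx\pm cM)$. The plus-sign case is immediate: $(s,v^{(i)})_+>t/2>(t-c^2)/4=K$, violating the upper bound. For the minus sign, squaring the desired inequality $(s,v^{(i)})_-\leq K$ produces the quadratic condition
\[
F(t)\;:=\;(2x-1)^2\,t^2-2c^2(2x^2-2x+1)\,t+c^2(c^2-16)\;\leq\;0.
\]
For $x=1$ one gets $F(t)=(t-c^2)^2-16c^2$, so $F(t)\leq 0$ iff $|t-c^2|\leq 4c$; integrality of $u$ at $x=1$ forces $4n+1$ to be a perfect square, i.e.\ $n=k(k+1)$ for some integer $k\geq 1$. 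Combined with $K\geq 1$ this pins down $k=i+2$ uniquely, yielding $n=(i+2)(i+3)$ and $s=v-(i+2)a=v^{(i+1)}$; writing $m=i+2$ this is the case $n=m(m+1),\,i=m-2$ of the statement.

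The main obstacle is the case $x\geq 2$, which I propose to rule out by showing that both roots of $F$ lie strictly below $c^2+4$ (the value of $t$ corresponding to $K=1$). A direct evaluation gives
\[
F(c^2+4)\;=\;16\bigl[(x^2-x-1)c^2+(2x-1)^2\bigr],
\]
which is strictly positive for $x\geq 2$ since both summands are. Moreover $F'(c^2+4)=4x(x-1)c^2+8(2x-1)^2>0$ for $x\geq 2$, so $F$ is increasing and positive at $c^2+4$; since the leading coefficient of $F$ is positive, $F(t)>0$ for every $t\geq c^2+4$, and the upper bound is always violated. Combining everything forces $s\in\{a,v^{(i+1)}\}$, with the second option occurring precisely when $n=m(m+1)$ and $i=m-2$.
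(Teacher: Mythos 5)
Your proof is correct, and while it shares the paper's starting point --- writing $s=xv+ya$, reducing sphericity to the Pell relation $u^2-tx^2=4$, and splitting on the $v$-coefficient $x$ (the cases $x=0$ and $x=\pm1$ come out essentially the same way in both arguments, including the pronic-number pinch $1\le K\le c$ that forces $k=i+2$) --- it handles the crucial case $x\ge 2$ by a genuinely different and cleaner mechanism. The paper squeezes the integer $p=2i+3$ into the window $z-\Delta\le p<tx/z$ via a delicate real estimate ($\delta<1/z$), then invokes divisibility ($z\mid tx$, leading to $z(z-kx)=4$) for the contradiction, and needs a separate parity argument to dispose of $x=2$. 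You instead convert the upper bound $(s,v^{(i)})_-\le K$ into the sign condition $F(t)\le 0$ for an explicit quadratic in $t$, and note that $F(c^2+4)>0$, $F'(c^2+4)>0$ and the positive leading coefficient $(2x-1)^2$ force $F>0$ on the whole relevant range $t=c^2+4K\ge c^2+4$; this is purely algebraic, uses no integrality input for $x\ge2$, and treats $x=2$ uniformly. Your disposal of $x\le -1$ via $s\mapsto -s$ together with positivity of both Pell-values for $x\ge1$ is also tidier than the paper's ``completely similar'' discussion of negative $x$. Two small points you should write out explicitly: the inequality $tx-2K=\tfrac{1}{2}\bigl(t(2x-1)+c^2\bigr)>0$, which both legitimizes squaring $(s,v^{(i)})_-\le K$ into $F(t)\le 0$ and excludes the spurious branch $tx-2K<-cM$ when $F(t)>0$; and the one-line verification $tx>cM\iff Ktx^2>c^2$, which holds because $t=4K+c^2>c^2$, underlying your claim that both Pell-values are strictly positive for $x\ge1$.
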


\proof 
Suppose $s=xv+ya$; the sphericity condition means that 
\begin{equation}\label{hyperbola}
(n-1)x^2+xy-y^2=-1.    
\end{equation} 
The conditions that $0<(s,v^{(i)})\leq \frac{(v^{(i)})^2}{2}$ means that 
$$0< (2n-i-3)x+(2i+3)y\le n-1-(i+1)(i+2).$$
Letting $z=x-2y$, we rewrite these conditions as $z^2=(4n-3)x^2+4$ and 
\begin{equation}\label{eqeq}
0< 2(4n-3)x-2(2i+3)z\le (4n-3)-(2i+3)^2.
\end{equation}
If $x=0$, we get $y=1$, $z=-2$, so $s=a$. Suppose instead that $x\ge 1$; then the second inequality implies $z>0$. With $t=4n-3$ and $p=2i+3$, we get 
$$p< \frac{tx}{z} \quad \mathrm{and} \quad (p-z)^2\le \Delta^2:=z^2-(2x-1)t= t(x-1)^2+4.$$
Note that $z>\sqrt{t}x,$ hence $\frac{tx}{z}<\sqrt{t}<z$, so these two inequalities are always compatible (forgetting the condition that $p$ is integer) and reduce to 
$$z-\Delta\le p <\frac{tx}{z}.$$
Suppose that $x=1$, which implies that $4n+1$ is a square, hence $n=m(m+1)$ for some $m>0$; then $z=2m+1$, $\Delta=2$ and we are reduced to $2m-1\le p<2m+1-\frac{4}{2m+1}$; but then $p=2m-1$, 
$i=m-2$, and $(x,y)=(1,-m)$ is a solution, giving $s=v^{(i+1)}$. 

Now suppose that $x\ge 2$. In fact $x=2$ would mean $z^2=16n-8$ which is impossible, so we can suppose that $x\ge 3$. We claim that  
\begin{equation}\label{claim}
\delta:=\frac{tx}{z}-(z-\Delta)<\frac{1}{z}.
\end{equation}
But then,
$\frac{tx}{z}$ being at distance smaller than $\frac{1}{z}$ of the integer $p$, must be equal to $p$. In particular $z$ must divide $tx$, but $tx=kz$ yields $z(z-kx)=4$, which is impossible. 

There remains to prove (\ref{claim}). First observe that $z^2-\Delta^2 = (2x-1)t$, hence 
$$\delta = \frac{tx}{z}- \frac{t(2x-1)}{z+\Delta}=\frac{t(x\Delta-(x-1)z)}{z(z+\Delta)}.$$
In order to bound this expression, we use the identities
$$z=\sqrt{t}x+\frac{4}{z+\sqrt{t}x}, \qquad \Delta=\sqrt{t}(x-1)+\frac{4}{\Delta+\sqrt{t}(x-1)},$$
from which we deduce that 
$$x\Delta-(x-1)z = \frac{4x}{\Delta+\sqrt{t}(x-1)}-\frac{4(x-1)}{z+\sqrt{t}x}$$
Clearly $z$ is close to $\sqrt{t}x$; more precisely 
$$\frac{1}{2\sqrt{t}x}-\frac{1}{z+\sqrt{t}x}=\frac{z-\sqrt{t}x}{2\sqrt{t}x(z+\sqrt{t}x)}
=\frac{2}{\sqrt{t}x(z+\sqrt{t}x)^2}<\frac{1}{2t\sqrt{t}x^3}.$$
This yields the estimate
$$x\Delta-(x-1)z <\frac{2(2x-1)}{\sqrt{t}x(x-1)}+\frac{2(x-1)}{t\sqrt{t}x^3}.$$
Finally, we get the claimed upper bound 
$$z\delta<\frac{t(x\Delta-(x-1)z)}{(2x-1)\sqrt{t}}<\frac{2}{x(x-1)}+\frac{2(x-1)}{t(2x-1)x^3}
\le \frac{1}{3}+\frac{4}{135t}<1.$$
The case where $x<0$ is completely similar:
$z$ must also be negative, so we let $X=-x$ and $Z=-z$. The conditions become 
$$p>\frac{tX}{Z} \quad \mathrm{and} \quad (p+Z)^2\le \Delta^2:=Z^2+(2X+1)t= t(X+1)^2+4.$$
Then the very same discussion as before goes through, with all $-1$'s replaced by $+1$'s, and $X=1$ is no exception. \qed

\medskip 
The two previous lemmas allow  to apply \cite{bayermacri}, and we deduce the following geometric statement.

 \begin{lemma}\label{base irred}
    The base $B$ of the contraction $c$ is irreducible. 
    
    More generally, for any $i<r$, $c^{(i)}$ is a flopping contraction with an irreducible base  $B^{(i+1)}\subset \Sigma^{(i)}_0$. 
    For $i=r$, $c^{(i)}$ is an isomorphism.
\end{lemma}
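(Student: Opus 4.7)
The plan is to apply the classification of walls due to Bayer and Macrì \cite[Theorem 5.7]{bayermacri} to the lattice $\Lambda$ and each of the Mukai vectors $v^{(i)}$, reading off the nature of the contraction $c^{(i)}$ and the structure of its base from the possible Jordan-H\"older decompositions. That theorem partitions walls into three mutually exclusive types: fake walls (no contraction), divisorial walls (of Hilbert-Chow, Li-Gieseker-Uhlenbeck, or Brill-Noether type), and flopping walls. The three divisorial types are each characterized by the existence of a decomposition of $v^{(i)}$ as a sum of two positive classes of $\Lambda$ satisfying specific numerical constraints. Lemma \ref{no-positive} excludes all such decompositions, so every wall under consideration must be either fake or flopping.

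To show that $c^{(i)}$ is a non-trivial flopping contraction for $i<r$, I would exhibit a strictly $\sigma_0$-semistable object of Mukai vector $v^{(i)}$. Since $v^{(i)}=v^{(i+1)}+a$ in $\Lambda$, and both $a$ (effective by our choice of $\sigma_0$, represented by $\cU_2[1]$) and $v^{(i+1)}$ (represented by the non-empty moduli space $\Sigma^{(i+1)}$) are effective, any non-split extension of $\cU_2[1]$ by an object $E\in\Sigma^{(i+1)}$ yields such a strictly semistable $F$. Combined with the previous paragraph, this forces the wall to be flopping. For $i=r$, the maximality of $r$ yields $(v^{(r+1)})^2<-2$, so $v^{(r+1)}$ is not the Mukai vector of any semistable object; by Lemma \ref{no-spherical} the only other potential spherical JH factor is $a$, and by Lemma \ref{no-positive} no decomposition of $v^{(r)}$ into two positive classes is available. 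Hence $v^{(r)}$ is already $\sigma_0$-stable, so $c^{(r)}$ is an isomorphism.

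For the irreducibility of $B^{(i+1)}$ when $i<r$, recall that by \cite[Theorem 1.1]{bayermacri1} the points of $B^{(i+1)}$ correspond to S-equivalence classes of strictly $\sigma_0$-semistable objects with Mukai vector $v^{(i)}$. Lemma \ref{no-spherical} limits the spherical JH factors in $\Lambda$ to $a$ and (possibly) $v^{(i+1)}$, while Lemma \ref{no-positive} precludes longer decompositions into positive classes of other types. The generic decomposition of a point of $B^{(i+1)}$ is therefore $v^{(i)}=v^{(i+1)}+a$, with a single non-rigid factor of class $v^{(i+1)}$ and a single rigid factor $\cU_2[1]$. Taking the stable factor of class $v^{(i+1)}$ defines a birational morphism $B^{(i+1)}\dashrightarrow \Sigma^{(i+1)}_0$; since $\Sigma^{(i+1)}_0$ is irreducible (as a moduli space of Bridgeland-semistable objects with positive primitive Mukai vector on a K3), so is $B^{(i+1)}$.

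The main obstacle will be to control the deeper strata of $B^{(i+1)}$ parametrizing objects with more refined JH filtrations (for instance $v^{(i)}=v^{(i+k)}+ka$ with $k\geq 2$), and to check that these lie in the closure of the generic stratum rather than forming new irreducible components. Handling this requires the full Bayer-Macrì identification of the exceptional fibers in terms of projective spaces attached to the filtrations, combined with upper-semicontinuity of the JH length as one specializes in the moduli space.
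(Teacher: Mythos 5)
Your overall strategy --- classify the wall via \cite[Theorem 5.7]{bayermacri} and feed in Lemmas \ref{no-positive} and \ref{no-spherical} --- is exactly the one used in the paper, and your treatment of the flopping case for $i<r$ and of the case $i=r$ is essentially correct in outline. There are, however, two genuine gaps.

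First, you exclude divisorial contractions by asserting that the three divisorial types (Brill--Noether, Hilbert--Chow, Li--Gieseker--Uhlenbeck) are ``characterized by the existence of a decomposition of $v^{(i)}$ as a sum of two positive classes'' and then invoking Lemma \ref{no-positive}. This mischaracterizes \cite[Theorem 5.7]{bayermacri}: the sum-of-two-positive-classes condition belongs to the \emph{flopping} criterion in part (b), while the divisorial types in part (a) are detected by the existence of a spherical class $s$ with $(s,v^{(i)})=0$, resp.\ an isotropic class $w$ with $(w,v^{(i)})\in\{1,2\}$. Lemma \ref{no-positive} says nothing about such classes, so as written this step fails. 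The paper rules them out by a lattice computation: such a class together with $v^{(i)}$ would span a rank-two sublattice of $\Lambda$ of discriminant $1$, $4$ or $2(v^{(i)})^2$, each strictly smaller than the discriminant $t=4n-3$ of $\Lambda$, which is impossible. You need this (or an equivalent) argument.

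Second, for irreducibility you identify the \emph{generic} stratum of $B^{(i+1)}$ (S-equivalence classes with factors $v^{(i+1)}$ and $a$) with a subset of $\Sigma^{(i+1)}_0$, and you correctly flag that the deeper strata coming from longer filtrations must be shown to lie in the closure of this stratum --- but you leave that unresolved, so the proof of irreducibility is incomplete. The paper closes this gap by citing \cite[Section 14]{bayermacri}: the irreducible components of the base of the contraction are indexed by the two-term partitions of $v^{(i)}$ into effective classes of $\Lambda$, so the uniqueness of the partition $v^{(i)}=a+v^{(i+1)}$ (from Lemmas \ref{no-positive} and \ref{no-spherical}) immediately gives irreducibility with no separate closure argument. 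A minor further point: for $i=r$ your argument implicitly assumes $(v^{(r)})^2>0$; when $(v^{(r)})^2\in\{-2,0\}$ the moduli space is a point or a K3 surface of Picard rank one, and the paper treats these degenerate cases separately (no smooth rational curves, so any wall-crossing is an isomorphism).
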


\begin{proof}
Suppose first $(v^{(i)})^2\leq 0$; then $i=r$. If $(v^{(i)})^2=-2$, both $\Sigma^{(i)}$ and $\Sigma^{(i)}_0$ are points and there is nothing to prove. If $(v^{(i)})^2=0$, then $\Sigma^{(i)}$ is a K3 surface of Picard rank one by \cite[Theorem 3.6 (b)]{bayermacri}, in particular it does not contain any smooth rational curve. In this case, any birational transformation induced by wall-crossing is in fact an isomorphism, see \cite[Example 9.1]{bayermacri1}.

Now suppose that $(v^{(i)})^2>0$.
By definition $c^{(i)}$ is induced by a suitably high multiple of a class in $\NS(\Sigma^{(i)})$.
According to \cite[Theorem 5.7]{bayermacri}, it is a contraction if and only if there exists a special class in $\Lambda$, isotropic or spherical, subject to certain numerical conditions. There are several cases to consider. 

If a class ($s$ or $w$) as in \cite[Theorem 5.7 a)]{bayermacri} does exist, then together with $v^{(i)}$ it spans a rank two sublattice of $\Lambda$ with discriminant $d\in \lbrace 1,4,2(v^{(i)})^2\rbrace$. By \cite[XIV, (0.1)]{huybrechtsK3} this is not possible, since $\Lambda$ has discriminant $t$ which is always bigger than $d$. This excludes the possibility that $c^{(i)}$ is a divisorial contraction. 
So if $c^{(i)}$ is a contraction it must be a flopping one, subject to the conditions of \cite[Theorem 5.7 b)]{bayermacri}. There are two possibilities to consider, and they are addressed by Lemma \ref{no-positive} and Lemma \ref{no-spherical}.

\medskip
Every case in \cite[Theorem 5.7 b)]{bayermacri} yields a decomposition of $v^{(i)}$ as a sum of two effective classes in $\Lambda$ (a \textit{two-terms partition}), explicitly
described in the proof of \cite[Proposition 9.1]{bayermacri}. By Lemma \ref{no-positive}, the case of two positive classes never happens.
In the case involving a  spherical and effective class $s$, the decomposition is $v^{(i)}=s+(v^{(i)}-s)$, so Lemma \ref{no-spherical} shows that the only two-terms partition in our situation is $v^{(i)}=a+(v^{(i)}-a)=a+v^{(i+1)}$.
The case $s=v^{(i+1)}$ in Lemma \ref{no-spherical} corresponds to the situation in which $r=i+1$ and both terms of the decomposition are spherical.

As shown in \cite[Section 14]{bayermacri},
each two-terms partition of $v^{(i)}$ corresponds to an irreducible component of the base of the contraction $c^{(i)}$, so in our case the base of the contraction is irreducible.
\end{proof}

Conditions in \cite[Theorem 5.7]{bayermacri} describe all the possible Jordan-H\"older 
factors for objects corresponding to points in positive-dimensional fibers of a contraction. 
To lighten the exposition, from now on we only consider Jordan-H\"older filtrations giving rise to positive dimensional fibers, since those are the only ones that matter for our study of the contraction.

Let $A$ be a $\sigma$-stable object with Mukai vector $a$; since $a^2=-2$, $A$ is rigid.
Following the proof of \cite[Proposition 9.1]{bayermacri}, any extension of $A$ by an element $E^{(0)}\in \Sigma^{(0)}$ lies in a positive dimensional fiber of $c$.
That $E^{(0)}$ is the first element of a Jordan-H\"older filtration for $\cI_Z$ in $\Coh^\beta$ and, by the proof of Lemma \ref{base irred}, any such filtration for an element of $S^{[n]}$ starts in this way. Furthermore, again by Lemma \ref{base irred}, any element in a Jordan-H\"older filtration of $E^{(0)}$ lies in $\Sigma^{(1)}$ and so on.
In short, for any $Z\in S^{[n]}$,
any Jordan-H\"older filtration in $\Coh^\beta$ is of the form 
\begin{equation}\label{JOfiltr}
    0\subset E_Z^{(k)} \subset E_Z^{(k-1)} \subset \cdots \subset E^{(-1)}_Z=\cI_Z  
\end{equation}
for some $k\leq r$ and some objects $E^{(i)}_Z$ corresponding  to points of $\Sigma^{(i)}$; moreover,  $Z$ belongs to  $I$ if and only if this filtration is non trivial, that is,  $k\geq 0$. Here the inclusion sign means being a subobject in the category $\Coh^\beta$.

\medskip

Let us take a closer look to the Jordan-H\"older filtrations and factors, the latter being the cokernels of 
the inclusions in \eqref{JOfiltr}. We want to understand them in terms of coherent sheaves on $S$.
We will denote by $M_\cH(v^{(i)})$ the moduli space of $\cH$-semistable coherent sheaves with Mukai vector $v^{(i)}$.
Recall that $v^{(-1)}=v$, hence $M_\cH(v^{(-1)})=S^{[n]}$.

\begin{lemma}\label{de JOfiltr a JOmaps}
For $Z\in I$, set $\cE^{(-1)}=\cI_Z$. A filtration \eqref{JOfiltr} is equivalent to a 
collection of inclusions of sheaves
\begin{equation}\label{JOmaps}
\lbrace f^{(i)}:\cU_2\hookrightarrow \cE^{(i)} \rbrace_{i=0,\ldots,k}
\end{equation}
such that $\cE^{(i-1)}\in M_\cH(v^{(i-1)})$ is the cokernel of $f^{(i)}$.
Vice-versa, any such collection is the Jordan-H\"older filtration of some $Z\in I$. 
\end{lemma}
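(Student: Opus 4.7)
The plan is to pass back and forth between the triangulated world of $\Coh^\beta\subset D^b(S)$ and the abelian category of coherent sheaves on $S$, using the long exact sequence of cohomology sheaves.

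\emph{From filtrations to inclusions.} Start with a Jordan--H\"older filtration as in \eqref{JOfiltr}. Each step is a short exact sequence in $\Coh^\beta$
\[
0\lra E^{(i)}_Z\lra E^{(i-1)}_Z\lra \cU_2[1]\lra 0,
\]
since the only spherical summand admitted by Lemma \ref{no-spherical} is $a=v(\cU_2[1])$. Rotating the corresponding distinguished triangle in $D^b(S)$ gives
\[
\cU_2\lra E^{(i)}_Z\lra E^{(i-1)}_Z\lra \cU_2[1].
\]
I proceed by induction on $i$, starting from the coherent sheaf $\cE^{(-1)}=\cI_Z$. Assuming $E^{(i-1)}_Z$ is (a sheaf concentrated in degree $0$ equal to) $\cE^{(i-1)}$, the long exact sequence of cohomology sheaves applied to the triangle above, combined with the fact that $\cU_2$ sits in degree $0$ and has no negative cohomology, forces $H^{-1}(E^{(i)}_Z)=0$ and identifies $E^{(i)}_Z$ with a coherent sheaf $\cE^{(i)}$ fitting in
\[
0\lra \cU_2\stackrel{f^{(i)}}{\lra}\cE^{(i)}\lra \cE^{(i-1)}\lra 0.
\]

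\emph{Gieseker semistability of the cokernel.} It remains to check that $\cE^{(i-1)}\in M_\cH(v^{(i-1)})$. Here the key input is that the stability condition $\sigma$ has been chosen in Section \ref{sec: small c} to lie in a chamber whose heart is $\Coh^\beta$ and which is adjacent to the relevant flopping wall; in this regime $\sigma$-stability of objects with Mukai vector $v^{(i-1)}$ (in particular positive rank, with $c_1$ proportional to $\cH$) agrees with Gieseker $\cH$-semistability by the standard large-volume limit identification (see \cite[Section 6]{bayermacri}). Since $\cE^{(i-1)}$ is represented by the $\sigma$-semistable object $E^{(i-1)}_Z\in \Sigma^{(i-1)}$, it therefore lies in $M_\cH(v^{(i-1)})$.

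\emph{Converse.} Given a collection \eqref{JOmaps}, I build the filtration by reverse induction: for each $i$ the short exact sequence of sheaves $0\to\cU_2\to \cE^{(i)}\to \cE^{(i-1)}\to 0$ yields a triangle in $D^b(S)$, which after rotation becomes a short exact sequence
\[
0\lra \cE^{(i)}\lra \cE^{(i-1)}\lra \cU_2[1]\lra 0
\]
in $\Coh^\beta$ (since both endpoints lie in $\Coh^\beta$, the middle term does too). Assembling these for $i=k,\ldots,0$ produces the filtration \eqref{JOfiltr}; the successive factors being $\cU_2[1]$ and the outer terms having the prescribed Mukai vectors, it is a Jordan--H\"older filtration for $\sigma_0$.

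\emph{Main obstacle.} The delicate point is the identification, in the forward direction, of $\cE^{(i-1)}$ as a Gieseker semistable sheaf, i.e.\ the passage from Bridgeland to Gieseker stability. Once this is in place, the rest is bookkeeping with long exact sequences of cohomology sheaves and with Yoneda extensions.
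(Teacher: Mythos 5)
Your overall strategy — moving back and forth between $\Coh^\beta$ and coherent sheaves via long exact sequences of cohomology sheaves — is the right one, and it is essentially how the paper proceeds through \cite[Proposition 2.4]{bayer}. But your first step assumes precisely what the proof has to establish. You declare that each step of \eqref{JOfiltr} is a short exact sequence $0\to E^{(i)}_Z\to E^{(i-1)}_Z\to \cU_2[1]\to 0$ in $\Coh^\beta$, ``since the only spherical summand admitted by Lemma \ref{no-spherical} is $a$''. Lemma \ref{no-spherical} only pins down the \emph{Mukai vector} of the Jordan--H\"older factor; it does not identify the unique $\sigma_0$-stable object of class $a$ with $\cU_2[1]$. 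In the paper that identification is Remark \ref{A}, stated explicitly as a \emph{consequence} of the first part of the proof of this very lemma. If all you know is that the factor is some object $G\in\Coh^\beta$ with $v(G)=a$, the long exact sequence of cohomology sheaves yields a four-term exact sequence of sheaves $0\to\cK\to\cE^{(i)}\to\cE^{(i-1)}\to\cQ\to0$ with $\cK=H^{-1}(G)\in F^\beta$, $\cQ=H^0(G)\in T^\beta$ and $v(\cK)-v(\cQ)=-a$, and the entire content of the paper's proof is the elimination of $\cQ$: writing $v(\cQ)=(x,y\cH,z)$, slope inequalities confine $(x,y)$ to a lattice triangle whose integral points are controlled by Pick's formula, and the one surviving nonzero case is excluded because it would force a torsion image inside the torsion-free $\cE^{(i-1)}$ (equivalently $\cK\simeq\cE^{(i)}$, impossible since $\cK\in F^\beta$ while $\cE^{(i)}\in T^\beta$). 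Only then does one get $\cQ=0$, $v(\cK)=-a$, and $\cK\simeq\cU_2$. Your argument stops exactly where this work begins.

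A second gap: your justification that $\cE^{(i-1)}\in M_\cH(v^{(i-1)})$ via ``the standard large-volume limit identification'' is not available here. The stability condition $\sigma$ is chosen generic near the wall of $\Stab^\dagger(S)$ cut out by $\Lambda$ (a small semicircle about $\beta=-\tfrac{1}{2}$), not in the Gieseker chamber for $v^{(i-1)}$, and the identification $\Sigma^{(i)}\simeq M_\cH(v^{(i)})$ is Corollary \ref{mod space}, which the paper deduces \emph{from} the present lemma together with Lemma \ref{de JOfiltr a JOmaps second part}; invoking it here is circular. The converse direction is likewise under-argued (you need $\cU_2\in F^\beta$, the containment of the $\cE^{(i)}$ in $T^\beta$, and the $\sigma_0$-stability and equality of phases of the factors before calling the result a Jordan--H\"older filtration), but the essential missing ingredient is the elimination of $\cQ$ described above.
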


Beware that in \cite{invol1} we rather used the dual sheaves, which for $n=3$ were vector bundles.

\begin{proof}
Recall that $\Coh^\beta$ is obtained by extension-closure of two subcategories of the category of coherent sheaves, called $T^\beta$ and $F^\beta$.
By \cite[Proposition 2.4]{bayer}, a subobject of $\cE^{(i-1)}$ in $\Coh^\beta$ is equivalent to an exact sequence of coherent sheaves
    \[
0\lra \cK \lra \cE^{(i)} \stackrel{\nu}{\lra} \cE^{(i-1)} \lra \cQ\lra 0
\]
with $\cK\in F^\beta$, $\cQ\in T^\beta$ and $v(\cK)-v(\cQ)=-a$: we impose $v(\cQ)=(x,y\cH,z)$ and $v(\cK)=(2+x,(y-1)\cH,2n-1+z)$.
We proceed to prove that $\nu$ is necessarily surjective, in particular $\cQ=0$ and $v(\cK)=-a$, yielding $\cK\simeq \cU_2$ by semistability of $\cK$. This will prove the first part, since it will identify a subobject in $\Coh^\beta$ with an injective map $\cU_2\lra \cE^{(i)}$ with cokernel $\nu$.

The proof of \cite[Lemma 30]{invol1} is easily adapted as long as $x=0$, so that the rank of $\cK$ is two. This is the case for example when $i=0$. Suppose instead $i>0$ and $x>0$.
Since both $\cE^{(i)}$ and $\cE^{(i-1)}$ are semistable, the two inequalities 
\[
\left\{ \begin{aligned}
    \mu_\cH(\cK)\leq \mu_\cH(\cE^{(i)}) & \Longleftrightarrow \frac{y-1}{x+2}\leq -\frac{i+1}{2i+3} \\
    \mu_\cH(\cQ)\geq \mu_\cH(\cE^{(i)})& \Longleftrightarrow \frac{y}{x}\geq -\frac{i}{2i+1}
\end{aligned}
\right.
\]
must hold. 
The pair $(x,y)\in \ZZ^2$ should then lie in the triangle cut out in $\RR^2$ by $x=0$ and the two lines $g_1(x)=-\frac{i}{2i+1}x$ and $g_2(x)=-\frac{i+1}{2i+3}x+\frac{1}{2i+3}$, intersecting at $(2i+1,-i)$. Note that this triangle is contained in the slightly bigger triangle with integral vertices $(2i+1,-i), (0,0)$ and $(-2,1)$. This triangle has area $\frac{1}{2}$ so by Pick's formula the only integral points it contains are its three vertices.
The only possibility  with $x>0$ is therefore $(x,y)=(2i+1,-i)$. But in this case $\cK$ and $\cE^{(i)}$ have the same rank, so the image via $\nu$ in $\cE^{(i-1)}$ is a torsion sheaf, and has to be zero since $\cE^{(i-1)}$ is torsion free. 
This is impossible, since $\cK\in F^\beta$ and $\cE^{(i)}\in T^\beta$ cannot be isomorphic. This concludes the proof. \end{proof}

\begin{remark}\label{A}
As a consequence of the first part of the proof, the unique $\sigma$-stable object with Mukai vector $a$, that we denoted $A$ in the proof of Lemma \ref{base irred}, is in fact $\cU_2[1]$.
\end{remark}

\begin{lemma}\label{de JOfiltr a JOmaps second part}
With the same notations as before:
\begin{enumerate}
    \item any nonzero map $\cU_2\lra \cE^{(i)}$, with $\cE^{(i)}\in M_\cH(v^{(i)})$, is injective with cokernel in $M_\cH(v^{(i-1)})$;
    \item vice-versa,
    any nonzero morphism $\cE^{(i)}\lra \cE^{(i-1)}$, for $\cE^{(i)}\in M_\cH(v^{(i)})$ and $\cE^{(i-1)}\in M_\cH(v^{(i-1)})$, is surjective with kernel $\cU_2$; 
    \item  
    any non-trivial extension  of $\cE^{(i-1)}\in M_\cH(v^{(i-1)})$ by $\cU_2$ defines a point $\cE^{(i)}$ of $M_\cH(v^{(i)})$ which is $\cH$-stable.
\end{enumerate}
\end{lemma}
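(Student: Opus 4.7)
\medskip\noindent\textbf{Proof plan.} The three parts will be established in order, combining slope inequalities translated into integer constraints on $(r',k')$---the rank and $\cH$-multiple of $c_1$ of any sub/quotient sheaf, using $\mathrm{Pic}(S)=\ZZ\cH$---with the primitivity of $v^{(i)}$ (so Gieseker semistable equals stable), and---for part (3)---the vanishing of $\Ext^{j}(\cT,\cU_2)$ for $j\le 1$ and $\cT$ a $0$-dimensional torsion sheaf. The key slope ordering is
\[
\mu(\cU_2)=-(4n-3) \;<\; \mu(\cE^{(i)})=-\tfrac{2(i+1)(4n-3)}{2i+3} \;<\; \mu(\cE^{(i-1)})=-\tfrac{2i(4n-3)}{2i+1}.
\]

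For (1), I first rule out a rank-one image of $f$: it would be a proper quotient of the stable $\cU_2$ (slope $>-(4n-3)$) and a rank-one subsheaf of $\cE^{(i)}$ (slope $\le\mu(\cE^{(i)})$), but rank-one slopes are integer multiples of $\cH^2=8n-6$, and no such multiple lies in the interval. Hence $f$ is injective by torsion-freeness of $\cE^{(i)}$. The cokernel $\cC$ is torsion-free because the saturation of $\cU_2$ in $\cE^{(i)}$ has rank $2$ with $c_1=-\cH$ (by slope constraints and $\mathrm{Pic}(S)=\ZZ\cH$) and, by local freeness of $\cU_2$, must coincide with $\cU_2$. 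For Gieseker semistability of $\cC$: any destabilizing $\cC'\subset\cC$ with invariants $(r',k')$ pulls back to $\cE'\subset\cE^{(i)}$; stability of $\cE^{(i)}$ gives $(2i+3)k'+(i+1)r'\le 1$, while $\mu(\cC')>\mu(\cE^{(i-1)})$ gives $(2i+1)k'+ir'\ge 1$; subtracting yields $2k'+r'\le 0$, which admits no integer solutions simultaneously satisfying both inequalities for $r'\ge 1$ and $i\ge 0$. Primitivity of $v^{(i-1)}$ upgrades semistability to stability.

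For (2), the image $\cI$ of $g$ satisfies $\mu(\cE^{(i)})\le\mu(\cI)\le\mu(\cE^{(i-1)})$; an analogous integer analysis rules out $\mathrm{rk}\,\cI<2i+1$. For $\mathrm{rk}\,\cI=2i+1$, the inequalities squeeze $c_1(\cI)=-i\cH$, so $\cE^{(i-1)}/\cI$ is $0$-dimensional of some length $\ell\ge 0$ and $v(\ker g)=(2,-\cH,2n-1+\ell)$, whose square is $-2-4\ell$. A direct slope argument (any rank-one subsheaf of $\ker g$ with $c_1\ge 0$ would destabilize $\cE^{(i)}$, and no rank-one subsheaf can have slope $-(4n-3)$) shows $\ker g$ is Gieseker semistable, so its Mukai vector square is $\ge -2$, forcing $\ell=0$. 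Thus $g$ is surjective, $v(\ker g)=v(\cU_2)$, and uniqueness of the rank-two Mukai bundle gives $\ker g\simeq\cU_2$.

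For (3), take $\xi\in\Ext^1(\cE^{(i-1)},\cU_2)\setminus\{0\}$ and let $\cE^{(i)}$ be the corresponding extension. For $\cE'\subsetneq\cE^{(i)}$, set $\cK:=\cE'\cap\cU_2$ and $\cC':=\cE'/\cK\hookrightarrow\cE^{(i-1)}$. The case $\cK=\cU_2$ is handled by the arithmetic of (1), using stability of $\cE^{(i-1)}$; the case $\cK$ of rank one (with slope $\le-2(4n-3)$ by stability of $\cU_2$) is excluded by a direct integer check. The delicate case is $\cK=0$: then $\cE'\simeq\cC'$ lifts to $\cE^{(i)}$ iff the pullback $\xi|_{\cC'}\in\Ext^1(\cC',\cU_2)$ vanishes. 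The integer analysis confines any destabilizing $\cC'$ to full rank $2i+1$ in $\cE^{(i-1)}$ with $0$-dimensional torsion quotient $\cT:=\cE^{(i-1)}/\cC'$. Applying $\Ext^*(-,\cU_2)$ to $0\to\cC'\to\cE^{(i-1)}\to\cT\to 0$, the Koszul resolution of $\cT$ combined with local freeness of $\cU_2$ gives $\Ext^{j}(\cT,\cU_2)=0$ for $j\le 1$, so the restriction $\Ext^1(\cE^{(i-1)},\cU_2)\to\Ext^1(\cC',\cU_2)$ is injective; hence $\xi|_{\cC'}\neq 0$, excluding the last case. Primitivity of $v^{(i)}$ then yields stability. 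The main obstacle is (3): the case-by-case analysis of $\cK\subset\cU_2$ and the arithmetic isolation of the splitting-type destabilizer, which is finally killed by the $\Ext$-vanishing.
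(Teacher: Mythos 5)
Your overall strategy is the same as the paper's: translate sub/quotient constraints into integer inequalities on $(r',k')$ using $\NS(S)=\ZZ\cH$, and exploit that consecutive slopes $-i/(2i+1)$ and $-(i+1)/(2i+3)$ are Farey neighbours. Part (1) is essentially identical to the paper (your saturation argument for torsion-freeness of the cokernel is a harmless variant of the paper's pullback-of-torsion argument, and your pinching inequality $2k'+r'\le 0$ is the paper's $\frac{1-ir}{2i+1}\le x\le\frac{1-(i+1)r}{2i+3}$ in disguise). Parts (2) and (3) do diverge usefully. In (2) the paper gets surjectivity from the triangle/Pick's-formula argument of the preceding lemma and then checks semistability of the kernel directly; you instead pin the image to $(\mathrm{rk},c_1)=(2i+1,-i\cH)$ by the Farey argument and then kill the finite-length quotient via $\mu$-stability of the kernel and the bound $v^2\ge -2$ for simple sheaves on a K3 — a clean alternative. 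In (3), your treatment of the case $\cK=0$, using $\Ext^{j}(\cT,\cU_2)=0$ for $j\le 1$ to get injectivity of $\Ext^1(\cE^{(i-1)},\cU_2)\to\Ext^1(\cC',\cU_2)$ and hence $\xi|_{\cC'}\ne 0$, is exactly what is needed to justify the paper's terse step ``we can suppose $\cV=\cE^{(i-1)}$ and conclude that the extension splits''; on this point your write-up is more complete than the paper's.

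Two concrete repairs are needed. First, in (3) your case split on $\cK=\cE'\cap\cU_2$ (namely $\cK=\cU_2$, $\cK$ of rank one, $\cK=0$) is not exhaustive: $\cK$ can be a proper rank-two subsheaf of $\cU_2$, with $c_1(\cK)=-y\cH$, $y\ge 1$. This case is not covered as written; it is excluded either by replacing $\cE'$ with $\cE'+\cU_2$ (which has the same image $\cC'$ in $\cE^{(i-1)}$, no smaller slope, and intersection exactly $\cU_2$), or by redoing the arithmetic with the only input $y\ge 1$, as the paper does by treating ranks $k\in\{1,2\}$ together. Second, the assertion ``primitivity of $v^{(i-1)}$ (resp. $v^{(i)}$) upgrades semistability to stability'' is not a valid principle — primitive Mukai vectors admit strictly semistable sheaves in general (e.g. $(2,0,-2)$). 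What saves you here is the coprimality $\gcd(2i+1,i)=\gcd(2i+3,i+1)=1$: since $-i/(2i+1)$ and $-(i+1)/(2i+3)$ are in lowest terms, a proper-rank subsheaf cannot have exactly the same slope, so your $\mu$-semistability conclusions are automatically $\mu$-stability, which suffices (and note that for (1) only membership in the semistable moduli space $M_\cH(v^{(i-1)})$ is claimed). Finally, the ``direct integer checks'' you invoke for the rank-one $\cK$ case in (3) and for ruling out lower-rank images in (2) are only asserted, but they do go through by the same Farey-type arithmetic, so these are omissions of detail rather than errors.
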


\begin{proof}
Let us prove (1). Let $\cK$ be the kernel of a non-zero map $\cU_2\lra \cE^{(i)}$. Suppose $\cK\neq 0$: since both $\cU_2$ and $\cE^{(i)}$ are torsion free, $\cK$ has rank one. Since $\cU_2$ is stable with slope $\mu_\cH(\cU_2)=-\frac{1}{2}$, $c_1(\cK)=-x\cH$ with $x>0$. The image of $\cU_2$ in $\cE^{(i)}$ has then rank one and first Chern class $(x-1)\cH$. This is absurd since the slope of $\cE^{(i)}$ is negative, so $\cK=0$. 

Then we check that the cokernel $ \cE^{(i-1)}$  must be torsion free. If not, let $\cT$ denote its torsion subsheaf, and $\tilde{\cT}$ the pullback of $\cT$ in 
$\cE^{(i)}$: as a subsheaf of the latter, this is a torsion-free sheaf, hence locally free in codimension one. Moreover, since $\cE^{(i)}$ is stable with negative first Chern class, $c_1(\tilde{\cT})<0$. But since $\cT$ is torsion, $c_1(\cT)\ge 0$, while the relation
$c_1(\cT)=c_1(\tilde{\cT})+\cH\le 0$ yields $c_1(\cT)=0$. This means that $\cT$ has finite support. Then $\cU_2\hookrightarrow \tilde{\cT}\hookrightarrow \tilde{\cT}^{\vee\vee}$
and this double dual is a vector bundle with the same first Chern class as $\cU_2$; so the inclusion of $\cU_2$ must be an isomorphism, hence also $\cU_2\simeq\tilde{\cT}$, hence $\cT=0$. 

Finally, suppose that $\cF$ is a rank $r$ destabilizing subsheaf of $ \cE^{(i-1)}$, with first Chern class $x\cH$, so that $x/r>-i/(2i+3)$. Its pullback $\tilde{\cF}$ in 
$ \cE^{(i)}$ has rank $r+2$ and first Chern class $(x-1)\cH$; since $ \cE^{(i)}$ is $\cH$-semistable, we deduce that $(x-1)/(r+2)\le -(i+1)/(2i+3)$. Hence 
$$\frac{1-ir}{2i+1}\le x\le \frac{1-(i+1)r}{2i+3},$$
which is impossible since  $\frac{1-(i+1)r}{2i+3}-\frac{1-ir}{2i+1}=-\frac{r+2}{(2i+1)(2i+3)}<0$.

\smallskip
We turn to (2). In light of the proof of the previous Lemma, we only need to show that the kernel $\cK$ of any non-zero $\cE^{(i)}\lra \cE^{(i-1)}$ is $\cH$--semistable. By the proof above, we know that $\cK$ has rank two and $c_1(\cK)=-\cH$. Consider any subsheaf $\cV$ of $\cK$ of rank one; since $\cV$ is also a subsheaf of $\cE^{(i)}$ which is $\cH$--semistable with negative slope, $c_1(\cV)=-x\cH$ for some integer $x>0$. But then $\mu_\cH(\cV)=-x<\mu_\cH(\cK)=-\frac{1}{2}$ and we are done.

\smallskip
To prove (3), first observe that an extension $\cF$ of $\cE^{(i-1)}$ by $\cU_2$ is necessarily torsion free with Mukai vector $v^{(i)}$, so we are left to prove $\cH$--semistability.
For $\cV$ a subsheaf of $\cF$, let $0\to \cK \to \cV\to \cQ\to 0$ be the induced extension, with $\cQ$ a subsheaf of $\cE^{(i-1)}$ of rank $r-k$, where $k\in \lbrace 0,1,2 \rbrace$  is the rank of $\cK$ and $r<2i+3$ the rank of $\cV$; we can suppose $r>k$. 

If $\cK$ has rank zero, being a subsheaf of $\cU_2$ it must be zero, hence
$\cV\simeq\cQ$. We must show this sheaf cannot destabilize $\cE^{(i)}$, knowing that it does not destabilize $\cE^{(i-1)}$. Suppose the contrary; if $c_1(\cV)=-x\cH$, this means that 
$$\frac{i}{2i+1} \le \frac{x}{r}\le\frac{i+1}{2i+3}.$$
The difference between the last two rational numbers can be written as $\frac{m}{r(2i+3)}$ for some positive integer $m$. On the other hand, the difference between the extreme two is $\frac{1}{(2i+1)(2i+3)}$. So the only possibility is that $m=1$ and $r=2i+1$, in which case we can suppose that $\cV=\cE^{(i-1)}$ and conclude that the extension splits, a contradiction. 

So suppose $k>0$, and let $c_1(\cV)=-x\cH$ and $c_1(\cK)=-y\cH$, with $x\ge 0$ and $y\ge 1$. Since $\cE^{(i-1)}$ is $\cH$-semistable, we have 
$\frac{x-y}{r-k}\geq \frac{i}{2i+1}$, in particular $x\geq y>0$. 
If $\cV$ has bigger or equal slope than $\cF$, then $\frac{x}{r}\leq \frac{i+1}{2i+3}$. In particular $x\le i$ since $r<2i+3$. Then the inequality 
$r-k\le 2(x-y)+\frac{x-y}{i}$ implies that in fact $r-k\le 2(x-y)$. Similarly, the inequality 
$r\ge 2x+\frac{x}{i+1}$ implies that in fact $r\ge 2x+1$. The two inequalities together yield $2y\le k-1$, in contradiction with the semistability of $\cU_2$.
\end{proof}

From now on, we refer also to a collection of maps as in \eqref{JOmaps} as a Jordan-H\"older filtration. By \cite[Definition and Corollary 6.1.5]{huy-lehn}, 
$$\chi(\cU_2,\cE^{(i)})=-(v(\cU_2),v(\cE^{(i)}))=(a,v^{(i)})=2i+3.$$ 
Since both sheaves are semistable and $\cE^{(i)}$ has bigger $\cH$-slope than $\cU_2$, we have 
$hom(\cE^{(i)},\cU_2)=0$. Using Serre duality we deduce that
\begin{equation}\label{hom et ext}
hom(\cU_2,\cE^{(i)})=(2i+3)+ext^1(\cE^{(i)},\cU_2).
\end{equation}
In particular $hom(\cU_2,\cE^{(i)})\geq 2i+3$.

As a nice consequence, we get a description of objects in $\Sigma^{(i)}$.

\begin{coro}\label{mod space}
    For $i\geq 0$, objects in $\Sigma^{(i)}$ are $\cH$-stable coherent sheaves with Mukai vector $v^{(i)}$. In particular, $\Sigma^{(i)}\simeq M_\cH(v^{(i)})$, and this moduli space parametrizes $\cH$-stable sheaves. 
\end{coro}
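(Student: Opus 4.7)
The plan is to argue by induction on $i\ge -1$, with the base case $i=-1$ being immediate since $\Sigma^{(-1)}=S^{[n]}=M_\cH(v)$ parametrizes ideal sheaves $\cI_Z$, which are $\cH$-stable. For $i\ge 0$, assuming $\Sigma^{(j)}=M_\cH(v^{(j)})$ for all $j<i$, the identification of $\Sigma^{(i)}$ with $M_\cH(v^{(i)})$ as moduli of objects in $D^b(S)$ amounts to the two inclusions: ($\alpha$) every $\cH$-stable sheaf with Mukai vector $v^{(i)}$ is $\sigma$-stable, and ($\beta$) every $\sigma$-stable object in $\Sigma^{(i)}$ is a $\cH$-stable coherent sheaf.

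For ($\alpha$), let $\cE^{(i)}\in M_\cH(v^{(i)})$. Its $\cH$-slope $-\tfrac{i+1}{2i+3}$ lies in $(-\tfrac{1}{2},0)$, so $\cE^{(i)}\in T^\beta\subset \Coh^\beta$ is an object of the heart. By the genericity of $\sigma$ with respect to every $v^{(j)}$ (Section \ref{sec: small c}), the only wall for $v^{(i)}$ that $\sigma$ can abut is $\sigma_0$, associated to $\Lambda$. Lemmas \ref{no-positive}--\ref{no-spherical} restrict the possible $\sigma_0$-Jordan--H\"older decompositions of $\cE^{(i)}$ to involve $A=\cU_2[1]$ together with a residual class of the form $v^{(i+k)}$ (Lemma \ref{base irred}); via Remark \ref{A} and Lemma \ref{de JOfiltr a JOmaps second part}(1), this translates into an inclusion of sheaves $\cU_2\hookrightarrow \cE^{(i)}$ with cokernel in $M_\cH(v^{(i-1)})=\Sigma^{(i-1)}$ (inductive hypothesis). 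At the generic $\sigma$ off $\sigma_0$, the $\sigma$-phase of these factors differs strictly from that of $\cE^{(i)}$, so no $\sigma$-destabilization occurs and $\cE^{(i)}$ is $\sigma$-stable.

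For ($\beta$), let $E^{(i)}\in \Sigma^{(i)}$ be $\sigma$-stable. If $E^{(i)}$ is also $\sigma_0$-stable, it lies in the open complement of the exceptional locus of $c^{(i)}$, which by ($\alpha$) and Lemma \ref{de JOfiltr a JOmaps second part}(1) coincides with the open subset of $M_\cH(v^{(i)})$ of sheaves admitting no subsheaf isomorphic to $\cU_2$; hence $E^{(i)}$ is a $\cH$-stable coherent sheaf. If $E^{(i)}$ is strictly $\sigma_0$-semistable, then Lemma \ref{de JOfiltr a JOmaps} (whose proof applies verbatim to any $\sigma_0$-Jordan--H\"older subobject, not only to those arising from ideal sheaves) exhibits $E^{(i)}$ as a coherent sheaf fitting in a short exact sequence $0\to \cU_2\to E^{(i)}\to \cE^{(i-1)}\to 0$ with $\cE^{(i-1)}\in M_\cH(v^{(i-1)})$; Lemma \ref{de JOfiltr a JOmaps second part}(3) then guarantees the $\cH$-stability of $E^{(i)}$.

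The main obstacle I anticipate is the ``no other walls'' input in ($\alpha$): ruling out walls for $v^{(i)}$ between $\sigma$ and the Gieseker chamber other than $\sigma_0$. This is handled by the Bayer--Macri wall analysis recalled in Section \ref{sec: small c}: any such wall is indexed by a rank-two primitive sublattice of $H^*_{alg}(S,\ZZ)$ subject to stringent numerical constraints, and the genericity of $\sigma$ with respect to every $v^{(j)}$ of degree $\ge -2$, together with Lemmas \ref{no-positive}--\ref{no-spherical}, pins down $\Lambda$ as the unique such lattice producing a destabilization of $v^{(i)}$ in the region of $\sigma$.
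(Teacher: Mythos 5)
Your overall strategy (proving the two containments between $\Sigma^{(i)}$ and $M_\cH(v^{(i)})$ separately) is reasonable, but both directions as written have gaps, and the paper takes a shorter route that avoids them. The paper first shows that \emph{every} object $E\in\Sigma^{(0)}$ is a subobject in $\Coh^\beta$ of some ideal sheaf $\cI_Z$: since $\hom(A,E)=\hom(E,A)=0$ for $A=\cU_2[1]$, the Mukai pairing gives $\mathrm{ext}^1(A,E)=(a,v^{(0)})=3>0$, so a nontrivial extension of $A$ by $E$ exists and is a $\sigma$-stable object of class $v$, i.e.\ an ideal sheaf. Being a subobject of a complex concentrated in degree zero forces $E$ itself to be concentrated in degree zero (as in the proof of Lemma \ref{de JOfiltr a JOmaps}), and Lemma \ref{de JOfiltr a JOmaps second part}(3) then gives $\cH$-stability. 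This yields an injection $\Sigma^{(0)}\hookrightarrow M_\cH(v^{(0)})$, and surjectivity is obtained not by comparing stability conditions but by a dimension count: both spaces are irreducible and projective of dimension $(v^{(0)})^2+2$.

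The concrete problems with your version are the following. In ($\alpha$) you claim that an $\cH$-stable sheaf of vector $v^{(i)}$ is $\sigma$-stable because ``the $\sigma$-phase of these factors differs strictly from that of $\cE^{(i)}$, so no $\sigma$-destabilization occurs.'' This is backwards: a subobject destabilizes precisely when its phase becomes strictly \emph{larger}, so a strict phase difference is exactly what could destabilize, and you would need to know on which side of the wall $\sigma_0$ the condition $\sigma$ sits relative to $a$ and $v^{(i)}$. Moreover, genericity of $\sigma$ with respect to the $v^{(j)}$ only guarantees that $\sigma$ does not lie \emph{on} a wall for $v^{(i)}$; it does not rule out walls for $v^{(i)}$ lying \emph{between} $\sigma$ and the Gieseker chamber, which is what a direct comparison of $\sigma$-stability with $\cH$-stability would require, and Lemmas \ref{no-positive} and \ref{no-spherical} only control the single wall associated to $\Lambda$. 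In ($\beta$), for $E^{(i)}$ that is $\sigma_0$-stable you never show that it is concentrated in degree zero; lying in the complement of the exceptional locus of $c^{(i)}$ does not by itself identify it with a sheaf, and the identification you invoke presupposes the equality $\Sigma^{(i)}=M_\cH(v^{(i)})$ that you are trying to prove. Both gaps are closed simultaneously by the paper's ``subobject of an ideal sheaf'' observation together with the final dimension count.
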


\begin{proof}
We prove the statement for $i=0$. The case where $i>0$ follows inductively with the same argument.
A priori, objects in $\Sigma^{(0)}=M_\sigma(v^{(0)})$ are two-term complexes; whenever $E\in \Sigma^{(0)}$ is a subobject of some $\cI_Z\in S^{[n]}$, seen in $\Coh^\beta$ as a complex concentrated in degree 0, then also $E$ is concentrated in degree zero, as in the proof of \cite[Proposition 2.4]{bayer}.
Now, \textit{any} $E\in \Sigma^{(0)}$ can be seen as a subobject of some $\cI_Z$, as it is clear from the construction of the indeterminacy locus $I$ in the proof of Lemma \ref{base irred}. So $E$ is always concentrated in degree zero and can be identified with an $\cH$-semistable sheaf $\cE$ of Mukai vector $v^{(0)}$.
As a consequence, $\Sigma^{(0)}$ embeds in $M_\cH(v^{(0)})$ by sending a complex 
to its only non-zero term.
The proof of Lemma \ref{de JOfiltr a JOmaps} translates the fact of being a subobject of some $\cI_Z$ for $E$ to sitting in an exact sequence between $\cU_2$ and the same $\cI_Z$ for the associated $\cE$.
Hence any such $\cE$ is actually $\cH$-stable by Lemma \ref{de JOfiltr a JOmaps second part} item (3).
By \cite[X.3. Theorem 3.10 and X.1. Corollary 1.5]{huybrechtsK3}, $M_\cH(v^{(0)})$ is irreducible of dimension $(v^{(0)})^2+2$ and $M_\sigma(v^{(0)})$ is projective of the same dimension \cite[Theorem 2.15 and Theorem 3.6]{bayermacri}, so the embedding is also surjective. 
\end{proof}

The Jordan-H\"older filtration is not unique, but unicity holds for Jordan-H\"older factors: the Jordan-H\"older factors of \eqref{JOfiltr}, except for the $r$-th one, have Mukai vector $a$ hence, by Remark \ref{A}, they are copies of  $\cU_2[1]$.
So the Jordan-H\"older factors are
\begin{equation}\label{JOfact}
[\cE^{(k)}_Z,\overbrace{\cU_2[1],\ldots,\cU_2[1]}^{k+1\,  times}].
\end{equation}

\medskip

We are ready to provide a description of the points of $I$.
Recall that, in $\Coh^\beta$, the ideal sheaves of points in the indeterminacy locus $I$ are extensions of $\cU_2[1]$ by an element of $\Sigma^{(0)}$ and two extensions are sent to the same point if and only if the associated $\cE^{(0)}$'s are S-equivalent for $\sigma_0$.

\begin{prop}\label{prop sequence for I}
A length $n$ subscheme $Z$ of $S$ defines a point of $I$ if and only if the sheaf $\cI_Z$
lies in a a short exact sequence
\begin{equation}\label{extension} 
0\lra \cU_2\stackrel{f}{\lra} \mathcal{E}\lra \cI_Z\lra 0
\end{equation}
for some $\cE$ corresponding to a point in $\Sigma\cong M_\cH(w)$.
Two subschemes $Z,Z'\in I$, with associated Jordan-H\"older filtrations as in \eqref{JOfiltr}, of respective lengths $k$ and $k'$, lie in the same fiber of $c$ if and only if $k=k'$ and $\cE^{(k)}_Z=\cE^{(k')}_{Z'}$. 
\end{prop}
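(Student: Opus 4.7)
The plan is to exploit the Jordan--Hölder machinery already set up. For the first assertion, the \emph{only if} direction amounts to reading off the outermost step of the filtration \eqref{JOfiltr}: if $Z\in I$ the filtration is non-trivial, so one has $\cE^{(0)}_Z \subset \cI_Z$ in $\Coh^\beta$, and Lemma \ref{de JOfiltr a JOmaps} translates this into a short exact sequence of coherent sheaves
\[
0 \lra \cU_2 \lra \cE^{(0)}_Z \lra \cI_Z \lra 0,
\]
with $\cE^{(0)}_Z \in M_\cH(v^{(0)})$; by Corollary \ref{mod space} this moduli space is precisely $\Sigma$, so we take $\cE = \cE^{(0)}_Z$.

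For the \emph{if} direction, the plan is to tilt the given sequence. Since $\cU_2 \in F^\beta$ while $\cE$ and $\cI_Z$ lie in $T^\beta$, the distinguished triangle $\cU_2 \to \cE \to \cI_Z \to \cU_2[1]$ reassembles in the heart $\Coh^\beta$ as
\[
0 \lra \cE \lra \cI_Z \lra \cU_2[1] \lra 0.
\]
Both $\cE$ and $\cU_2[1]$ have Mukai vectors in $\Lambda$ and are therefore $\sigma_0$-semistable of the same phase as $\cI_Z$, so $\cE$ is a proper $\sigma_0$-subobject of $\cI_Z$ of the same phase; the $\sigma_0$-Jordan--Hölder filtration of $\cI_Z$ is hence non-trivial, and so $Z \in I$.

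For the second assertion, I would apply \cite[Theorem 1.1]{bayermacri1}: the fibers of $c$ are exactly the $\sigma_0$-S-equivalence classes, characterized by coincidence of multisets of Jordan--Hölder factors at $\sigma_0$. From \eqref{JOfact}, for $Z\in I$ with filtration length $k$, this multiset reads
\[
\bigl\{\cE^{(k)}_Z,\ \underbrace{\cU_2[1],\ldots,\cU_2[1]}_{k+1 \text{ copies}}\bigr\}.
\]
In the basis $(v,a)$ of $\Lambda$, the Mukai vector $v^{(k)} = v-(k+1)a$ has coefficient $1$ on $v$ while $a$ has coefficient $0$; so $v^{(k)} \neq a$, and $\cE^{(k)}_Z$ is never a copy of $\cU_2[1]$. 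Consequently the multiset uniquely determines both the integer $k$ (one less than the number of $\cU_2[1]$-factors) and the remaining object $\cE^{(k)}_Z$, so the multisets coincide if and only if $k = k'$ and $\cE^{(k)}_Z \cong \cE^{(k')}_{Z'}$, as claimed.

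The main technical obstacle I foresee is the tilting step in the converse of the first claim: one must check that the short exact sequence of sheaves unfolds as a genuine short exact sequence in $\Coh^\beta$ rather than as a longer complex. This is in fact forced by the placements $\cU_2 \in F^\beta$ and $\cE, \cI_Z \in T^\beta$, and requires no slope computation beyond those already carried out in Lemmas \ref{de JOfiltr a JOmaps} and \ref{de JOfiltr a JOmaps second part}.
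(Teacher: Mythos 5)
Your proposal is correct and follows essentially the same route as the paper, which simply observes that via Lemma \ref{de JOfiltr a JOmaps} the sequence \eqref{extension} is equivalent to the Jordan--H\"older filtration \eqref{JOfiltr} being non-trivial, and that the second assertion is immediate from \eqref{JOfact}. Your write-up merely makes explicit the tilting step ($\cU_2\in F^\beta$, $\cE,\cI_Z\in T^\beta$) already contained in that lemma and the observation that $v^{(k)}\neq a$ so the multiset of factors determines both $k$ and $\cE^{(k)}_Z$.
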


\begin{proof}
Via Lemma \ref{de JOfiltr a JOmaps}, condition \eqref{extension} is equivalent to ask that any Jordan-H\"older filtration \eqref{JOfiltr} for $Z$ has length $k\geq 0$. The second part is clear by \eqref{JOfact}.

\end{proof}

\begin{remark}\label{rmq EN}
The exact sequence (\ref{extension}) can be seen as the Eagon-Northcott complex associated to the transpose of $f$, since $\cU_2$ and $\cE$ have the same determinant. Seen as a way to construct a sheaf from a finite scheme and an extension class, it is reminiscent of the Serre construction, which rather involves a Koszul complex. It could be interesting to explore this variant further. In particular, one could ask under which conditions the extension $\cE$ is a
vector bundle. This question has a very nice answer for the Serre construction on a K3 surface, see e.g. \cite[Theorem 3.13]{laz-linearseries}. In our specific situation, we expect the following statement: for $Z$ reduced in $I$, representing a collection $P_1,\ldots , P_n$ of planes in $V_{2n+1}$, whose span has dimension $2n-1$, then any subcollection of order $n-1$ should span a subspace of  $V_{2n+1}$ of the expected dimension $2n-2$. This is in agreement with the fact that for $n=3$, we only have vector bundles: if two points in $S$ represent planes that are not in general position, then the line they span is contained in $S$, which is not possible. 
\end{remark}

\begin{remark}\label{rmq recursive}
    The picture is recursive in nature and most of the statements have an analogue for any $i\geq 0$. One of them in particular will play an important r\^ole in the next section:
    for any $\cE^{(i)}\in \Sigma^{(i)}$, a Jordan-H\"older filtration is a truncation of \eqref{JOfiltr}, from $i$ to some $k\leq r$.
    From that, and using Lemma \ref{de JOfiltr a JOmaps} and Lemma \ref{de JOfiltr a JOmaps second part} one shows easily that (as it is for $i=-1$ in Proposition \ref{prop sequence for I}), for any $0\leq i<r$,  $\cE^{(i)}$ lies in the exceptional locus of $c^{(i)}$ if and only if it lies in a short exact sequence
    \[
    0\lra \cU_2\lra \cE^{(i+1)}\lra \cE^{(i)}\lra 0
    \]
    for some $\cE^{(i+1)}\in \Sigma^{(i+1)}$. Moreover, two elements of $\Sigma^{(i)}$ lie in the same fiber of $c^{(i)}$ if and only if their last Jordan-H\"older factor is the same.
\end{remark}

Now that we have a description in terms of sheaves for objects in $\Sigma^{(i)}$ and for Jordan-H\"older filtrations and factors, we will  use it to describe natural stratifications on $N$ and $S^{[n]}$.

\subsection{Stratification of the base of the contraction and the indeterminacy locus}
There is a natural stratification of $S^{[n]}$, introduced in 
\cite[Lemma 14.1]{bayermacri}.

\begin{definition}\label{def strates Sn}
We denote by $I_k\subset S^{[n]}$ the locus of schemes $Z$ whose Jordan-H\"older filtrations have length exactly $k+1$.
\end{definition}

Via $c$, the stratification on $S^{[n]}$ induces a stratification on $N$,
\begin{equation}\label{strates N}
\emptyset =\bar{B}^{(r+1)}\subset \bar{B}^{(r)}\subset \ldots \subset \bar{B}^{(0)}=B^{(0)}=B\subset B^{(-1)}=N    
\end{equation}
with $r+2$ strata, where each $\bar{B}^{(k)}$ is the closure of the image of $I_k$ via $c$ and 
$c(I_k)=\bar{B}^{(k)}\smallsetminus \bar{B}^{(k+1)}$.
This space parametrizes S-equivalence classes of subschemes $Z$ for which any Jordan-H\"older filtration \eqref{JOfiltr} has length $k+1$.
The stratifications depend in a fundamental way on \eqref{hom et ext}.

\begin{definition}
    We denote by $I^{(i)}\subset \Sigma^{(i)}$ the exceptional locus of $c^{(i)}$ and $U^{(i)}:=\Sigma^{(i)}\smallsetminus I^{(i)}$.
\end{definition}

The open subset $U^{(i)}$ is the space of strictly $\sigma_0$-stable elements of $\Sigma^{(i)}$. It is equal to the whole $\Sigma^{(i)}$ if and only if $i=r$.

\begin{lemma}\label{aumoins3 debut}
For any $i\geq -1$, $hom(\cU_2,\cE^{(i)})= 2i+3$ if and only if $\cE^{(i)}$ 
is the only element in its S-equivalence class for $\sigma_0$, or equivalently $\cE^{(i)}\in U^{(i)}$.
If $\cE^{(i)}$ lies in $U^{(i)}$, then $ext^1(\cE^{(i)},\cU_2)=0$.
\end{lemma}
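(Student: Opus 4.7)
The approach rests on combining equation \eqref{hom et ext} with the geometric description of the exceptional loci given by Remark \ref{rmq recursive} and the extension construction of Lemma \ref{de JOfiltr a JOmaps second part}(3). First, \eqref{hom et ext} rewrites as
\[ hom(\cU_2,\cE^{(i)}) - (2i+3) = ext^1(\cE^{(i)},\cU_2), \]
so $hom(\cU_2,\cE^{(i)})=2i+3$ is equivalent to $ext^1(\cE^{(i)},\cU_2)=0$. Both assertions of the lemma therefore reduce to the single statement that $\cE^{(i)}\in U^{(i)}$ if and only if $ext^1(\cE^{(i)},\cU_2)=0$.

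For $i<r$, I would invoke Remark \ref{rmq recursive} (and Proposition \ref{prop sequence for I} when $i=-1$): the exceptional locus $I^{(i)}=\Sigma^{(i)}\smallsetminus U^{(i)}$ consists exactly of those $\cE^{(i)}$ fitting into a non-split exact sequence
\[ 0\lra \cU_2\lra \cE^{(i+1)}\lra \cE^{(i)}\lra 0 \]
with $\cE^{(i+1)}\in \Sigma^{(i+1)}$. Non-split extensions are parametrized by $\Ext^1(\cE^{(i)},\cU_2)\setminus\{0\}$, and Lemma \ref{de JOfiltr a JOmaps second part}(3) guarantees that the middle term of any such extension is automatically $\cH$-stable of Mukai vector $v^{(i+1)}$, hence defines a point of $\Sigma^{(i+1)}=M_\cH(v^{(i+1)})$. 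Therefore $\cE^{(i)}\in I^{(i)}$ if and only if $ext^1(\cE^{(i)},\cU_2)\ne 0$, which is the required equivalence.

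For $i=r$, we have $U^{(r)}=\Sigma^{(r)}$ (since $c^{(r)}$ is an isomorphism by Lemma \ref{base irred}), so what remains to prove is that $ext^1(\cE^{(r)},\cU_2)=0$ for every $\cE^{(r)}\in\Sigma^{(r)}$. I would argue by contradiction: a non-zero class would, again by Lemma \ref{de JOfiltr a JOmaps second part}(3), produce an $\cH$-stable sheaf $\cF$ with Mukai vector $v^{(r+1)}$. But the maximality of $r$ forces
\[ (v^{(r+1)})^2+2 = 2n - 2(r+2)(r+3) < 0, \]
hence $(v^{(r+1)})^2<-2$. Since $\cF$ is stable it is simple, and Serre duality together with Riemann-Roch on the K3 surface give $ext^1(\cF,\cF)=2+(v^{(r+1)})^2<0$, a contradiction.

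The step I expect to require the most care is checking that Lemma \ref{de JOfiltr a JOmaps second part}(3) really does apply in the boundary case $i=r$: the lemma is stated with the tacit understanding that $\Sigma^{(i+1)}$ is non-empty, but inspection of its proof shows that the $\cH$-stability argument only uses the Mukai vector of the middle term of the extension, so the same reasoning produces the putative stable sheaf $\cF$ whose existence we can then rule out numerically.
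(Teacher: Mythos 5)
Your proof is correct and follows essentially the same route as the paper's: reduce both claims to $ext^1(\cE^{(i)},\cU_2)=0$ via \eqref{hom et ext}, then characterize membership in the exceptional locus $I^{(i)}$ by the existence of an extension $0\to\cU_2\to\cE^{(i+1)}\to\cE^{(i)}\to0$, using Proposition \ref{prop sequence for I}, Remark \ref{rmq recursive} and Lemma \ref{de JOfiltr a JOmaps second part}(3) for the two directions. The only difference is that you spell out the boundary case $i=r$ (ruling out a simple sheaf with $(v^{(r+1)})^2<-2$), a detail the paper leaves implicit in its claim that "the other cases are proved in the exact same way."
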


\begin{proof}
We prove the statement for $i=-1$, the other cases are proved in the exact same way because of the observations in Remark \ref{rmq recursive}.
An element of $S^{[n]}$ lies in $I$ if and only if the associated ideal sheaf lies in a short exact sequence \eqref{extension}. In particular, this is equivalent to ask $ext^1(\cI_Z,\cU_2)>0$, or $hom(\cU_2,\cE^{(i)})> 2i+3$ by \eqref{hom et ext}.

Similarly, if $ext^1(\cI_Z,\cU_2)\neq 0$ by Lemma \ref{de JOfiltr a JOmaps second part} there is an inclusion $\cU_2\hookrightarrow \cE^{(0)}$ for some $\cE^{(0)}$, whose cokernel is $\cI_Z$, so $Z\in I$.
\end{proof}

By \eqref{extension} and Remark \ref{rmq recursive}, together with Lemma \ref{de JOfiltr a JOmaps second part}, for any $i\geq 0$ and $\cE^{(i)}$ there is a surjective morphism from $\PP(\mathrm{Hom} (\cU_2,\cE^{(i)}))$ to the space of elements of $\Sigma^{(i-1)}$ which admit a Jordan-H\"older filtration as in \eqref{JOmaps} whose first term $f^{(0)}$ lies in $\mathrm{Hom}(\cU_2,\cE^{(i)})$. Although that space is not necessarily a fiber of $c^{(i-1)}$, it is clearly contained in one such fiber, which we denote $F_{\cE^{(i)}}$.
This allows us to define a surjective continuous map
\begin{equation}
    \eta^{(i)}:\Sigma^{(i)}_0 \lra B^{(i)}
\end{equation}
sending $c^{(i)}(\cE^{(i)})$ to $c^{(i-1)}(F_{\cE^{(i)}})$.
By the observation on Jordan-H\"older filtrations made in Remark \ref{rmq recursive}, $\eta^{(i)}$ is bijective, but it is not clear a priori that it is a morphism.

In order to prove it, 
for each $i\ge 0$ we consider the universal sheaf $\cF^{(i)}$ on $S\times\Sigma^{(i)}$ (depending on $n$ and $i$, we may need to consider the \textit{twisted} universal sheaf instead, see \cite[X.2.2 (ii)]{huybrechtsK3}).
We denote by $p_1$ and $p_2$ the projections from $S\times \Sigma^{(i)}$ to the two factors and by $$\cG^{(i)}= p_{2*}Hom(p_1^*\cU_2,\cF^{(i)})$$ 
the (twisted) sheaf on $\Sigma^{(i)}$ obtained by push-forward. As a consequence of Lemma \ref{aumoins3 debut}, $\cG^{(i)}$ is locally free of rank $2i+3$ on $U^{(i)}$.
We consider the projectivization of the restriction $\PP_{U^{(i)}}(\cG^{(i)})$.
By the same reasoning as above, there is a surjective morphism 
\begin{equation}\label{eq desirée}
\PP_{U^{(i)}}(\cG^{(i)})\lra I^{(i-1)}_0,    
\end{equation}
where $I^{(i-1)}_0\subset I^{(i-1)}$ is the locus of objects whose Jordan-H\"older filtrations have minimal length.
This morphism descends to a morphism $\eta^{(i)}_0:c(U^{(i)})\lra c^{(i-1)}(I^{(i-1)}_0)$; by construction, it coincides with $\eta^{(i)}$ over the open $c(U^{(i)})$. We can now prove the following

\begin{lemma}
    $\eta^{(i)}$ is a bijective normalization morphism.
\end{lemma}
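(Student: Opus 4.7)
The plan is to use the characterization of the normalization as a finite bijective morphism from a normal variety. The bijection is already established, and the normality of $\Sigma^{(i)}_0$ follows from Bayer-Macr\`i's general theory \cite[Theorem 1.4]{bayermacri1}; so three tasks remain: (a) extend $\eta^{(i)}$ to a morphism on all of $\Sigma^{(i)}_0$; (b) show it is finite; (c) conclude it is the normalization.

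For (a), I would argue by descending induction on $i$. The base case $i = r$ is trivial: by Lemma \ref{base irred}, $c^{(r)}$ is an isomorphism and the exceptional locus is empty. For the inductive step, decompose $\Sigma^{(i)}_0 = c^{(i)}(U^{(i)}) \sqcup c^{(i)}(I^{(i)})$. On the open subset $c^{(i)}(U^{(i)})$, the morphism $\eta^{(i)}_0$ is supplied by the descent from the projective bundle $\PP_{U^{(i)}}(\cG^{(i)})$ via \eqref{eq desirée}. On the closed stratum $c^{(i)}(I^{(i)})$, the recursive description of Jordan-H\"older filtrations (Remark \ref{rmq recursive}) identifies this set with $\Sigma^{(i+1)}_0$ via the ``last Jordan-H\"older factor'' map, and under this identification $\eta^{(i)}$ restricts to $\eta^{(i+1)}$, which is a morphism by induction.

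To glue these partial morphisms I would take the closure $\Gamma$ of the graph of $\eta^{(i)}_0$ inside the projective variety $\Sigma^{(i)}_0 \times B^{(i)}$. The first projection $\Gamma \to \Sigma^{(i)}_0$ is proper and birational; bijectivity of $\eta^{(i)}$ together with the analysis of the closed stratum forces its fibers to be singletons. Zariski's Main Theorem combined with normality of $\Sigma^{(i)}_0$ then implies that $\Gamma \to \Sigma^{(i)}_0$ is an isomorphism, and composing its inverse with the second projection realizes $\eta^{(i)}$ as a morphism on all of $\Sigma^{(i)}_0$. For (b), $\eta^{(i)}$ is quasi-finite by bijectivity and proper since both source and target are projective, hence finite; (c) follows since a finite bijective morphism from a normal variety to an integral variety is by definition the normalization.

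The main difficulty is the gluing step in (a): a continuous bijection between varieties is not automatically a morphism, so ensuring the two partial morphisms combine consistently is essential. The argument via $\Gamma$ circumvents direct gluing but requires verifying that the fibers of $\Gamma \to \Sigma^{(i)}_0$ are singletons \emph{at every point}, not only generically; this in turn uses the bijection from Remark \ref{rmq recursive} together with a careful analysis of how extensions of $\cU_2$ by a given $\cE^{(i)}$ degenerate at boundary points of the stratification. An alternative that sidesteps this limit analysis would be to construct $\eta^{(i)}$ directly as a moduli morphism by associating to each $\cE^{(i)}\in \Sigma^{(i)}$ the polystable object $\cE^{(i)}\oplus \cU_2[1]^{\oplus (i+1)}$ and invoking the coarse moduli property of $N$, but this requires a careful family-version of the Jordan-H\"older analysis.
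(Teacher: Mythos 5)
Your proposal is correct and rests on the same key mechanism as the paper's proof: take the closure $\Gamma$ of the graph of $\eta^{(i)}_0$ in $\Sigma^{(i)}_0\times B^{(i)}$, use bijectivity of $\eta^{(i)}$ to see that the proper birational projection $\Gamma\to\Sigma^{(i)}_0$ is bijective, and conclude by Zariski's Main Theorem together with the normality of $\Sigma^{(i)}_0$. The only differences are matters of emphasis: the paper identifies $\Gamma$ directly with the graph of the set-theoretic bijection $\eta^{(i)}$ (so the singleton-fiber issue you flag is absorbed into that identification), and it leaves implicit the standard fact that a proper bijective morphism from a normal variety is the normalization, which you spell out.
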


\begin{proof}
We already know that $\eta^{(i)}$ is bijective, we just need to prove that it is a morphism.
In $\Sigma^{(i)}_0\times B^{(i)}$, we consider the graphs $\Gamma$ and $\Gamma_0$ of $\eta^{(i)}$ and $\eta^{(i)}_0$, respectively; we denote by $p$ and $q$ the two projections, to $\Sigma^{(i)}_0$ and $B^{(i)}$, respectively.

The graph $\Gamma$ is the closure of $\Gamma_0$,
since $\eta^{(i)}$ restricts to $\eta^{(i)}_0$ on a dense open subset; in particular $\Gamma$ is a projective variety, since $\eta^{(i)}_0$ is a morphism.
As $\Gamma$ is a  graph, the restriction $p_{|\Gamma}:\Gamma\lra \Sigma^{(i)}_0$ is 
bijective. 
So $p_{|\Gamma}$ is a birational, proper and bijective map; by Zariski's Main Theorem, 
it is an isomorphism, $\Sigma^{(i)}_0$ being normal. But then we are done, since $\eta^{(i)}=q\circ (p_{|\Gamma})^{-1}$. 
\end{proof}

The normalization $\eta^{(0)}$ will also be denoted by $\eta$.

\smallskip
For any $i\geq 0$, the base of the $i$-th contraction $B^{(i)}$ admits an injective map to $B^{(i-1)}$, via the inclusion in $\Sigma^{(i-1)}_0$ and the bijective normalization $\eta^{(i-1)}$. Repeating from $r$ to $0$, we obtain injective maps $\tau^{(i)}:B^{(i)}\lra N$; for each $i$, the image is the $i$-th stratum
$\bar{B}^{(i)}$ defined in \eqref{strates N}.
Note that $\tau^{(0)}$ is just the identity $\bar{B}^{(0)}=B^{(0)}$. We let $\eta^{(-1)}=\tau^{(-1)}=id_N$.
Putting everything together, we get the following diagram.

\tiny
\begin{equation}\label{big beautiful diagram}
    \begin{tikzcd}
\Sigma^{(r)}\ar[d,"\eta^{(r)}"]= \Sigma^{(r)}_0 & \Sigma^{(r-1)}\ar[d,"c^{(r-1)}"]  & & & & \\
B^{(r)}\ar[r,hook]\ar[dddd,"\tau^{(r)}"]&  \Sigma^{(r-1)}_0\ar[d,"\eta^{(r-1)}"] & & & & \\
 & B^{(r-1)}\ar[r,hook]\ar[ddd,"\tau^{(r-1)}"] & \ldots & \Sigma^{(1)}\ar[d,"c^{(1)}"] & & \\
& &   \ldots \ar[r,hook]& \Sigma^{(1)}_0 \ar[d,"\eta^{(1)}"] & \Sigma^{(0)}=\Sigma\ar[d,"c^{(0)}=\alpha"] &  \\
& & &            B^{(1)}\ar[d,"\tau^{(1)}"] \ar[r,hook]    & \Sigma^{(0)}_0=\Sigma_0 \ar[d,"\eta^{(0)}=\eta"] & \Sigma^{(-1)}=S^{[n]}\ar[d,"c^{(-1)}=c"] \\
 \bar{B}^{(r)}\ar[r,hook] & \bar{B}^{(r-1)}\ar[r,hook] &\ldots \ar[r,hook]                  & \bar{B}^{(1)} \ar[r,hook] &   \bar{B}^{(0)}=B^{(0)}=B \ar[r,hook] & \Sigma^{(-1)}_0=N. \\
    \end{tikzcd}
\end{equation}
\normalsize

It is then clear that the trivial identity $\bar{B}^{(-1)}=N$ can be generalized to any stratum.

\begin{coro}\label{bijective}
    For any $i\geq 0$, $\bar{B}^{(i)}\subset N$ is isomorphic to $\Sigma^{(i)}_0$, up to a bijective normalization. 
\end{coro}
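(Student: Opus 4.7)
The plan is to unwind the big diagram \eqref{big beautiful diagram} and show that the desired bijective normalization is simply the composition
$$\psi^{(i)} := \tau^{(i)} \circ \eta^{(i)} : \Sigma^{(i)}_0 \lra N,$$
whose image is $\bar{B}^{(i)}$ by construction of the stratification.

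First I would observe that $\psi^{(i)}$ is genuinely a morphism. By the definition given in the text, $\tau^{(i)} : B^{(i)} \to N$ factors as the zig-zag
$$B^{(i)} \hookrightarrow \Sigma^{(i-1)}_0 \xrightarrow{\eta^{(i-1)}} B^{(i-1)} \hookrightarrow \Sigma^{(i-2)}_0 \xrightarrow{\eta^{(i-2)}} \cdots \hookrightarrow \Sigma^{(0)}_0 \xrightarrow{\eta^{(0)}} B^{(0)} \hookrightarrow N,$$
in which each arrow is either a closed immersion (since each $B^{(j)}$, being the image of the exceptional locus $I^{(j-1)}$ under the proper morphism $c^{(j-1)}$, is a closed subscheme of $\Sigma^{(j-1)}_0$) or the bijective normalization $\eta^{(j-1)}$ produced in the previous Lemma. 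In particular $\tau^{(i)}$ is a proper morphism, and since each of its factors is injective, so is $\tau^{(i)}$, with image $\bar{B}^{(i)}$. Composing with the bijective $\eta^{(i)}$ then yields a proper bijective morphism $\psi^{(i)} : \Sigma^{(i)}_0 \to \bar{B}^{(i)}$.

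Finally, to identify $\psi^{(i)}$ with the normalization of $\bar{B}^{(i)}$, I would invoke the fact that $\Sigma^{(i)}_0 = M_{\sigma_0}(v^{(i)})$ is normal (by \cite[Theorem 1.4]{bayermacri1}, already used to define the $c^{(i)}$). A proper bijective morphism from a normal variety onto its image is a bijective normalization, which is precisely the content of the statement.

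There is essentially no obstacle here: the work has been done in constructing the diagram, and the corollary is just a cosmetic rephrasing that packages the iterated $\eta^{(j)}$'s and $\tau^{(j)}$'s into a single bijective normalization $\Sigma^{(i)}_0 \to \bar{B}^{(i)}$. The one point requiring a moment's care is checking that the composition defining $\tau^{(i)}$ makes sense as a morphism (rather than just a set-theoretic map), which as noted above follows because the inclusions $B^{(j)} \hookrightarrow \Sigma^{(j-1)}_0$ are closed immersions.
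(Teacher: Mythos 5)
Your proposal is correct and follows exactly the route the paper intends: the corollary is stated as an immediate consequence of diagram \eqref{big beautiful diagram}, and the bijective normalization is precisely the composition $\tau^{(i)}\circ\eta^{(i)}:\Sigma^{(i)}_0\to\bar{B}^{(i)}$, built from the closed inclusions $B^{(j)}\hookrightarrow\Sigma^{(j-1)}_0$ and the bijective normalizations $\eta^{(j)}$ established in the preceding lemma, with normality of $\Sigma^{(i)}_0=M_{\sigma_0}(v^{(i)})$ doing the final work. You have merely written out the details the paper leaves implicit (``it is then clear\ldots'').
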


\begin{definition} 
For $\cE^{(i)}\in \Sigma^{(i)}$, we denote by $[\cE^{(i)}]$ its image in $\bar{B}^{(i)}\subset N$ via $\tau^{(i)}\circ \eta^{(i)}\circ c^{(i)}$ (see \eqref{big beautiful diagram}).
\end{definition}

For $\cE^{(i)}\in \Sigma^{(i)}$ and $\cE^{(i+1)}\in \Sigma^{(i+1)}$, it may happen that $[\cE^{(i)}]=[\cE^{(i+1)}]$ when $\cE^{(i)}\in B^{(i+1)}$. For example, by definition $c(Z)=[\cI_Z]$ for any $Z$, but this is the only possible class $\cE$ such that $c(Z)=[\cE]$ if and only if $Z\notin I$.

The next Proposition is a good summary of the relation between the contraction $c$ and the stratification of $N$, in light of \eqref{big beautiful diagram} and the Definition above.

\begin{prop}\label{image of c}
For $Z\in I$ with Jordan-H\"older factors \eqref{JOfact},
\[c(Z)=[\cE^{(k)}_Z]\in \bar{B}^{(k)}\smallsetminus \bar{B}^{(k+1)}.\]
More  generally, $c(Z)=[\cE^{(i)}]$ for $\cE^{(i)}\in \Sigma^{(i)}$ and $i\leq k$
if and only if there exists a Jordan-H\"older filtration \eqref{JOmaps} for $Z$ in which $\cE^{(i)}$ appears.
\end{prop}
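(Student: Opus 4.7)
The strategy is to exploit the diagram \eqref{big beautiful diagram} telescopically. The fundamental identity, built into the construction of the bijective normalization $\eta^{(j)}$ preceding \eqref{eq desirée}, is that
\[
\eta^{(j)}\bigl(c^{(j)}(\cE^{(j)})\bigr)=c^{(j-1)}(\cE^{(j-1)})
\]
whenever $0\to \cU_2\to \cE^{(j)}\to \cE^{(j-1)}\to 0$ is an exact sequence with $\cE^{(j)}\in \Sigma^{(j)}$ and $\cE^{(j-1)}\in \Sigma^{(j-1)}$. Coupled with the description of fibers of $c^{(j-1)}$ in terms of last Jordan--H\"older factors (Proposition \ref{prop sequence for I} and Remark \ref{rmq recursive}), this reduces everything to bookkeeping of filtrations.

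For the first statement, fix $Z\in I_k$ and a Jordan--H\"older filtration \eqref{JOmaps} with cokernels $\cE^{(-1)}_Z=\cI_Z,\cE^{(0)}_Z,\ldots,\cE^{(k)}_Z$. Applying the identity above for $j=k,k-1,\ldots,0$, and recalling that by construction $\tau^{(k)}: B^{(k)}\to N$ is the composition of the injections $B^{(j)}\hookrightarrow \Sigma^{(j-1)}_0$ with the $\eta^{(j-1)}$'s for $j=k,\ldots,1$, one obtains
\[
c(Z)=c^{(-1)}(\cI_Z)=\tau^{(k)}\bigl(\eta^{(k)}(c^{(k)}(\cE^{(k)}_Z))\bigr)=[\cE^{(k)}_Z].
\]
That this point lies in $\bar{B}^{(k)}\setminus\bar{B}^{(k+1)}$ is the very definition of the stratification \eqref{strates N}; equivalently, since $Z\in I_k$ the filtration has maximal length $k+1$, so $\cE^{(k)}_Z$ admits no further inclusion of $\cU_2$, hence $\cE^{(k)}_Z\in U^{(k)}$ and its image lies outside $B^{(k+1)}$.

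The ``if'' direction of the second statement is then immediate: if $\cE^{(i)}$ appears as the $i$-th term of some Jordan--H\"older filtration of $\cI_Z$, the same telescoping from $j=i$ down to $j=0$ yields $c(Z)=[\cE^{(i)}]$.

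The ``only if'' direction is the delicate part and the main obstacle. Assume $c(Z)=[\cE^{(i)}]$ with $i\le k$. Unwinding the diagram in reverse using that each $\eta^{(j)}$ is bijective onto its image and that $\tau^{(i)}$ is injective, the equality forces the $\sigma_0$-S-equivalence class of $\cE^{(i)}$ to coincide with that of the $i$-th term of any Jordan--H\"older filtration of $\cI_Z$; by Remark \ref{rmq recursive} this means $\cE^{(i)}$ shares its last Jordan--H\"older factor $\cE^{(k)}_Z$, and is therefore itself an iterated extension of $\cE^{(k)}_Z$ by $k-i$ copies of $\cU_2$. It remains to upgrade this S-equivalence statement to an actual inclusion of $\cE^{(i)}$ in a Jordan--H\"older filtration of $\cI_Z$. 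For this we argue level by level downward from $j=i$ to $j=0$: using Lemma \ref{de JOfiltr a JOmaps second part} and the non-vanishing of $\mathrm{Hom}(\cU_2,\cE^{(j)})$ guaranteed by \eqref{hom et ext}, at each step we can extend the object above by $\cU_2$, and since the surjection $\PP(\mathrm{Hom}(\cU_2,\cE^{(j+1)}))\twoheadrightarrow F_{\cE^{(j+1)}}$ exhausts the appropriate $\sigma_0$-S-equivalence class (by the construction preceding \eqref{eq desirée}), we can arrange the extensions compatibly so that the final object is precisely $\cI_Z$. This exhibits $\cE^{(i)}$ in a Jordan--H\"older filtration of $\cI_Z$.
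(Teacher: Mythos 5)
Your first statement and the ``if'' half of the second are correct and are essentially what the paper intends: the identity $\eta^{(j)}(c^{(j)}(\cE^{(j)}))=c^{(j-1)}(\cE^{(j-1)})$, valid whenever $\cE^{(j-1)}$ is a cokernel of a map $\cU_2\to\cE^{(j)}$, is exactly the commutativity of the small squares in \eqref{big beautiful diagram}, and telescoping it gives $c(Z)=[\cE^{(k)}_Z]$ together with the ``if'' implication. The gap is in your ``only if'' direction, precisely at the sentence claiming that ``we can arrange the extensions compatibly so that the final object is precisely $\cI_Z$''. Two things go wrong. First, the surjection from $\PP(\mathrm{Hom}(\cU_2,\cE^{(j+1)}))$ does \emph{not} exhaust the fiber $F_{\cE^{(j+1)}}$: the paper explicitly warns, in the paragraph where $F_{\cE^{(i)}}$ is introduced, that its image is only \emph{contained} in that fiber. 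Second, and more seriously, knowing that $\cE^{(i)}$ has the same Jordan--H\"older factors as the $i$-th terms of the filtrations of $\cI_Z$ does not let you steer the subsequent extensions onto the particular object $\cI_Z$: at the last step you would need a map $\cU_2\to\cE^{(0)}$ whose cokernel is the \emph{given} ideal sheaf, and nothing guarantees its existence.

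A dimension count shows that this step genuinely fails, so the per-object ``only if'' cannot be proved as you state it. Take $Z\in I_1$ (so $n\ge 6$), with last factor $\cE^{(1)}_Z\in U^{(1)}$. By Lemma \ref{aumoins3 debut}, $hom(\cU_2,\cE^{(1)}_Z)=5$, and distinct points of $\PP(\mathrm{Hom}(\cU_2,\cE^{(1)}_Z))\simeq\PP^4$ have pairwise non-isomorphic cokernels $\cE^{(0)}$ (each has $ext^1(\cE^{(0)},\cU_2)=1$, so the subobject $\cU_2$ is determined by the cokernel up to scalar); every one of these satisfies $[\cE^{(0)}]=[\cE^{(1)}_Z]=c(Z)$. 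On the other hand, the $\cE^{(0)}$'s occurring in a Jordan--H\"older filtration of the fixed $\cI_Z$ are middle terms of extensions of $\cI_Z$ by $\cU_2$, hence form at most a $\PP(\mathrm{Ext}^1(\cI_Z,\cU_2))\simeq\PP^1$, since $ext^1(\cI_Z,\cU_2)=h^0(\cI_Z\otimes\cU_2^\vee)-1=k+1=2$ by Lemma \ref{J-BrillNoether} and Proposition \ref{Bk et Jk}. So a general $\cE^{(0)}$ with $c(Z)=[\cE^{(0)}]$ appears in no filtration of $\cI_Z$. What survives, and what is actually used later (e.g.\ in Lemma \ref{dim fibre}), is the weaker equivalence: $c(Z)=[\cE^{(i)}]$ if and only if $\cE^{(i)}$ lies in the same fiber of $c^{(i)}$ as (i.e.\ is $\sigma_0$-S-equivalent to) the $i$-th term of some Jordan--H\"older filtration of $\cI_Z$. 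You should either prove that weaker statement, or isolate the missing surjectivity claim explicitly rather than absorbing it into the phrase ``arrange the extensions compatibly''.
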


\begin{proof}
This is a restatement of the proof of Lemma \ref{base irred} in the language of sheaves provided by the statements in Subsection \ref{sec: base}, using the commutativity of all the small diagrams in \eqref{big beautiful diagram}. 
\end{proof}

By Proposition \ref{image of c}, for any $Z$ for which $\cI_Z$ lies in a sequence \eqref{extension}, we have $c(Z)=[\cE]\in B$. The depth of $c(Z)$ in the stratification $\lbrace  \bar{B}^{(i)} \rbrace_i$, hence of $Z$ in $\lbrace I_k \rbrace_i$ depends on \eqref{hom et ext} by Lemma \ref{aumoins3 debut}.
This will be crucial in the next subsection. 

\begin{remark} 
Proving that $B^{(i)}$ is normal for any $i$ would simplify a lot the picture. In that case, we would have $\bar{B}^{(i)}=B^{(i)}=\Sigma^{(i)}_0$ and the diagram \eqref{big beautiful diagram} would reduce to
\begin{equation}
    \begin{tikzcd}
        &  \Sigma^{(r-1)} \ar[d,"c^{(r-1)}"] & \ldots &  \Sigma^{(0)}\ar[d,"c^{(0)}"]  & \Sigma^{(-1)}=S^{[n]}\ar[d,"c^{(-1)}=c"]\\
        \Sigma^{(r)}=\Sigma^{(r)}_0\ar[r,hook] &  \Sigma^{(r-1)}_0 \ar[r,hook] & \ldots \ar[r,hook]&  \Sigma^{(0)}_0 \ar[r,hook] & \Sigma^{(-1)}_0=N.
    \end{tikzcd}
\end{equation} 
This is the case for $n=2$ and $n=3$: conjecturally, for any $n$ and any $i\geq 0$,  $B^{(i)}$ should be smooth away from $B^{(i+1)}$.
\end{remark}

\begin{remark}
For $n=2,3$, studied in \cite{ogradydual} and \cite{invol1} respectively, the situation is much simpler. In these cases $r=0$ and $\Sigma^{(0)}_0=\Sigma^{(0)}=\Sigma$ is respectively a single point or a smooth K3 surface. The diagram \eqref{big beautiful diagram} becomes 
\[
\begin{tikzcd}
    & S^{[n]} \ar[d,"c"] \\
\Sigma\ar[r,hook] & N.
\end{tikzcd}
\]
Up to $n=5$ we also need to take into account the normalization $\eta:\Sigma\lra B$. 
Starting from $n=6$, the stratification of $N$ has at least three strata since $r\geq 1$, so $B$ is always singular since $\Sigma^{(0)}$ is.

For $n=6$ then $\Sigma^{(1)}=\bar{B}^{(1)}$ is a singleton, for $n=7$ the stratum $\bar{B}^{(1)}$ is a surface of which $\Sigma^{(1)}$, a K3 surface, is a bijective normalization, and so on. More generally, the deeper stratum is a point if and only if $n=(r+1)(r+2)$, a surface whenever $n=(r+1)(r+2)+1$, etc.
\end{remark}

\subsection{Structure of the indeterminacy locus}
\label{descr of I}

We are now ready to relate the locus $J\subset S^{[n]}$ of subschemes generating a non-maximal linear space in $V_{2n+1}$ with the indeterminacy locus $I$.

\begin{theorem}\label{thm I=J}
$I=J$.
\end{theorem}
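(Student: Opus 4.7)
The plan is to identify both conditions $Z\in I$ and $Z\in J$ with the non-vanishing of the same $\mathrm{Ext}^1$ group, namely $\mathrm{Ext}^1(\cI_Z,\cU_2)$, and read off the equivalence from results already established in Subsection \ref{sec: base} and Section \ref{sec pfaffians}.

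First I would rewrite the defining condition of $J$. By Lemma \ref{J-BrillNoether}, $Z\in J$ iff $h^1(\cI_Z\otimes \cU_2^\vee)>0$. Since $\cU_2$ is locally free,
$$H^1(S,\cI_Z\otimes \cU_2^\vee)\cong \mathrm{Ext}^1(\cU_2,\cI_Z),$$
and since $S$ is a $K3$ surface Serre duality gives $\mathrm{Ext}^1(\cU_2,\cI_Z)\cong \mathrm{Ext}^1(\cI_Z,\cU_2)^\vee$. Hence $Z\in J$ iff $\mathrm{Ext}^1(\cI_Z,\cU_2)\neq 0$, i.e. iff $\cI_Z$ admits a non-trivial extension
$$0\lra \cU_2 \lra \cE \lra \cI_Z \lra 0.$$

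Next I would match this with the characterization of $I$. By Proposition \ref{prop sequence for I} together with Corollary \ref{mod space}, $Z\in I$ iff there exists a short exact sequence as above with $\cE\in \Sigma$, i.e. with $\cE$ an $\cH$-stable sheaf of Mukai vector $w$. The two directions are now immediate: any non-trivial extension of the above form has $\cE$ automatically $\cH$-stable by Lemma \ref{de JOfiltr a JOmaps second part}(3) (applied at $i=0$, $\cE^{(-1)}=\cI_Z$), so it defines a point of $\Sigma$; conversely, if $\cE\in \Sigma$ then the extension must be non-trivial, since a split extension $\cE=\cU_2\oplus \cI_Z$ would contain $\cI_Z$ as a subsheaf of $\cH$-slope $0$, strictly greater than $\mu_\cH(\cE)=-1/3$, contradicting $\cH$-stability of $\cE$.

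Combining the two paragraphs yields $Z\in I\iff \mathrm{Ext}^1(\cI_Z,\cU_2)\neq 0 \iff Z\in J$, which is the claim. There is essentially no obstacle: the substance is entirely in the preparatory lemmas of Subsection \ref{sec: base}, and what remains is the translation between the Brill--Noether description of $J$ and the extension description of $I$, via Serre duality and the locally-free-ness of $\cU_2$. In effect, the theorem is the $i=-1$ instance of Lemma \ref{aumoins3 debut} rewritten in the Brill--Noether language of $J$.
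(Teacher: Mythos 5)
Your proof is correct and follows essentially the same route as the paper: the paper's two-line proof cites Lemma \ref{aumoins3 debut} (whose own proof is precisely your identification of $I$ with the locus where $\mathrm{Ext}^1(\cI_Z,\cU_2)\neq 0$, via the extension characterization and Lemma \ref{de JOfiltr a JOmaps second part}) together with Lemma \ref{J-BrillNoether}, and the bridge between the two is the same Serre-duality relation you invoke, namely \eqref{hom et ext} at $i=-1$. Your closing remark that the theorem is the $i=-1$ instance of Lemma \ref{aumoins3 debut} in Brill--Noether language is exactly how the paper proceeds.
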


\begin{proof}
By Lemma \ref{aumoins3 debut}, $Z\in I$
if and only if $hom(\cU_2,\cI_Z)>1$. By Lemma \ref{J-BrillNoether}, this is equivalent to asking that  $Z\in J$.
\end{proof}

We can be much more precise.

\begin{definition}
For $k\geq -1$, let $J_k$ be the locus of subschemes generating a linear space of codimension $k+2$ in $V_{2n+1}$.
\end{definition}

The $J_k$'s form a stratification of $S^{[n]}$ and of $I=\bigcup_{k\geq 0} J_k$.
We defined another stratification $\lbrace I_k\rbrace_{k=-1\ldots r}$, see Definition \ref{def strates Sn}. By Theorem \ref{thm I=J} we have $I_{-1}=J_{-1}$.  

\begin{prop}\label{Bk et Jk}
    Consider $Z\in I$ such that $c(Z)\in \bar{B}^{(k)}\smallsetminus \bar{B}^{(k+1)}$ for some $k\geq 0$. Then $Z$ generates in $V_{2n+1}$ a linear space of codimension $k+2$. Equivalently, for any $k\geq -1$ we have
    \[ I_k=J_k. \]
\end{prop}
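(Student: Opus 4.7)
The plan is to characterize both strata $I_k$ and $J_k$ via a single cohomological invariant, namely $\mathrm{hom}(\cU_2,\cI_Z)$, and show that in both cases this equals $k+2$. The equivalence between the two statements in the proposition is immediate from the identity $c(I_k)=\bar{B}^{(k)}\smallsetminus\bar{B}^{(k+1)}$ built into the stratification \eqref{strates N}, so the real content is the equality $I_k=J_k$.

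For $J_k$, I would extend Lemma~\ref{J-BrillNoether} in the obvious way. Since $\cU_2$ is locally free, $\mathrm{hom}(\cU_2,\cI_Z)=h^0(\cI_Z\otimes\cU_2^\vee)$, and twisting the ideal sequence of $Z$ by $\cU_2^\vee$ and taking global sections gives
\[
0\to H^0(\cI_Z\otimes\cU_2^\vee)\to H^0(\cU_2^\vee)\to H^0(\cU_2^\vee|_Z).
\]
Thus $\mathrm{hom}(\cU_2,\cI_Z)$ is the dimension of the kernel of the evaluation map, which for reduced $Z=\{p_1,\dots,p_n\}$ is $2n+1-\dim\langle P_1,\dots,P_n\rangle$, i.e.\ the codimension of $\langle Z\rangle$ in $V_{2n+1}$; for non-reduced $Z$ this is precisely the intrinsic definition used in Section~\ref{sec pfaffians}. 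Hence $Z\in J_k$ if and only if $\mathrm{hom}(\cU_2,\cI_Z)=k+2$.

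For $I_k$ (with $k\ge 0$), I would unwind the Jordan-H\"older filtration provided by Lemma~\ref{de JOfiltr a JOmaps}: membership in $I_k$ gives short exact sequences of coherent sheaves
\[
0\to\cU_2\to\cE^{(i)}\to\cE^{(i-1)}\to 0,\qquad i=0,\dots,k,
\]
with $\cE^{(-1)}=\cI_Z$ and with the terminal term $\cE^{(k)}\in U^{(k)}$ (the length is exactly $k+1$, so $\cE^{(k)}$ cannot be broken further, cf.\ Remark~\ref{rmq recursive}). Applying $\mathrm{Hom}(\cU_2,-)$ and invoking the sphericity of $\cU_2$, namely $\mathrm{Hom}(\cU_2,\cU_2)=\CC$ and $\mathrm{Ext}^1(\cU_2,\cU_2)=0$, the long exact sequence truncates to a short exact sequence yielding the recursion
\[
\mathrm{hom}(\cU_2,\cE^{(i-1)})=\mathrm{hom}(\cU_2,\cE^{(i)})-1.
\]
By Lemma~\ref{aumoins3 debut}, the starting value is $\mathrm{hom}(\cU_2,\cE^{(k)})=2k+3$, and after the $k+1$ steps down the filtration one arrives at $\mathrm{hom}(\cU_2,\cI_Z)=k+2$, as required.

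Combining the two computations gives $I_k\subseteq J_k$ for $k\ge 0$. The reverse inclusion is automatic: a given $Z\in J_k$ with $k\ge 0$ lies in $J$, hence in $I$ by Theorem~\ref{thm I=J}, so $Z\in I_{k'}$ for some $k'\ge 0$; the first direction then forces $k'+2=k+2$, whence $k'=k$. The case $k=-1$ is just Theorem~\ref{thm I=J} applied to complements. I do not anticipate a serious obstacle: the argument is essentially rigidity bookkeeping, and the only delicate point is making sure the non-reduced case is handled by the intrinsic definition of $J_k$ via the evaluation map, which is why I would keep the whole argument phrased in terms of $\mathrm{hom}(\cU_2,\cI_Z)$ rather than appealing directly to the geometric notion of linear span.
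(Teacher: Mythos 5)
Your argument is correct, and it rests on exactly the same ingredients as the paper's proof: the sheaf-theoretic description of Jordan--H\"older filtrations (Lemma \ref{de JOfiltr a JOmaps}), the sphericity of $\cU_2$, the normalization $\mathrm{hom}(\cU_2,\cE^{(k)})=2k+3$ from Lemma \ref{aumoins3 debut}, and the identification of the codimension of $\langle Z\rangle$ with $h^0(\cI_Z\otimes\cU_2^\vee)$. The one organizational difference is worth noting. The paper first proves that $Z\in I_k$ is equivalent to the existence of a presentation $0\to\cU_2^{\oplus(k+1)}\to\cE^{(k)}\to\cI_Z\to 0$, which requires an inductive lifting argument (surjectivity of $\mathrm{Hom}(\cU_2,\cE^{(i)})\to\mathrm{Hom}(\cU_2,\cE^{(i-1)})$, again from sphericity, plus a diagram chase); only then does it twist by $\cU_2^\vee$ once to extract $h^0(\cI_Z\otimes\cU_2^\vee)=2k+3-(k+1)$. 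You instead telescope the one-step recursion $\mathrm{hom}(\cU_2,\cE^{(i-1)})=\mathrm{hom}(\cU_2,\cE^{(i)})-1$ down the filtration, which reaches the same count $k+2$ while bypassing the construction of the $(k+1)$-fold extension entirely. This is a legitimate shortcut for the statement at hand; what the paper's longer route buys is the presentation of $\cI_Z$ by $k+1$ copies of $\cU_2$, an Eagon--Northcott-type structural byproduct, and the intermediate isomorphisms $\mathrm{Hom}(\cU_2,\cE^{(i-1)})\simeq\mathrm{Hom}(\cU_2,\cE^{(i)})/\CC f^{(i)}$ that are reused in the proof of Lemma \ref{dim fibre}. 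Your handling of the endpoints (the terminal term lying in $U^{(k)}$ because the Jordan--H\"older factors are $\sigma_0$-stable, and the reverse inclusion $J_k\subseteq I_k$ by exhaustion of the two partitions of $I=J$) is also sound.
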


\begin{proof}
Consider $Z\in I$.
We prove that $Z\in I_k$ if and only if $k$ is maximal such that the ideal sheaf of $Z$ fits into an exact sequence
\begin{equation}\label{seq U2 k+1}
    0 \lra \cU_2^{\oplus(k+1)} \stackrel{g}{\lra} \cE^{(k)}\lra\cI_Z\lra 0,
\end{equation}
with $\cE^{(k)}\in \Sigma^{(k)}$. Equivalently, we show that, if $\cI_Z$ fits into a sequence \eqref{seq U2 k+1}, then we can produce a collection of length $k+1$ of inclusions like in \eqref{JOmaps}, and vice-versa.

Take $Z$ whose ideal sheaf fits in a short exact sequence \eqref{seq U2 k+1}
for some $\cE^{(k)}\in \Sigma^{(k)}$. 
For $i=0,\ldots,k$, we denote by $j_i:\cU_2\lra \cU_2^{\oplus(k+1)}$ the $i$-th inclusion. 
We define $f^{(k)}=g\circ j_k:\cU_2\lra \cE^{(k)}$ and we denote by $q^{(k-1)}:\cE^{(k)}\lra \cE^{(k-1)}$ its cokernel, which by construction admits a surjective morphism $h^{(k-1)}:\cE^{(k-1)}\lra \cI_Z$. Then we define $$f^{(k-1)}=q^{(k-1)}\circ g\circ j_{k-1}:\cU_2\lra  \cE^{(k-1)}.$$
Proceeding in this way,  we construct inductively a collection of injective morphisms 
$f^{(i)}:\cU_2\lra \cE^{(i)}$ for $i=0,\ldots,k$, and of surjective morphisms $h^{(i)}:\cE^{(i)}\lra \cI_Z$ for $i=-1,\ldots, k$.
Observe that, by construction, the kernel of $h^{(i)}$ is a sum of copies of $\cU_2$, in particular it is torsion free, hence $\cE^{(i)}$ is also torsion free.
Then $h^{(0)}$ yields an isomorphism $\cE^{(-1)}\simeq \cI_Z$ and by Lemma \ref{de JOfiltr a JOmaps second part} we have $\cE^{(i)}\in \Sigma^{(i)}$ for any $i$. It is then clear that
the collection $\lbrace f^{(i)}:\cU_2\lra \cE^{(i)}  \rbrace_{i=0,\ldots,k}$ obtained in this way is is in the form \eqref{JOmaps} with $\cE^{(-1)}=\cI_Z$.

Conversely, 
consider $Z$ with a collection of maps as in \eqref{JOmaps}, of length $k+1$. We construct inductively a collection of injective morphisms $g_i:\cU_2^{\oplus (i+1)}\lra \cE^{(i)}$, starting with  $g_0=f^{(0)}$. Consider the short exact sequence 
\begin{equation}\label{hom(i) et Hom(i-1)}
0\to \cU_2\xrightarrow{f^{(i)}}\cE^{(i+1)}\xrightarrow{q^{(i)}}\cE^{(i)}\to 0.    
\end{equation}
Since $\cU_2^\vee$ is spherical, by twisting this sequence by $\cU_2^\vee$ and taking cohomology, we see that the morphism $Hom(\cU_2, \cE^{(i+1)})\ra Hom(\cU_2, \cE^{(i)})$ is surjective. Then it is easy to see that, $g_{i-1}$ being given,  there exists  a lifting $g_{i}$ fitting in a commutative diagram
$$\begin{CD}
  @.   @. 0   @. 0   @.\\
    @.   @. @VVV        @VVV \\
  0  @>>> \cU_2   @>>> \cU_2^{\oplus (i+1)}   @>>> \cU_2^{\oplus i}   @>>> 0 \\
    @.  @|  @VVg_iV        @VVg_{i-1}V  \\
  0  @>>> \cU_2   @>>> \cE^{(i)}  @>>> \cE^{(i-1)}   @>>> 0 \\
   @.   @. @VVV        @VVV   \\
   @. @. \cI_Z  @= \cI_Z   @. \\
   @.   @. @VVV        @VVV  \\
     @.  @. 0   @. 0   @.  
\end{CD}$$
Then $g=g_k$ has the required properties.
\medskip

Having obtained \eqref{seq U2 k+1} for $Z\in I_k$, we twist this sequence by $\cU_2^\vee$ and we take cohomology. Since $\cU_2^\vee$ is spherical we get a short exact sequence 
\[
0\lra \CC^{k+1} \lra Hom(\cU_2,\cE^{(k)})\lra H^0(\cI_Z\otimes \cU_2^\vee)\lra 0.
\]
Now, $[\cE^{(k)}]\in \bar{B}^{(k)}\smallsetminus \bar{B}^{(k+1)}$ since $Z\in I_k$. 
By Lemma \ref{aumoins3 debut} this implies $hom(\cU_2,\cE^{(k)})=2k+3$, so $h^0(\cI_Z\otimes \cU_2^\vee)=2k+3-(k+1)=k+2$.
\end{proof}

Since $c$ is semismall \cite[Lemma 2.11]{kaledin}, the dimension of the fiber of a general point of $\bar{B}^{(k)}$ is at most $(k+1)(k+2)$.

The first stratum $k=0$ is easy to study:
for $[\cE]\in B\smallsetminus \bar{B}^{(1)}$, hence $\cE\in U^{(0)}$, \eqref{eq desirée} restricts by Lemma \ref{aumoins3 debut} to a
surjective morphism
\begin{equation}\label{1ere strate}
\PP(Hom(\cU_2,\cE))\lra c^{-1}([\cE])     
\end{equation}
sending the class of $f^{(0)}$ to $coker(f^{(0)})=\cI_Z$ for some $Z$. 
But $f^{(0)}$ is unique up to scalar, since by hypothesis $\cE\in U^{(0)}$, or equivalently $\cE$ is unique in its S-equivalence class, so $ext^1(\cE,\cU_2)=1$.
Hence the fiber over a point of $B\smallsetminus\bar{B}^{(1)}$  is isomorphic to $\PP(Hom(\cU_2,\cE))\simeq\PP^2$.

\begin{coro}\label{structure I}
    $I$ is irreducible, and birational to a $\PP^2$-fibration over $\Sigma$.
\end{coro}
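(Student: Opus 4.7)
The plan is to identify the open stratum $I_0 \subset I$ (where $\cI_Z$ sits in a sequence $0\to \cU_2\to \cE\to \cI_Z\to 0$ with $\cE\in U^{(0)}$, equivalently $h^0(\cI_Z\otimes \cU_2^\vee)=2$) with a $\PP^2$-bundle over $U^{(0)}\subset \Sigma$, and then to show that $I_0$ is Zariski dense in $I$ by a dimension count on the remaining strata $I_k$, $k\ge 1$.

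For the $\PP^2$-bundle identification, I will combine Lemma \ref{aumoins3 debut} (which gives that $\cG^{(0)}$ is locally free of rank $3$ on $U^{(0)}$) with the morphism \eqref{eq desirée} and its pointwise description \eqref{1ere strate}, obtaining a surjective morphism
\[ \pi\colon \PP_{U^{(0)}}(\cG^{(0)}) \lra I_0, \qquad [f] \longmapsto \mathrm{coker}(f). \]
Since $\cU_2$ is simple, $\mathrm{End}(\cU_2)=\CC$, so two injections $\cU_2\hookrightarrow \cE$ with the same cokernel are proportional; hence $\pi$ is bijective on closed points. Both source and target are equidimensional of dimension $2n-2$ (the source a smooth $\PP^2$-bundle over the smooth hyperK\"ahler $U^{(0)}$ by Corollary \ref{mod space}; the target a degeneracy locus of the expected codimension two in $S^{[n]}$ by Proposition \ref{surJ}). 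An application of Zariski's Main Theorem on the normal locus of $I_0$ then upgrades $\pi$ to an isomorphism there, giving the claimed birational $\PP^2$-fibration over $\Sigma$.

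For density of $I_0$ in $I$, I will bound the dimension of each $I_k$ for $k\ge 1$. Iterating the construction preceding \eqref{eq desirée}, the fiber of $c$ over a point $[\cE^{(k)}]\in \bar{B}^{(k)}\setminus \bar{B}^{(k+1)}$ is assembled from successive $\PP(\mathrm{Hom}(\cU_2,\cE^{(i)}))\simeq \PP^{2i+2}$ for $i=k,k-1,\ldots,0$, so has dimension at most $\sum_{i=0}^{k}(2i+2)=(k+1)(k+2)$. Combined with $\dim \Sigma^{(k)}=(v^{(k)})^2+2=2n-2(k+1)(k+2)$, this yields $\dim I_k\le 2n-(k+1)(k+2)<2n-2=\dim I_0$ for $k\ge 1$, as required.

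The main obstacle is the step showing that the bijective morphism $\pi$ is a scheme-theoretic isomorphism rather than merely a bijection on geometric points. This requires either invoking normality of $I_0$ together with Zariski's Main Theorem, or an explicit tangent-space comparison using the Eagon--Northcott description of Remark \ref{rmq EN}; in either case one must reconcile the moduli-theoretic description on the source with the degeneracy-locus scheme structure of the target. Once this is settled, irreducibility of $I=\overline{I_0}$ is automatic since $U^{(0)}$ is irreducible (as an open subset of the irreducible $\Sigma$), and the birational $\PP^2$-fibration statement is an immediate consequence.
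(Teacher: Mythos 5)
Your identification of $I_0$ as a $\PP^2$-fibration over $U^{(0)}$ is essentially the paper's argument (via Lemma \ref{aumoins3 debut}, \eqref{eq desirée} and \eqref{1ere strate}); note that for the stated conclusion, which is only birationality, generic injectivity of $\pi$ in characteristic zero already suffices, so the Zariski Main Theorem step you worry about is not actually needed. Where you genuinely diverge from the paper is in proving $\overline{I_0}=I$: the paper simply cites \cite[Lemma 14.1]{bayermacri}, whereas you attempt a self-contained dimension count. That is a legitimate alternative route (and in effect anticipates the later Lemma \ref{dim fibre}), but as written it has two problems.

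First, your claim that the fiber over $[\cE^{(k)}]$ is assembled from $\PP(\mathrm{Hom}(\cU_2,\cE^{(i)}))\simeq\PP^{2i+2}$ for all $i\le k$ is incorrect for $i<k$: Lemma \ref{aumoins3 debut} gives $\hom(\cU_2,\cE^{(i)})=2i+3$ only for $\cE^{(i)}\in U^{(i)}$, and the intermediate cokernels in a Jordan--H\"older filtration lie in the exceptional loci $I^{(i)}$, where in fact $\hom(\cU_2,\cE^{(i)})=k+i+3$ (by the sequence \eqref{hom(i) et Hom(i-1)} and rigidity of $\cU_2$). Your tower therefore only bounds the fiber dimension by $\sum_{i=0}^{k}(k+i+2)=(k+1)(k+2)+\tfrac{k(k+1)}{2}$, not $(k+1)(k+2)$ (the sharper value requires also quotienting by the extension spaces, as in Lemma \ref{dim fibre}). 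Fortunately the weaker bound still yields $\dim I_k\le 2n-\tfrac{(k+1)(k+4)}{2}<2n-2$ for $k\ge 1$, so the count survives. Second, $\dim I_k<\dim I_0$ for $k\ge 1$ does not by itself give $\overline{I_0}=I$: a priori $I$ could have an irreducible component contained in $\bigcup_{k\ge 1}I_k$. You must combine the count with Proposition \ref{surJ} (every point of $I=J$ lies on a component of dimension at least $2n-2$, since $J$ is a degeneracy locus of a map of bundles of ranks $2n+1$ and $2n$), which forces every component of $I$ to meet $I_0$ densely. You have both ingredients on the table, but the deduction needs to be made explicit.
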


\begin{proof}
    Since $I_0$ is a $\PP^2$-fibration over an irreducible base, it is irreducible. Then $\overline{I_0}=\cup_{k=-1,\ldots r}I_r=I$ by \cite[Lemma 14.1]{bayermacri}.
\end{proof}

The description of $I_0$ is essentially \cite[Lemma 14.2]{bayermacri}.
For deeper strata the description of the fibers is more complicated, but we are able to prove that they always have the expected dimension.

\begin{lemma}\label{dim fibre}
For $k\geq 0$, the fiber of $c$ over a point of 
$\bar{B}^{(k)}\smallsetminus \bar{B}^{(k+1)}$ is unirational (hence irreducible) of dimension $(k+1)(k+2)$.
\end{lemma}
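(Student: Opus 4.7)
The plan is to exhibit the fiber $c^{-1}([\cE^{(k)}])$ as the image of a free $GL_{k+1}$-action on an open subset of an affine space; both the dimension and the unirationality will drop out of this description.

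Fix $[\cE^{(k)}]\in \bar{B}^{(k)}\smallsetminus \bar{B}^{(k+1)}$. By Lemma \ref{aumoins3 debut} we have $\cE^{(k)}\in U^{(k)}$, hence $\mathrm{hom}(\cU_2,\cE^{(k)})=2k+3$. By (the proof of) Proposition \ref{Bk et Jk}, a subscheme $Z$ belongs to $c^{-1}([\cE^{(k)}])$ precisely when $\cI_Z$ is the cokernel of some injective morphism $g\colon \cU_2^{\oplus(k+1)}\hookrightarrow \cE^{(k)}$. Let
\[
U\;\subset\;\mathrm{Hom}(\cU_2^{\oplus(k+1)},\cE^{(k)})\;\simeq\;\mathrm{Hom}(\cU_2,\cE^{(k)})^{k+1}
\]
be the open subset of injective morphisms; then $g\mapsto \mathrm{coker}(g)$ yields a surjective map $\rho\colon U\twoheadrightarrow c^{-1}([\cE^{(k)}])$, and $U$ is nonempty since the fiber is.

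The key step is to identify the fibers of $\rho$. The group $G:=\mathrm{Aut}(\cU_2^{\oplus(k+1)})=GL_{k+1}$ (using that $\cU_2$ is stable, hence simple) acts on $U$ by precomposition, and its orbits are clearly contained in fibers of $\rho$. Conversely, if $\rho(g)=\rho(g')$, comparing the two short exact sequences yields a commutative diagram whose only nontrivial data are an automorphism of $\cE^{(k)}$ and one of $\cI_Z$; both must be scalars, since $\mathrm{End}(\cE^{(k)})=\CC$ by stability and $\mathrm{End}(\cI_Z)=\CC$ because $\cI_Z$ is reflexive in codimension one with reflexive hull $\cO_S$. A diagram chase then forces $g'=g\circ \phi$ for a unique $\phi\in G$; and $G$ acts freely on $U$, since injectivity of $g$ makes the relation $g\circ\phi=g$ force $\phi=\id$. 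So each nonempty fiber of $\rho$ is a $G$-orbit of dimension $(k+1)^2$.

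Consequently
\[
\dim c^{-1}([\cE^{(k)}])=\dim U-\dim G=(k+1)(2k+3)-(k+1)^2=(k+1)(k+2),
\]
matching the semismallness upper bound. Finally, $U$ is a nonempty open subset of an affine space, hence rational and irreducible, so surjectivity of $\rho$ shows that $c^{-1}([\cE^{(k)}])$ is unirational and irreducible. The main subtle point I expect is the identification of fibers of $\rho$ with $G$-orbits, in particular the computation $\mathrm{End}(\cI_Z)=\CC$ and the ensuing rigidity of the diagram chase; the dimension count and unirationality are then formal.
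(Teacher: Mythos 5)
Your argument is essentially correct but takes a genuinely different route from the paper's. The paper dominates the fiber $c^{-1}([\cE^{(k)}])$ by the tower of projective fibrations parametrizing full Jordan--H\"older filtrations $(f^{(k)},\dots,f^{(0)})$, and computes its dimension as the difference between the dimension of that tower and the dimension of the tower of filtrations with fixed image $Z$, using \eqref{hom et ext} and Lemma \ref{aumoins3 debut} at each stage. You instead collapse the whole filtration into a single presentation $\cU_2^{\oplus(k+1)}\hookrightarrow\cE^{(k)}\twoheadrightarrow\cI_Z$ and realize the fiber as the quotient of the open set $U$ of injective maps by the free $GL_{k+1}$-action. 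Your version is cleaner and makes the unirationality and the number $(k+1)(k+2)=(k+1)(2k+3)-(k+1)^2$ more transparent; the paper's version has the advantage of living entirely inside the recursive Jordan--H\"older formalism it has already set up, and of making visible how the fiber is stratified by the intermediate sheaves $\cE^{(i)}$. One small point you should make explicit for surjectivity of $\rho$: an arbitrary injective $g\in U$ must be shown to have cokernel equal to $\cI_Z$ for some $Z$ \emph{in this fiber}; this follows by peeling off the summands one at a time and applying Lemma \ref{de JOfiltr a JOmaps second part}(1) inductively, exactly as in the proof of Proposition \ref{Bk et Jk}, together with uniqueness of Jordan--H\"older factors.

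The one step that is genuinely load-bearing and not justified as written is the existence of the commutative diagram comparing two presentations with the same cokernel. To lift the isomorphism $\chi$ of cokernels to a map $\psi\colon\cE^{(k)}\to\cE^{(k)}$ you need the composite $\cE^{(k)}\to\mathrm{coker}(g)\xrightarrow{\chi}\mathrm{coker}(g')$ to factor through $\cE^{(k)}\to\mathrm{coker}(g')$, and the obstruction lies in $\Ext^1(\cE^{(k)},\cU_2^{\oplus(k+1)})$. This group vanishes precisely because $[\cE^{(k)}]\notin\bar{B}^{(k+1)}$ forces $\cE^{(k)}\in U^{(k)}$, whence $\mathrm{ext}^1(\cE^{(k)},\cU_2)=0$ by the last sentence of Lemma \ref{aumoins3 debut} (equivalently, $\hom(\cE^{(k)},\cI_Z)=1$, so the surjection onto $\cI_Z$ is unique up to scalar). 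Without this vanishing the fiber of $\rho$ over $Z$ could be a union of several $G$-orbits (distinct subsheaves $K\subset\cE^{(k)}$ with quotient $\cI_Z$), and your count would only give the upper bound $(k+1)(k+2)$ already known from semismallness; the lower bound is exactly what requires the Ext vanishing. Once you insert that citation, the rest of the diagram chase ($\psi$ scalar by stability of $\cE^{(k)}$, $\chi$ scalar by simplicity of $\cI_Z$, freeness of the $GL_{k+1}$-action on injective maps) is fine.
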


\proof 
Any Jordan-H\"older filtration \ref{JOmaps} comes with a sequence of sheaves $\underline{\cE}=(\cE^{(k)},\cE^{(k-1)},\ldots,\cE^{(0)})$. We 
fix $[\cE^{(k)}]\in \bar{B}^{(k)}\smallsetminus \bar{B}^{(k+1)}$ and consider the space 
\[
\cZ=\Big\{
\underline{\cE}=(\cE^{(k)},\cE^{(k-1)},\ldots,\cE^{(0)}) \,|\, [\cE^{(i)}]=[\cE^{(k)}] \text{ for all }i
\Big\},
\]
parametrizing the sheaves in Jordan-H\"older filtrations with the same Jordan-H\"older factors \eqref{JOfact}, see Proposition \ref{image of c}. 
By Corollary \ref{bijective}, $\cE^{(k)}$ is uniquely determined by $[\cE^{(k)}]$; then 
$\cE^{(k-1)}$ is the cokernel of some nonzero morphism $f_k: \cU_2\lra \cE^{(k)}$, and so on.
Thus the space of Jordan-H\"older filtrations  $\underline{\cE}$ that lie in $\cZ$ admits a surjective morphism to the fiber $c^{-1}([\cE^{(k)}])$, sending $(f^{(k)},\ldots,f^{(0)})$ to $Z$ such that $coker(f^{(0)})=\cI_Z$.
Note that arguing as in the proof of Proposition \ref{Bk et Jk}, from \eqref{hom(i) et Hom(i-1)} we deduce that 
$Hom(\cU_2,\cE^{(k-1)})\simeq Hom(\cU_2,\cE^{(k)})/\CC f_k$ has fixed dimension, and so on. We therefore dominate 
the fiber $c^{-1}([\cE^{(k)}])$ by a tower of projective fibrations, which  already ensures that it is unirational. 

In order to compute its dimension, we observe that 
a Jordan-H\"older filtration $(f^{(k)},\ldots,f^{(0)})$ is sent to a given finite scheme $Z\in I$ when $f^{(0)}$
defines an extension $\cE^{(0)}$ of $\cI_Z$ by $\cU_2$, $f^{(1)}$
defines an extension $\cE^{(1)}$ of  $\cE^{(0)}$ by $\cU_2$, and so on. Again, the corresponding extension spaces have fixed dimensions, so that the filtrations with given image $Z$ are also parametrized by a tower of projective fibrations. We can then compute the dimension of the fiber $c^{-1}([\cE^{(k)}])$ as the difference between the dimensions of these two towers, which yields 
\[
\sum_{i=0}^k (hom(\cU_2,\cE^{(i)})-1)
-\sum_{i=0}^k(ext^1(\cE^{(i-1)},\cU_2)-1)\stackrel{\text{by }\eqref{hom et ext}}{=}
\]
\[
=ext^1(\cE^{(k)},\cU_2)+\left(\sum_{i=0}^k 2i+3 \right)-ext^1(\cI_Z,\cU_2)\stackrel{\text{Lemma }\ref{aumoins3 debut}}{=} 
\]
\[
=2\left(\sum_{i=0}^k i\right) +3(k+1)-ext^1(\cI_Z,\cU_2)=k(k+1)+3(k+1)-ext^1(\cI_Z,\cU_2).
\]
By Serre duality, $ext^1(\cI_Z,\cU_2)=h^1(\cI_Z\otimes \cU_2^\vee)$ and the latter is $h^0(\cI_Z\otimes \cU_2^\vee)-1$, as we saw in Lemma \ref{J-BrillNoether}. Finally, we are able to conclude by Proposition \ref{Bk et Jk}, since
\[
k^2+4k+4-h^0(\cI_Z\otimes \cU_2^\vee)=k^2+4k+4-(k+2)=(k+1)(k+2). \qedhere
\]

\medskip
We close this section by collecting our results about the indeterminacy locus $I$ of $\varphi$. Recall that $r=max_{i\geq 0} \lbrace n-(i+1)(i+2)\geq 0 \rbrace$.

\begin{theorem}\label{descr I}
The indeterminacy locus $I\subset S^{[n]}$ of the birational involution $\varphi$ coincides with  the locus $J$ of subschemes generating a linear supspace in $V_{2n+1}$ of codimension at least two. Moreover, $I$ is irreducible and is contracted by $c$ to the base $B\subset N$, admitting a stratification
\[
\emptyset\subset \bar{B}^{(r)}\subset \cdots \subset\bar{B}^{(1)}\subset \bar{B}^{(0)}=B 
\]
where $\bar{B}^{(k)}$ has codimension $2(k+1)(k+2)$ in $N$. 

For each $k\geq 0$, the restriction of $c$ to $\bar{B}^{(k)}\smallsetminus \bar{B}^{(k+1)}$ is a fibration with unirational fibers of dimension $(k+1)(k+2)$. A scheme $Z\in I$ is sent by $c$  to $\bar{B}^{(k)}\smallsetminus \bar{B}^{(k+1)}$ if and only if it generates a subspace of $V_{2n+1}$ of codimension exactly $k+2$.
\end{theorem}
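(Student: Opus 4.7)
The plan is to assemble into a single statement the various pieces established in the preceding subsections; essentially no new work is required. First I would invoke Theorem \ref{thm I=J} for the identification $I=J$, Corollary \ref{structure I} for the irreducibility of $I$ (and hence of $B=c(I)$), Proposition \ref{Bk et Jk} for the stratum-by-stratum matching $I_k=J_k$ — which is exactly the assertion that the codimension of the linear span of $Z$ in $V_{2n+1}$ reads off the depth of $c(Z)$ in the stratification — and finally Lemma \ref{dim fibre} for the unirationality and for the dimension $(k+1)(k+2)$ of a general fiber of $c$ over $\bar{B}^{(k)}\smallsetminus \bar{B}^{(k+1)}$.

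The only point not already addressed verbatim in the text is the codimension count $\mathrm{codim}_N\bar{B}^{(k)}=2(k+1)(k+2)$. To obtain it, I would use Corollary \ref{bijective}, which identifies $\bar{B}^{(k)}\subset N$ with $\Sigma^{(k)}_0$ via the bijective normalization $\tau^{(k)}\circ\eta^{(k)}$. The moduli space $\Sigma^{(k)}=M_\sigma(v^{(k)})$ is smooth projective of dimension $(v^{(k)})^2+2$ (by \cite[Theorem 3.6]{bayermacri} and the existence range for the $\Sigma^{(k)}$ recalled at the start of Subsection \ref{sec: base}). A direct computation with $v^{(k)}=(2k+3,-(k+1)\cH,(2k+1)n-k)$ and $\cH^2=8n-6$ gives $(v^{(k)})^2=2n-2(k+1)(k+2)-2$, so that $\dim\bar{B}^{(k)}=\dim\Sigma^{(k)}=2n-2(k+1)(k+2)$, and the required codimension in $N$ follows from $\dim N=2n$.

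I do not foresee any serious obstacle, since the theorem is really a repackaging of the machinery developed in Subsections \ref{sec: small c}--\ref{descr of I}. The only place where a little care is needed is in arguing that $\bar{B}^{(k)}$ is nonempty in the full claimed range $0\le k\le r$ and empty for $k>r$, but this is precisely the nonemptiness criterion for the $\Sigma^{(k)}$ used in the construction of the chain of contractions in Lemma \ref{base irred}. As a sanity check, the codimension $2(k+1)(k+2)$ is exactly twice the fiber dimension $(k+1)(k+2)$, which is the semismallness of $c$ already invoked in Lemma \ref{dim fibre}; this confirms both the dimension count and the fact that our explicit parametrization of $\bar{B}^{(k)}$ via $\Sigma^{(k)}_0$ saturates the semismallness bound.
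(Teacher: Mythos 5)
Your proposal is correct and follows exactly the route the paper takes: Theorem \ref{descr I} is stated there as a summary of Theorem \ref{thm I=J}, Corollary \ref{structure I}, Proposition \ref{Bk et Jk} and Lemma \ref{dim fibre}, with no separate proof given. The one computation you supply yourself — $\dim\bar{B}^{(k)}=\dim\Sigma^{(k)}=(v^{(k)})^2+2=2n-2(k+1)(k+2)$ via Corollary \ref{bijective}, hence codimension $2(k+1)(k+2)$ in the $2n$-dimensional $N$ — is exactly the intended one and is consistent with the semismallness bound, as you note.
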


We can also translate our result in a Brill-Noether like statement. Recall that we denote by $v^{(k)}$ the Mukai vector $v-(k+1)a$, see Section \ref{sec: base}.

\begin{theorem}\label{descr I - BN}
Let $(S,\cH)$ be a very general K3 surface of degree $8n-6$, embedded in $G(2,V_{2n+1})$ via the Mukai bundle of Mukai vector $(2,\cH,2n-1)$. The space $J_k$ of degree $n$ finite subschemes of $S$ generating a linear subspace of $V_{2n+1}$ of codimension  $k+2$ is non-empty if and only if $(k+1)(k+2)\leq n$. In that case, it has dimension $2n-(k+1)(k+2)$ and is a fibration 
over an open subset of the moduli space $M_\cH(v^{(k)})$ of $\cH$-stable coherent sheaves on $S$ with Mukai vector $v^{(k)}$, with unirational fibers of dimension $(k+1)(k+2)$. The complement of this open subset has codimension $2(k+2)$ in $M_\cH(v^{(k)})$. 
\end{theorem}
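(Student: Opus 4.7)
The plan is to derive Theorem \ref{descr I - BN} by translating Theorem \ref{descr I} into moduli-theoretic language, exploiting the identifications built up in Section \ref{sec: indeterminacy}. The bulk of the work is already in place; what remains is to repackage it, and, crucially, to estimate the codimension of the exceptional locus in $M_\cH(v^{(k)})$.

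First I would invoke Proposition \ref{Bk et Jk} to identify $J_k=I_k$, which reduces the problem to statements about the contraction $c$ and its stratified base. Non-emptiness becomes immediate: $I_k\ne\emptyset$ iff $\bar{B}^{(k)}\smallsetminus \bar{B}^{(k+1)}\ne\emptyset$, iff $\Sigma^{(k)}\ne\emptyset$, iff $(v^{(k)})^2\ge -2$ by \cite[Theorem 3.6]{bayermacri}; the formula $(v^{(k)})^2+2=2n-2(k+1)(k+2)$ recalled in Section \ref{sec: indeterminacy} turns this into $(k+1)(k+2)\le n$. Corollary \ref{mod space} identifies $\Sigma^{(k)}$ with the $\cH$-stable moduli space $M_\cH(v^{(k)})$, providing the required sheaf-theoretic interpretation.

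Assembling the fibration is the second step. Restricting $c$ to $I_k$, composing with the bijective normalization $(\tau^{(k)}\circ \eta^{(k)})^{-1}$ coming from diagram \eqref{big beautiful diagram}, and applying Corollary \ref{mod space}, yields a surjection $J_k\lra U^{(k)}\subset M_\cH(v^{(k)})$. The fiber dimension $(k+1)(k+2)$ and unirationality are given by Lemma \ref{dim fibre}, so the total dimension is $\dim U^{(k)}+(k+1)(k+2)=(2n-2(k+1)(k+2))+(k+1)(k+2)=2n-(k+1)(k+2)$, as required.

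The final step, which I expect to be the main obstacle, is the codimension bound $\mathrm{codim}_{M_\cH(v^{(k)})}(\Sigma^{(k)}\smallsetminus U^{(k)})=2(k+2)$. A direct attempt based on the fiber dimension of $c^{(k)}$ alone gives the wrong number, so one must parametrize $I^{(k)}$ more carefully. The plan is to run the argument leading to Corollary \ref{structure I} with $c^{(k)}$ in place of $c=c^{(-1)}$: by Remark \ref{rmq recursive} and Lemma \ref{aumoins3 debut}, a generic $\cE^{(k)}\in I^{(k)}$ sits, uniquely up to scalar, in an extension
\[
0\lra \cU_2\lra \cE^{(k+1)}\lra \cE^{(k)}\lra 0
\]
with $\cE^{(k+1)}\in U^{(k+1)}$; since $\mathrm{Aut}(\cE^{(k+1)})=\CC^*$ acts trivially on the projective space of sub-$\cU_2$'s, the generic part of $I^{(k)}$ is birational to the projective bundle $\PP(\cG^{(k+1)})|_{U^{(k+1)}}$, of relative dimension $hom(\cU_2,\cE^{(k+1)})-1=2k+4$ by Lemma \ref{aumoins3 debut}. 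Hence $\dim I^{(k)}=(2n-2(k+2)(k+3))+(2k+4)=2n-2(k+2)^2$, yielding codimension $2(k+2)^2-2(k+1)(k+2)=2(k+2)$. Irreducibility of $I^{(k)}$, so that this is the codimension of the full complement and not of some component, follows by the same argument as in Corollary \ref{structure I}. The case $k=r$ is vacuous, since $\Sigma^{(r+1)}$ and hence $I^{(r)}$ is empty.
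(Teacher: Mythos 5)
Your proposal is correct and follows essentially the same route as the paper, which presents Theorem \ref{descr I - BN} as a direct translation of Theorem \ref{descr I} and the results of Section \ref{sec: indeterminacy} (Proposition \ref{Bk et Jk}, Corollary \ref{mod space}, Corollary \ref{bijective}, Lemma \ref{dim fibre}, and the non-emptiness criterion $(v^{(k)})^2\ge -2$). The codimension-$2(k+2)$ bound is the one item the paper leaves implicit, and your computation of it via the recursive structure — $I^{(k)}$ birational to a $\PP^{2k+4}$-bundle over $U^{(k+1)}$, using $hom(\cU_2,\cE^{(k+1)})=2k+5$ from Lemma \ref{aumoins3 debut} — is exactly the intended argument and checks out.
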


We already knew that $I$ is a $\PP^2$-fibration for $n=2,3$. The same statement holds for $n=4,5$, but is no longer true for $n\geq 6$. For $n=6$, $I$ is a $\PP^2$-fibration
except for a single fiber of dimension $6$ corresponding to the subschemes generating a linear space of codimension $3$. In general, whenever $n=(r+1)(r+2)$, the last stratum $\bar{B}^{(r)}\cong M_\sigma(v^{(r)})$ is a single point. Hence the subschemes generating a linear space of codimension $r+2$ are parametrized by a unirational variety of dimension $(r+1)(r+2)$, 
which is contracted to a point by $c$.

\medskip
\begin{remark}
For $n=3$, we proved in  \cite{invol1} that $\varphi$ is a Mukai flop. In particular it can be resolved by blowing-up the indeterminacy locus $I$ and contracting it again in other directions. We expect something similar, but more complicated for bigger $n$: we would need to 
blow-up first $I_{r-1}$ along $I_r$, then the bigger strata recursively, in order to finally resolve the indeterminacies of $\varphi$, which should be a kind of stratified Mukai flop.

One more time, we are inspired by Beauville's involution. In that framework, Markman described a Brill-Noether stratification of $S^{[n]}$ and proceeded to resolve the birational map by blowing up the strata recursively, see \cite{markman-BN} and \cite[Section 4]{ogradyinvolutions}.
\end{remark}

\section{The variety $\Sigma$}\label{sec: Sigma}

Let us study more closely the hyperK\"ahler manifold $\Sigma$ 
that desingularizes the base $B$ of the contraction $c$, see
 \eqref{big beautiful diagram}; we assume again for simplicity that $C_n=1$. The morphism $\Sigma\lra B$  factors into
$\alpha:\Sigma\lra \Sigma_0$ and  a bijective normalization $\eta: \Sigma_0\lra B$.
We will be mostly interested in the first map. By Lemma \ref{base irred}, $\alpha$ is an isomorphism when $r=0$ (that is when $n\leq 5$), and a flopping contraction otherwise.

For $n=2$ both $\Sigma$ and $\Sigma_0$ are singletons. In this setting, most of the statements in this section either remain trivially true, or are clearly false. 
From now on we impose $n\geq 3$.

\subsection{Moduli spaces interpretation}\label{Sigma as mod. sp.}
By definition, $\Sigma=M_\sigma(w)$ for the Mukai vector  $w=v-a=(3,-\cH,n)$. There is a distinguished class in the Néron-Severi group of this moduli space, pulled-back from the base of the contraction.

\begin{remark}
    $\Sigma$ is a fine moduli space if and only if $3$ does not divide $n$.
\end{remark}

The Mukai vector $u=(2,-\cH,2(n-1))$ is orthogonal to $w$, hence the associated Mukai morphism $\theta$ \eqref{mukai morphism} sends $u$ to a class in $\NS(\Sigma)$.
Note that $u$ is a primitive generator of the orthogonal complement of $\Lambda$ in $H^*_{alg}(S,\ZZ)$.
We denote by $\cL$ the class among $\pm \theta(u)$ which lies in the positive cone of $\Sigma$; since $u^2=2$, $\cL$ has Beauville-Bogomolov degree $2$.
The following result is basically a restatement of Lemma \ref{base irred} for $i=0$, but this time we want to highlight the r\^ole of $\cL$.

\begin{prop}\label{cL on Sigma}
    The class $\cL$ is nef and big and lies in the interior of the movable cone of $\Sigma$. Moreover $\alpha=\phi_{k\cL}$ for $k\gg 0$; in particular $\cL$ is ample if and only if $n\leq 5$, while for $n\geq 6$ it spans a flopping wall.
\end{prop}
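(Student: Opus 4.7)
The plan is to identify $\cL$, up to a positive rational scalar, with the class that defines the contraction $\alpha$; from there all the assertions will follow essentially formally. Recall from Section \ref{sec: small c} that, by the Bayer-Macr{\`\i} description of wall-crossing, for $k \gg 0$ the morphism $\alpha = c^{(0)}$ is defined by $k$ times a generator of $\theta(w^\perp \cap \Lambda)^\perp$ in $\NS(\Sigma)$. Since $\theta$ is a lattice isometry, this generator is $\theta(D)$ for $D$ a generator of the rank one lattice $(w^\perp \cap \Lambda)^\perp \cap w^\perp$ inside $H^*_{\mathrm{alg}}(S,\ZZ)$.

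The heart of the argument is then the lattice identification of $D$ with a multiple of $u$. Using $(v,w) = 2n-3$ and $(a,w) = 3$, one sees that $w^\perp \cap \Lambda$ is the rank one sublattice of $\Lambda$ spanned by $z := 3v - (2n-3)a$. Since $\{w,z\}$ spans $\Lambda$ rationally, any element of $H^*_{\mathrm{alg}}(S,\QQ)$ orthogonal to both $w$ and $z$ must lie in $(\Lambda \otimes \QQ)^\perp = \QQ u$, so $D$ is a nonzero rational multiple of $u$, and the sign is fixed by the requirement that $\cL$ lie in the positive cone of $\Sigma$. Hence $\theta(D)$ is a positive multiple of $\cL$, and $\alpha = \phi_{k\cL}$ for $k \gg 0$.

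The remaining statements are formal consequences. The equality $k\cL = \alpha^*(A)$ for $A$ an ample class on $\Sigma_0$ gives nefness of $\cL$. By Lemma \ref{base irred}, $\alpha$ is birational, namely an isomorphism when $r = 0$ (equivalently $n \leq 5$) and a flopping contraction when $r \geq 1$ (equivalently $n \geq 6$); hence $\cL$ is big. In the first case $\cL$ is ample and therefore lies in the interior of the movable cone; in the second, $\cL$ spans a flopping wall of $\NS(\Sigma)_\RR$, which by definition is a wall in the interior of the movable cone, and $\cL$ fails to be ample since $\alpha$ contracts curves. This settles both the position of $\cL$ in the movable cone and the ampleness dichotomy. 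The only substantive obstacle is the rank one lattice computation of the second paragraph; everything else is a direct application of results already in hand.
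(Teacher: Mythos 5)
Your argument is correct for $n\ge 4$ and follows essentially the same route as the paper: both proofs reduce to the Bayer--Macr\`i fact that $\alpha=c^{(0)}$ is defined by a high multiple of a generator of $\theta(w^\perp\cap\Lambda)^\perp$ in $\NS(\Sigma)$, identify that generator with $\pm\theta(u)$, and then read off nefness, bigness and the ampleness dichotomy from Lemma \ref{base irred}. The paper phrases the middle step differently --- it works with the map $\ell:\overline{\cC}\to\NS(\Sigma)_\RR$ of Bayer--Macr\`i and the alignment of central charges of $\Lambda$ at $\sigma_0$, and simply quotes that $u$ generates $\Lambda^\perp$ in $H^*_{\alg}(S,\ZZ)$ --- whereas you make the lattice identification explicit; that is a welcome addition. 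Two small points. First, your sentence fixing the sign of $D$ is mildly circular as written: what actually pins down the sign is that the defining class equals $\alpha^*(A)/k$ for $A$ ample, hence is nef and big, hence lies in the positive cone and must therefore be the positive multiple of $\cL$; you establish nefness only in the next sentence, so reorder. Second, your rank computation degenerates at $n=3$ (which the section does allow): there $w^2=0$, so $z=3v-(2n-3)a=3w$ is proportional to $w$, the pair $\{w,z\}$ does not span $\Lambda$ rationally, and $(w^\perp\cap\Lambda)^\perp\cap w^\perp$ has rank two rather than one. In that case the Mukai morphism kills $w$, $\NS(\Sigma)$ has rank one generated by $\theta(u)$, and the conclusion is immediate; but as stated your lattice argument does not cover it, so add a sentence disposing of this degenerate case.
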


\begin{proof}
We consider the walls-and-chambers decomposition of $\Stab^\dagger(S)$ associated to $w$.
Let $\cC$ be the chamber in $\Stab^\dagger(S)$ containing $\sigma$ and $\overline{\cC}$ its closure. The map 
$\ell:\overline{\cC}\lra \NS(\Sigma)_\RR$ constructed in \cite{bayermacri1} sends $\overline{\cC}$ to the nef cone and $\cC$ to its interior.
Lemma \ref{base irred} shows that $\sigma_0\in \cC$ for $n\leq 5$ (that is when $r=0$), otherwise $\sigma_0$ only lies in the closure of $\cC$. Equivalently, $\ell(\sigma_0)$ is always nef and big and is ample if and only if $n\leq 5$.
By definition of $\Lambda$, the central charges with respect to $\sigma_0$ of elements of $\Lambda$ align; as one can see from the proof of \cite[Theorem 12.1]{bayermacri}, this is equivalent to 
the condition that $\ell(\sigma_0)\in \theta(v^\perp\cap \Lambda)^\perp$. The latter is generated by $\theta(u)$ and, since $\ell(\sigma_0)$ is nef, $\theta(u)$ is a positive multiple of $\cL$.
By the main result of \cite{bayermacri1}, $\alpha$ is defined by the ray spanned by $\theta(u)$, hence $\alpha=\phi_{k\cL}$ for some $k>0$.
Finally, $\cL$ lies in the interior of the movable cone since $\alpha$ is either an isomorphism or a flopping contraction.
\end{proof}

Since $\cL$ has Beauville-Bogomolov degree $2$, $-R_\cL$ is an integral isometry of $H^2(\Sigma,\ZZ)$ (where again $R_\cL$ denotes the reflection with respect to $\cL$). 

\begin{definition}\label{unique involution}
Let $\iota_\Sigma$ be the unique birational involution of $\Sigma$ acting as $-R_\cL$ in cohomology; its existence follows from the same argument as in \cite[proof of Theorem 1.1, case (i)]{beri}. It is non-symplectic, acts as $-id$ on the transcendental lattice of $\Sigma$, and fixes 
the span of $\cL$ in $\NS(\Sigma)$. 
It is biregular if and only if  $\cL$ is ample, i.e. $n\leq 5$.
\end{definition}

For $n=3$, $\Sigma\simeq\Sigma_0=B$ is a two-polarized K3 surface with $\NS(\Sigma)=\ZZ\cL$.
The covering involution acts as $-R_\cL$ in cohomology. On the other hand, we observed in 
\cite[Section 4.5]{invol1} 
that in that case $\varphi$ induces an involution on $N$ which is biregular, and restricts to $\Sigma\subset N$ as its covering involution: this is \cite[Lemma 35]{invol1} (where the induced involution is denoted $\gamma$). 

It is natural to ask whether this picture extends to higher dimensions. In other words, we can wonder if the involution acting as $-R_\cL$ on the cohomology of $\Sigma$ is induced by $\varphi$. We will answer this question for most  cases in  Subsection \ref{sec: invol}. But we will first observe that, somewhat surprisingly, the group of birational automorphisms $\Bir(\Sigma)$ may be infinite for certain values of $n$; in particular  we cannot in general
show that $\varphi_\Sigma^*=-R_\cL$
simply arguing by unicity, as we do for $S^{[n]}$.

\smallskip
In order to understand whether $\Bir(\Sigma)$ is finite or not, we use Oguiso's techniques relying on the weak version of the movable cone conjecture proved by Markman in \cite{markmansurvey}.
We first need to describe $\NS(\Sigma)$; note that starting from $n=4$ we have $w^2\geq 2$, so $\Sigma$ has Picard rank two by \cite[Theorem 3.6]{bayermacri}.

\begin{lemma}\label{pic sigma}
Let $n\geq 4$. As a lattice, $\NS(\Sigma)=\langle 2 \rangle\oplus \langle-2\frac{t(n-3)}{\gcd(3,n)^2}\rangle$.
\end{lemma}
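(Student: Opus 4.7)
The plan is to apply the Mukai isometry: since $S$ is very general of Picard rank one and $w^2=2n-6\geq 2$, the restriction of $\theta$ to the algebraic part gives an isometry
$$\theta:\ w^\perp\cap H^*_{alg}(S,\ZZ)\xrightarrow{\sim}\NS(\Sigma).$$
This reduces the computation to describing the orthogonal complement of $w=(3,-\cH,n)$ inside $\Lambda':=H^0(S,\ZZ)\oplus \ZZ\cH\oplus H^4(S,\ZZ)$ equipped with the Mukai form. Writing $v=(r,d\cH,s)$, the orthogonality condition reads $nr+2td+3s=0$.

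A natural element of $w^\perp$ is $u=(2,-\cH,2(n-1))$, with $u^2=2$; since $u$ is primitive in $\Lambda'$, it is primitive in $w^\perp$, and $\cL=\theta(u)$ is the class of Beauville--Bogomolov degree $2$ from Proposition~\ref{cL on Sigma}. The goal is to exhibit a complementary $e\in w^\perp$ orthogonal to $u$ such that $\{u,e\}$ is a $\ZZ$-basis of $w^\perp$ and $e^2=-2t(n-3)/\gcd(3,n)^2$. Reducing $nr+2td+3s=0$ modulo $3$ yields $n(r+2d)\equiv 0\pmod 3$ (since $2t\equiv 2n\pmod 3$), which forces a case split on $\gcd(3,n)$.

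When $\gcd(3,n)=1$, one needs $r\equiv d\pmod 3$, so $v_1:=(1,\cH,-3n+2)$ and $u$ span $w^\perp$ (their projection to the $(r,d)$-plane has determinant $-3$, matching the index of $\{r\equiv d\pmod 3\}$ in $\ZZ^2$). Direct computation gives $v_1^2=14n-10$ and $(v_1,u)=-4(n-1)$, so $e:=v_1+2(n-1)u$ is orthogonal to $u$ with $e^2=-2t(n-3)$. When $\gcd(3,n)=3$, the congruence is automatic, so $v_1:=(1,0,-n/3)$ and $v_2:=(0,\cH,-(8n-6)/3)$ form a basis, and $u=2v_1-v_2$. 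The primitive solution of $\alpha(u,v_1)+\beta(u,v_2)=0$ is $(\alpha,\beta)=(t/3,-(2n-3)/3)$ (both integers, since $3\mid t$ and $3\mid (2n-3)$ when $3\mid n$), giving $e:=(t/3)v_1-((2n-3)/3)v_2$; the change-of-basis matrix from $\{v_1,v_2\}$ to $\{u,e\}$ has determinant $1$, and $e^2=-2t(n-3)/9$ after simplifying via the identity $-t(n-2)+(2n-3)^2=3-n$.

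The main technical obstacle is the second case: identifying the primitive orthogonal generator relies on $\gcd(4n-6,8n-6)=6$ (which holds precisely when $3\mid n$), and the simplification of $e^2$ hinges on the arithmetic identity above. As a conceptual cross-check, the discriminant $-4t(n-3)/\gcd(3,n)^2$ can be derived directly: the divisibility $\gamma(w)=\gcd\{(v,w):v\in\Lambda'\}=\gcd(3,n)$, together with the index formula $[\Lambda':\ZZ w\oplus w^\perp]=w^2/\gamma(w)$, yields $\mathrm{disc}(\NS(\Sigma))=w^2\cdot\mathrm{disc}(\Lambda')/\gamma(w)^2=-4t(n-3)/\gcd(3,n)^2$, matching the discriminant of $\langle 2\rangle\oplus\langle -2t(n-3)/\gcd(3,n)^2\rangle$.
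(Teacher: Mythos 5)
Your proof is correct and follows essentially the same route as the paper: both reduce to $w^\perp\cap H^*_{alg}(S,\ZZ)$ via the Mukai morphism and rest on the same two vectors, $u=(2,-\cH,2(n-1))$ and $\frac{1}{\gcd(3,n)}(t,-(2n-3)\cH,t(n-2))$ (your $e$ coincides with the preimage of the paper's $\kappa$ in both cases), with the same values $u^2=2$ and $e^2=-2t(n-3)/\gcd(3,n)^2$. The only difference is in how one checks that these two vectors generate all of $w^\perp$: the paper argues by finite index plus a discriminant comparison citing \cite{ghs13}, whereas you verify it directly by exhibiting explicit $\ZZ$-bases in the two cases $\gcd(3,n)\in\{1,3\}$ — and your concluding discriminant cross-check is precisely the paper's own argument.
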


\begin{proof}
A primitive generator of $w^\perp\cap u^\perp$ in $H^*_{alg}(S,\ZZ)$ is $\frac{1}{\gcd(3,n)}(t,-(2n-3)\cH,t(n-2))$. It corresponds via the Mukai morphism to a generator $\kappa$ of the orthogonal complement of $\cL$ in $\NS(\Sigma)$. The Beauville-Bogomolov degree of $\kappa$ is $-2\frac{t(n-3)}{\gcd(3,n)^2}$. The lattice spanned by $\cL$ and $\kappa$ is a finite index sublattice of $\NS(\Sigma)$. By \cite[Lemma 7.2]{ghs13}, both lattices have the same discriminant, so they coincide by \cite[XIV.0.2, (0.1)]{huybrechtsK3}.
\end{proof}

The positive cone $\rm{Pos}(\Sigma)$ is delimited by the lines spanned by $\frac{\sqrt{t(n-3)}}{\gcd(3,n)}\cL\pm \kappa$, which are rational if and only if $n=7$. For this reason, this case needs to be treated separately.

\begin{prop}\label{card Bir}
For $n=7$, $\Bir(X)$ is finite.
For $n\geq 4$ and $n\neq 7$, $\Bir(X)$ is finite if and only if $\Mov(\Sigma)$ is strictly contained in $\rm{Pos}(\Sigma)$.
\end{prop}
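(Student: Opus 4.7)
The plan is to apply Markman's weak version of the movable cone conjecture for hyperK\"ahler manifolds, which asserts that $\Bir(\Sigma)$ acts on the movable cone $\Mov(\Sigma)$ with a rational polyhedral fundamental domain. In the rank two situation at hand ($\rho(\Sigma)=2$ by Lemma \ref{pic sigma}), this translates, following Oguiso, into the criterion that $\Bir(\Sigma)$ is finite if and only if $\Mov(\Sigma)$ is itself rational polyhedral. Indeed, a rational polyhedral cone in rank two is delimited by two rational rays; any group acting on it by isometries must permute those rays, and the stabilizer of a rational non-isotropic ray in $O(\NS(\Sigma))$ is finite, so the whole group is finite. Conversely, an irrational boundary ray forces infinitely many $\Bir(\Sigma)$-translates of the fundamental domain to cover $\Mov(\Sigma)$.

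The boundary rays of $\Mov(\Sigma)$ are of two possible types: either strict internal walls of $\Pos(\Sigma)$, always spanned by integral classes since they come from flopping or divisorial contractions, or rays on $\partial \Pos(\Sigma)$. The rationality of the latter is controlled by the computation preceding the statement of the proposition: it holds precisely when $n=7$. The question of rational polyhedrality of $\Mov(\Sigma)$ therefore reduces to understanding whether its boundary rays coincide with those of $\Pos(\Sigma)$.

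I would next exploit the involution $\iota_\Sigma\in\Bir(\Sigma)$ from Definition \ref{unique involution}, which acts on $\NS(\Sigma)_\RR$ as $-R_\cL$. Since it fixes $\cL$ and sends $\kappa$ to $-\kappa$, $\iota_\Sigma$ exchanges the two rays spanned by $\frac{\sqrt{t(n-3)}}{\gcd(3,n)}\cL\pm\kappa$ which generate $\partial \Pos(\Sigma)$. Being a birational automorphism, $\iota_\Sigma$ preserves $\Mov(\Sigma)$ and permutes its boundary rays, yielding the dichotomy: either both boundary rays of $\Mov(\Sigma)$ lie on $\partial\Pos(\Sigma)$, meaning $\Mov(\Sigma)=\Pos(\Sigma)$, or both are strict internal walls, meaning $\Mov(\Sigma)\subsetneq \Pos(\Sigma)$.

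The proposition then follows by case analysis. For $n=7$, $\partial \Pos(\Sigma)$ is rational, so in either alternative the two boundary rays of $\Mov(\Sigma)$ are rational; hence $\Mov(\Sigma)$ is rational polyhedral and $\Bir(\Sigma)$ is finite. For $n\neq 7$ and $n\geq 4$, $\partial\Pos(\Sigma)$ is irrational, so $\Mov(\Sigma)=\Pos(\Sigma)$ produces an irrational boundary ray (infinite $\Bir(\Sigma)$), whereas $\Mov(\Sigma)\subsetneq \Pos(\Sigma)$ guarantees that both boundary rays are rational internal walls (finite $\Bir(\Sigma)$). I expect the main subtlety to be the careful invocation of Markman's cone conjecture together with the Oguiso-style reduction to rational polyhedrality of $\Mov(\Sigma)$; once this is in hand, the dichotomy imposed by $\iota_\Sigma$ closes the argument cleanly, and no further information on the effective position of the boundary of $\Mov(\Sigma)$ is needed to establish the stated equivalence.
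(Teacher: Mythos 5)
Your argument is correct and proves the stated equivalence, but it follows a more self-contained route than the paper's. The paper disposes of the proposition by quoting two results of Oguiso for hyperK\"ahler manifolds of Picard number two (his Theorem 1.3(2) for the infinite case, his Proposition 5.1 for the finite one), linked by Markman's description of the interior walls of $\Pos(\Sigma)$ as orthogonals of effective divisors of negative Beauville--Bogomolov square: if $\Mov(\Sigma)=\Pos(\Sigma)$ and the boundary rays are irrational the group is infinite, while $\Mov(\Sigma)\subsetneq\Pos(\Sigma)$ forces the existence of such a divisor and hence finiteness. You instead rederive the Oguiso-type dichotomy directly from Markman's rational polyhedral fundamental domain, and you introduce an ingredient the paper does not use here: the involution $\iota_\Sigma$, which acts as $-R_\cL$, hence swaps the two boundary rays of $\Mov(\Sigma)$ (neither of which can be $\RR_{>0}\cL$, since $\cL$ is interior to the movable cone by Proposition \ref{cL on Sigma}); this cleanly forces either $\Mov(\Sigma)=\Pos(\Sigma)$ or both boundary rays to be rational interior walls. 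What your route buys is independence from the precise statements in Oguiso's paper; what it costs is two small points that the citations would otherwise absorb and that you should make explicit. First, to pass from finiteness of the image in $O(\NS(\Sigma))$ to finiteness of $\Bir(\Sigma)$ you need the kernel of $\Bir(\Sigma)\to O(\NS(\Sigma))$ to be finite; this holds (it is in fact trivial by Lemma \ref{cohom invol}, whose proof is independent of the present proposition). Second, in the case $n=7$ with $\Mov(\Sigma)=\Pos(\Sigma)$ the boundary rays are rational but \emph{isotropic}, so your remark about stabilizers of non-isotropic rational rays does not literally apply; one checks directly that the stabilizer in the isometry group of a rank-two hyperbolic lattice of a primitive isotropic ray is trivial, so the conclusion still stands. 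With these two clarifications the proof is complete.
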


\begin{proof}
Suppose $n\neq 7$.
Since the boundary rays of $\rm{Pos}(\Sigma)$ are irrational, \cite[Theorem 1.3 (2)]{oguiso-pic2} implies that $\Bir(\Sigma)$ is infinite when $\Mov(\Sigma)=\rm{Pos}(\Sigma)$. On the other hand, the chambers in the interior of the positive cone are cut out by the orthogonal complements to the classes of effective divisors with negative Beauville-Bogomolov degree \cite[Section 6.4]{markmansurvey}. So when $\rm{Pos}(\Sigma)\neq \Mov(\Sigma)$, at least one such divisor must exist and we conclude by \cite[Proposition 5.1]{oguiso-pic2}. 

For $n=7$ the boundary rays of $\rm{Pos}(\Sigma)$ are rational, so $\Bir(\Sigma)$ is finite either by \cite[Theorem 1.3 (2)]{oguiso-pic2} or \cite[Proposition 5.1]{oguiso-pic2}.
\end{proof}

\begin{coro}
$|\Bir(\Sigma)|=\infty$ whenever $n=3n'$ for some $n'>1$.
\end{coro}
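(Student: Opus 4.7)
The plan is to apply Proposition~\ref{card Bir}. Since $n=3n'$ with $n'>1$ forces $n\geq 6$, in particular $n\neq 7$, the corollary reduces to showing $\Mov(\Sigma)=\mathrm{Pos}(\Sigma)$. As noted in the proof of Proposition~\ref{card Bir}, this is equivalent to showing that $\NS(\Sigma)$ contains no primitive class of a prime exceptional divisor. By Markman's classification of such classes on manifolds of $K3^{[n]}$-type, a primitive prime exceptional class $D$ must satisfy either $D^2=-2$ (with arbitrary divisibility), or $D^2=-2(n-1)$ with $\mathrm{div}(D)\in\{n-1,2(n-1)\}$. It thus suffices to rule out each of these cases.

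The first step I would carry out is to compute $\mathrm{div}(D)$ for any primitive $D=a\cL+b\kappa$ in $\NS(\Sigma)$. Using the Bayer-Macri isometry $\theta\colon w^\perp\lra H^2(\Sigma,\ZZ)$, one identifies $D$ with the Mukai vector $au+bu''\in w^\perp\subset H^*(S,\ZZ)$, where $u=(2,-\cH,2(n-1))$ and $u''=(4n'-1,-(2n'-1)\cH,(4n'-1)(3n'-2))$ are the generators appearing in Lemma~\ref{pic sigma}. A direct computation of $(au+bu'',y)$ for arbitrary $y\in w^\perp$ (parametrized by its $e_0$ and $e_4$ components, using that $\cH$ has divisibility $1$ in $H^2(S,\ZZ)$) should give
\[
\mathrm{div}(D)=\gcd\bigl(a,2(n'-1)\bigr).
\]
In particular $\mathrm{div}(D)\leq 2(n'-1)<3n'-1=n-1$, which already rules out every prime exceptional class with $D^2=-2(n-1)$.

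It remains to exclude $D^2=-2$. By Lemma~\ref{pic sigma}, this translates into the Pell-like equation
\[
a^2-(4n'-1)(n'-1)\,b^2=-1.
\]
Since $4n'-1\equiv 3\pmod 4$, it must have at least one prime divisor $p\equiv 3\pmod 4$. Reducing the equation modulo $p$ yields $a^2\equiv -1\pmod p$, which is impossible because $-1$ is a quadratic non-residue modulo primes $p\equiv 3\pmod 4$. Combined with the previous step, this shows $\NS(\Sigma)$ contains no prime exceptional class, so $\Mov(\Sigma)=\mathrm{Pos}(\Sigma)$ and Proposition~\ref{card Bir} gives the conclusion.

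The main obstacle I expect is the divisibility computation. It has to be carried out in the ambient lattice $H^2(\Sigma,\ZZ)$ via the Mukai isometry rather than intrinsically in $\NS(\Sigma)$, where the divisibility would generally be larger and the bound too weak. The sharp value $\mathrm{div}(D)=\gcd(a,2(n'-1))$ depends crucially on the factor $\gcd(3,n)=3$ appearing in Lemma~\ref{pic sigma}, i.e.\ on the hypothesis $3\mid n$; this is precisely the place where the assumption $n=3n'$ enters and explains why the statement fails for generic $n$.
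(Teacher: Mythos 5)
Your proposal is correct in substance, and for half of the argument it takes a genuinely different route from the paper. The paper works with Bayer--Macr\`i's Theorem 12.3 directly in the algebraic Mukai lattice: a wall of $\Mov(\Sigma)$ in the interior of $\Pos(\Sigma)$ would come either from an isotropic Mukai vector pairing to $1$ or $2$ with $w=(3,-\cH,n)$ --- ruled out in one line because $w$ has divisibility $\gcd(3,2t,n)=3$ in $H^*_{\alg}(S,\ZZ)$ --- or from a spherical class orthogonal to $w$, which is excluded by the same negative Pell equation $X^2-(4n'-1)(n'-1)Y^2=-1$ modulo a prime $p\equiv 3\pmod 4$ dividing $4n'-1$ that you use. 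You instead pass to the hyperK\"ahler side, invoke Markman's numerical classification of stably prime exceptional classes, and kill the non-$(-2)$ case by computing divisibilities of primitive classes $a\cL+b\kappa$ in $H^2(\Sigma,\ZZ)$ via the Mukai isometry. Your computation is right: with $u=(2,-\cH,2(n-1))$ and your $u''$, pairing $au+bu''$ against a general element of $w^\perp$ in the full Mukai lattice gives the subgroup generated by $a+2b(n'-1)$ and $2b(n'-1)$, hence divisibility $\gcd(a,2(n'-1))$ when $\gcd(a,b)=1$; and the square $-2$ case is exactly the paper's Pell argument. So the two proofs differ only in how they exclude the ``second type'' of wall: the paper's divisibility observation on $w$ is shorter, while your computation stays inside $\NS(\Sigma)\subset H^2(\Sigma,\ZZ)$ and makes explicit where the hypothesis $3\mid n$ enters.

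One slip should be corrected: $\Sigma=M_\sigma(w)$ has dimension $w^2+2=2(n-2)$, so it is of $K3^{[n-2]}$-type, not $K3^{[n]}$-type. Markman's criterion must therefore be applied with $D^2=-2(n-3)$ and $\mathrm{div}(D)\in\{n-3,\,2(n-3)\}=\{3(n'-1),\,6(n'-1)\}$, not with $n-1$ and $2(n-1)$. Your bound $\mathrm{div}(D)\le 2(n'-1)<3(n'-1)$ still excludes both values (and this is precisely where $n'>1$ is needed), so the argument survives, but as written the classification is quoted with the wrong parameter.
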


\begin{proof}
Let prove that $\rm{Pos}(\Sigma)=\Mov(\Sigma)$. If this is not the case, there must exist a class $s$ or $w$ as in \cite[Theorem 12.3]{bayermacri}. The second possibility is easily ruled out,  since $w$ (our $w=(3,-\cH,n)$) has divisibility $\gcd(3,2t,n)=3$. To exclude the first possibility,
we observe that the Mukai morphism $\theta$ sends a spherical class $s\in w^\perp$ to an element of $\NS(\Sigma)$; so by Lemma \ref{pic sigma} $\theta(s)$ must be of the form $X\cL+Y\kappa$ with $X^2-(4n'-1)(n'-1)Y^2=-1$. But a necessary condition for the negative Pell's equation $X^2-\Delta Y^2=-1$ to admit a solution is that no prime number $p$ with $p\equiv 3\pmod{4}$ divides $\Delta$, since $-1$ is not a residue modulo those primes; under our hypothesis there must exist such a prime, since $4n'-1\equiv 3\pmod{4}$.
\end{proof}

As we will see in the next subsection, in most cases the following observation will be sufficient to identify the action of $\varphi_\Sigma$ in cohomology.

\begin{lemma}\label{cohom invol}
For any $n\geq 4$, the only birational automorphism of $\Sigma$ acting trivially on $\NS(\Sigma)$ is the identity. 
\end{lemma}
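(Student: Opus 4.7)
The plan is to show in two steps that any $\psi\in\Bir(\Sigma)$ acting trivially on $\NS(\Sigma)$ is biregular, and then to force $\psi=\mathrm{id}$ via Hodge theory together with a lattice computation. Since $\psi^*$ fixes the ample cone of $\Sigma$, $\psi$ is biregular: a birational self-map of a projective hyperK\"ahler manifold preserving a K\"ahler class is automatically regular. Hence $\psi\in\mathrm{Aut}(\Sigma)$ and $\psi^*$ is an integral Hodge isometry of $H^2(\Sigma,\ZZ)$.

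Via the Mukai morphism $\theta:w^{\perp}\to H^2(\Sigma,\ZZ)$, the transcendental part $T(\Sigma)$ is identified with $T(S)$ as Hodge structures, since $T(S)\subset\tilde H(S,\ZZ)$ is already orthogonal to the whole algebraic part and in particular to $w$. Because $S$ is very general of Picard rank one, $\mathrm{End}_{\mathrm{Hdg}}(T(S)\otimes\QQ)=\QQ$, so every Hodge isometry of $T(\Sigma)\otimes\QQ$ is $\pm\mathrm{id}$. If $\psi^*|_{T(\Sigma)}=\mathrm{id}$, then $\psi^*$ is the identity on the finite-index sublattice $\NS(\Sigma)\oplus T(\Sigma)\subset H^2(\Sigma,\ZZ)$, hence on all of $H^2(\Sigma,\ZZ)$; faithfulness of the representation $\mathrm{Aut}(\Sigma)\hookrightarrow O(H^2(\Sigma,\ZZ))$ for $K3^{[m]}$-type manifolds then gives $\psi=\mathrm{id}$.

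The remaining case, which is the main obstacle, is $\psi^*|_{T(\Sigma)}=-\mathrm{id}$. A standard gluing argument shows that the isometry $\mathrm{id}_{\NS(\Sigma)}\oplus(-\mathrm{id})_{T(\Sigma)}$ extends to $H^2(\Sigma,\ZZ)$ only when the overlap subgroup $H_{\NS}\subset D(\NS(\Sigma))$ arising from the primitive embedding $\NS(\Sigma)\hookrightarrow H^2(\Sigma,\ZZ)$ is annihilated by $2$. Using $T(\Sigma)\cong T(S)$ together with the fact that $T(S)$ is the orthogonal complement of $\tilde H_{\mathrm{alg}}(S,\ZZ)$ in the unimodular Mukai lattice, one computes $|D(T(\Sigma))|=2t$; combining this with Lemma \ref{pic sigma} and $|D(H^2(\Sigma,\ZZ))|=2(n-3)$ yields $|H_{\NS}|=2t/\gcd(3,n)$. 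Since $t=4n-3$ is odd and at least $13$, while $\gcd(3,n)\in\{1,3\}$, this order has an odd prime factor for every $n\geq 4$, so $H_{\NS}$ is not $2$-torsion and this case cannot occur, completing the proof.
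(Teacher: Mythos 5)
Your proof is correct, and its skeleton parallels the paper's: restrict $\psi^*$ to the transcendental lattice, get $\pm\id$ there, dispose of the $+\id$ case by finite-index density plus faithfulness of the action on $H^2$, and kill the $-\id$ case by lattice arithmetic that ultimately exploits the oddness of $t=4n-3$. The two places where you deviate are worth noting. First, you obtain $\psi^*|_T=\pm\id$ from $\mathrm{End}_{\mathrm{Hdg}}(T(S)\otimes\QQ)=\QQ$, which genuinely uses that $S$ is \emph{very general} (Picard rank one alone would not suffice, as rank-one K3s with extra Hodge endomorphisms exist); the paper instead deduces $\pm\id$ from the odd rank of $T$ via Oguiso, which is slightly more robust, but your hypothesis is available in the paper's setting, so nothing is lost. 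Second, and more substantially, in the $-\id$ case the paper observes that $\psi$ must be a non-symplectic involution with invariant lattice $\NS(\Sigma)$ and coinvariant lattice $T$, and then cites Camere's Proposition 1.1 to force one of the two discriminants to be a power of two, contradicting divisibility by the odd number $t>1$; you instead run the gluing argument directly: an isometry of $H^2(\Sigma,\ZZ)$ restricting to $\id\oplus(-\id)$ on $\NS(\Sigma)\oplus T$ exists only if the glue group is $2$-torsion, and the order computation $|H_{\NS}|^2=|D(\NS(\Sigma))|\cdot|D(T)|/|D(H^2(\Sigma,\ZZ))|=4t^2/\gcd(3,n)^2$ (using Lemma \ref{pic sigma}, $\mathrm{disc}\,T=2t$, and $\mathrm{disc}\,H^2(\Sigma,\ZZ)=2(n-3)$ for $K3^{[n-2]}$-type) gives $|H_{\NS}|=2t/\gcd(3,n)$, which has an odd prime factor, so $H_{\NS}$ is not annihilated by $2$. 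This is a self-contained replacement for the citation — in effect you reprove the special case of Camere's result that is needed — at the cost of the extra input $\mathrm{disc}\,H^2(\Sigma,\ZZ)=2(n-3)$; a small bonus of your route is that you never need to argue that $\psi$ is an involution, only that $\psi^*$ restricts to $\id\oplus(-\id)$.
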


\begin{proof}
Let $g\in \Bir(\Sigma)$.
Since the transcendental lattice $T$ of $\Sigma$ has odd Picard rank, $g^*$ restricts to $\pm id_T$ on $T$ \cite[Proof of Lemma 4.1]{oguiso}. Hence if $g$ is symplectic, $(g^*)_{|T}=id_T$, and if moreover $g$ acts trivially on $\NS(\Sigma)$, then $g^*=id_{H^2(\Sigma,\ZZ)}$ and we are done.
Suppose instead that $g$ is non-symplectic 
and acts trivially on $\NS(\Sigma)$. It has to be an involution, with invariant lattice $\NS(\Sigma)$ and coinvariant lattice $T$; as a consequence of \cite[Proposition 1.1]{camere}, the discriminant of one of them has to be a power of two. But this is not the case. Indeed, the discriminant of $\NS(\Sigma)$ is $4t(n-3)$. Moreover, since $T$ is isometric to the transcendental lattice of $S$, it has discriminant $2t$ by \cite[XIV.0.2, (0.3)]{huybrechtsK3} since $\NS(S)$ is generated by $\cH$ and $H^2(S,\ZZ)$ is unimodular.
Both discriminants contains $t>1$ as a factor, and $t$ is odd so we are done.
\end{proof}

\subsection{The induced involution}\label{sec: invol}
The birational involution $\varphi$ descends to a birational involution $\overline
{\varphi}$ on $N$. 

\begin{prop}\label{involution Sigma0}
   $\overline{\varphi}$ on $N$ is regular. 
\end{prop}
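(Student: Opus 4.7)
The plan is to exhibit $\overline{\varphi}$ as the deck transformation of a degree two finite cover of $N$ arising from the Stein factorization of $\phi_{\cH_n-2\delta}$, and then to deduce its regularity from the normality of $N$.

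Under the hypothesis $C_n=1$, Theorem \ref{bpf for nC=1} ensures that $\phi_{\cH_n-2\delta}$ is a regular morphism, and Corollary \ref{c stein} asserts that its Stein factorization writes $\phi_{\cH_n-2\delta}=g\circ c$, with $c:S^{[n]}\to N$ the flopping contraction. Since $c$ is a small contraction it is birational, so the finite factor $g:N\to \phi_{\cH_n-2\delta}(S^{[n]})$ has generic degree equal to that of $\phi_{\cH_n-2\delta}$, namely $2$ by Proposition \ref{card fiber}. By Corollary \ref{phi is covering inv}, $\varphi$ is the covering involution of $\phi_{\cH_n-2\delta}$, so $g(c(Z))=g(c(\varphi(Z)))$ for every $Z\in S^{[n]}$; on the open locus where $g$ is \'etale, $c(Z)$ and $c(\varphi(Z))$ are the two distinct preimages under $g$, hence the rational involution $\overline{\varphi}$ coincides there with the nontrivial Galois involution $\tau$ of the degree two cover $g$.

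It remains to show that $\tau$ is in fact a regular automorphism of $N$. This is a standard consequence of normality: $N$ is normal by Lemma \ref{base irred}, and the closure in $N\times N$ of the graph of $\tau$ lies in the fiber product $N\times_{\phi_{\cH_n-2\delta}(S^{[n]})}N$, which is a degree two finite scheme over $N$ under either projection. The diagonal accounts for one of its two irreducible components, while the residual component is finite and birational over $N$, hence isomorphic to $N$ by Zariski's Main Theorem; this identifies $\tau$ with a biregular involution of $N$, which necessarily equals $\overline{\varphi}$. I do not anticipate any serious obstacle here: the argument amounts to combining the Stein factorization, the covering-involution property of $\varphi$, and the normality of $N$, all already established in the paper. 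The only subtlety to keep an eye on is that the Galois correspondence must be applied to the degree two finite morphism $g$ and not directly to $\phi_{\cH_n-2\delta}$, whose image need not be normal.
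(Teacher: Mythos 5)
Your argument is correct, but it follows a genuinely different route from the paper's. You exploit the full degree-two structure: the Stein factorization $\phi_{\cH_n-2\delta}=g\circ c$ (Corollary \ref{c stein}), the fact that $d=2$ (Proposition \ref{card fiber}) and that $\varphi$ is the covering involution (Corollary \ref{phi is covering inv}), so that $\overline{\varphi}$ becomes the deck transformation of the degree-two finite map $g$, which you then extend to all of $N$ by taking the closure of its graph inside $N\times_{\phi_{\cH_n-2\delta}(S^{[n]})}N$ and applying Zariski's Main Theorem to the finite birational projection onto the normal variety $N$. The paper instead argues more softly: it takes an ample line bundle $\cA$ on $N$ with $c^*\cA$ a multiple of $\cH_n-2\delta$, notes that $\cA$ is $\overline{\varphi}$-invariant because $\varphi^*$ fixes $\cH_n-2\delta$, extends the rational sections $\sigma\circ\overline{\varphi}$ of $m\cA$ across the indeterminacy locus using normality of $N$, and concludes that $\overline{\varphi}$ is the restriction of a linear automorphism of $\PP(H^0(N,m\cA)^\vee)$. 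Your approach buys more geometric information (it exhibits $N$ as a generically two-to-one cover of the image with $\overline{\varphi}$ as its Galois involution), but it relies on the finer results of Sections 2--3; the paper's argument is essentially formal and would apply to any birational self-map of a normal projective variety fixing an ample class pulled back from the contraction, without invoking $d=2$ at all. Two small points to tidy in your write-up: the identity $g(c(Z))=g(c(\varphi(Z)))$ only makes sense for $Z$ outside the indeterminacy locus of $\varphi$ (which suffices), and $N\times_{\phi_{\cH_n-2\delta}(S^{[n]})}N$ may a priori have extra lower-dimensional components over the branch locus, so the clean statement is simply that the closure of the graph of the deck transformation is finite and birational over $N$, which is all your ZMT step needs.
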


\proof 
By \cite[Theorem 2.1.27]{lazarsfeld}, there is an ample line bundle $\mathcal{A}$ on $N$ whose pullback via $c$ is a positive multiple of $\cH_n-2\delta$. In particular, $\mathcal{A}$ is invariant under the action of $\overline{\varphi}$.
Choose an integer $m$ such that $m\mathcal{A}$ is very ample. For any section 
$\sigma$ of $m\mathcal{A}$,  $\sigma \circ\overline{\varphi}$ is a rational section of $\overline{\varphi}^*(m\mathcal{A})=m\mathcal{A}$, and since $N$ is normal by construction (see \cite[Theorem 1.4(a)]{bayermacri1}), 
it extends to a regular section. We thus get a linear involution $\overline{\varphi}^*$
of $H^0(N,m\mathcal{A})$. But since $m\mathcal{A}$ is very ample, $N$ embeds inside $\PP(H^0(N,mA)^\vee)$ and  by construction, the induced
projective automorphism of $\PP(H^0(N,mA)^\vee)$ restricts to $\overline{\varphi}$. In particular, $\overline{\varphi}$ is regular. \qed 

\medskip

Since the base of the contraction is the singular locus of $N$, $\overline{\varphi}$ restricts to a biregular involution on it. Via the normalization $\eta$, a biregular involution on $\Sigma_0$ is also induced, and via $\alpha$ a birational involution $\varphi_\Sigma$ on $\Sigma$. 

\begin{coro}
$\varphi$ induces a birational involution $\varphi_\Sigma$ of 
$\Sigma$.    
\end{coro}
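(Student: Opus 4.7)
The plan is to chase the involution down the tower of maps $\Sigma \xrightarrow{\alpha} \Sigma_0 \xrightarrow{\eta} B \hookrightarrow N$, using at each stage the intrinsic nature of the objects involved.

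First, by Proposition \ref{involution Sigma0}, $\overline{\varphi}$ is a regular (hence biregular, being an involution) automorphism of the normal projective variety $N$. The base $B \subset N$ of the contraction $c$ is characterized intrinsically: it is the locus of points over which $c$ has positive-dimensional fibers, equivalently the non-$\QQ$-factorial locus of $N$ (coming from the small contraction). Since $\overline{\varphi}$ is biregular, it must preserve this intrinsic locus, so it restricts to a biregular involution $\overline{\varphi}_{|B}$ of $B$.

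Next, the bijective normalization $\eta: \Sigma_0 \to B$ is the normalization of $B$ in its function field, hence is universal: any automorphism of $B$ lifts uniquely to an automorphism of $\Sigma_0$. Applying this to $\overline{\varphi}_{|B}$, we obtain a biregular involution $\varphi_{\Sigma_0}$ of $\Sigma_0$ satisfying $\eta \circ \varphi_{\Sigma_0} = \overline{\varphi}_{|B} \circ \eta$.

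Finally, the morphism $\alpha: \Sigma \to \Sigma_0$ is birational (it is either an isomorphism when $r=0$, or a flopping contraction when $r \geq 1$, by Lemma \ref{base irred}). In either case it induces an isomorphism of function fields, so the biregular involution $\varphi_{\Sigma_0}$ corresponds to a birational self-map $\varphi_\Sigma := \alpha^{-1} \circ \varphi_{\Sigma_0} \circ \alpha$ of $\Sigma$, which is an involution since $\varphi_{\Sigma_0}$ is. This is the desired $\varphi_\Sigma$. No step here poses a real obstacle: the only mild subtlety is the preservation of $B$ by $\overline{\varphi}$, which follows immediately from the intrinsic characterization of $B$ as the exceptional locus of the small $\QQ$-factorialization (or equivalently the non-$\QQ$-factorial locus) of $N$.
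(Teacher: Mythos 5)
Your proof is correct and follows essentially the same route as the paper: restrict the regular involution $\overline{\varphi}$ of $N$ to the intrinsically preserved locus $B$, lift through the normalization $\eta$, and transport via the birational map $\alpha$. The only cosmetic difference is that the paper identifies $B$ as the singular locus of $N$ (immediate from purity of the exceptional locus, since $c$ is small), whereas you use the non-$\QQ$-factorial locus; both characterizations are intrinsic and valid.
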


Contrary to the previous ones, this involution $\varphi_\Sigma$ is a priori not regular, and we will show later on that it is indeed \textit{not} biregular, except for $n=3,4,5$ for which $\Sigma\simeq \Sigma_0$.

We summarize the situation in a diagram:

\[
\xymatrix{
  \Sigma\ar[d]_\alpha\ar@{-->}@(u,l)_{\varphi_\Sigma} &  I \ar@{^{(}->}[r]\ar[d] &  S^{[n]}\ar@{-->}@(u,r)^{\varphi} \ar[d]_c\\
\Sigma_0 \ar@(d,l)\ar[r]^\eta_{1:1} &  B\ar@{^{(}->}[r]  &  N\ar@(d,r)_{\overline{\varphi}}.
}
\]

\medskip
In order to understand $\varphi_\Sigma$, we will relate it to the main result of \cite{faenzi-menet}.  For a sheaf $\cE$ from $\Sigma=M_\sigma(w)$, a straightforward computation shows that the Mukai vector of $\cE(\cH)$ is $(3, 2\cH, 5n-3)$. So  \cite[Theorem 4.4]{faenzi-menet} applies with $k=1$ (and $g=4n-2$); with their notations, $\Phi=\Phi_{\cU_2^\vee, 1}$ is a birational involution of $M_\sigma(w)$. By definition, up to a twist by $\cH$, it sends a generic sheaf $\cE\in M_\sigma(w)$ to a sheaf $\cF\in M_\sigma(w)$ that fits into exact sequences
\begin{align}\label{involution-for bundles} 
0\lra \cE^\vee\lra Hom(\cU_2,\cE)^\vee\otimes \cU_2^\vee\lra \cF(\cH)\lra {\mathcal Ext}^1(\cE,\cO_S)\lra 0, \\
0\lra \cF^\vee\lra Hom(\cU_2, \cF)^\vee\otimes \cU_2^\vee\lra \cE(\cH)\lra {\mathcal Ext}^1(\cF,\cO_S)\lra 0,
\end{align}
where the sheaves ${\mathcal Ext}^1(\cE,\cO_S)$ and ${\mathcal Ext}^1(\cF,\cO_S)$ have finite support
\cite[Lemma 3.3]{faenzi-menet}. 
These sequences are dual one to the other, and induce an isomorphism
$$ Hom(\cU_2, \cE)^\vee\lra Hom(\cU_2, \cF).$$

\begin{prop}\label{descr varphiSigma}
$\varphi_\Sigma=\Phi$.
\end{prop}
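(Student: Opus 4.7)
The plan is to identify both $\varphi_\Sigma$ and $\Phi$ with the unique birational involution $\iota_\Sigma$ of Definition \ref{unique involution}, i.e.\ the one whose cohomological action is $-R_\cL$. For $\Phi$, this follows from \cite[Theorem 4.4]{faenzi-menet} combined with the observation that $\Phi$ is non-symplectic and preserves $\cL = \theta'(u)$, where $u \in w^\perp$ is the canonical generator of $\Lambda^\perp$ used in the Faenzi--Menet construction. It therefore suffices to show $\varphi_\Sigma^* = -R_\cL$ on $H^2(\Sigma, \ZZ)$.

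First, I would establish $\varphi_\Sigma^*\cL = \cL$. The morphism $\alpha : \Sigma \to \Sigma_0$ is defined by a multiple of $\cL$ (Proposition \ref{cL on Sigma}), and by construction of $\varphi_\Sigma$ we have $\alpha \circ \varphi_\Sigma = \bar\varphi|_{\Sigma_0} \circ \alpha$, which forces $\cL$ to be preserved. Second, I would argue that $\varphi_\Sigma$ is non-symplectic, hence in particular non-trivial. For this, I use that both $\varphi$ on $S^{[n]} = M_\sigma(v)$ and $\varphi_\Sigma$ on $\Sigma = M_\sigma(w)$ are induced by a common derived autoequivalence $\Psi$ of $D^b(S)$ attached to the wall determined by $\Lambda$, and that the Mukai morphisms $\theta : v^\perp \to H^2(S^{[n]}, \ZZ)$ and $\theta' : w^\perp \to H^2(\Sigma, \ZZ)$ are $\Psi$-equivariant. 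Since $\varphi^*$ acts as $-\mathrm{id}$ on the transcendental lattice $T(S^{[n]})$ by non-symplecticity of $\varphi$ (Proposition \ref{action}), and since the transcendental sublattice $T(S) \subset \tilde H(S)$ is orthogonal to the algebraic part (and in particular to $u$), we deduce $\Psi^*|_{T(S)} = -\mathrm{id}$; transporting via $\theta'$ yields $\varphi_\Sigma^*|_{T(\Sigma)} = -\mathrm{id}$, confirming non-symplecticity.

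Having $\varphi_\Sigma^*\cL = \cL$ and $\varphi_\Sigma \neq \mathrm{id}$, Lemma \ref{cohom invol} (for $n \geq 4$) shows $\varphi_\Sigma^*$ is non-trivial on $\NS(\Sigma)$; combined with the rank-two description (Lemma \ref{pic sigma}), the only non-trivial involution fixing $\cL$ is $-R_\cL$, so $\varphi_\Sigma^*|_{\NS(\Sigma)} = -R_\cL$. Together with $\varphi_\Sigma^*|_{T(\Sigma)} = -\mathrm{id}$, this gives $\varphi_\Sigma^* = -R_\cL$ on $H^2(\Sigma, \ZZ)$, hence $\varphi_\Sigma = \iota_\Sigma = \Phi$. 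For $n = 3$, $\Sigma$ is a K3 surface of Picard rank one with degree-$2$ polarization $\cL$, whose unique non-trivial (bi)rational involution is the covering involution of $\phi_\cL$, coinciding with both $\varphi_\Sigma$ and $\Phi$ as verified in \cite{invol1}. The main subtlety is the explicit identification of $\Psi$ and the verification $\Psi^*|_{T(S)} = -\mathrm{id}$, which relies on the Bayer--Macr\`i description of the wall-crossing autoequivalence for the lattice $\Lambda$ together with the non-symplectic character of $\varphi$.
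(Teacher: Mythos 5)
Your strategy (identify both maps with the unique involution $\iota_\Sigma$ acting as $-R_\cL$) is genuinely different from the paper's, which never argues cohomologically: the paper proves $\varphi_\Sigma=\Phi$ by a direct geometric computation on the open stratum, showing that for $(\cE,[u],[v])$ with $\cF=\Phi(\cE)$ the two schemes $Z,Z'$ arising from the extensions $0\to\cU_2\to\cE\to\cI_Z\to 0$ and $0\to\cU_2\to\cF\to\cI_{Z'}\to 0$ lie on the zero locus of a common section $\sigma_{v,v'}$ of $\cU_2^\vee$, so that $Z'=\varphi(Z)$ by the geometric description of $\varphi$. Your route could in principle work (uniqueness of the involution with action $-R_\cL$ does follow from Lemma \ref{cohom invol} for $n\ge 4$), but it has a genuine gap exactly at the point your argument needs most: the non-triviality (indeed non-symplecticity) of $\varphi_\Sigma$. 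You derive it from a ``common derived autoequivalence $\Psi$ of $D^b(S)$ attached to the wall determined by $\Lambda$'' inducing both $\varphi$ and $\varphi_\Sigma$, with $\Psi$-equivariant Mukai morphisms. No such $\Psi$ is constructed, and its existence is not supplied by Bayer--Macr\`i: wall-crossing gives birational maps between moduli spaces, not an autoequivalence of $D^b(S)$ realizing $\varphi$ (and any candidate, being modelled on the Faenzi--Menet construction, would involve derived dualization, hence be an anti-equivalence, so even the equivariance statement needs care). In the paper the logical order is the reverse: non-triviality of $\varphi_\Sigma$ and $\overline{\varphi}|_B$, and the identity $\varphi_\Sigma^*=-R_\cL$ (Corollary \ref{action in cohom}), are \emph{consequences} of $\varphi_\Sigma=\Phi$; a priori $\overline{\varphi}$ could act trivially on the base $B$, and nothing in your fixed-class argument ($\varphi_\Sigma^*\cL=\cL$, which is fine and is the same argument as in Corollary \ref{action in cohom}) rules this out.

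A secondary, smaller gap: you assert that $\Phi^*=-R_\cL$ follows from Faenzi--Menet ``combined with the observation that $\Phi$ is non-symplectic and preserves $\cL=\theta(u)$''. Non-symplecticity is indeed their Proposition 5.11 (cited in the paper), but the statement that $\Phi$ fixes precisely the class $\theta(u)$ is not quoted anywhere and would have to be extracted from their computation of the cohomological action and matched against the lattice description of Lemma \ref{pic sigma}; as written it is an unverified identification on which your whole reduction rests. If both gaps were filled, your argument would give an alternative, purely lattice-theoretic proof for $n\ge 4$ (with $n=3$ handled as you do via \cite{invol1}), but as it stands the essential content of the proposition --- that the involution induced on $\Sigma$ by $\varphi$ is genuinely the Faenzi--Menet involution, and in particular non-trivial --- is assumed rather than proved.
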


\proof
By Lemma \ref{aumoins3 debut},
the open subset $U^{(0)}\subset \Sigma$ parametrizes 
sheaves $\cE$ such that  $h^0(\cU_2\otimes\cE)=3$. Via $\alpha$, we see $U^{(0)}$ as an open subset of $\Sigma_0$. By \eqref{1ere strate}, the stratum $I_0\subset I$ is a $\PP^2$-fibration over $U^{(0)}$, hence a smooth codimension two locally closed subvariety of $S^{[n]}$. The symplectic form on the Hilbert scheme allows to identify its normal bundle to the relative cotangent bundle. If we blow-up $I_0$, we thus get an exceptional bundle with fibers $\PP (Hom(\cU_2,\cE)/\cO(-1))$ over $I_0$; note that since $Hom(\cU_2,\cE)$ is three-dimensional, this is a relative flag bundle over $U^{(0)}$. So a point in the exceptional divisor should be interpreted as a triple $(\cE,[u],[v])$
for $\cE\in U^{(0)}$, $u\in Hom(\cU_2,\cE)$, $v\in Hom(\cU_2,\cE)^\vee$, with $u$ and $v$ non-zero and orthogonal.  

If $\cF=\Phi(\cE)$, then $v$ defines an element of $Hom(\cF,\cU_2^\vee)$, from which we get an exact sequence 
\begin{equation}\label{extension'}
0\lra \cU_2\lra \cF \lra \cI_{Z'}\lra 0
\end{equation}
for some scheme $Z'\in I$. We claim that the association $Z\mapsto Z'$ extends $\varphi$, which will prove the Proposition. By construction of $\varphi$ (Proposition \ref{intro-involution}), we just need to show that there exists a global section of $\cU_2^\vee$ vanishing both on $Z$ and $Z'$.  

In order to construct such a section, we start by dualizing the sequence 
(\ref{extension'}), which yields  
\begin{equation}\label{dual-extension} 
0\lra \cO_S\lra \mathcal{F}^\vee\stackrel{v}{\lra} \cU_2^\vee\lra \cO_{Z'}\lra 0.
\end{equation}
In particular there is an induced section $\sigma_v$ of $\cF^\vee$ (at least up to non-zero scalar). We can easily give an explicit expression of this section, say at a point $s\in S$ at which $\cF^\vee$ is locally free. Let $f_1,f_2,f_3$ be a basis of the fiber $\cF^\vee_s$;
since $\det(\cF^\vee)=H$, we see $f_1\wedge f_2\wedge f_3$ as a generator at 
$s$ of $H=\det(\cU_2^\vee)$. Therefore the following expression makes sense 
$$\sigma_v(s)=\frac{v(f_1)\wedge v(f_2)}{f_1\wedge f_2\wedge f_3}f_3+
\frac{v(f_2)\wedge v(f_3)}{f_1\wedge f_2\wedge f_3}f_1+\frac{v(f_3)\wedge v(f_1)}{f_1\wedge f_2\wedge f_3}f_2 \in\cF^\vee_s,$$
and does not depend on any choice. Then we can use any $v'\in Hom(\cU_2,\cF)$ to deduce a section $\sigma_{v,v'}$ of $\cU_2^\vee$ outside the singular set of $\cF$; since this singular set is finite, $\sigma_{v,v'}$ extends to a global section.

\begin{lemma} Suppose $Z, Z'$ are reduced. Then:
\begin{enumerate}
    \item $\sigma_v$ vanishes along $Z'$,
    \item $\sigma_{v,v}=0$,
    \item if $v'$ is orthogonal to $u$, then $\sigma_{v,v'}$ vanishes along $Z$.
\end{enumerate}
\end{lemma}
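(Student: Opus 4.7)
My plan is to interpret $\sigma_v$ as the canonical generator of $\ker(v^\vee\colon\cF^\vee\to\cU_2^\vee)$ arising from (\ref{dual-extension}), and ${\sigma_{v,v'}}={v'}^\vee(\sigma_v)$ accordingly as an element of $H^0(\cU_2^\vee)$; the three statements will then follow from three observations of increasing depth. For (1), I will argue pointwise: at a reduced $z\in Z'$ the length-one cokernel $\cO_{Z'}$ of $v^\vee$ forces $v^\vee(z)$ to have rank $1$, so $v(f_1),v(f_2),v(f_3)$ are collinear in $\cU_2^\vee|_z$, each wedge $v(f_i)\wedge v(f_j)$ vanishes at $z$ (while $f_1\wedge f_2\wedge f_3$ is a local generator of $\det\cF^\vee=\cH$ where $\cF^\vee$ is locally free), and hence every coefficient in the formula for $\sigma_v$ vanishes at $z$. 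Claim (2) is then essentially tautological: once the determinantal identity sketched by the authors confirms $v\circ\sigma_v=0$ as a section, we have $\sigma_v\in\ker(v^\vee)$, so $\sigma_{v,v}={v}^\vee(\sigma_v)=0$.

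The heart of the matter is (3). I will use the second Faenzi--Menet sequence of (\ref{involution-for bundles}), which provides an inclusion $\iota\colon\cF^\vee\hookrightarrow W\otimes\cU_2^\vee$, where $W=Hom(\cU_2,\cE)$, identifying $\cF^\vee$ with the kernel of the evaluation $W\otimes\cU_2^\vee\to\cE(\cH)$ sending $(w,\xi)\mapsto w(\xi)$ via $\cU_2^\vee\simeq\cU_2(\cH)$. A short tensor--hom adjunction, obtained by dualising the tautological evaluation $W^\vee\otimes\cU_2\to\cF$, identifies ${v'}^\vee$ with the composition $\langle v',\cdot\rangle\circ\iota$ for every $v'\in W^\vee$. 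Now fix a reduced $z\in Z$ generic enough to avoid $Z'$, the singular loci of $\cE$ and $\cF$, and the supports of the relevant ${\mathcal Ext}^1$ sheaves. Since $\cE/u(\cU_2)\simeq\cI_Z$ has two-dimensional stalk at $z$, the map $u(z)\colon\cU_2^\vee|_z\to\cE(\cH)|_z$ has rank $1$ with a $1$-dimensional kernel $L_u$. Consequently $u\otimes L_u\subset\ker(\mathrm{ev}_z)=\cF^\vee|_z$, and the contraction with $v$ gives $\langle v,u\otimes\xi\rangle=\langle v,u\rangle\xi=0$ by the orthogonality of $u$ and $v$; thus $u\otimes L_u\subseteq\ker(v^\vee(z))$, and since $z\notin Z'$ both sides are one-dimensional, so they coincide. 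Because $\sigma_v$ lies globally in $\ker(v^\vee)$, we get $\sigma_v(z)\in\CC\cdot u\otimes L_u$, and for any $v'\perp u$ the contraction yields ${v'}^\vee(\sigma_v(z))=\langle v',u\rangle\cdot\xi_0=0$, proving the vanishing of $\sigma_{v,v'}$ at every such $z$ and hence along $Z$.

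The main obstacle I anticipate is the duality/bookkeeping step identifying ${v'}^\vee$ with $\langle v',\cdot\rangle\circ\iota$ through $\cU_2^\vee\simeq\cU_2(\cH)$, together with ensuring that the fibrewise rank calculations at $z\in Z$ really hold on a dense open subset: this requires some care at the finitely many points where Faenzi--Menet's evaluation fails to be surjective onto $\cE(\cH)$ or where $\cF^\vee|_z$ has dimension exceeding $3$. Both issues are in principle controlled by the genericity of $\cE\in U^{(0)}$ and the hypothesis that $Z$ and $Z'$ be reduced, but they will need to be checked explicitly.
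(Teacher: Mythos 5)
Your parts (1) and (2) are essentially the paper's own arguments: (1) is the same pointwise observation that, by (\ref{dual-extension}), the map $v\colon\cF^\vee\to\cU_2^\vee$ has rank one at a reduced point of $Z'$, so all the wedges in the formula for $\sigma_v$ vanish there; and (2) is the same determinantal identity, which the paper phrases as the vanishing of an alternating expression in the three vectors $v(f_1),v(f_2),v(f_3)$ lying in a two-dimensional space (this identity is also what makes the explicit formula agree, up to nonzero scalar, with the canonical kernel section you take as the definition of $\sigma_v$, so the two descriptions are interchangeable).

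Part (3) is where you genuinely diverge, and your argument is correct but different. The paper uses the first Faenzi--Menet presentation $0\to\cE^\vee\to Hom(\cF,\cU_2)\otimes\cU_2^\vee\to\cF(\cH)\to 0$ away from the singularities of $\cE$, writes $\cE$ locally via the relation (\ref{relation}), exploits that the map $\cE^\vee\to\cU_2^\vee$ attached to $u$ (equivalently to the flag $\langle v\rangle\subset\langle v,v'\rangle$) has rank one over $Z$, extracts a local section $f$ of $\cF$ with $v(f)=v'(f)=0$, and substitutes $f_3=f$ into the trilinear formula. You instead work with the dual presentation $\cF^\vee\hookrightarrow Hom(\cU_2,\cF)^\vee\otimes\cU_2^\vee\simeq Hom(\cU_2,\cE)\otimes\cU_2^\vee$ and identify, at a point $z\in Z$ away from $Z'$, the one-dimensional fibre of $\ker(v)\subset\cF^\vee$ with the line $u\otimes\ker u(z)$; contraction against any $v'\perp u$ then kills $\sigma_v(z)$. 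This replaces the paper's choice of local bases and the existence argument for $f$ by a transparent kernel identification, at the price of the duality bookkeeping you rightly flag (that the map $\cF^\vee\to\cU_2^\vee$ attached to $v'$ is contraction of the coevaluation $\iota$ against $v'$, and that the second map in the dual sequence becomes the evaluation under the isomorphism $Hom(\cU_2,\cF)^\vee\simeq Hom(\cU_2,\cE)$ -- both hold essentially by construction of the Faenzi--Menet sequences, but should be spelled out). As for your genericity caveats: note that the singular points of $\cE$ necessarily lie on $Z$ (an extension of $\cI_Z$ by $\cU_2$ is locally free off $Z$), so ``choosing $z\in Z$ generically'' really means restricting to the points of $Z$ where $\cE$ and $\cF$ are locally free, $z\notin Z'$, and the $\mathcal{E}xt^1$-supports are avoided; this is exactly the same implicit restriction under which the paper's local computation is carried out, and it suffices for the use made of the lemma in the proof of Proposition \ref{descr varphiSigma}, so you lose nothing relative to the paper there.
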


\noindent {\it Proof of the Lemma.} The exact sequence 
(\ref{dual-extension}) shows that on $Z$ the rank of $v$ is only one, so the wedge products in the definition of $\sigma_v$ all vanish, implying (1). 
The expression of $\sigma_{v,v}$ is skew-symmetric in the three vectors $v(f_1), v(f_2), v(f_3)$, that  belong to a vector space of dimension two; so it has to vanish, proving (2). 

In order to prove the last assertion, we use the exact sequence 
$$0\lra \cE^\vee\lra Hom(\cF, \cU_2)\otimes \cU_2^\vee\lra 
\cF(\cH)\lra 0,$$
which holds outside the singular locus of $\cE$.
If we use a local basis $x_1,x_2$ of $\cU_2^\vee$ at $s$, this means that $\cE$ can be described locally as the bundle of combinations $w_1\otimes x_1+w_2\otimes x_2$, where $w_1, w_2\in Hom(\cF, \cU_2)$, such that 
\begin{equation}\label{relation}
\langle w_1(f), x_1\rangle + \langle w_2(f), x_2\rangle =0
\end{equation}
for any local section $f$ of $\cF$. The morphism from $\cE^\vee$ to $\cU_2^\vee$ defined by 
$u$, or equivalently by the orthogonal vectors $v$ and $v'$, simply sends 
$$w_1\otimes x_1+w_2\otimes x_2\mapsto (w_1\wedge v\wedge v')x_1+(w_2\wedge v\wedge v')x_2,$$
seen as a section of $\cU_2^\vee$ once we have 
trivialized  $ \wedge^3Hom(\cF, \cU_2)\simeq \CC$. Over $Z$, this morphism has rank one, say its image is generated by $x_1$. So 
$w_2$ must belong to $\langle v,v'\rangle$; if we complete $v,v'$ with $v''$ to get a basis of $Hom(\cF, \cU_2)$ and write $w_i=z_i v+z'_iv'+z''_i v''$, this means that the equation $z''_2$ must be one of the equations given by (\ref{relation}). Hence there must exist an $f$ such that $v(f), v'(f), v''(f)$ are parallel to $x_2$, while  $v(f), v'(f)$ are parallel to $x_1$. 
Then necessarily $v(f)=v'(f)=0$, and choosing $f_3=f$ in the expression 
$$\sigma_{v,v'}(s)=\frac{v(f_1)\wedge v(f_2)}{f_1\wedge f_2\wedge f_3}v'(f_3)+
\frac{v(f_2)\wedge v(f_3)}{f_1\wedge f_2\wedge f_3}v'(f_1)+\frac{v(f_3)\wedge v(f_1)}{f_1\wedge f_2\wedge f_3}v'(f_2),$$
we can clearly conclude that it vanishes.
\qed 

\medskip \noindent {\it Conclusion.}
Up to scalar, the Lemma implies that   $\sigma_{v,v'}$ only depends on the flag 
$\langle v\rangle\subset \langle v,v'\rangle$. If we choose, $\langle v,v'\rangle = u^\perp$, we thus get a uniquely defined section which vanishes along $Z\cup Z'$. This concludes the proof. \qed 

\medskip

Since $\varphi_\Sigma=\Phi$ is a non-symplectic involution \cite[Proposition 5.11]{faenzi-menet}, $\varphi_\Sigma$ and $\overline{\varphi}_{|B}$ are not the identity! 
Recall that $-R_\cL$, minus the reflection with respect to $\cL$, acts in $H^2(\Sigma,\ZZ)$ as the identity on the span of $\cL$ and as $-id$ on the orthogonal complement.

\begin{coro}\label{action in cohom}
For $n\geq 6$, $\varphi_\Sigma^*=-R_\cL$. In particular $\varphi_\Sigma$ is birational, but not biregular on any hyperK\"ahler birational model of $\Sigma$.
\end{coro}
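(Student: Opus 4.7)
The plan is to determine $\varphi_\Sigma^*$ by analyzing its restrictions to the transcendental lattice $T_\Sigma$ and to $\NS(\Sigma)$ separately, and then to deduce the non-biregularity from the position of $\cL$ in the movable cone. For the transcendental lattice, since $\varphi_\Sigma = \Phi$ is non-symplectic by \cite[Proposition 5.11]{faenzi-menet} and $T_\Sigma$ has odd rank $21$ (because $\Sigma$ has Picard rank two by Lemma~\ref{pic sigma}), the argument recalled in the proof of Lemma~\ref{cohom invol} forces $\varphi_\Sigma^*|_{T_\Sigma} = -\mathrm{id}_{T_\Sigma}$.

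For the action on $\NS(\Sigma)$, I first show that $\cL$ is fixed. By Proposition~\ref{involution Sigma0}, the involution $\overline{\varphi}$ is biregular on $N$; it preserves $B$ and therefore induces, via the bijective normalization $\eta$, a biregular involution $\iota_0$ of $\Sigma_0$ satisfying $\alpha \circ \varphi_\Sigma = \iota_0 \circ \alpha$ on the open set where $\alpha$ is an isomorphism. Since $N$ is the base of a flopping contraction of the Picard rank two manifold $S^{[n]}$, its Picard rank is one, so the ample generator $\cA$ of $\mathrm{Pic}(N)$ is automatically $\overline{\varphi}$-invariant. As $\alpha = \phi_{k\cL}$ for $k \gg 0$ (Proposition~\ref{cL on Sigma}), the class $k\cL$ is the pullback via $\alpha$ of a positive multiple of $\cA$, and pulling back the invariance yields $\varphi_\Sigma^* \cL = \cL$. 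Now $\varphi_\Sigma^*$ is an integral involution of the rank two lattice $\NS(\Sigma)$ fixing $\cL$, so it sends the orthogonal generator $\kappa$ of Lemma~\ref{pic sigma} to $\pm \kappa$. The case $+\kappa$ would make $\varphi_\Sigma^*$ act trivially on $\NS(\Sigma)$, forcing $\varphi_\Sigma = \mathrm{id}$ by Lemma~\ref{cohom invol} and contradicting non-symplecticity. Hence $\varphi_\Sigma^* \kappa = -\kappa$, which combined with the first step gives $\varphi_\Sigma^* = -R_\cL$.

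For the non-biregularity statement, suppose for contradiction that some birational map $\mu: \Sigma \dashrightarrow M$ to a hyperK\"ahler manifold $M$ makes the transported involution $\psi = \mu \circ \varphi_\Sigma \circ \mu^{-1}$ biregular. Under the standard Hodge lattice isomorphism $\mu^*: H^2(M,\ZZ) \to H^2(\Sigma,\ZZ)$, the action $\psi^*$ corresponds to $\varphi_\Sigma^* = -R_\cL$; since $\psi^*$ acts as $-\mathrm{id}$ on $T_M$ and as $-\mathrm{id}$ on $\cL_M^\perp \cap \NS(M)$, its $(+1)$-eigenspace in $H^2(M,\RR)$ is exactly $\cL_M \cdot \RR$, where $\cL_M = (\mu^*)^{-1}\cL$. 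If $\psi$ were biregular then averaging any K\"ahler class of $M$ would produce a non-zero $\psi^*$-invariant K\"ahler class, forcing $\cL_M$ (up to positive scalar) to be ample on $M$. But Proposition~\ref{cL on Sigma} asserts that for $n \geq 6$ the class $\cL$ spans a flopping wall in the interior of $\Mov(\Sigma)$, so $\cL_M$ lies on a wall of $\Mov(M)$ and cannot be ample --- a contradiction.

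The main delicacy will be the second step: one has to verify carefully that $\overline{\varphi}$ actually fixes (and does not merely preserve the line spanned by) the ample class on $N$, which is where the Picard rank one of $N$ becomes essential, and then translate the birational identity $\alpha \circ \varphi_\Sigma = \iota_0 \circ \alpha$ into the cohomological statement $\varphi_\Sigma^* \cL = \cL$ despite $\alpha$ being only birational. Once this is in place the remaining steps are short, relying respectively on the rank-two Lemma~\ref{cohom invol} dichotomy and on the wall-and-chamber geometry already developed in Section~\ref{sec: walls and chambers}.
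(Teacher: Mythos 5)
Your proposal is correct and follows essentially the same route as the paper: the heart in both cases is that the regular involution $\overline{\varphi}$ on $N$ fixes an ample class, whose pullback through the flopping contraction $\alpha=\phi_{k\cL}$ (Proposition \ref{cL on Sigma}, using $n\geq 6$) is a positive multiple of $\cL$, after which Lemma \ref{cohom invol} together with the non-symplecticity of $\Phi=\varphi_\Sigma$ pins down the action on $\NS(\Sigma)$, and the non-biregularity follows because the only $\varphi_\Sigma^*$-fixed movable classes span the wall generated by $\cL$. The only (harmless) variations are that you compute the action on the transcendental lattice directly via the odd-rank/$\pm\id_T$ argument instead of identifying $\varphi_\Sigma$ with $\iota_\Sigma$ as the paper does, and you deduce the invariance of the ample class on $N$ from $\rho(N)=1$ rather than from $c^*\cA$ being proportional to the $\varphi$-invariant class $\cH_n-2\delta$.
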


\begin{proof}
We know that $\overline{\varphi}$ fixes an ample class $\cA$ on $B$ by Proposition \ref{involution Sigma0}. Denote also by $\cA$ its pullback to the normalization $\eta:\Sigma_0\lra B$.
Since $\varphi_\Sigma$ is induced by $\alpha:\Sigma\lra \Sigma_0$, the pullback of $\cA$ by $\alpha$ is fixed by $\varphi_\Sigma$. Since $n\geq 6$, by Proposition \ref{cL on Sigma} $\alpha$ is a flopping contraction, hence $\alpha^*\cA$ is a nef and big class in the interior of the movable cone which is not ample. All such classes are positive multiples of $\cL$, so $\varphi_\Sigma$ fixes them. By the proof of Lemma \ref{cohom invol}, since $\varphi_\Sigma$ is not the identity, it fixes exactly one class in $\NS(\Sigma)$ (up to constant), and then it has to act as $-id$ on $\cL^\perp\cap \NS(\Sigma)=\ZZ\kappa$. We conclude that the action of $\varphi_\Sigma$  on $\NS(\Sigma)$ coincides with that of the automorphism $\iota_\Sigma$ from Definition \ref{unique involution}. Since both are involutions, $(\varphi_\Sigma\circ \iota_\Sigma)^*_{|\NS(\Sigma)}=id_{\NS(\Sigma)}$, hence 
$\varphi_\Sigma\circ \iota_\Sigma$ is the identity by Lemma \ref{cohom invol} and $\varphi_\Sigma=\iota_\Sigma^{-1}=\iota_\Sigma$.

For $\varphi_\Sigma$ to be biregular on $\Sigma'$ a hyperK\"ahler birational model of $\Sigma$, the action of $\varphi_\Sigma$ in cohomology must fix a class in the interior of the chamber of the movable cone corresponding to $\Sigma'$. But all the movable classes fixed by $\varphi_\Sigma^*$ lie in the span of $\cL$, which generates a wall.
\end{proof}

The case $n=3$ is settled in \cite{invol1}, so the remaining cases are $n=4,5$, for which $\varphi_\Sigma$ is a biregular involution.
For $n=4$, we can fully recover the result above, by a study of the group of birational automorphisms.

\begin{prop}\label{epw}
For $n=4$, $\varphi_\Sigma^*=-R_\cL$. Moreover $(\Sigma,\cL)$ is a double EPW sextic, whose covering involution is $\varphi_\Sigma$. 
\end{prop}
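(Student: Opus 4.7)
The plan is to first identify $(\Sigma,\cL)$ concretely with a double EPW sextic via O'Grady's numerical characterization, and then to show that the birational involution $\varphi_\Sigma$ coincides with the covering involution of this double cover, by a symplectic composition argument.

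For $n=4$, Proposition \ref{cL on Sigma} gives that $\alpha\colon\Sigma\to\Sigma_0$ is an isomorphism and $\cL$ is ample. Since $w^2=2$, the variety $\Sigma$ is a four-dimensional hyperK\"ahler manifold of $K3^{[2]}$-type. The class $\cL$ has Beauville--Bogomolov degree $2$ and divisibility $1$ in $H^2(\Sigma,\ZZ)$: the latter follows via the Mukai isometry $\theta$ from the primitivity of $u=(2,-\cH,6)$ in $H^*_{\alg}(S,\ZZ)$. By O'Grady's characterization of polarized $K3^{[2]}$-type fourfolds with a polarization of Beauville--Bogomolov degree $2$ and divisibility $1$ \cite{ogrady}, the morphism $\phi_\cL\colon\Sigma\to\PP(H^0(\Sigma,\cL)^\vee)\cong\PP^5$ is generically $2:1$ onto an EPW sextic. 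Let $\iota\in\mathrm{Aut}(\Sigma)$ be the associated covering involution: it is non-symplectic and fixes $\cL$, hence acts on $H^2(\Sigma,\ZZ)$ as $-R_\cL$.

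It remains to show $\varphi_\Sigma=\iota$. First, $\varphi_\Sigma$ is biregular since $\alpha$ is an isomorphism, $\eta\colon\Sigma_0\to B$ is a bijective normalization from a normal space, and $\overline{\varphi}$ is biregular on $N$ by Proposition~\ref{involution Sigma0}; and it is non-symplectic by Proposition~\ref{descr varphiSigma} combined with \cite[Proposition~5.11]{faenzi-menet}. I would then consider $\psi:=\iota\circ\varphi_\Sigma\in\mathrm{Aut}(\Sigma)$, a symplectic biregular automorphism. Both $\iota$ and $\varphi_\Sigma$ fix ample classes (respectively $\cL$ and the pullback to $\Sigma$ under the finite morphism $\eta\circ\alpha$ of a $\overline{\varphi}$-invariant ample class on $N$); averaging over $\langle\iota,\varphi_\Sigma\rangle$ produces an ample class fixed by this subgroup, which is therefore finite by the classical finiteness of polarized automorphism groups of compact K\"ahler manifolds. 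Hence $\psi$ has finite order.

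The heart of the argument, and the step I expect to be the main obstacle, is the application of Mongardi's classification of finite-order symplectic automorphisms on manifolds of $K3^{[2]}$-type: any non-trivial such automorphism has coinvariant lattice of rank at least $8$ (for symplectic involutions, it is isomorphic to $E_8(-2)$). Since symplectic automorphisms act trivially on the transcendental lattice, this coinvariant lattice would have to embed primitively in $\NS(\Sigma)$, which has rank $2$ by Lemma~\ref{pic sigma} --- leaving no room. One concludes that $\psi$ acts trivially on $H^2(\Sigma,\ZZ)$, and Lemma~\ref{cohom invol} then forces $\psi=\id_\Sigma$, that is $\varphi_\Sigma=\iota^{-1}=\iota$. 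In particular $\varphi_\Sigma^*=-R_\cL$, which recovers the first assertion and identifies $\varphi_\Sigma$ as the covering involution of the double EPW sextic.
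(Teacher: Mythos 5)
There are two genuine gaps, one in each half of your argument. First, the identification of $(\Sigma,\cL)$ with a double EPW sextic cannot be deduced from the numerical invariants alone: there is no ``characterization'' of double EPW sextics by degree $2$ and divisibility $1$. O'Grady's result says that a \emph{generic} polarized fourfold of $K3^{[2]}$-type with $q(\cL)=2$ is a double EPW sextic, and there are explicit Heegner divisors of exceptions (for instance $S'^{[2]}$ for a degree-$2$ K3 surface $S'$, where the induced degree-$2$ class maps the fourfold $4:1$ onto the chordal cubic in $\PP^5$, not $2:1$ onto a sextic). Since $\Sigma$ has Picard rank two, its period point automatically lies on a Heegner divisor, so you must check that this divisor is not one of the excluded ones. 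This is exactly the content of the paper's argument: $\NS(\Sigma)=\langle 2\rangle\oplus\langle -26\rangle$, the periods of the pairs $(\Sigma,\cL)$ are dense in a component of the divisor $\cD^{(1)}_{2,26}$, and by Debarre--Macr\`i (Example 6.3) together with Debarre--Iliev--Manivel one concludes that such a very general pair is a double EPW sextic. Without this step your $\iota$ is not known to exist.

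Second, your finiteness argument for $\psi=\iota\circ\varphi_\Sigma$ is circular: averaging an ample class over $\langle\iota,\varphi_\Sigma\rangle$ only makes sense if that group is already known to be finite, and a group generated by two involutions each fixing its own ample class can perfectly well be infinite dihedral -- finiteness of $\langle\iota,\varphi_\Sigma\rangle$ is equivalent to the finiteness of the order of $\psi$, which is what you are trying to prove. This is precisely where the paper has to work: it shows $\Bir(\Sigma)$ is finite for $n=4$ by exhibiting a wall in the positive cone coming from the solution $(18,5)$ of $X^2-13Y^2=-1$ (via Proposition \ref{card Bir} and Bayer--Macr\`i's Theorem 12.3), and this is genuinely not automatic, since for other values of $n$ (e.g.\ $n$ divisible by $3$) the group $\Bir(\Sigma)$ is infinite. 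Once finiteness is established, your endgame is fine and in fact attractive: $\psi$ is symplectic, of finite order, acts trivially on the transcendental lattice, and Mongardi's bound (coinvariant lattice of rank at least $8$) is incompatible with $\NS(\Sigma)$ of rank $2$, so $\psi^*=\id$ and Lemma \ref{cohom invol} gives $\psi=\id_\Sigma$; this is a legitimate alternative to the paper's observation that $\varphi_\Sigma\circ\iota_\Sigma$ would otherwise have non-cyclotomic characteristic polynomial. But as written, both pillars of the proposal (existence of the EPW double cover, and finiteness of the order of $\psi$) are unsupported.
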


\begin{proof}
For $n=4$ we have $t=13$. Consider the basis $\lbrace \cL,\kappa \rbrace$ of $\NS(\Sigma)$ described in Lemma \ref{pic sigma}. For $(a,b)$ any solution of the negative Pell's equation $X^2-tY^2=-1$, for example $(a,b)=(18,5)$, the orthogonal complement of $a\cL+b\kappa$ cuts a wall in the interior of the positive cone of $\Sigma$ by \cite[Theorem 12.3]{bayermacri}.
Thus $\Bir(\Sigma)$ is finite by Proposition \ref{card Bir}.

Recall the involution $\iota_\Sigma$ from Definition \ref{unique involution}. 
By Lemma \ref{cohom invol}, the invariant lattice of the involution $\varphi_\Sigma$ in $\NS(\Sigma)$ has rank one, so it is generated by a primitive ample class $\cL'$.
Suppose $\cL\neq \cL'$, so that $\lbrace \cL,\cL' \rbrace$ is a $\QQ$-basis for $\NS(\Sigma)$. We denote by $\lambda$ the Beauville-Bogomolov product of $\cL$ and $\cL'$; both $\cL$ and $\cL'$ have Beauville-Bogomolov degree two by \cite[Proposition 3.1]{camere}, and we need a $\lambda>2$ for the lattice to be hyperbolic. The induced action of $\varphi_\Sigma\circ \iota_\Sigma$ in the basis $\lbrace \cL,\cL' \rbrace$ is given by
$$
\begin{bmatrix}
-1-\lambda & -\lambda \\
\lambda & \lambda^2-1 
\end{bmatrix},
$$
with  characteristic polynomial $X^2+(2-\lambda^2)X+1$. This polynomial is never cyclotomic, so 
$\varphi_\Sigma\circ \iota_\Sigma$ has infinite order. This is absurd since $|\Bir(\Sigma)|<\infty$. So $\cL=\cL'$ and $\varphi_\Sigma=\iota_\Sigma$ by Lemma \ref{cohom invol}.

We know that $(\Sigma,\cL)$ is a polarized pair, with $\cL$ of Beauville-Bogomolov degree $2$. The moduli space of $2$-polarized hyperK\"ahler manifolds of $K3^{[2]}$-type is irreducible \cite{apostolov}; 
since $\Sigma$ has Picard rank two, with transcendental lattice of discriminant $2t=26$, the period points of our pairs $(\Sigma,\cL)$'s 
are dense in an irreducible component of the divisor denoted by $\cD^{(1)}_{2,26}$ in \cite{debarremacri}.
As observed in \cite[Example 6.3]{debarremacri}, from \cite[Theorem 8.1]{dim} one can deduce that $(\Sigma,\cL)$ is a double EPW sextic. The covering involution of a double EPW sextic always acts in cohomology as minus the reflection with respect to the polarization, so it must coincide with $\varphi_\Sigma$.
\end{proof}

\begin{remark}
The involution $\overline{\varphi}$ on $N$ induces also an involution on $\Sigma^{(k)}_0$, hence on $\Sigma^{(k)}$, for any $k\geq 0$. It would be interesting to adapt the statements in this section to those situations. This is all the more tempting that if $\cE$ has Mukai vector 
$v^{(k-1)}$, then the Mukai vector of the twist $\cE(\cH)$ is exactly as in \cite[Theorem 4.4]{faenzi-menet}; it is very  likely that the two birational involutions coincide.
\end{remark}

\subsection{The Pl\"ucker variety}

We call \textit{Pl\"ucker space} the locus in $|\cH_n-2\delta|^\vee$ where the projection  $p_{V^\vee_{2n+1}}$ is not defined (see \eqref{subsystem map}), i.e. the projectivization of
\begin{equation}\label{generic}
V_{2n+1}^\perp =\left\{ H\in |I_n(Sec^{n-2}(S))|^\vee \mbox{ such that }\mbox{ }H\supset V_{2n+1}\right\}.
\end{equation}
Its dimension is $\frac{(n+2)(n+1)}{2}-(2n+1)=\frac{n(n-1)}{2}$.
The following statement follows from the definition of $V^\perp_{2n+1}$ and the fact that  $I$ is the base locus of the linear subsystem $|V_{2n+1}|$, cf. subsection \ref{sec: pfaff}.

\begin{lemma}
$\phi_{\cH_n-2\delta}$ sends $I$ into the the Pl\"ucker space $|V^\perp_{2n+1}|$.
\end{lemma}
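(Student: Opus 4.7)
The plan is to simply unwind the two defining descriptions and observe that the statement follows immediately from Theorem \ref{thm I=J}.

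On the one hand, by construction the Plücker space $\PP(V_{2n+1}^\perp)$ parametrises the hyperplanes $H \subset I_n(Sec^{n-2}(S))$ that contain the Pfaffian subsystem $V_{2n+1}$; this is exactly the set of points in $|\cH_n-2\delta|^\vee$ on which the linear projection $p_{V_{2n+1}^\vee}$ in diagram \eqref{subsystem map} is undefined. On the other hand, for a generic $Z = p_1+\cdots+p_n \in S^{[n]}$ the morphism $\phi_{\cH_n-2\delta}$ sends $Z$ to the hyperplane of polynomials in $I_n(Sec^{n-2}(S))$ which vanish on $\langle p_1,\ldots ,p_n\rangle$, equivalently which satisfy $P(p_1,\ldots,p_n)=0$; intrinsically, $\phi_{\cH_n-2\delta}(Z)$ is the hyperplane of sections of $\cH_n-2\delta$ whose restriction to $Z$ is zero.

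Now apply Theorem \ref{thm I=J}: $I=J$ is precisely the base locus (in $S^{[n]}$) of the Pfaffian subsystem $|V_{2n+1}|$, i.e.\ the locus of finite subschemes $Z\in S^{[n]}$ on which every Pfaffian section $P_v$, $v\in V_{2n+1}$, vanishes. Hence for $Z\in I$, every element of $V_{2n+1}$ belongs to the hyperplane $\phi_{\cH_n-2\delta}(Z)$, which means $V_{2n+1} \subset \phi_{\cH_n-2\delta}(Z)$, i.e.\ $\phi_{\cH_n-2\delta}(Z) \in V_{2n+1}^\perp$.

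There is essentially no obstacle; the only mild subtlety is that $\phi_{\cH_n-2\delta}$ is a priori only rational, so we must know it is actually defined along $I$. Under the assumption $C_n=1$ this is guaranteed by Theorem \ref{bpf for nC=1}. In general, since $I$ contains a dense open subscheme of reduced $Z$, where the argument above is most transparent, and since $\PP(V_{2n+1}^\perp) \subset |\cH_n-2\delta|^\vee$ is closed, the conclusion for arbitrary $Z\in I$ (on which $\phi_{\cH_n-2\delta}$ is defined) follows by continuity.
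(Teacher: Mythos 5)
Your proof is correct and is essentially the paper's own argument: the paper dismisses the lemma as an immediate consequence of the definition of $V_{2n+1}^\perp$ together with the fact that $I=J$ is the base locus of the Pfaffian subsystem $|V_{2n+1}|$, which is exactly the unwinding you carry out. Your remark about $\phi_{\cH_n-2\delta}$ being defined along $I$ is harmless and consistent with the paper, since the Plücker variety section works under the standing hypothesis $C_n=1$, where Theorem \ref{bpf for nC=1} makes the map a morphism.
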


We define the  \textit{Pl\"ucker variety} to be 
$$\overline{\Sigma}:=\phi_{\cH_n-2\delta}(I)=|V^\perp_{2n+1}|\cap 
\phi_{\cH_n-2\delta}(S^{[n]}).$$
It plays the same role as the Pl\"ucker point for $n=2$; 
for $n=3$ it is equal to the Pl\"ucker space $|V^\perp_7|\simeq \PP^2$; starting from $n\geq 4$ the two are different.

\medskip
Recall that by Corollary \ref{c stein}, the flopping contraction $c$ is the first factor of the Stein factorization of $\phi_{\cH_n-2\delta}$. We denote the second factor by $\nu$.
Via $\nu$, the base of the contraction $B$ is sent to $\overline{\Sigma}\subset |V^\perp_{2n+1}|$. 
We denote by $\tilde{\alpha}$ the composition of $\alpha$ with the normalization map $\eta$, 
and by $f$ the composition of $\tilde{\alpha}$ with $\nu$, as in the following diagram:
\[
\xymatrix{
  \Sigma\ar@{-->}@(u,l)_{\varphi_\Sigma}\ar[dr]_{\tilde{\alpha}}\ar[dd]_f &  I \ar@{^{(}->}[r]\ar[d] &  S^{[n]}\ar@{-->}@(u,r)^{\varphi} \ar[d]_c \ar\ar@/^1.0pc/@{-->}[ddr]^{\phi_{V_{2n+1}}} &\\
 &  B\ar@{^{(}->}[r]\ar[d]\ar[dl]  &  N \ar[d]_\nu &\\
\overline{\Sigma}\ar@{^{(}->}[r] & |V_{2n+1}^\perp| \ar@{^{(}->}[r] & |\cH_n-2\delta|^\vee \ar@{-->}[r]_{p_{V^\vee_{2n+1}}} & |V_{2n+1}|^\vee
}
\]
By construction, the image of $\Sigma$ via $f$ is $\overline{\Sigma}$.

\begin{prop}
    Suppose $n\ge 6$. The Pl\"ucker variety $\overline{\Sigma}$ is the image of $I$ by the full linear system
    $|\cO_I(\cH_n-2\delta)|$.
\end{prop}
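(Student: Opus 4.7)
The plan is to factor the map defined by $|\cO_I(\cH_n-2\delta)|$ through the rational $\PP^2$-fibration $\pi\colon I\dashrightarrow \Sigma$ of Corollary~\ref{structure I}, identify the descended line bundle on $\Sigma$ with $\cL$, and then conclude using the common covering involution $\varphi_\Sigma$ of the two maps involved.

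First, since the flopping contraction $c\colon S^{[n]}\to N$ is induced by $|k(\cH_n-2\delta)|$ for $k\gg 0$ (Proposition~\ref{prop: ample cone} combined with Section~\ref{sec: walls and chambers}), and since by construction of $\pi$ its general $\PP^2$-fibers coincide with general fibers of $c|_I$, those fibers are contracted by $c$; hence $(\cH_n-2\delta)|_F\simeq\cO_F$ for a general fiber $F$. Over the dense open set $U\subset\Sigma$ where $\pi$ is a $\PP^2$-bundle, this forces $\cO_I(\cH_n-2\delta)|_{\pi^{-1}(U)}\simeq \pi^*\cM$ for some line bundle $\cM$ on $U$. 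Using Proposition~\ref{cL on Sigma} (where $\alpha=\phi_{k\cL}$ for $k\gg 0$) and tracing pullbacks through diagram~\eqref{big beautiful diagram}, I would identify $\cM$ with $\cL|_U$ up to a positive rational multiple. The projection formula together with $\pi_*\cO_{\pi^{-1}(U)}=\cO_U$ (connected $\PP^2$-fibers) then yields $\pi_*\cO_I(\cH_n-2\delta)\simeq \cL$ after extension across the codimension $\geq 2$ complement, so the natural factorization shows that the map defined by $|\cO_I(\cH_n-2\delta)|$ factors as $\pi$ followed by $\phi_\cL\colon \Sigma\dashrightarrow \overline{\Sigma}'$.

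Next, the same argument applied to the restricted linear subsystem $V_I\subseteq H^0(I,\cO_I(\cH_n-2\delta))$ coming from $H^0(S^{[n]},\cH_n-2\delta)$ shows that it also factors through $\pi$, giving the map $f\colon \Sigma\dashrightarrow \overline{\Sigma}$, which is nothing but the composition $\Sigma\xrightarrow{\tilde\alpha}B\xrightarrow{\nu|_B}\overline{\Sigma}$. Both $\phi_\cL$ and $f$ are generically $2\colon 1$ with common covering involution $\varphi_\Sigma$: for $\phi_\cL$ this is Corollary~\ref{action in cohom}, giving $\varphi_\Sigma^*=-R_\cL$ (so $\phi_\cL$ factors through $\Sigma/\varphi_\Sigma$ by \cite[Corollary 4.7]{debarre}); for $f$, the second factor $\nu$ of the Stein factorization of $\phi_{\cH_n-2\delta}$ is $2\colon 1$ with covering involution $\overline\varphi$ by Corollary~\ref{phi is covering inv}, and $\overline\varphi|_B$ is non-trivial (because it lifts via the birational $\tilde\alpha$ to $\varphi_\Sigma$, which is non-trivial by Corollary~\ref{action in cohom}). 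Thus both $\overline{\Sigma}'$ and $\overline{\Sigma}$ are canonically identified with $\Sigma/\varphi_\Sigma$, so the image of the full linear system on $I$ is $\overline{\Sigma}$, as claimed.

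The main obstacle is Step 1, specifically the identification of $\cM$ with $\cL|_U$: this requires careful bookkeeping of line bundles and their multiplicities across the chain of maps $\pi$, $\tilde\alpha$, $\eta$, and the embedding $\nu$, in particular reconciling the polarizations induced by the two contractions $c$ and $\alpha$ through the bijective normalization $\eta\colon \Sigma_0\to B$. A secondary technical issue is establishing the identification $\pi_*\cO_I(\cH_n-2\delta)\simeq \cL$ as sheaves on all of $\Sigma$ rather than just on $U$, which should follow from normality considerations together with the complements having codimension at least two.
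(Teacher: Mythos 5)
Your strategy has a genuine gap, and two of its pillars are not actually available. The step you yourself flag as the main obstacle --- identifying the descended bundle with $\cL$ rather than with some multiple $k\cL$ --- is not bookkeeping but the heart of the proposition, and nothing in your argument determines $k$: the ample class on $N$ is only known to pull back, through the bijective normalization $\eta$ and the contraction $\alpha$, to a \emph{positive multiple} of $\cL$, and tracing the diagram cannot improve this. The paper pins down $k=1$ by a completely different mechanism: the Eagon--Northcott resolution
$0\to ((\cU_2^\vee)^{[n]})^\vee\to \cO_{S^{[n]}}^{\oplus(2n+1)}\to \cI_I(\cH_n-2\delta)\to 0$,
a Danila-type induction giving vanishing and the bound $h^{2n-2}(S^{[n]},(\cU_2^\vee)^{[n]})\le n\,h^0(S,\cU_2^\vee)$, hence $h^0(\cO_I(\cH_n-2\delta))\le \frac{n(n-1)}{2}+n(2n+1)$, which combined with $h^0(\Sigma,k\cL)=\binom{\,k^2+n-1\,}{n-2}$ forces $k=1$ exactly when $n\ge 6$. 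Your proposal never uses the hypothesis $n\ge 6$ (for $n=5$ the inequality does not exclude $k=2$), a sign that the decisive quantitative input is missing. Second, your claim that $\phi_\cL$ is generically $2\!:\!1$ onto its image is precisely the Conjecture stated at the end of the paper: Corollary \ref{action in cohom} only gives $\varphi_\Sigma^*=-R_\cL$, hence factorization of $\phi_\cL$ through the quotient by $\varphi_\Sigma$, not degree two (recall how much extra work, namely the monodromy argument of Proposition \ref{card fiber}, was needed to get degree two for $\phi_{\cH_n-2\delta}$ itself).

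Even granting both points, the conclusion would not follow. Knowing that the map given by the full system and $f=\nu\circ\tilde{\alpha}$ both factor through the same involution identifies their images at best abstractly and birationally; the proposition asserts equality with $\overline{\Sigma}$ inside $|V^\perp_{2n+1}|$, whose actual content is that the restriction map $H^0(S^{[n]},\cH_n-2\delta)\to H^0(I,\cO_I(\cH_n-2\delta))$ is surjective, i.e.\ $h^0(I,\cO_I(\cH_n-2\delta))=\frac{n(n-1)}{2}$. If the descended bundle were $k\cL$ with $k\ge 2$, the full system would be strictly larger than the Pl\"ucker subsystem and its image would no longer be $\overline{\Sigma}$, even though it would still factor through $\varphi_\Sigma$. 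Your argument never addresses this dimension statement, which is exactly what the paper's cohomological computation establishes.
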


\proof 
By Theorem \ref{thm I=J}, $I=J$ is a degeneracy locus of the expected dimension, so its  ideal sheaf can be resolved by the 
Eagon-Northcott complex 
\begin{equation}\label{eagon-northcott1}
0\lra ((\cU_2^\vee)^{[n]})^\vee\lra \mathcal{O}_{S^{[n]}}^{\oplus (2n+1)}\lra 
\mathcal{I}_I(\det ((\cU_2^\vee)^{[n]}))\lra 0
\end{equation}
(see e.g. \cite{weyman}). Moreover $\det ((\cU_2^\vee)^{[n]})=\cH_n-2\delta $ (see \cite[Lemma 1.5]{wandel}).

\medskip
We will prove that  $H^0(S^{[n]},((\cU_2^\vee)^{[n]})^\vee)$ and $H^1(S^{[n]},((\cU_2^\vee)^{[n]})^\vee)$ vanish.
By Serre duality, this follows from: 

 \begin{lemma}\label{vanishing1}
 Let $\cE$ be a vector bundle on $S$ with no higher cohomology. Then
   $h^q(S^{[m]},\cE^{[m]})=0$ for  $q\ge 2m-1$,  and 
  $h^{2m-2}(S^{[m]},\cE^{[m]})\le m h^0(S,\cE).$
\end{lemma}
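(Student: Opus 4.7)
The plan is to compute $H^*(S^{[m]},\cE^{[m]})$ via the nested Hilbert scheme $S^{[m,m-1]}$, identified with the universal subscheme.

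Classically (Ellingsrud--Str\o mme), the universal subscheme of $S^{[m]}$ is isomorphic to $S^{[m,m-1]}$, with the two projections to $S^{[m]}$ and to $S$ corresponding to the maps $\pi:(Z,Z')\mapsto Z$ and $\sigma:(Z,Z')\mapsto \mathrm{res}(Z,Z')$. By definition $\cE^{[m]}=\pi_*(\sigma^*\cE)$, and since $\pi$ is finite flat of degree $m$ we have $R\pi_*=\pi_*$, so that
\[
H^k(S^{[m]},\cE^{[m]})\;\cong\; H^k(S^{[m,m-1]},\sigma^*\cE).
\]

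Furthermore, Cheah showed that the forgetful morphism
\[
b:S^{[m,m-1]}\longrightarrow S^{[m-1]}\times S,\qquad (Z,Z')\longmapsto\bigl(Z',\,\mathrm{res}(Z,Z')\bigr),
\]
is the blow-up of the universal subscheme $\Psi\subset S^{[m-1]}\times S$, which is smooth of codimension $2$. In particular $Rb_*\cO_{S^{[m,m-1]}}=\cO_{S^{[m-1]}\times S}$. Since $\sigma=\mathrm{pr}_S\circ b$, the projection formula yields
\[
H^k(S^{[m,m-1]},\sigma^*\cE)\;\cong\; H^k\bigl(S^{[m-1]}\times S,\,\mathrm{pr}_S^*\cE\bigr).
\]
K\"unneth, combined with the hypothesis that $\cE$ has no higher cohomology, then gives
\[
H^k(S^{[m]},\cE^{[m]})\;\cong\; H^k(S^{[m-1]},\cO_{S^{[m-1]}})\otimes H^0(S,\cE).
\]

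To conclude, recall that $S^{[m-1]}$ is an irreducible holomorphic symplectic manifold of dimension $2(m-1)$, whose Hodge numbers satisfy $h^{0,q}(S^{[m-1]})=1$ for $q\in\{0,2,\ldots,2(m-1)\}$ and vanish otherwise. It follows that $H^k(S^{[m-1]},\cO)=0$ for $k\ge 2m-1$, which proves the first claim, and that $h^{2m-2}(S^{[m-1]},\cO)=1$, yielding $h^{2m-2}(S^{[m]},\cE^{[m]})=h^0(S,\cE)\le m\cdot h^0(S,\cE)$. The only nontrivial input is the blow-up description of the nested Hilbert scheme; everything else is formal. Note that this strategy actually gives the sharper equality $h^{2m-2}(S^{[m]},\cE^{[m]})=h^0(S,\cE)$, but the weaker bound in the statement is what is required later in the argument.
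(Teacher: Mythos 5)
Your first step is where the argument breaks. The nested Hilbert scheme $S^{[m-1,m]}$ is \emph{not} isomorphic to the universal subscheme $Z_m\subset S^{[m]}\times S$ once $m\ge 3$, and the projection $\pi\colon S^{[m-1,m]}\to S^{[m]}$ is not finite (let alone finite flat of degree $m$): over a punctual subscheme $Z$ with ideal $\mathfrak m_x^2$ (length $3$, $m=3$), the fibre of $\pi$ is the $\PP^1$ of length-two subschemes of $Z$, all of which have residual point $x$, whereas the fibre of $Z_m\to S^{[m]}$ is the finite scheme $Z$ itself. Thus the natural map $\rho\colon S^{[m-1,m]}\to Z_m$, $(Z',Z)\mapsto (Z,\mathrm{res}(Z',Z))$, contracts this $\PP^1$ and is only a proper birational morphism. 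The tautological sheaf is by definition pushed forward from $Z_m$ (which is the space that is finite flat of degree $m$ over $S^{[m]}$), so the identification $H^k(S^{[m]},\cE^{[m]})\cong H^k(S^{[m-1,m]},\sigma^*\cE)$ is not formal: it requires $R\rho_*\cO_{S^{[m-1,m]}}=\cO_{Z_m}$, i.e. that the universal family $Z_m$ (equivalently the nested Hilbert scheme $S^{[1,m]}$) is normal with rational singularities. That is a substantive theorem in its own right, not a consequence of the classical facts you cite, and your proof neither states nor proves it. A smaller slip of the same kind occurs at the second step: the centre of the blow-up $b\colon S^{[m-1,m]}\to S^{[m-1]}\times S$ (this description is due to Ellingsrud--Str\o mme; Cheah's theorem is the smoothness of $S^{[m-1,m]}$) is the universal subscheme $Z_{m-1}$, which is singular for $m\ge 4$, so it is not ``smooth of codimension $2$''; here, though, the conclusion $Rb_*\cO_{S^{[m-1,m]}}=\cO_{S^{[m-1]}\times S}$ is still correct, because $b$ is a proper birational morphism from a smooth variety onto a smooth variety.

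If the first step is repaired --- either by establishing or citing rational singularities of the universal family, or by invoking the Danila--Scala computation $H^*(S^{[m]},\cE^{[m]})\cong H^*(S,\cE)\otimes S^{m-1}H^*(S,\cO_S)$ directly --- your route does yield the sharper statement $h^{2m-2}(S^{[m]},\cE^{[m]})=h^0(S,\cE)$ together with the vanishing in degrees $\ge 2m-1$. The paper avoids all of this: it never compares $S^{[m]}$ with $S^{[m-1]}\times S$ in one stroke, but inducts on $m$ using Danila's exact sequence on $S^{[m-1,m]}$ and only the weak fact that $H^q(S^{[m]},\cE^{[m]})$ is a direct factor of $H^q(S^{[m-1,m]},p_m^*\cE^{[m]})$, which is why it only obtains the bound $h^{2m-2}\le m\,h^0(S,\cE)$. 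As written, however, your argument has a genuine gap at the first step.
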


\proof This can be checked by induction on $m$, following the strategy of 
\cite[section 6]{danila}. This involves the nested Hilbert scheme $S^{[m-1,m]}$ and its projection $p_m$ to $S^{[m]}$ and $\phi$ to $S^{[m-1]}\times S$. The first observation 
\cite[Assertion 5) page 21]{danila} is that $ H^q(S^{[m]},\cE^{[m]})$ is a factor of 
 $H^q(S^{[m-1,m]},p_m^*\cE^{[m]})$. In order to control the latter cohomology group, 
 an exact sequence is constructed \cite[Assertion 2) page 21 for $A=0$]{danila}:
 $$0\lra p_m^*\cE^{[m]}\lra \phi^*(\cE^{[m-1]}\boxtimes \cO_S)\oplus \phi^*(\cO_{S^{[m-1]}}\boxtimes \cE) \lra \phi^*(\cO_{S^{[m-1]}}\boxtimes \cE)_{|\Delta}\lra 0,$$
where $\Delta$  denotes the exceptional divisor. Moreover, the cohomology groups of the 
terms of this sequence can be controlled through the following isomorphisms \cite[Assertions 4) and 6) page 21]{danila}:
$$H^q(S^{[m-1,m]},\phi^*(\cE^{[m-1]}\boxtimes \cO_S))=H^q(S^{[m-1]},\cE^{[m-1]})\oplus 
H^{q-2}(S^{[m-1]},\cE^{[m-1]}),$$
$$H^q(S^{[m-1,m]},\phi^*(\cO_{S^{[m-1]}}\boxtimes \cE))=H^q(S^{[m-1]}\times S,\cO_{S^{[m-1]}}\boxtimes \cE),$$
$$H^{q-1}(S^{[m-1,m]}, \phi^*(\cO_{S^{[m-1]}}\boxtimes \cE)_{|\Delta})=
H^{q-1}(S^{[m-1]},\cE^{[m-1]}).$$
To prove that $H^q(S^{[m]},\cE^{[m]})=0$ it suffices to check that these three terms vanish. 
For $q\ge 2m-1$ we can use induction for the first one. Since the bundle $\cE$ has no higher cohomology, by the Kunneth formula the second one vanishes for degree reasons. The third one also vanishes by induction. 

For $q=2m-2$, the first term reduces to 
$H^{2m-4}(S^{[m-1]},\cE^{[m-1]})$, which we can control by induction. The last term
vanishes by what we have just proved. The second one does not vanish since it reduces to  $H^0(S,\cE)$, so induction yields the asserted bound. \qed

\medskip
As a consequence of the Lemma, 
$h^0(\mathcal{I}_I(\cH_n-2\delta))=2n+1$, and the restriction morphism 
    $$H^0(S^{[n]},\cH_n-2\delta)\lra H^0(I,\cO_I(\cH_n-2\delta))$$
    has an image of dimension $\frac{n(n-1)}{2}$, equal to the dimension of $V_{2n+1}^\perp$. 
At this point, is not clear that this image is the full linear series, but by the previous estimate 
$h^1(\cI_I(\cH_n-2\delta))$ is at most $n(2n+1)$, hence  $h^0(\mathcal{O}_I(\cH_n-2\delta))$
is at most $\frac{n(n-1)}{2}+n(2n+1)$.
\medskip 

On the other hand, since $I$ contracts to $B$ with connected fibers, $H^0(\mathcal{O}_I(\cH_n-2\delta))$ 
is isomorphic to $H^0(B,\mathcal{A})$, and to $H^0(\Sigma_0,\mathcal{A})$ (we use the same notations as Corollary \ref{action in cohom}) and $H^0(\Sigma,\alpha^*\mathcal{A})$ as well since $\eta$ and $\alpha$ also have connected fibers. As we already saw in Corollary \ref{action in cohom},  this is 
$H^0(\Sigma,k\cL)$ for some $k>0$ \cite[Theorem 2.1.27]{lazarsfeld}. We denote by $q(\cL)$ the Beauville-Bogomolov degree of $\cL$; since $\Sigma$ is of $K3^{[n-2]}$-type, we know 
(see \cite[3.3]{debarre}) that 
$$h^0(\Sigma,k\cL)=\binom{\frac{k^2}{2}q(\cL)+n-1}{n-2},$$
and moreover $q(\cL)=2$. For $n\ge 6$ we deduce that $k=1$, and $H^0(\mathcal{O}_I(\cH_
n-2\delta))$ is isomorphic to $V_{2n+1}^\perp$. \qed

\begin{coro}
    Suppose $n\geq 6$. The image of $\Sigma$ via $f=\phi_\cL$ is the Pl\"ucker variety $\overline{\Sigma}$.
\end{coro}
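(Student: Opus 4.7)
The plan is to match two descriptions of the map $f$. By construction $f = \nu \circ \tilde{\alpha}$ with $\tilde{\alpha}:\Sigma \to B$ surjective and $\nu(B) = \overline{\Sigma}$, so the image equality $f(\Sigma) = \overline{\Sigma}$ is automatic. The substantive content is the equality of morphisms $f = \phi_\cL$, and this is really just the translation into a clean statement of what has already been proved in the preceding proposition.

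First I would observe that $f$ is defined by the line bundle $\tilde{\alpha}^*(\nu^*\cO(1)|_B)$ together with the subspace of its sections coming from the hyperplane sections on $|V_{2n+1}^\perp|$. Both of these data should be identified with $(\cL, H^0(\Sigma,\cL))$. For the line bundle, $\nu^*\cO(1)|_B$ is ample on $B$, so its pullback via $\tilde{\alpha}$ is nef and big on $\Sigma$, lies on the flopping wall of $\alpha$, and is thus an integer multiple $k\cL$ of the primitive generator $\cL$ of that wall. The dimension computation $h^0(\Sigma,k\cL) = \binom{k^2+n-1}{n-2}$ carried out in the preceding proof, combined with $\dim V_{2n+1}^\perp = \binom{n}{2}$, forces $k=1$ once $n\ge 6$. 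For the sections, the preceding proposition establishes identifications
\[
V_{2n+1}^\perp \;\cong\; H^0(I,\cO_I(\cH_n-2\delta)) \;\cong\; H^0(\Sigma,\cL),
\]
whose outer terms both have dimension $\binom{n}{2}$. Tracing through the diagram, these isomorphisms are compatible with restriction from $S^{[n]}$ to $I$ and with pullback along $\tilde{\alpha}$, so the subspace of $H^0(\Sigma,\cL)$ used by $f$ is the whole space; hence $f$ is defined by the complete linear system, i.e.\ $f=\phi_\cL$.

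The main technical work has already been done in the preceding proposition; what remains is essentially bookkeeping along the Stein factorization. The one minor point requiring attention is checking that the pullback map $H^0(B,\nu^*\cO(1)|_B) \to H^0(\Sigma,\cL)$ is indeed an isomorphism and not merely an injection. But this is immediate from the dimension count, since the source contains $V_{2n+1}^\perp$ (the image of the hyperplane sections) of dimension $\binom{n}{2}$, and the target is of the same dimension by the computation recalled above. Once this identification is made, the equality $f=\phi_\cL$ follows and, combined with the automatic image identification from the first paragraph, gives $\phi_\cL(\Sigma) = \overline{\Sigma}$.
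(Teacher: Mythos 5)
Your overall route is the one the paper intends: the corollary is bookkeeping once the preceding proposition is available, since $f=\nu\circ\tilde{\alpha}$ has image $\nu(B)=\overline{\Sigma}$ automatically, and the only substantive point is that the linear system on $\Sigma$ defining $f$ is the complete system $|\cL|$. However, the two steps you call "immediate" are precisely where the proposition's \emph{proof} (not its statement) does the work, and one of them, as you phrase it, is not a valid deduction. Knowing $h^0(\Sigma,k\cL)=\binom{k^2+n-1}{n-2}$ together with $\dim V_{2n+1}^\perp=\binom{n}{2}$ does not force $k=1$: a priori $f$ is given by a linear subsystem of $|k\cL|$ of dimension at most $\binom{n}{2}$, and nothing prevents a proper subsystem of $|2\cL|$ from having that dimension, so the size of the target space puts no upper constraint on $k$. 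What forces $k=1$ in the paper is a two-sided bound on $h^0(I,\cO_I(\cH_n-2\delta))\cong h^0(\Sigma,k\cL)$: the lower bound $\binom{n}{2}$ coming from the image of the restriction map $H^0(S^{[n]},\cH_n-2\delta)\to H^0(I,\cO_I(\cH_n-2\delta))$, and the upper bound $\binom{n}{2}+n(2n+1)$ coming from $h^1(\cI_I(\cH_n-2\delta))\le n(2n+1)$, which is extracted from the Eagon--Northcott resolution of $\cI_I$ and the cohomological vanishing lemma for tautological bundles; only then does $\binom{n+3}{5}>\binom{n}{2}+n(2n+1)$ for $n\ge 6$ exclude $k\ge 2$. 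So you must either quote that conclusion explicitly from the proposition's proof or reproduce the $h^1$ estimate; the numerical coincidence with $\dim V_{2n+1}^\perp$ alone cannot do it.

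Similarly, your claim that $H^0(B,\nu^*\cO(1)|_B)$ "contains $V_{2n+1}^\perp$" is the assertion that $V_{2n+1}^\perp$ injects into the sections on $B$, i.e.\ that no section of $\cH_n-2\delta$ outside the Pfaffian subsystem vanishes identically on $I$, equivalently $h^0(\cI_I(\cH_n-2\delta))=2n+1$ exactly (equivalently, $\overline{\Sigma}$ is nondegenerate in the Pl\"ucker space). This is again an output of the Eagon--Northcott and vanishing computation in the proposition's proof, not a formal consequence of the definitions or of the proposition's statement. Once these two facts are imported explicitly, your sandwich $V_{2n+1}^\perp\hookrightarrow H^0(B,\nu^*\cO(1)|_B)\hookrightarrow H^0(\Sigma,\cL)$ between spaces of the same dimension $\binom{n}{2}$ closes the argument, and it then coincides with the paper's (implicit) proof of the corollary.
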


\begin{remark}
For $n=2$, the K3 surface $S\subset \PP^{t+1}=\PP^6$ is an instance of a Gushel-Mukai variety, a class of complete intersections inside the Grassmannian $G(2,V_5)$, which have been extensively studied by many authors. To any Gushel-Mukai variety one can associate a so-called Pl\"ucker point, that plays a fundamental r\^ole in their study, see \cite{ogrady1, iliev, debkuz-classbir}.
Our Pl\"ucker varieties are designed to be generalizations of the Pl\"ucker points for $n\geq 3$, and to help understanding 
to what extent the Gushel-Mukai picture generalizes.
\end{remark}

\smallskip\noindent {\bf Conjecture.}
$\phi_\cL :\Sigma\to |V^\perp_{2n+1}|\simeq \PP^{\frac{(n+1)(n-2)}{2}}$  
is generically finite of degree two over the  Pl\"ucker variety.

\medskip For $n=4$ we already know by Proposition  \ref{epw} that $\Sigma$ is a double EPW sextic, and we expect $\overline{\Sigma}\subset \PP^5$ to be an EPW sextic. The next case $n=5$ would be that of K3 surfaces of genus $18$, for which a model was found by Mukai: the general such surface is the common zero locus, inside the orthogonal Grassmannian $OG(3,9)$, of five sections of the rank two spinor bundle \cite{mukai-g=18}. In this case we get a sixfold 
$\overline{\Sigma}\subset \PP^9$, presumably of trivial canonical class, in particular subcanonical.
Can we describe it in terms of Lagrangian bundles?

\appendix

\section{Computing the walls of the movable cone}\label{appendix}

We use a computer program to show that the movable cone of $S^{[n]}$ has exactly two chambers for $n\leq 200$, that is, with the notation of Proposition \ref{nC=1}, $C_n=1$ for $n\leq 200$. 

We explain how the code attached below works.
As one can see in the proof of \cite[Theorem 6.4]{cattaneo}, walls in the interior of the movable cone are spanned by classes $X\cH_n-2tY\delta$ with $(X,Y)$ as in \eqref{eq alpha et rho} lying in the interior of the movable cone.

Since $t$ is $n$-irregular \cite[Lemma 3.6]{beri}, the middle wall -- the $C_n$-th one -- is spanned by $\cH_n-2\delta$, so to prove $C_n=1$ it is sufficient to show that no $X\cH_n-2tY\delta$ lies in the cone $\Pi$ spanned by $\cH_n-2\delta$ and $\cH_n$, or equivalently no solution $(X,Y)$ satisfies $\frac{Y}{X}<\frac{1}{t}$.
Let  $$A(\alpha,\rho)=\alpha^2-4\rho(n-1).$$ 
Since $\frac{Y}{X}=\sqrt{\frac{X^2-A(\alpha,\rho)}{4t(n-1)}}$, the condition can be rewritten as 
\begin{equation}\label{bound x}
0<X<tA(\alpha,\rho).
\end{equation}
Since $A(\alpha,\rho)>0$, we always have $0<Y<X$.

\medskip

The three cases in \eqref{eq alpha et rho} are listed in the program as Cases A, B and C. Note that $(\rho,\alpha)=(-1,1)$ is avoided, since it always admits a solution $(X,Y)=(t,1)$ corresponding to the middle wall and further solutions do not produce classes cutting walls in $\Pi$ \cite[Remark 2.2]{cattaneo}: this is \cite[Lemma 3.6]{beri}, see also Subsection \ref{sec: small c}.

The function 
\texttt{search\_solutions} takes $\alpha$, $\rho$ and $n$ as inputs. For any integer $X$ in the interval \eqref{bound x}, and any integer $Y$ such that $0<Y<X$, it checks that, whenever $X\equiv \pm \alpha \pmod{2(n-1)}$, the pair $(X,Y)$ is not a solution of \eqref{eq alpha et rho}.

\medskip

\begin{verbatim}
import math

i = 0
def search_solutions(alpha, rho, n):
    global i
    i += 1
    sqrt_argument = (4 * n - 3) * (alpha * alpha - 4 * rho * (n - 1))
    if sqrt_argument < 0:
        return
    range_x = range(1, int(math.sqrt(sqrt_argument))+1)
    range_y = range(1, int(math.sqrt(sqrt_argument)))
    for x in range_x:
        alpha_mod = alpha % (2 * (n - 1))
        x_mod = x % (2 * (n - 1))
        if not ((alpha_mod == x_mod or alpha_mod == -x_mod) 
        or alpha_mod == (2 * (n - 1)) -x_mod):
           continue
        for y in range_y:
            lh_exp = x * x - 4 * (4 * n - 3) * (n - 1) * y * y
            rh_exp = alpha * alpha - 4 * rho * (n - 1)
            if rh_exp == lh_exp:
                print(f"n = {n},\t alpha = {alpha},\t rho = {rho}")
                print(f"Solution: x = {x}, y = {y}")


for N in range(4, 201):
    # Case A
    for alpha in range(2, N):
        search_solutions(alpha=alpha, rho=-1, n=N)
    # Case B
    for alpha in range(3, N):
        search_solutions(alpha=alpha, rho=0, n=N)
    # Case C
    range_rho = range(1, int((N - 1) / 4))
    for rho in range_rho:
        range_alpha = range(4 * rho + 1, N)
        for alpha in range_alpha:
            search_solutions(alpha=alpha, rho=rho, n=N)
print("This is the end")
\end{verbatim}

    \bibliographystyle{amsplain}

\bibliography{main.bib}

\end{document}